\numberwithin{equation}{section}
\newtheorem{theorem}{Theorem}[section]
\newtheorem{lemma}[theorem]{Lemma}
\newtheorem{proposition}[theorem]{Proposition}
\newtheorem{corollary}[theorem]{Corollary}
\newtheorem{condition}[theorem]{Condition}
\theoremstyle{definition}
\newtheorem{definition}[theorem]{Definition}
\newtheorem{remark}[theorem]{Remark}
\def\E{{\mathbb E}}
\def\R{{\mathbb R}}
\def\N{{\mathbb N}}
\def\P{{\mathcal P}}
\def\X{{\mathcal X}}
\def\Y{{\mathcal Y}}
\def\Q{{\mathcal Q}}
\def\F{{\mathcal F}}
\newcommand\cB{\mathcal B}
\newcommand\cC{\mathcal C}
\newcommand\cF{\mathcal F}
\newcommand\cG{\mathcal G}
\newcommand\cI{\mathcal I}
\newcommand\cL{\mathcal L}
\newcommand\cM{\mathcal M}
\newcommand\cN{\mathcal N}
\newcommand\cP{\mathcal P}
\newcommand\cR{\mathcal R}
\newcommand\cT{\mathcal T}
\newcommand\cW{\mathcal W}
\newcommand\cX{\mathcal X}
\newcommand\cY{\mathcal Y}
\def \E{\mathbb{E}}
\def \F{\mathbb{F}}
\def \N{\mathbb{N}}
\def \P{\mathbb{P}}
\def \Q{\mathbb{Q}}
\def \R{\mathbb{R}}
\def \S{\mathbb{S}}
\def \X{\mathbb{X}}
\def \Y{\mathbb{Y}}
\def \eps{\varepsilon}
\title[$L^2$-graphon SDEs]{Interacting particle systems on sparse $W$-random graphs}
\author{Carla Crucianelli \and Ludovic Tangpi}
\date{\today}
\thanks{Address: cjcrucianelli@princeton.edu and ludovic.tangpi@princeton.edu; Operations Research and Financial Engineering; and Program in Applied and Computational Mathematics, Princeton University. 
Funding: Research Partially supported by NSF CAREER award DMS-2143861 and the AMS Claytor-Gilmer fellowship}
\begin{document}

\begin{abstract}

We consider a general interacting particle system with interactions on a random graph, and study the large population limit of this system. When the sequence of underlying graphs converges to a graphon, we show convergence of the interacting particle system to a so called graphon stochastic differential equation. This is a system of uncountable many SDEs of McKean-Vlasov type driven by a continuum of Brownian motions. We make sense of this equation in a way that retains joint measurability and essentially pairwise independence of the driving Brownian motions of the system by using the framework of Fubini extension. The convergence results are general enough to cover nonlinear interactions, as well as various examples of sparse graphs. 
A crucial idea is to work with unbounded graphons and use the $L^p$ theory of sparse graph convergence.

\end{abstract}

\maketitle


\tableofcontents


\section{Introduction and main results}

Given a family of independent $m$-dimensional Brownian motions $W^1,\dots, W^N$ and a family of Borel probability measures $\gamma^i$ on $\R^n$, consider the interacting particle system
\begin{equation}
\label{eq:intro.part.system}
    Y_t^{i,N} =  Y^{i,N}_0 + \int_0^t b(s,Y_s^{i,N}, m_{\mathbb{Y}_s}^{i,N})ds + \int_0^t \sigma(s,Y_s^{i,N}, m_{\mathbb{Y}_s}^{i,N}) dW_s^{i},\quad 
    Y^{i,N}_0 \sim  \gamma^i, 
\end{equation}
where $\mathbb{Y}:= (Y^{1,N},\dots, Y^{N,N})$, and
\begin{equation}
\label{eq:definition.empirical.measure}
    m_{\mathbb{Y}_s}^{i,N} := \frac{1}{N} \sum_{j=1}^N \frac{\zeta_{ij}^N}{\beta_N} \delta_{Y_s^{j,N}}
\end{equation}
is a positive measure modeling the interaction among the particles $(Y^{i,N})_{i=1,\dots,N}$. $\beta_N$ is a sequence of non-negative real numbers that represent the sparsity parameters of the underlying graph of the system and the matrix $(\zeta^N_{ij})_{1\le i,j\le N}$ represents the adjacency matrix of a simple random graph $\mathcal{G}_N$.
We are chiefly interested in describing the asymptotic behavior of the particle system \eqref{eq:intro.part.system} when the graph $\mathcal{G}_N$ is sparse. 

Interacting particle systems have been extensively studied in the literature and have many different applications for example to study synchronization as in the Kuramoto model \cite{Chiba-Kuramoto-graphs,Oliveira-Reis-interacting-diffusions-on-random-graphs}, the Ising model \cite{spin-systems-on-random-graphs}, neuroscience \cite{Agathe-long-term-stability-Hawke} or games \cite{Carmona-stochastic-graphon-games} (See \cite{porter2015dynamical} for more examples). 
When studying complex systems arising from applications it is typical to have non-symmetric networks, which forces the models to have an underlying graph structure (see \cite{Oliveira-Reis-interacting-diffusions-on-random-graphs,oliveira-interacting-on-sparse-graphs,Chiba-Kuramoto-graphs, spin-systems-on-random-graphs,Medvedev-Nonlinear-heat-on-dense-graph}). 
In many applications, one central question is to study the large population limit.
This requires considering convergence of a sequence of large graphs. 
The classical theory of convergence of graph sequences defines this convergence by duality in terms of combinatorial quantities called \textit{graph parameters} (see \cite{lovazs-limits-dense-graph-seq,Large-networks-and-graph-limits-lovasz} for details); and it is well-known that it is necessary to consider a larger class of objects than finite graphs, these are called \textit{graphons}. 
Formally, a graphon\footnote{To be more precise, instead of graphon we should use the term kernel, as graphon usually reserved for bounded functions. We nevertheless call $g$ a graphon in accordance with the term graphon SDE used in the literature.} is a function
\begin{equation*}
    g: I\times I\to [0,\infty), \quad \text{with}\quad I=[0,1]
\end{equation*}
that is Borel-measurable and symmetric. Intuitively, these generalize the notion of graphs when we have an uncountable number of vertices and $g(x,y)$ would represent the weight of the edge between $x$ and $y$. 
Graphon stochastic differential equations (SDEs) therefore arise as natural (candidate) limit for interacting particle systems on graphs converging to a graphon. 
Importantly, we focus on square-integrable graphons, extending beyond the existing literature which has predominantly addressed the bounded case. We will elaborate on the relevance of this extension below.

By ``graphon SDE'' we mean an equation of the form
\begin{equation}
\label{eq:graphon-SDE}
    dX_t^x = b\bigg(t,X_t^x, \int_I g(x,y)\mathcal{L}(X_t^y) \lambda(dy)\bigg)dt + \sigma\bigg(t,X_t^x, \int_I g(x,y)\mathcal{L}(X_t^y) \lambda(dy)\bigg)dB_t^x, \;\;
    X_0^x \sim  \gamma^x 
\end{equation}
for a family of Brownian motions $(B^x)_{x\in I}$ on some probability space $(\Omega,\cF, \P)$ and where $\cL(X_t^y)$ denotes the law of $X^y_t$.
At this point, one could wonder why we need to work with a continuum of random variables, the reason lies on the fact that it is a typical assumption in economic literature to work with a continuum of agents, see \cite{Sun-2006}. For example \citet{Aumann-market-continuum} shows that when studying a market with perfect competition, as each individual will not be able to influence the prices by itself, continuum models are more suitable. 
This poses important measurability issues that we carefully address in this paper. 
It is a known result that a process $f$ defined on the usual product space between two probability spaces $(I,\mathcal{I},\lambda)$ and $(\Omega, \mathcal{F},\mathbb{P})$, that is jointly measurable and essentially pairwise independent (see Definition \ref{def:e.p.i}) implies that $f_i$ is $\lambda$-almost surely constant~\cite{Sun-2006}. 
Hence, if we attempt to construct the solution of \eqref{eq:graphon-SDE} on $(I\times\Omega, \cI\otimes\cF, \lambda\otimes\P)$ we would need to give up joint measurability!
However, notice that to make rigorous sense of equation \eqref{eq:graphon-SDE}, the mapping $x\mapsto\cL(X^x_t)$ needs to be measurable, the process $X^x$ adapted to the filtration of $B^x$ and $X^x,X^y$ independent for $\lambda$-almost every $y\neq x$. To achieve this, the family of Brownian motions $(B^x)_{x\in I}$ needs to be pairwise independent and measurable.

Different authors have proposed different approaches to this problem. 
One option would be to give up the joint measurability.
For instance \citet{bayraktar-graphon-mean-field-systems} propose a particle system driven by a family of Brownian motions indexed by $x\in I$. 
However, it is not assumed that $x\mapsto B^x$ is measurable. 
It is argued directly that $x\mapsto \cL(X^x)$ is measurable (but not necessarily $x\mapsto X^x$). 
The reason why the measurability of the laws is enough for their purposes is that the particle system considered in \cite{bayraktar-graphon-mean-field-systems} involves integrals with respect to $\cL(X^x)$ and not with respect to $X^x$.  
The main drawback of the approach taken in \cite{bayraktar-graphon-mean-field-systems} is that more general dependencies on the coefficients cannot be considered, for example dependencies such as $\int_I g(x,y)X^{y}_t \lambda(dy)$ which naturally appear in economics applications, see e.g. \citet{Carmona-stochastic-graphon-games,Sun-2006}. 
Our approach will allow to consider dependencies of this form using the Exact Law of Large Numbers. 
Moreover, anticipating our discussion of the $N\to\infty$ convergence, let us already mention that an easy corollary of our main convergence result will be a weak law of large numbers (Corollary \ref{coro:WLLN}) or an Exact law of large numbers~(\cite{Sun-2006,Law-large-numbers-Uhlig-1996,Judd-lln-1985}) both for which measurability of $x\mapsto X^x$ is key.
We need to consider an appropriate probability space on which we will construct solutions of the SDE \eqref{eq:graphon-SDE}. We will use Fubini extensions, as was done by \citet{aurell-carmona-2021}, to be able to keep both measurability and independence. To this end, let us beginning by describing the probabilistic setting on which the graphon SDE is studied.

\subsection{Probabilistic setting and general graphon SDEs}
Inspired by \cite{aurell-carmona-2021}, we will work on a Fubini extension of a given probability space.
Let us describe such an extension and state a crucial property that justifies it as a natural probability space to work on.
\label{sec:probabilistic-setting}
\begin{definition}~\cite{Sun-2006}
\label{def:e.p.i}
    Let $(I,\mathcal{I}',\lambda')$ and $(\Omega', \mathcal{F}',\mathbb{P}')$ be two probability spaces. 
    A process  $f: I\times \Omega' \rightarrow E$, where $E$ is a separable metric space is essentially pairwise independent if for $\lambda'$-almost every $x\in I$ the random variables $f_x$ and $f_y$ are independent for $\lambda'$-almost every $y\in I$. 
\end{definition}

\begin{definition}
Let $(\Omega', \mathcal{F}',\mathbb{P}')$ and $(I,\mathcal{I}',\lambda')$ be two probability spaces. 
A probability space $(I \times \Omega' ,\mathcal{W}, \mathbb{Q})$ extending the product space $(I \times \Omega' , \mathcal{I}\otimes \mathcal{F}', \lambda'\otimes \mathbb{P}')$ is a Fubini extension if for any real valued $\mathbb{Q}$-integrable function $f$ on $(I \times \Omega' ,\mathcal{W}, \mathbb{Q})$, 
\begin{itemize}
    \item $f_x: \omega \mapsto f(x,\omega)$ is integrable on $(\Omega', \mathcal{F}',\mathbb{P}')$ for $\lambda'$-a.e. $x\in I$ and $f_\omega: x\mapsto f(x,\omega)$ is integrable on $(I,\mathcal{I}', \lambda')$ for $\mathbb{P}'$-a.e. $\omega \in \Omega'$.
    \item $\int_{\Omega'} f_x(\omega)\mathbb{P}'(d\omega)$ and $\int_I f_\omega(x) \lambda'(dx)$ are integrable respectively on $(I,\mathcal{I}',\lambda')$ and $(\Omega', \mathcal{F}',\mathbb{P}')$ with:
    \begin{equation*}
        \int_{I \times \Omega'} f(x,\omega) \mathbb{Q}(dx,d\omega) = \int_I \left(\int_{\Omega'} f_x(\omega)\mathbb{P}'(d\omega)\right) \lambda'(dx) = \int_{\Omega'} \left(\int_I f_\omega(x)d\lambda'(x)\right) \mathbb{P}'(d\omega).
    \end{equation*}
    \end{itemize}
    We denote $ (I \times \Omega', \mathcal{I}' \boxtimes \mathcal{F}',\lambda' \boxtimes \mathbb{P}')= (I \times\Omega' ,\mathcal{W}, \mathbb{Q})$ and by $\mathbb{E}^{\boxtimes}$ the expectation with respect to $\lambda'\boxtimes \mathbb{P}'$. 
\end{definition}
We will colloquially refer to the Fubini extension as the \textit{box product} of the two spaces.
Now, let $T>0$ be a fixed finite time horizon and denote by $\mathcal{C}=C([0,T],\mathbb{R}^n)$ the space of continuous functions from $[0,T]$ to $\R^n$ equipped with the uniform norm $\left|\omega\right|_{\mathcal{C}} = \sup_{t\in [0,T]} |\omega(t)|$.
\begin{remark}
    As it is remarked in \cite{aurell-carmona-2021}, we can extend the definition to maps $f:I\times \Omega' \rightarrow \mathcal{C}$. 
    The only difference is that in this case we should interpret measurability as strong measurability and the integral as a Bochner integral.
\end{remark}

We are now ready to state the existence of the probability space that will carry the family of uncountable essentially pairwise independent Brownian motions $(B^x)_{x\in I}$ 
(See also \cite{aurell-carmona-2021, Sun-2006}). 
The construction follows from the following Theorem of Sun and Zhang~\cite{sun2008individual}, where $\cP(E)$ denotes the set of probability measures on $E$.

\begin{theorem}\label{thm:construction-spaces}~\cite{sun2008individual} 
    Let $I'$ be the unit interval and $E$ a Polish space. There exists a probability space $(I',\mathcal{I'},\lambda')$ that is an extension of the Lebesgue measure on $I'$, a probability space $(\Omega', \mathcal{F}',\mathbb{P}')$ and a Fubini extension $(I'\times\Omega', \mathcal{I'}\boxtimes \mathcal{F'},\lambda' \boxtimes \mathbb{P}')$ such that if $\varphi: (I',\mathcal{I}',\lambda') \rightarrow \mathcal{P}(E)$ is measurable there is a $\mathcal{I'}\boxtimes \mathcal{F}'$-measurable process $Y: I'\times \Omega' \rightarrow E$ such that the random variables $Y^x$ are essentially pairwise independent and $\mathbb{P}'\circ (Y^x)^{-1} = \mathcal{L}(Y^x) = \varphi(x)$ for $\lambda'$-almost every $x \in I'$.  
\end{theorem}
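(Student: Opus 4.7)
The plan is to build the required spaces via the hyperfinite Loeb construction of Keisler and Sun, which is the classical route to producing Fubini extensions rich enough to carry a nontrivial jointly measurable, essentially pairwise independent family of random variables. Fix an unlimited hyperinteger $N$, let $T=\{1,\ldots,N\}$ carry the internal counting measure $\nu$, and identify $T$ with $I'$ via $i \mapsto i/N$. The Loeb extension of $(T,\nu)$ transports to a probability space $(I',\cI',\lambda')$ on the unit interval that strictly extends Lebesgue measure; this is the index space.

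Next, I would take $(\Omega',\cF',\P')$ to be the Loeb space of an internal product $\prod_{i \in T}^{\mathrm{int}}(\Omega_i,\cF_i,\P_i)$ of hyperfinite coordinate spaces, each rich enough to carry any internal probability measure on ${}^*E$. The Fubini extension $(I'\times\Omega',\cI'\boxtimes\cF',\lambda'\boxtimes\P')$ is then defined as the Loeb space of the internal product of $(T,\nu)$ with the underlying internal space of $\Omega'$. Keisler's Fubini theorem for hyperfinite Loeb products yields the iterated-integral identity demanded in the definition, so this Loeb space is genuinely a Fubini extension of the ordinary product $(I'\times\Omega',\cI'\otimes\cF',\lambda'\otimes\P')$.

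Given a Borel measurable $\varphi: I' \to \cP(E)$, the next step is to produce an internal lifting $\Phi: T \to \mathrm{Meas}({}^*E)$ such that $\mathrm{st}(\Phi(i)) = \varphi(i/N)$ for $\lambda'$-a.e.\ $i$, using standard lifting arguments from nonstandard analysis (transfer principle combined with $S$-integrability; here the Polish hypothesis on $E$ is essential for the existence of such a lift). On each factor $\Omega_i$, one constructs an internal random variable $Y_i$ with internal distribution $\Phi(i)$; internal independence of the family $(Y_i)_{i \in T}$ follows from the product construction of $\Omega'$. Define $Y(i/N,\omega):=\mathrm{st}(Y_i(\omega))$. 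As the standard part of an internal function, $Y$ is $\cI'\boxtimes\cF'$-measurable, and by construction $\mathcal{L}(Y^x) = \varphi(x)$ for $\lambda'$-a.e.\ $x \in I'$. Essential pairwise independence then follows because internal independence across distinct coordinate factors descends to Loeb independence of $Y^x$ and $Y^y$ for $(\lambda'\otimes\lambda')$-a.e.\ pair $(x,y)$, which is the content of Sun's pairwise-independence results for Loeb products.

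The main obstacle is producing a Fubini extension rich enough to simultaneously carry a nontrivial jointly measurable, essentially pairwise independent family while still obeying Fubini's theorem. Any such construction must strictly enlarge the standard product $\sigma$-algebra, since otherwise the rigidity result from Section~\ref{sec:probabilistic-setting} would force the family to be $\lambda'$-almost surely constant in $x$. Keisler's Fubini theorem for hyperfinite Loeb products is the precise tool that threads this needle, and its verification (together with the passage from internal to essential pairwise independence) is the technical heart of the proof rather than the lifting of $\varphi$ or the construction of $Y$ itself.
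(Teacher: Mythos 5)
First, a point of context: the paper does not prove this statement at all --- it is quoted verbatim from Sun and Zhang \cite{sun2008individual} and used as a black box --- so there is no internal proof to compare against; your sketch has to be judged against what the cited theorem actually asserts. Measured that way, your Loeb-product strategy is the right family of techniques (it is how the earlier existence results of Sun were obtained, and it underlies the cited paper), but the sketch skips precisely the point that makes this particular theorem nontrivial: the index space must be the \emph{unit interval} $I'$ carrying a $\sigma$-algebra and measure that \emph{extend Lebesgue measure}. Your step ``identify $T=\{1,\dots,N\}$ with $I'$ via $i\mapsto i/N$'' does not deliver this. The points $i/N$ are nonstandard reals, and the only canonical map to $[0,1]$ is the standard-part map, which is badly non-injective: it pushes the Loeb counting measure forward to ordinary Lebesgue measure, so no extension of Lebesgue measure is produced this way, and a process $Y$ defined coordinatewise on $T$ does not descend to a well-defined family $(Y^x)_{x\in I'}$ because uncountably many $i\in T$ sit over the same standard $x$. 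If instead you simply take the Loeb space $(T,L(\mathcal T),\hat\nu)$ itself as the index space, you prove the weaker statement of Sun (2006), not the one quoted here. Bridging this gap --- selecting a set of representatives $\{t_x\}_{x\in[0,1]}\subset T$ of Loeb outer measure one, transporting the Loeb structure through the resulting bijection so that $(I',\mathcal I',\lambda')$ genuinely extends Lebesgue measure, and then verifying that the Fubini-extension property and the essentially pairwise independent process survive this transfer --- is the actual technical content of Sun--Zhang (2008), and it is absent from your argument.

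Two smaller issues: the theorem allows $\varphi$ to be measurable with respect to the \emph{extended} $\sigma$-algebra $\mathcal I'$, not merely Borel measurable, so your lifting step must be carried out relative to $\lambda'$ on the extension rather than relative to Lebesgue measure; and the claim that the Loeb measure of the internal product is a Fubini extension of $\lambda'\otimes\P'$ (not just of the product of the Loeb marginals on $T$ and $\Omega'$) again depends on first having fixed the index space as the Lebesgue extension, so it cannot be invoked before the transfer described above is done. As written, the proposal reproduces the classical Loeb-product existence argument but does not prove the statement actually cited in the paper.
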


Consider the space $\mathcal{C}\times \mathbb{R}^n$ equipped with the product $\sigma$-algebra. 
Then, we can apply Theorem \ref{thm:construction-spaces} for this space so it holds that there exists: 
    $(i)$ a probability measure that extends the Lebesgue measure on the unit interval $(I,\mathcal{I},\lambda)$, 
    $(ii)$ a probability space  $(\Omega, \mathcal{F},\mathbb{P})$ and 
    $(iii)$ a Fubini extension $(I \times \Omega, \mathcal{I}\boxtimes \mathcal{F},\lambda \boxtimes \mathbb{P})$ satisfying the property of Theorem \ref{thm:construction-spaces}.

Let the initial condition of the SDE \eqref{eq:graphon-SDE} be a family of probability measures $\{\gamma^x\}_{x\in I}$ denoting the laws of $(X^x_0)_{x\in I}$, and let $\Q$ denote the Wiener measure on $\mathcal{C}$. 
Assume:
\begin{condition} \label{cond:initial-conditions}
    \begin{enumerate}
        \item $ (I,\mathcal{I},\lambda) \ni x \mapsto \mathbb{Q} \otimes \gamma^x \in \mathcal{P}(\mathcal{C}\times \mathbb{R}^n)$ is measurable.
        \item For all $x\in I$, $\gamma^x\in \mathcal{P}_2(\mathbb{R}^n)$, and if $\xi^x\sim \gamma^x$, then 
        \begin{equation*}
            \int_I \mathbb{E}\left[|\xi^x|^2\right]\lambda(dx)<\infty.
        \end{equation*}

    \end{enumerate}
\end{condition}

Under Condition \ref{cond:initial-conditions}, consider the measurable map $\varphi: x \in (I,\mathcal{I},\lambda) \mapsto \mathbb{Q} \otimes \gamma^x \in \mathcal{P}(\mathcal{C}\times \mathbb{R}^n)$. Then, there exists a measurable function $\Tilde{B}: (I \times \Omega, \mathcal{I}\boxtimes \mathcal{F},\lambda\boxtimes \mathbb{P}) \rightarrow \mathcal{C}\times \mathbb{R}^n$ such that, if we denote $\Tilde{B}^x=(B^x,\xi^x)$, we have that $\{\Tilde{B}^x\}_{x\in I}$ are essentially pairwise independent, $\mathcal{L}(\xi^x) = \gamma^x$ and $\mathcal{L}(B^x) = \mathbb{Q}$ for every $x\in I$. 
Finally we define $\mathbb{F}^x$ to be the $\P$-competed filtration generated by $\Tilde{B}^x$, that is, the $\mathbb{P}$-completion of $\sigma(\{B_s^x, s\leq t\}, \xi^x)$.
We thus summarize the notation and conventions that we will adopt as follows: 
\begin{itemize}
    \item $(I,\mathcal{I}, \lambda)$ where $\mathcal{I}$ is an extension of the Borel $\sigma$-algebra of $I$ and $\lambda$ is an extension of the Lebesgue measure on the interval. $(\Omega, \cF, \P)$ is a sample space and we can construct the Fubini extension of these two spaces $(I \times \Omega, \cI\boxtimes \cF, \lambda \boxtimes \P)$. We will use the convention that probability measures are complete\footnote{ This is the same convention that \citet{Sun-2006} uses}.
    \item The map $ (I\times \Omega, \mathcal{I}\boxtimes \mathcal{F}, \lambda\boxtimes \mathbb{P})\ni (x,\omega) \mapsto \Tilde{B}^x(\omega) \in \mathcal{C}\times \mathbb{R}^n$ is measurable.
    Thus by coordinate-wise projection, $ (I\times \Omega, \mathcal{I}\boxtimes \mathcal{F}, \lambda\boxtimes \mathbb{P}) \ni (x,\omega) \mapsto B^x(\omega) \in \mathcal{C}$ and $ (I\times \Omega, \mathcal{I}\boxtimes \mathcal{F}, \lambda\boxtimes \mathbb{P})\ni (x,\omega) \mapsto \xi^x(\omega) \in  \mathbb{R}^n$ are measurable. 
    \item $\{(B^x,\xi^x)\}_{x\in I}$ are essentially pairwise independent.
\end{itemize}
We denote by $L^2_\boxtimes(I\times \Omega, \mathcal{C})$ the space of square integrable processes with respect to the box product defined above. 
That is,
\begin{equation*}
\begin{split}
    L^2_{\boxtimes} (I \times \Omega , \mathcal{C}) = \bigg\{X:I\times \Omega \rightarrow \mathcal{C} : \text{$\mathcal{I}\boxtimes \mathcal{F}$-$\mathcal{B}(\mathcal{C})$-measurable  and $\int_{I\times\Omega } \|X^x(\omega)\|_\mathcal{C}^2  \lambda \boxtimes \mathbb{P}(dx,d\omega)<\infty$ }\bigg\}.
\end{split}
\end{equation*}

As usual, we identify elements of $L^2_{\boxtimes}(I\times \Omega,\cC)$ that are equal $\lambda\boxtimes \P$-a.s.
\begin{remark}
    The space $L^2_\boxtimes(I\times\Omega, \cC)$ should be interpreted as the Bochner space and the correct measurability notion is $\lambda\boxtimes\P$-strongly measurable. However, as $\cC$ is a separable space, the notions of measurable and strongly measurable are equivalent (\cite[Corollary 1.1.10]{Analysis-in-Banach}) and using \cite[Proposition 1.1.16]{Analysis-in-Banach} we get that in this case we can use the usual measurability notion. 
\end{remark}

\subsection{Main results of the paper}

With these measure-theoretic preliminaries out of the way, let us come back to the subject of the present paper and describe our main results.
\subsubsection{Well-posedness}
We start by well-posedness of the graphon SDE.
\begin{definition}
\label{def:solution-sde}
    We say that a family of stochastic processes $(X^x)_{x\in I}$ is a (strong) solution of the SDE $\eqref{eq:graphon-SDE}$ if:
    \begin{enumerate}
        \item $X\in L^2_\boxtimes(I\times\Omega,\mathcal{C})$.   
        \item $X^x$ is $\mathbb{F}^x$-adapted for $\lambda$-almost every $x\in I$. 
        \item For $\lambda$-almost every $x\in I$, $X^x$ satisfies the equation \eqref{eq:graphon-SDE} $\P$-a.s. 
    \end{enumerate}
\end{definition}
For any solution $(X^x)_{x\in I}$ of the graphon SDE, the integral in \eqref{eq:graphon-SDE} is understood as a non-negative measure, see Lemma \ref{lemma:nu-defines-a-measure}.
Denote $\cM_+(\R^n)$ to set of non-negative measures and $d_{BL}$ to the bounded Lipschitz distance both defined in Section \ref{sec:notation}. Now, consider the following conditions: 

\begin{condition} \label{cond:coefficients}
The coefficients $b : [0,T] \times \mathbb{R}^n \times \mathcal{M}_+(\mathbb{R}^n) \rightarrow \mathbb{R}^n$ and $\sigma:  [0,T] \times \mathbb{R}^n \times \mathcal{M}_+(\mathbb{R}^n) \rightarrow \mathbb{R}^{n\times m} $satisfy\footnote{ We will denote with $|\cdot|$ to both the Euclidean norm in $\R^n$ and the Euclidean norm on $\R^{n\times m}$}:
\begin{enumerate} 
    \item There exists a constant $C>0$ such that for $t\in [0,T]$, $x,y\in \mathbb{R}^n$, $\mu,\nu \in \mathcal{M}_+(\mathbb{R}^n)$:
    \begin{equation}
    \label{eq:b-lipschitz-condition}
        |b(t,x,\mu) - b(t,y,\nu)| + |\sigma(t,x,\mu) - \sigma(t,y,\nu)| \leq C \left( |x-y| + d_{\mathrm{BL}}(\mu, \nu)\right),
    \end{equation}
    \item for every $x\in \mathbb{R}^n$, $\nu \in \mathcal{M}_+(\mathbb{R}^n)$ the maps $t \mapsto b(t,x,\nu)$ and $t \mapsto \sigma(t,x,\nu)$ are continuous. 
    \item the functions $b$ and $\sigma$ grow at most linearly. Specifically, there exists a constant $C>0$ such that for every $t\in [0,T]$, $x\in \mathbb{R}^n$, $\nu\in \mathcal{M}_+(\mathbb{R}^n)$:
    \begin{equation}
    \label{eq:b-growth-condition}
        |b(t,x,\nu)| + |\sigma(t,x,\nu)| \leq C \Big( 1 + |x| + \|\nu\|_{\mathrm{BL}} \Big).
    \end{equation}
\end{enumerate}
\end{condition}

Our first contribution is the following well-posedness result.

\begin{theorem}
\label{thm:existence-uniqueness}
    Let $g$ be a square integrable graphon. 
    If Conditions \ref{cond:initial-conditions} and \ref{cond:coefficients} are satisfied, then there exists a unique solution to the graphon SDE $\eqref{eq:graphon-SDE}$.
\end{theorem}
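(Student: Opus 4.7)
The plan is to construct the solution via a Picard-type fixed point argument on the complete space $L^2_\boxtimes(I\times\Omega,\mathcal{C})$. Define the operator $\Phi$ on $L^2_\boxtimes(I\times\Omega,\mathcal{C})$ by the following two-step procedure: first, given $X = (X^x)_{x\in I}$, form the measure-valued coefficient $\nu_t^x := \int_I g(x,y)\, \mathcal{L}(X_t^y)\, \lambda(dy)$, which is a well-defined element of $\mathcal{M}_+(\mathbb{R}^n)$ by Lemma \ref{lemma:nu-defines-a-measure} and jointly measurable in $(t,x)$; second, for $\lambda$-a.e.\ $x$ let $\Phi(X)^x$ be the unique strong $\mathbb{F}^x$-adapted solution of the classical SDE driven by $B^x$ started from $\xi^x$, with the deterministic coefficients $(t,z) \mapsto b(t,z,\nu_t^x)$ and $(t,z) \mapsto \sigma(t,z,\nu_t^x)$, whose existence is guaranteed by Condition \ref{cond:coefficients} and classical theory. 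A fixed point of $\Phi$ is precisely a solution to \eqref{eq:graphon-SDE} in the sense of Definition \ref{def:solution-sde}.

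To see that $\Phi$ maps $L^2_\boxtimes$ into itself, note that since any probability measure $\mu$ satisfies $\|\mu\|_{\mathrm{BL}} \leq 1$, we have $\|\nu_t^x\|_{\mathrm{BL}} \leq \int_I g(x,y)\,\lambda(dy)$ uniformly in $t$, and Cauchy--Schwarz gives $\int_I \bigl(\int_I g(x,y)\,\lambda(dy)\bigr)^2 \lambda(dx) \leq \|g\|_{L^2(I\times I)}^2 < \infty$. The linear growth bound \eqref{eq:b-growth-condition} together with the Burkholder--Davis--Gundy inequality and Grönwall then yields
\begin{equation*}
\mathbb{E}\bigl[\|\Phi(X)^x\|_\mathcal{C}^2\bigr] \leq C\Bigl(1 + \mathbb{E}[|\xi^x|^2] + \sup_{t\in[0,T]}\|\nu_t^x\|_{\mathrm{BL}}^2\Bigr),
\end{equation*}
and integrating in $x$ using Condition \ref{cond:initial-conditions} delivers the desired $L^2_\boxtimes$-bound.

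For contraction, take $X, \tilde X \in L^2_\boxtimes$ and test against $\phi$ with $\|\phi\|_{\mathrm{BL}} \leq 1$. Using that $\phi$ is $1$-Lipschitz gives $d_{\mathrm{BL}}(\nu_t^x, \tilde\nu_t^x) \leq \int_I g(x,y)\, \mathbb{E}[|X_t^y - \tilde X_t^y|] \lambda(dy)$, and Cauchy--Schwarz in $y$ followed by Fubini in $x$ yields the crucial bound
\begin{equation*}
\int_I d_{\mathrm{BL}}(\nu_t^x, \tilde\nu_t^x)^2 \lambda(dx) \leq \|g\|_{L^2(I\times I)}^2 \int_I \mathbb{E}[|X_t^y - \tilde X_t^y|^2] \lambda(dy),
\end{equation*}
where the $L^2$-integrability of $g$ (rather than boundedness) is exactly what is needed. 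Combining this with the classical SDE estimate obtained from the Lipschitz property \eqref{eq:b-lipschitz-condition}, BDG, and Grönwall, namely $\mathbb{E}\bigl[\sup_{s\le t}|\Phi(X)_s^x - \Phi(\tilde X)_s^x|^2\bigr] \leq C_T \int_0^t d_{\mathrm{BL}}(\nu_s^x, \tilde\nu_s^x)^2\, ds$, and integrating over $x$ produces
\begin{equation*}
\int_I \mathbb{E}\bigl[\sup_{s\le t}|\Phi(X)_s^x - \Phi(\tilde X)_s^x|^2\bigr]\lambda(dx) \leq C_T\|g\|_{L^2}^2 \int_0^t \int_I \mathbb{E}\bigl[\sup_{r\le s}|X_r^y - \tilde X_r^y|^2\bigr]\lambda(dy)\,ds.
\end{equation*}
Iterating this inequality in the Picard sequence $X^{(n+1)} = \Phi(X^{(n)})$ gives factorial decay $(C_T\|g\|_{L^2}^2 T)^n/n!$ in the $L^2_\boxtimes$-norm of successive differences, so the iterates form a Cauchy sequence whose limit is the unique fixed point on $[0,T]$.

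The main obstacle I anticipate is tracking the joint $\mathcal{I}\boxtimes\mathcal{F}$-measurability (rather than just the ordinary product measurability) throughout the Picard iteration---this is precisely the pathology highlighted in the introduction that motivated using the Fubini extension in the first place. I would start the iteration from $X^{(0),x}_t := \xi^x$, which is jointly measurable since $(x,\omega) \mapsto \xi^x(\omega)$ is, and argue that each Picard step preserves joint measurability: $\nu^{(n)}$ is measurable in $(t,x)$ by Fubini on the box product, and the SDE solution depends jointly measurably on $B^x$, $\xi^x$ and the deterministic coefficient $\nu^{(n),x}$, e.g.\ via a limit of jointly measurable Euler--Maruyama approximations. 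Completeness of $L^2_\boxtimes$ then transfers the Cauchy property to a genuinely $\mathcal{I}\boxtimes\mathcal{F}$-measurable limit, which inherits the $\mathbb{F}^x$-adaptedness for $\lambda$-a.e.\ $x$ from the iterates.
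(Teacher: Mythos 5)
Your proposal is correct and takes essentially the same route as the paper: a fixed point of the composite map ``process $\mapsto$ graphon measure $\nu^x_t=\int_I g(x,y)\mathcal{L}(X^y_t)\lambda(dy)$ $\mapsto$ parameterized SDE solution'' on the box-product space, with the contraction coming from the same Cauchy--Schwarz estimate $\int_I d_{\mathrm{BL}}^2(\nu^x_t,\tilde\nu^x_t)\lambda(dx)\le \|g\|_2^2\int_I \mathbb{E}\big[|X^y_t-\tilde X^y_t|^2\big]\lambda(dy)$ and an iterated Gr\"onwall bound yielding factorial decay, followed by Banach's fixed point theorem. The joint-measurability obstacle you flag is handled in the paper exactly in the way you sketch (Lemma \ref{lemma:good-definition-of-psi} and Proposition \ref{prop:measurable.SDE}): the stochastic integral is approximated by jointly measurable Riemann sums whose convergence is upgraded from $L^2(\Omega)$ for fixed $x$ to $L^2_\boxtimes(I\times\Omega,\R^n)$, so that no $x$-dependent subsequence is needed, and measurability and $\F^x$-adaptedness pass to the limit by completeness of the adapted subspace of $L^2_\boxtimes(I\times\Omega,\cC)$.
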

In the above statement, by ``unique'' we mean that for any two solutions $X$ and $\bar X$ it holds $X^x_t = \bar X^x_t$ $\lambda\boxtimes \P$-a.s. for all $t\in [0,T]$.
To prove Theorem \ref{thm:existence-uniqueness} we will adapt the classical proof of the existence and uniqueness of strong solutions of the McKean-Vlasov SDE. 
The main difficulties and differences with the classical proof are of technical nature.
In fact, we need to deal with subtle measurability issues related to the measurability with respect to the $\sigma$-albegra of the Fubini extension. 
For instance, in the current framework it is not direct that the stochastic integral retains the desired measurability when both the driving Brownian motion and the integrand depend on the parameter $x$.

Measurability of SDEs depending on a parameter has been studied in the literature, refer for instance to or \citet{Skorokhod-sde-depending-parameter,Stricker1978CalculSD}. Notably, \citet{Stricker1978CalculSD} consider measurability of SDE solutions on the product space $I\times \Omega$ when the driving noise depends measurably on the parameter. This result is not strong enough for our purpose as we need to show measurability with respect to the box product.

\begin{remark}
    Even though existence is proved in $L^2_\boxtimes(I\times \Omega,\cC)$, it is not hard to see that if the integral $\nu^x:=\int_I g(x,y)\mathcal{L}(X_t^y) \lambda(dy)$ is defined for all $x$, then $X^x$ satisfies equation \eqref{eq:graphon-SDE} for every $x\in I$.
    But in the general unbounded graphon case, the integral $\nu^x$ is defined $\lambda$-almost surely.
    In the following we will choose the representative of the solution of the SDE \eqref{eq:graphon-SDE} to be such that if $\nu^x$ is a finite measure then $X^x$ satisfies the SDE \eqref{eq:graphon-SDE} for $x$.
\end{remark}

Theorem \ref{thm:existence-uniqueness} provides well-posedness of the graphon SDE \eqref{eq:graphon-SDE} in the space $L^2_\boxtimes(I\times \Omega,\cC)$ and with unbounded graphons.
Only two papers have addressed this issue: \cite{aurell-carmona-2021} shows existence and uniqueness in the linear case
\begin{equation*}
    dX^x_t = \Big(a(x)X^x_t + c(x)\int_I g(x,y)X^y_t\lambda(dy)\Big)dt + dB_t^x, \quad X^x_0 = \xi^x.
\end{equation*}
This equation falls within our setting thanks to the exact law of large numbers suggesting that $\int_I g(x,y)X^y_t\lambda(dy) = \int_Ig(x,y)\E[X^y_t]\lambda(dy)$.
More recently, the non-linear case was studied by \citet{coppini2024nonlineargraphonmeanfieldsystems}, who consider the SDE
\begin{equation}
\label{eq:SDE-nonlinear}
    dX^x_t = b\bigg(X^x_t,  \int_I\frac{g(x,y)}{\int_I g(x,y)dy}\cL(X^{y}_t)(dz)dy\bigg)dt  + \sigma \bigg(X^x_t, \int_I\frac{g(x,y)}{\int_I g(x,y)dy}\cL(X^{y}_t)(dz)dy\bigg)dB^x_t.
\end{equation}
Specifically, these authors show existence and uniqueness of \eqref{eq:SDE-nonlinear} under Lipschitz and linear growth conditions on the coefficients. 
The SDE \eqref{eq:SDE-nonlinear} has three main differences with our proposed graphon SDE \eqref{eq:graphon-SDE}: First, we allow dependence on the time parameter which forces us to add Condition \ref{cond:coefficients}$(2)$.
Secondly, we allow unbounded graphons, whereas \cite{coppini2024nonlineargraphonmeanfieldsystems} focuses on graphons taking values on the unit interval. 
Finally, in equation \eqref{eq:SDE-nonlinear} the last argument of the coefficients is normalized, hence making it a probability measure.
This allows to work with the Wasserstein metric. 
In contrast, the interaction term in \eqref{eq:graphon-SDE} only a positive measure.
Moreover, \cite{coppini2024nonlineargraphonmeanfieldsystems} assumes 
\begin{equation}
\label{eq:Assum.Coppini}
    \|g\|_1>0 \quad \text{and}\quad \int_I \|g(u,\cdot)\|_1^{-1}du < \infty.
\end{equation}
While the condition $\|g\|_1>0$ can be argued to be natural from graph-theoretic perspective, the second condition unfortunately excludes many natural examples as the uniform attachment graphon $g(x,y) = 1 - \max(x,y)$ or the graphon $g(x,y) = xy$.

\subsubsection{Propagation of chaos}

The main objective of this work is to show convergence of the particle system \eqref{eq:intro.part.system} to the graphon SDE \eqref{eq:graphon-SDE}.
To this end, we need to understand the convergence of the (random) graph sequence $\cG_N$ with adjacency matrix $\zeta^N_{ij}$ to the graphon $g$.  

The theory for convergence of sequences of graphs is well understood in the case of dense graphs\footnote{By a dense graph we mean graphs with $N$ vertices and $\Omega(N^2)$ edges, see \cite{lovazs-limits-dense-graph-seq}.}. 
Classical references on the theory include works by \citet{lovazs-limits-dense-graph-seq} and \citet{Large-networks-and-graph-limits-lovasz}, see also the references therein. 
However, most examples of graphs arising in applications (e.g. in statistical physics and social sciences) are sparse.
A prominent example is the power law graph which is a graph on $N$ vertices with the probability of having an edge between $i,j$ being $1\wedge \frac{N^{b}}{(ij)^a}$ where $a\in (0,1)$ and $b\in (0,2a)$. Unfortunately, the convergence of sequences of sparse graphs is not satisfactory if we use the dense graph limit framework. 
This is because under the classical dense theory sequences of sparse graphs converge to zero \cite{lovazs-limits-dense-graph-seq}. 
On the other hand, the theory of sparse graph limits (known as local weak convergence) introduced by \citet{Benj-Schr01,Aldous-Steele04} focuses on ``very sparse graphs'', as it covers graphs with $O(N)$ edges, where $N$ is the number of vertices. 
 We also refer to \citet{Lacker-local-weak-convergence-sparse-networks} for result along these lines for (dynamic) particle systems.

An important step in bridging the gap between the above two extremes is the works by \citet{Borgs2014AnT,Borgs-Chayes.etal18} who develop an $L^p$-theory of convergence for sparse graphs (not including the bounded average degree regime).
One central idea of the approach is to re-scale the sequences of graphs to recover a non trivial limit.
Naturally, both the re-scaled graph and the limiting graphon are unbounded.
(We will discuss the details of this construction in Section \ref{sec:graph-theoretic-formulation}.)
This makes it essential to study SDEs with unbounded graphons in order to cover the realm of sparse network interactions.
We remark at this point that the $L^p$ theory described in \cite{Borgs2014AnT} as well as the setting of the present work encompasses the dense/bounded case.
Before presenting the general conditions we impose on the graph sequence $\cG_N$, let us observe that
the most natural class of models included in our framework is that of \emph{$W$-random graphs} which we describe for the reader's convenience.

\textbf{Sequence of sparse $W$-random graphs:} 
Let $(\beta_N)_{N\in \N}$ be a fixed sequence of elements of $(0,1]$ , let $x_1,\dots,x_N$ form a partition of $I$ and let $g$ be a given (possibly unbounded) graphon.
A $W$-random graph is the graph $\cG_N(N, g,\beta_N )$ with $N$ nodes such that $(i,j)$ (with $i\neq j$) is an edge with probability $\beta_Ng(x_i,x_j)\wedge 1$, with no loops.
Moreover, the points $x_1,\dots,x_N$ can be chosen random. See e.g \cite[Section 10,1]{Large-networks-and-graph-limits-lovasz} or \cite[Section 2.7]{Borgs2014AnT} for details.
The $W$-random graph $\cG_N(N,g,\beta_N)$ is a sparse graph arising from the graphon $g$.
If $\beta_N\to0$ and $N\beta_N\to\infty$ as $N$ goes to infinity, then it converges to the $g$ in the appropriate\footnote{Namely, the if $g_N$ is the graphon associated to $\cG_N(N,g,\beta_N)$, then $g_N/\beta_N$ converges to $g$ in the cut metric, see \cite[Theorem 2.14]{Borgs2014AnT}.} sense.
Studying $W$-random graphs should allow to cover a wide class of sparse graphs.
In fact, \cite[Proposition 2.16]{Borgs2014AnT} shows that every sequence of sparse (simple) graphs converging to $g$ is close (in the cut metric) to $\cG(N,g,\beta_N)$.
Note in passing that the Erdös-R\'enyi graph $G(N,\beta_N)$ is recovered by taking $g\equiv 1$ and the power law graph corresponds to $g(x,y) = (xy)^{-a}$ and $\beta_N = N^{b-2a}$, with $b\in (2a-1, 2a)$.

Let us now address the issue of convergence of the particle system \eqref{eq:intro.part.system} to the graphon SDE \eqref{eq:graphon-SDE}.
 We will do so under different sets of assumptions that we detail in the following. 

\begin{condition}
\label{cond:graphon-cond-1}
    For every $N\in \N$ consider the sequence of points $x_i = \frac{i-1}{N}$ for $1\leq i\leq N+1$\footnote{Notice that we want to choose $N$ intervals of length $1/N$ and this implies we will need to define $N+1$ points. However, in the definition of the particle system we consider $N$ points and hence we will sum $i$ from $1$ to $N$.}. Assume:
    \begin{enumerate}
        \item For every $1\leq i,j\leq N$, $x\in I$, $\zeta_{ij}^N$ and $\Tilde{B}^x$ are independent.
        \item There exists a sequence of graphons $(g_N)_{N\in \mathbb{N}}$ that are constant over the intervals $\{[x_j,x_{j+1})\times [x_k,x_{k+1})\}_{j,k}$ \footnote{Notice that these points depend on $N$ but we will often omit this dependence as in general it will be clear from the context.}. 
        There also exists a sequence of real numbers $(\beta_N)_{N\in \mathbb{N}}$, such that $\beta_N\in (0,1]$, $\beta_Ng_N(x_i,x_j)\leq 1$ for every $N\in \N, 1\leq i,j\leq N$ and $N\beta_N \rightarrow \infty$ as $N\rightarrow \infty$.
        \item $g_N\rightarrow g$ in the $2$-norm. That is, $\|g_N-g\|_2^2 \rightarrow 0$ as $N\rightarrow \infty$. 
        \item For every $N$, the random variables $(\zeta^N_{ij})_{1\leq i,j\leq N}$ are $\cF_0$-measurable, independent and with distribution $\zeta^N_{ij}\sim \mathrm{Bernoulli}(g_N(x_i,x_j)\beta_N)$. 
    \end{enumerate}
\end{condition}

In Condition \ref{cond:graphon-cond-1} we need to have the condition $\beta_Ng_N(x_i,x_j)\leq 1$ or $\beta_Ng(x_i,x_j)\leq 1$ for $\zeta_{ij}^N$ to be well defined. In the dense case we can take for example $\beta_N = 1$ and $\|g\|_{\infty}\leq 1$ or $\|g_N\|_\infty\leq 1$. However, in this case the assumption is reasonable as it only involves a normalization of the graphons involved. In this work we have in mind the sparse graph case $\beta_N\to0$. To this end, we need to extra condition $N\beta_N \rightarrow \infty$ to be able to contextualize our work in the framework of the $L^p$ theory of convergence of ($W$-random) graphs, see \cite[Theorem 2.14]{Borgs2014AnT}. 

\begin{remark}
    The parameter $\beta_N$ represents the proportion of neighbors a node has compared to the total number of nodes. 
    Hence, the condition $N\beta_N\rightarrow \infty$ as $N\rightarrow \infty$ means that the degree of each node diverges. 
    The behavior of $N\beta_N$ is a relevant parameter as first observed by \citet{Oliveira-Reis-interacting-diffusions-on-random-graphs} where it is shown that $N\beta_N \rightarrow \infty$ is the optimal condition in a Kuramoto model on an Erdös-R\'enyi random graph for the dense and the sparse analogue to have the same hydrodynamic limit. 
    Moreover, it is proved by \citet{oliveira-interacting-on-sparse-graphs} that the condition $N\beta_N\rightarrow c \in \R_+$ changes the behavior of the system. We also refer to \citet{Lacker-local-weak-convergence-sparse-networks} for an in-dept study of the regime $N\beta_N\to c$ where it is showed that the particle system will converge in the weak local sense to a Galton-Watson tree, and the associated particle system will not become independent in the large population limit as the interactions remain strong. Hence, the condition $N\beta_N\rightarrow \infty$ adopted in this work is sharp if we want to get a propagation of chaos type of result.  
\end{remark}

For notational simplicity, let us denote the particle system under consideration $(Y^{x_i,N})_{i=1,\dots,N}$ introduced in \eqref{eq:intro.part.system} and the graphon system $(X^x)_{x\in I}$ introduced in \eqref{eq:graphon-SDE} respectively by 
 $$Y^{i,N}: =Y^{x_i,N}\quad \text{and}\quad X^{x,g}:= X^x.$$
We further assume that $Y^{i,N}_0 = \xi^{i}$. The form of the interaction term $m^{i,N}_{\Y}$ of the particle system \eqref{eq:part.system.B} is rather natural in view of the work of \cite{Borgs2014AnT} on sparse networks.
First of all, the mean field regime is recovered by choosing $\beta_N=1=g_N$, in which case $\zeta_{ij}^N=1$ and $m^{i,N}_{\Y} = m^{j,N}_{\Y} = \sum_{k=1}^N\delta_{Y^{k,N}}/N$ for all $i,j$.
In the network regime to allow sparse graphs we allow interaction only among neighbors; hence, for a particle $i$ with neighbors set $N(i)$ with cardinality $|N(i)|$, the interaction parameter becomes
    \begin{equation*}
        m^{i,N}_{\Y} = \frac{1}{|N(i)|} \sum_{j\in N(i)}\delta_{Y^{j,N}} =  \frac{1}{M} \sum_{j=1}^N\zeta^N_{ij}\delta_{Y^{j,N}} =  \frac{1}{N} \sum_{j=1}^N\frac{\zeta^N_{ij}}{|N(i)|/N}\delta_{Y^{j,N}} ,
    \end{equation*}
    for a matrix $\zeta^N_{ij}$ which is one when $j$ is a neighbor and zero otherwise.
    Thus assuming (the degree)  $\beta_N=|N(i)|/N$ to be constant leads to our interaction model and naturally requires us to work with re-scaled, unbounded graphons. A similar re-scaled graph model was notably considered by \citet{bayraktar-graphon-mean-field-systems} in the case of \textit{linear dependence} on the graphon and assuming the limiting graphon to be \emph{bounded}; and by \citet{Oliveira-Reis-interacting-diffusions-on-random-graphs} who consider a Kuramoto model on a sparse Erdös-Renyi graph. Most of the works cited in the previous section and in subsection \ref{sec.literature} focus on linear dependence on the graph terms.  Extending these results to the nonlinear case is a natural mathematical question also relevant in modeling application, see e.g. \citet{Bhamidi-weakly-interacting-part-syst-inhomogeneoud-random-graphs}.

In addition to Conditions \ref{cond:graphon-cond-1} and \ref{cond:graphon-cond-2} relating the finite network to the limiting graphon, we further need the following regularity and integrability conditions. Condition \ref{cond:almost-continuity} is adapted from \cite{bayraktar-graphon-mean-field-systems} and is clearly satisfied if $g$ is continuous or if it is piecewise constant.
\begin{condition}
\label{cond:almost-continuity}
    For $\lambda$-almost every $x\in I$, there exists a set $I_x$ such that $\lambda(I_x) = 0$ and $g$ is continuous in $(x,y)$ for every $y\in I_x^c$. 
\end{condition}
\begin{condition}
\label{cond:bound-initial-cond}
     There exists a constant $C>0$ independent of $N$ such that
     \begin{equation*}
         \sup_{ i\ge 1}\E[|\xi^i|^{n+1}] \leq C.
     \end{equation*}
\end{condition}

By a synchronous coupling argument, we will consider the particle system $\X = (X^{i,N},\dots, X^{N,N})$ solving

\begin{equation}
\label{eq:part.system.B}
    X_t^{i,N} =  \xi^{i} + \int_0^t b(s,X_s^{i,N}, m_{\mathbb{X}_s}^{i,N})ds + \int_0^t \sigma(s,X_s^{i,N}, m_{\mathbb{X}_s}^{i,N}) dB_s^{i}, \quad
    X^{i,N}_0 =  \xi^i. \\
\end{equation}
Notice that the system \eqref{eq:part.system.B} is the same system as \eqref{eq:intro.part.system} but driven by the Brownian motion $B^i:= B^{x_i}$ and with initial conditions given by $\xi^i \sim \gamma^i$.  
Hence, $\cL(X^i_t) = \cL(Y^i_t)$ for every $1\leq i\leq N$ and every $t\in [0,T]$.
The important point to keep in mind here is that the particle system \eqref{eq:intro.part.system} is driven by independent Brownian motions whereas the system \eqref{eq:part.system.B} is driven by e.p.i Brownian motions. 

We will assume that the Brownian motions $B^{x_i}$ and $B^{x_j}$ are pairwise independent for every pair $i,j$.
This does not restrict the generality because by uniqueness of solutions we can couple the systems with independent Brownian motions and retain the same laws. Both particle systems are well-posed as showed in Proposition \ref{prop:existence-part-system}.

We present two main convergence results. The first one is an convergence in the $L^1$ sense in the case of a bounded graphon:
\begin{theorem} 
\label{thm:convergence-bounded-graphon-L1}
    Assume Conditions \ref{cond:initial-conditions}, 
    \ref{cond:coefficients}, \ref{cond:graphon-cond-1}, \ref{cond:almost-continuity}, \ref{cond:bound-initial-cond}. If $g$ and the sequence $(g_N)_{N\in \N}$ are uniformly bounded and $\sigma$ does not depend on the measure argument. Then, 
    \begin{equation}
    \label{eq:first.main.conv}
         \frac{1}{N}\sum_{i=1}^N\E\Big[ \sup_{t\in [0,T]}\big|X^{i,N}_{t}-X^{i,g}_t\big| \Big] +\frac1N\sum_{i=1}^N\E\Big[d_{\mathrm{BL}}\Big(m^{i,N}_{\X_t}, \int_I g(x_i,y)\mathcal{L}(X_t^y) \lambda(dy)\Big) \Big] \xrightarrow{N\rightarrow \infty} 0 \,\,\text{for all } t\in [0,T]. 
    \end{equation}
\end{theorem}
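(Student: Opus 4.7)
The plan is a synchronous-coupling propagation of chaos argument, adapted to the Fubini-extension setting. Since $X^{i,N}$ and $Y^{i,N}$ have identical laws, it suffices to compare $X^{i,N}$ with $X^{x_i,g}$: the two processes are driven by the same e.p.i.\ Brownian motion $B^{x_i}$ and share the initial condition $\xi^i$. Setting $Z^{i,N}_t := X^{i,N}_t - X^{x_i,g}_t$ and $\nu^{x_i}_t := \int_I g(x_i,y)\cL(X^{y,g}_t)\lambda(dy)$, the Lipschitz hypotheses on $b$ and $\sigma$ (the latter measure-free, so the diffusion error collapses to $|Z^{i,N}|$), together with BDG and Jensen, yield
\begin{equation*}
\E\Bigl[\sup_{s\le t}|Z^{i,N}_s|^2\Bigr] \le C\int_0^t \E\Bigl[|Z^{i,N}_s|^2 + d_{\mathrm{BL}}(m^{i,N}_{\mathbb{X}_s}, \nu^{x_i}_s)^2\Bigr] ds.
\end{equation*}
Writing $e_N(t) := \frac1N\sum_i \E[\sup_{s\le t}|Z^{i,N}_s|^2]$, the target is a Gronwall inequality $e_N(t) \le \eta_N + C\int_0^t e_N(s)\,ds$ with $\eta_N \to 0$; Jensen then gives the first term in \eqref{eq:first.main.conv}, and the second term drops out of the decomposition below.

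The decomposition introduces three intermediate measures
\begin{equation*}
\tilde m^{i,N}_s := \frac1N\sum_j \frac{\zeta^N_{ij}}{\beta_N}\delta_{X^{j,g}_s},\quad \bar m^{i,N}_s := \frac1N\sum_j g_N(x_i,x_j)\delta_{X^{j,g}_s},\quad \mu^{i,N}_s := \frac1N\sum_j g_N(x_i,x_j)\cL(X^{j,g}_s),
\end{equation*}
interpolating between $m^{i,N}_{\mathbb{X}_s}$ and $\nu^{x_i}_s$. The \emph{particle-error} piece $d_{\mathrm{BL}}(m^{i,N}_{\mathbb{X}_s},\tilde m^{i,N}_s)$ is bounded by $\frac1N\sum_j (\zeta^N_{ij}/\beta_N)|Z^{j,N}_s|$ using that BL test functions are $1$-Lipschitz; averaging over $i$ and applying Cauchy-Schwarz together with the uniform $L^2$-bound on $\frac1N\sum_i \zeta^N_{ij}/\beta_N$ (which uses the $\cF_0$-measurability of the graph, boundedness of $g_N$, and $N\beta_N\to\infty$) yields a bound of order $\sqrt{e_N(s)}$. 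The \emph{graph-fluctuation} piece $d_{\mathrm{BL}}(\tilde m^{i,N}_s,\bar m^{i,N}_s)$ is treated, for a fixed BL test function, by conditioning on the particles; the inner sum becomes a combination of independent centered Bernoullis and a variance computation gives a pointwise $L^2$-bound of order $1/(N\beta_N)$. The \emph{empirical-fluctuation} piece $d_{\mathrm{BL}}(\bar m^{i,N}_s,\mu^{i,N}_s)$ is the standard fluctuation of an empirical measure of essentially pairwise independent random variables and has pointwise variance $O(1/N)$. Finally, the \emph{graphon-approximation} piece $d_{\mathrm{BL}}(\mu^{i,N}_s,\nu^{x_i}_s)$ is a Riemann-sum residue, which vanishes after averaging over $i$ by combining the $L^2$ convergence $g_N\to g$, the almost-continuity Condition \ref{cond:almost-continuity}, and the continuity of $y\mapsto \cL(X^{y,g}_s)$ extracted from a Gronwall stability estimate for the graphon SDE \eqref{eq:graphon-SDE}.

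The principal technical obstacle is upgrading the pointwise (in test function) variance bounds for the two fluctuation pieces to uniform bounds over the BL unit ball, because the targets are non-negative measures of random total mass for which the standard Kantorovich-Rubinstein duality for $W_1$ is not directly available. The remedy is to truncate the state space to a ball $B_R\subset\R^n$ whose radius is calibrated via the uniform $(n+1)$-st moment of the initial data from Condition \ref{cond:bound-initial-cond} (propagated through the SDE), pay a tail cost of order $R^{-(n+1)}$, and then invoke Arzelà-Ascoli to cover the restricted BL unit ball by a finite $\epsilon$-net over which the pointwise variance estimates are union-bounded. A secondary subtlety is to maintain joint $\cI\boxtimes\cF$-measurability throughout these manipulations, which is precisely where the Fubini-extension framework of Theorem \ref{thm:construction-spaces} is indispensable. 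Once the four bounds are combined and inserted into the Itô estimate, Gronwall closes the loop and produces \eqref{eq:first.main.conv}.
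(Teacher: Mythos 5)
Your coupling skeleton is right, but the Gronwall loop does not close as written, and this is a genuine gap rather than a presentational one. Your master estimate is second order, $\E[\sup_{s\le t}|Z^{i,N}_s|^2]\le C\int_0^t\E\big[|Z^{i,N}_s|^2+d_{\mathrm{BL}}^2(m^{i,N}_{\X_s},\nu^{x_i}_s)\big]ds$, so to reach $e_N(t)\le\eta_N+C\int_0^te_N(s)ds$ you must control $\frac1N\sum_i\E\big[d_{\mathrm{BL}}^2(m^{i,N}_{\X_s},\tilde m^{i,N}_s)\big]$; but your particle-error estimate (Cauchy--Schwarz against the $L^2$-bound of $\frac1N\sum_i\zeta^N_{ij}/\beta_N$) only bounds the \emph{first} power, and since $\E[d^2]\ge(\E[d])^2$ it cannot be squared to feed the displayed inequality. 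Moreover the bound you obtain is of order $\sqrt{e_N(s)}$ with $e_N$ the \emph{second}-moment quantity, which under the hypotheses of Theorem \ref{thm:convergence-bounded-graphon-L1} is only known to be bounded, not small, so even a first-order reformulation of your loop would not close. If instead you honestly estimate the squared centered fluctuation $\E\big[(\frac1N\sum_j(\zeta^N_{ij}/\beta_N-g_N(x_i,x_j))|Z^{j,N}_s|)^2\big]$, the cross terms produce a contribution of order $1/(\sqrt N\beta_N)$ (this is the computation leading to \eqref{eq:bound-zeta-variance}), which vanishes only under $N\beta_N^2\to\infty$ --- exactly the extra hypothesis of the $L^2$ result, Theorem \ref{thm:convergence-bounded-graphon}, and not available here. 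The paper closes the loop at first order throughout: $\sigma$ being measure-free is exploited through the Young-type trick $ab\le\eta^2a^2+\eta^{-2}b^2$ to absorb the BDG term, and the weights are centered, $\zeta^N_{ij}/\beta_N=(\zeta^N_{ij}-\beta_Ng_N(x_i,x_j))/\beta_N+g_N(x_i,x_j)$, the centered part being paired by Cauchy--Schwarz with the \emph{a priori} uniform moment bound of Lemma \ref{lemma:L2-boundness-solutions} (cost $C/\sqrt{N\beta_N}$, see Lemma \ref{lem:First.estim.conf.proof-L1}), while the bounded $g_N$-part reproduces the same first-order error average. That centering/first-order structure is missing from your argument, and without it the claimed Gronwall inequality does not follow under $N\beta_N\to\infty$ alone.

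A second problem is the Riemann-sum piece $d_{\mathrm{BL}}(\mu^{i,N}_s,\nu^{x_i}_s)$: you invoke continuity of $y\mapsto\cL(X^{y,g}_s)$, but Condition \ref{cond:initial-conditions} only gives measurability of $y\mapsto\gamma^y$, and the Gronwall stability estimate controls the dependence on the kernel row $g(y,\cdot)$, not on the initial law; adjacent indices may carry unrelated initial distributions, so the claimed continuity (and hence the Riemann-sum convergence) is unjustified. The paper avoids needing any regularity of the law in $y$ by comparing with the system driven by a piecewise-constant graphon $h_N$ and using the stability results (Propositions \ref{prop:stability-of-g-integral} and \ref{prop:stability-of-g-average}), so that initial data always match at the same index. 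On the positive side, your truncation-plus-Arzel\`a--Ascoli covering to upgrade per-test-function variance bounds to uniformity over the BL unit ball is a legitimate alternative to the paper's Gaussian-regularization and Carlson-lemma argument (Lemmas \ref{lemma:distance-regularised-measures-L1} and \ref{lemma:middle-term-reg-L1}), and with the moments of Condition \ref{cond:bound-initial-cond} it can be made rigorous for the two fluctuation pieces; it does not, however, repair the two gaps above.
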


One immediate Corollary of Theorem \ref{thm:convergence-bounded-graphon-L1} is obtained by letting the graph structure of the adjacency matrix $\zeta_{ij}^N$ be given by a $W$-random graph. This is the content of Corollary \ref{coro:bounded-L1-coro-Wrand}.

\begin{condition}
\label{cond:graphon-cond-2}
    For every $N\in \N$ consider the sequence of points $x_i = \frac{i-1}{N}$ for $1\leq i\leq N+1$. Assume Condition \ref{cond:graphon-cond-1}$(1)$ holds, there exists a sequence of real numbers $(\beta_N)_{N\in \mathbb{N}}$, such that $\beta_N\in (0,1]$, $\beta_Ng(x_i,x_j)\leq 1$ for every $N\in \N, 1\leq i,j\leq N$ and $N\beta_N \rightarrow \infty$ as $N\rightarrow \infty$ and the random variables $(\zeta^N_{ij})_{1\leq i,j\leq N}$ are $\cF_0$-measurable, independent and with distribution $\zeta^N_{ij}\sim \mathrm{Bernoulli}(g(x_i,x_j)\beta_N)$. 
\end{condition}

\begin{corollary}
    \label{coro:bounded-L1-coro-Wrand}
    Assume Conditions \ref{cond:initial-conditions}, 
    \ref{cond:coefficients}, \ref{cond:graphon-cond-2}, \ref{cond:almost-continuity}, \ref{cond:bound-initial-cond}. If $g$ is bounded and $\sigma$ does not depend on the measure argument. 
    Then equation \eqref{eq:first.main.conv} holds.
\end{corollary}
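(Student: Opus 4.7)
The plan is to reduce Corollary \ref{coro:bounded-L1-coro-Wrand} directly to Theorem \ref{thm:convergence-bounded-graphon-L1} by exhibiting a canonical sequence of piecewise-constant graphons $(g_N)_{N\in\N}$ that are consistent, at the grid nodes, with the Bernoulli parameters of the $W$-random graph specified by Condition \ref{cond:graphon-cond-2}. Concretely, I would define
\begin{equation*}
g_N(x,y) := g(x_i,x_j) \qquad \text{whenever } (x,y)\in [x_i,x_{i+1})\times [x_j,x_{j+1}),
\end{equation*}
and then verify that $(g_N)$ together with the given $(\beta_N)$ and $(\zeta^N_{ij})$ satisfies all of Condition \ref{cond:graphon-cond-1}.

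The parts (1), (2) and (4) of Condition \ref{cond:graphon-cond-1} are essentially immediate from Condition \ref{cond:graphon-cond-2}. Indeed, $g_N$ is constant on the product-intervals by construction; the bound $\beta_N g_N(x_i,x_j)=\beta_N g(x_i,x_j)\le 1$ and the limit $N\beta_N\to\infty$ are exactly what is assumed; the $\zeta^N_{ij}$ are $\cF_0$-measurable, independent and $\mathrm{Bernoulli}(g_N(x_i,x_j)\beta_N)$ because $g_N(x_i,x_j)=g(x_i,x_j)$; and independence of $\zeta^N_{ij}$ from $\tilde B^x$ is assumed in both conditions. Moreover, since $g$ is bounded by some $M$, so is $g_N$, which gives the uniform boundedness hypothesis of Theorem \ref{thm:convergence-bounded-graphon-L1}.

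The only nontrivial step is verifying item (3) of Condition \ref{cond:graphon-cond-1}, namely $\|g_N-g\|_2\to 0$. Here is where Condition \ref{cond:almost-continuity} is used. For $\lambda$-a.e.\ $x$, $g$ is continuous in $(x,y)$ for $\lambda$-a.e.\ $y$, and by Fubini the continuity set has full $\lambda\otimes\lambda$-measure. At any continuity point $(x,y)$, writing $i_N(x)$ and $j_N(y)$ for the unique indices with $x\in[x_{i_N(x)},x_{i_N(x)+1})$ and $y\in[x_{j_N(y)},x_{j_N(y)+1})$, we have $(x_{i_N(x)},x_{j_N(y)})\to(x,y)$ since the mesh size is $1/N$, so $g_N(x,y)=g(x_{i_N(x)},x_{j_N(y)})\to g(x,y)$. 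Thus $g_N\to g$ pointwise $\lambda\otimes\lambda$-a.e., and since $|g_N-g|\le 2M$, the dominated convergence theorem yields $\|g_N-g\|_2\to 0$.

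With Condition \ref{cond:graphon-cond-1} fully verified for this choice of $(g_N)$, Theorem \ref{thm:convergence-bounded-graphon-L1} applies verbatim and delivers the convergence \eqref{eq:first.main.conv}. The main (and essentially only) technical point is the $L^2$ convergence of the step-function interpolants, which is why Condition \ref{cond:almost-continuity} is imposed; without some form of a.e.\ continuity one cannot hope to identify the sampled graph with a piecewise-constant graphon converging to $g$.
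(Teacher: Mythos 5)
Your proposal is correct and is essentially the paper's own argument: the paper proves the corollary by setting $g_N(x,y)=g(x_i,x_j)$ on $[x_i,x_{i+1})\times[x_j,x_{j+1})$, noting that boundedness of $g$ plus Condition \ref{cond:almost-continuity} yield $\|g_N-g\|_2\to0$ via dominated convergence, and then invoking Theorem \ref{thm:convergence-bounded-graphon-L1}. You simply spell out the pointwise-convergence-at-continuity-points detail that the paper leaves implicit.
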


The proof of Corollary \ref{coro:bounded-L1-coro-Wrand} is straightforward if we notice that under the extra assumption that $g$ is bounded Condition \ref{cond:graphon-cond-2} is included in Condition \ref{cond:graphon-cond-1} as we can define $g_N(x,y) = g(x_i,x_j)$ for $(x,y)\in [x_i,x_{i+1})\times[x_j, x_{j+1})$ and the convergence $\|g-g_N\|_2\to0$ follows from the dominated convergence Theorem. Then, in this case we can apply Theorem \ref{thm:convergence-bounded-graphon-L1}.

The proof of Theorem \ref{thm:convergence-bounded-graphon-L1} begins with a standard coupling argument.
But in addition to the fact that the limiting particle system is not i.i.d., the proof is made more challenging by the nonlinearity of the interaction, the possibility of vanishing degree $\beta_N$ and the fact that the interaction term is not a probability measure.
Some of the main steps of the proof involve deriving stability results that allow to reduce the problem to piece-wise constant graphons $h_N$ for which we can choose a sequence of points $(y_i)_{1\leq i\leq N}$ such that the family $(X^{y_i,h_N}_u)_{1\leq i\leq N}$ is pairwise independent. Getting around the fact that the interaction term is not a probability measure is mainly done using a standard Gaussian regularization argument. 

The price to pay for this argument to hold is to require higher order moments of the initial distribution of the particle system.
We can extend Theorem \ref{thm:convergence-bounded-graphon-L1} to the case where the diffusion term $\sigma$ depends on the measure argument. However, this will require the stronger assumption $\beta_N^2 N \rightarrow \infty$, see Theorem \ref{thm:convergence-bounded-graphon}.
The proof follows the same lines of arguments.

We will present various by-products of our main convergence result in Section \ref{sec:corollaries.conve}.
For instance, under an additional Lipschitz continuity condition on $g$, on the one hand we obtain a quantitative rate for the convergence in Theorem \ref{thm:convergence-bounded-graphon-L1} (see Corollary \ref{coro:rate-convergence}).
Moreover, we will extend the result to the case where the sequence of points $(x_i)_{1\leq i\leq N}$ is chosen randomly, i.e. replaced by independent uniformly distributed random variables $(U_i)_{1\le i\le n}$, the convergence still holds (see Corollary \ref{coro:random-points}). 
In this setting, we will derive a weak law of large numbers.
That is, we will show that 
 \begin{equation*}
    \frac{1}{N}\sum_{i=1}^N\E\bigg[\Big|\frac{1}{N}\sum_{i=1}^N X_t^{U_i,N}-\int_I X_t^{y,g}\lambda(dy)\Big| \bigg|  (U_i)_{1\leq i\leq N} \bigg] \xrightarrow{N\rightarrow \infty}  0\quad \text{in probability}.
\end{equation*}
We stress that constructing solutions of the graphon SDE in $L^2_\boxtimes(I\times\Omega,\mathcal{C})$ is essential to deriving this result, see Corollary \ref{coro:WLLN}. 
Another consequence of Theorem \ref{thm:convergence-bounded-graphon-L1} is a propagation of chaos result for the particle systems driven by arbitrary (independent) Brownian motions:
\begin{corollary}
\label{coro:convergence-laws-L1}
Under the assumptions of Theorem \ref{thm:convergence-bounded-graphon-L1} we have that
\begin{equation*}
    \frac{1}{N}\sum_{i=1}^N \E\Big[\sup_{t\in [0,T]}d_{\mathrm{BL}}(\cL(Y^{i,N}_t), \cL(X^{i,g}_t))\Big] \xrightarrow{N\rightarrow \infty} 0 .
\end{equation*}
\end{corollary}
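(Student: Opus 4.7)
The plan is to deduce this distributional statement directly from the pathwise $L^1$ bound in Theorem \ref{thm:convergence-bounded-graphon-L1}, exploiting the identification $\cL(Y^{i,N}_t) = \cL(X^{i,N}_t)$ that is noted just after \eqref{eq:part.system.B}: the processes $X^{i,N}$ and $Y^{i,N}$ solve structurally identical SDEs with the same initial condition $\xi^i$ and with driving Brownian motions that are each standard, so their one-dimensional marginals at every time $t$ coincide. Hence the quantity to be estimated can be rewritten with $X^{i,N}$ in place of $Y^{i,N}$ without loss (and if one prefers to interpret $\cL(Y^{i,N}_t)$ as conditional on $\zeta^N$, the same identity holds by the tower property so the outer $\E$ is harmless).

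Next, I would apply the standard bound for the bounded-Lipschitz distance: for any two random variables $U,V$ on $\R^n$ defined on a common probability space one has $d_{\mathrm{BL}}(\cL(U),\cL(V)) \le \E[|U-V|\wedge 1]\le \E[|U-V|]$, by testing against $1$-Lipschitz functions bounded by $1$. Applying this pointwise in $t$ with $U=X^{i,N}_t$ and $V=X^{i,g}_t$, and then exchanging the supremum in time with the expectation by monotonicity, yields
\begin{equation*}
    \sup_{t\in[0,T]} d_{\mathrm{BL}}\bigl(\cL(X^{i,N}_t),\cL(X^{i,g}_t)\bigr) \;\le\; \E\Big[\sup_{t\in[0,T]}\bigl|X^{i,N}_t - X^{i,g}_t\bigr|\Big].
\end{equation*}

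Averaging over $i=1,\dots,N$, taking expectations on both sides, and invoking the first summand in the convergence \eqref{eq:first.main.conv} of Theorem \ref{thm:convergence-bounded-graphon-L1} then closes the argument. I do not foresee a substantive obstacle: the corollary is essentially a repackaging of the pathwise $L^1$ bound into a bound on marginal laws. The only minor point requiring care is the swap of $\sup_t$ with $\E$, which is handled cleanly by applying the $d_{\mathrm{BL}}\le \E[|U-V|]$ estimate at each fixed $t$ before passing to the time supremum.
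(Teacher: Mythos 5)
Your proposal is correct and follows essentially the same route as the paper's own proof: identify $\cL(Y^{i,N}_t)=\cL(X^{i,N}_t)$ via the synchronous coupling, bound $d_{\mathrm{BL}}$ by testing against $1$-Lipschitz functions bounded by one to get $d_{\mathrm{BL}}(\cL(X^{i,N}_t),\cL(X^{i,g}_t))\le \E\big[|X^{i,N}_t-X^{i,g}_t|\big]$, pass the time supremum inside the expectation, and invoke the first summand of \eqref{eq:first.main.conv}. No gaps; your write-up is, if anything, slightly more careful about the order of the $\sup_t$/expectation exchange than the paper's displayed chain.
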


Our second main convergence result is an $L^2$-type convergence result that under some extra assumptions will allow us to consider unbounded graphons. The main restriction we will need to impose for this result is that the interaction graph will need to be deterministic. 
We will make use of the function $f_N$ defined as
\begin{equation*}
    f_N(x) := \sum_{i=1}^{N-1} x_i {\mathbf{1}_{\{x\in [x_i,x_{i+1})\}}} + x_N \mathbf{1}_{\{x\in [x_N,1]\}}.
\end{equation*}

Formally, we will modify Condition \ref{cond:graphon-cond-1} as follows:

\begin{condition}
\label{cond:graphon-cond-deterministic}
    For every $N\in \N$ consider the sequence of points $x_i = \frac{i-1}{N}$ for $1\leq i\leq N+1$. Assume:
    \begin{enumerate}
        \item Condition \ref{cond:graphon-cond-1} $(1)$-$(2)$.
        \item For every $N$, the adjacency matrix $(\zeta^N_{ij})_{1\leq i,j\leq N}$ is defined as $\zeta^N_{ij} = g_N(x_i,x_j)\beta_N$.
        \item $g\circ (f_N,\mathrm{id})$ is dominated by a square-integrable function.
        \item $\|g\|_4<\infty$ and  $\|g_N-g\|_4\rightarrow 0$.
    \end{enumerate}
\end{condition}

\begin{theorem}
    \label{thm:convergence-unbounded-graphon-deterministic}
    Assume Conditions \ref{cond:initial-conditions}, 
    \ref{cond:coefficients}, \ref{cond:almost-continuity}, \ref{cond:bound-initial-cond} and \ref{cond:graphon-cond-deterministic} hold. Then for every $t\in [0,T]$ it holds
    \begin{equation}
    \label{eq:second.main.conv}
         \frac{1}{N}\sum_{i=1}^N\E\Big[ \sup_{t\in [0,T]}\big|X^{i,N}_{t}-X^{i,g}_t\big|^2 \Big] +  \frac1N\sum_{i=1}^N\E\Big[d^2_{\mathrm{BL}}\Big(m^{i,N}_{\X_t},  \int_I g(x_i,y)\mathcal{L}(X_t^y) \lambda(dy)\Big) \Big] \xrightarrow{N\rightarrow \infty} 0. 
    \end{equation}
\end{theorem}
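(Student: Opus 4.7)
The plan is to adapt the synchronous-coupling argument used for the bounded-graphon $L^1$ result (Theorem~\ref{thm:convergence-bounded-graphon-L1}), upgrading every estimate to $L^2$ and exploiting the $L^4$ control $\|g_N-g\|_4\to 0$ together with the $L^2$ domination in Condition~\ref{cond:graphon-cond-deterministic}(3) in place of a uniform bound on the graphon. Throughout, I write $X^{j,g}:=X^{x_j,g}$, $\nu^{x,g}_s:=\int_I g(x,y)\mathcal L(X^{y,g}_s)\lambda(dy)$, and introduce the auxiliary positive measures $\tilde m^{i,N}_s:=\tfrac{1}{N}\sum_{j=1}^N g_N(x_i,x_j)\delta_{X^{j,g}_s}$ and $\bar m^{i,N}_s:=\tfrac{1}{N}\sum_{j=1}^N g(x_i,x_j)\delta_{X^{j,g}_s}$. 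A key observation is that, the graph being deterministic here, $m^{i,N}_{\X_s}$ and $\tilde m^{i,N}_s$ differ only through the particle positions themselves, which is what makes the $L^2$ route viable.

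First, I apply It\^o's formula to $|X^{i,N}_t-X^{i,g}_t|^2$, combine with Burkholder-Davis-Gundy, and use the Lipschitz assumption on $(b,\sigma)$ to obtain the Gronwall-type inequality
\begin{equation*}
\tfrac{1}{N}\!\sum_{i=1}^N\!\E\Big[\sup_{s\le t}|X^{i,N}_s-X^{i,g}_s|^2\Big]\le C\!\int_0^t\!\tfrac{1}{N}\!\sum_{i=1}^N\!\E\Big[\sup_{u\le s}|X^{i,N}_u-X^{i,g}_u|^2\Big]ds + C\!\int_0^t\!\tfrac{1}{N}\!\sum_{i=1}^N\!\E\Big[d_{\mathrm{BL}}^2(m^{i,N}_{\X_s},\nu^{x_i,g}_s)\Big]ds.
\end{equation*}
The first integral closes the Gronwall loop, so everything reduces to showing that the second integrand vanishes as $N\to\infty$. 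For this I use the triangle inequality $d_{\mathrm{BL}}(m^{i,N}_{\X_s},\nu^{x_i,g}_s)\le A^i_s+B^i_s+C^i_s$ with $A^i_s:=d_{\mathrm{BL}}(m^{i,N}_{\X_s},\tilde m^{i,N}_s)$, $B^i_s:=d_{\mathrm{BL}}(\tilde m^{i,N}_s,\bar m^{i,N}_s)$, $C^i_s:=d_{\mathrm{BL}}(\bar m^{i,N}_s,\nu^{x_i,g}_s)$, corresponding respectively to a coupling error, a graphon-approximation error, and a continuum-limit error.

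The term $A^i_s$ is bounded, using Lipschitz test functions, by $\tfrac{1}{N}\sum_j g_N(x_i,x_j)|X^{j,N}_s-X^{j,g}_s|$; squaring, applying Cauchy-Schwarz, and averaging over $i$ yields the product of $\|g_N\|_2^2=\tfrac{1}{N^2}\sum_{i,j}g_N(x_i,x_j)^2$, which is bounded uniformly in $N$ since $\|g_N\|_4$ is bounded on the unit square, and $\tfrac{1}{N}\sum_j|X^{j,N}_s-X^{j,g}_s|^2$, which feeds back into Gronwall. The term $B^i_s$ is bounded, using $\|f\|_\infty\le 1$ for BL test functions, by $\tfrac{1}{N}\sum_j|g_N(x_i,x_j)-g(x_i,x_j)|$; squaring and averaging rewrites it as $\int_I\int_I|g_N(x,y)-g(f_N(x),f_N(y))|^2dx\,dy$, which tends to $0$ by the triangle inequality, the convergence $\|g_N-g\|_2\to 0$, the a.e.\ continuity of $g$ from Condition~\ref{cond:almost-continuity}, and dominated convergence using Condition~\ref{cond:graphon-cond-deterministic}(3).

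The main obstacle is the continuum-limit term $C^i_s$: the supremum over bounded-Lipschitz test functions precludes a direct variance calculation. I plan to invoke the standard Gaussian-mollification estimate $d_{\mathrm{BL}}(\mu,\nu)\le C\sqrt{\epsilon}(\mathrm{mass}(\mu)+\mathrm{mass}(\nu))+\|\phi^\epsilon*(\mu-\nu)\|_{L^1(\R^n)}$, with $\phi^\epsilon$ the density of $\mathcal N(0,\epsilon I_n)$; localize the $L^1$ norm to a ball $B_R$ via Cauchy-Schwarz (picking up a factor $|B_R|^{1/2}=CR^{n/2}$); and finally control $\E[\|\phi^\epsilon*(\bar m^{i,N}_s-\nu^{x_i,g}_s)\|_{L^2(B_R)}^2]$ by splitting into variance plus bias. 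The variance is $\tfrac{1}{N^2}\sum_j g(x_i,x_j)^2\int\mathrm{Var}(\phi^\epsilon(x-X^{j,g}_s))dx\le C\epsilon^{-n/2}N^{-2}\sum_j g(x_i,x_j)^2$, using essential pairwise independence of $(X^{j,g}_s)_j$ to kill cross terms; averaging over $i$ gives $O(\epsilon^{-n/2}N^{-1})$. The bias is a squared Riemann-sum error of $y\mapsto g(x_i,y)\E[\phi^\epsilon(\cdot-X^{y,g}_s)]$, which vanishes as $N\to\infty$ for fixed $\epsilon$ by Condition~\ref{cond:almost-continuity}, the continuity in $y$ of $\mathcal L(X^{y,g}_s)$ inherited from well-posedness, and $L^2$ dominated convergence via Condition~\ref{cond:graphon-cond-deterministic}(3). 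The tail outside $B_R$ is handled by propagating the $(n+1)$-th moment bound from Condition~\ref{cond:bound-initial-cond} to $X^{i,N}$ and $X^{i,g}$ using the linear growth of $(b,\sigma)$, followed by Markov. Choosing $\epsilon(N)\to 0$ and $R(N)\to\infty$ with $\epsilon(N)^{n/2}N\to\infty$ sufficiently slowly forces $\tfrac{1}{N}\sum_i\E[(C^i_s)^2]\to 0$; substituting back and invoking Gronwall closes the estimate for the first term in~\eqref{eq:second.main.conv}, and the second term is read off from the same decomposition.
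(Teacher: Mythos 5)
Your overall skeleton (Gronwall reduction, a three-term splitting of $d_{\mathrm{BL}}(m^{i,N}_{\X_s},\nu^{x_i,g}_s)$, Cauchy--Schwarz with $\|g_N\|_2$ for the coupling term, Gaussian mollification for the fluctuation term) is in the spirit of the paper, and your treatment of the coupling term $A^i_s$ is exactly the modification the paper makes for the deterministic graph. However, the route you take for the terms $B^i_s$ and $C^i_s$ has two genuine gaps. First, both terms are built from the \emph{grid values} $g(x_i,x_j)$ of the limiting graphon: after averaging, you need quantities such as $\|g_N-g\circ(f_N,f_N)\|_2$ and $\tfrac{1}{N^2}\sum_{i,j}g(x_i,x_j)^2=\|g\circ(f_N,f_N)\|_2^2$. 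Condition \ref{cond:graphon-cond-deterministic} only dominates $g\circ(f_N,\mathrm{id})$ by a square-integrable function; it gives no control on $g$ along the null set of grid lines, and Condition \ref{cond:almost-continuity} together with $\|g\|_4<\infty$ does not constrain values on a null set either. This is precisely why the paper adds the extra hypothesis that $g\circ(f_N,f_N)$ be dominated only in the corollary where $g_N(x,y):=g(x_i,x_j)$; under the theorem's hypotheses your $B$ and the variance part of $C$ are not justified. Second, your bias estimate for $C^i_s$ invokes ``continuity in $y$ of $\mathcal{L}(X^{y,g}_s)$ inherited from well-posedness''. This does not hold: the initial laws $\gamma^x$ are only measurable in $x$ (Condition \ref{cond:initial-conditions}) and $g(\cdot,y)$ is only measurable, so $y\mapsto\mathcal{L}(X^{y,g}_s)$ is merely measurable, and a Riemann sum of a merely measurable integrand evaluated at the deterministic points $x_j$ need not converge to the integral. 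This is the central difficulty of the whole convergence proof, and it is not cured by Condition \ref{cond:graphon-cond-deterministic}(3).

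The paper's proof avoids both problems by never comparing directly to $\int_I g(x_i,y)\mathcal{L}(X^{y,g}_s)\lambda(dy)$ with grid weights of $g$. It first replaces $g$ by a continuous bounded approximant $\tilde g$ with $\|g-\tilde g\|_2\le\eta$ and by piecewise-constant graphons ($h_N$, $\tilde g_N$); for a piecewise-constant graphon the interaction measure, and hence (by the coupling argument of Lemma \ref{lemma:sol-sde-are-epi}) the laws of the auxiliary solutions, are constant on the grid cells, so the graphon integral collapses \emph{exactly} into the cell-average sum and there is no Riemann-sum bias at all; the remaining fluctuation is then handled by essential pairwise independence plus the mollification/Carlson estimate, and the passage from $\tilde g$, $h_N$, $g_N$ back to $g$ is done through the stability results (Propositions \ref{prop:stability-of-g-integral} and \ref{prop:stability-of-g-average}), which only ever require $\|g_N-g\|_2$, $\|g\circ(f_N,\mathrm{id})-g\|_2$ and the bounded theorem (Theorem \ref{thm:convergence-bounded-graphon}). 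Concretely, Theorem \ref{thm:convergence-unbounded-graphon-deterministic} is then obtained as a one-step modification of Theorem \ref{thm:convergence-unbounded-graphon}: with $\zeta^N_{ij}=\beta_Ng_N(x_i,x_j)$ the only estimate that used $N\beta_N^2\to\infty$ reduces to a Cauchy--Schwarz bound by $\|g_N\|_2^2$ times the coupling error, which is your term $A$. To repair your argument you would either need to add the domination of $g\circ(f_N,f_N)$ and some genuine $y$-regularity of the laws, or insert the intermediate piecewise-constant graphon systems as the paper does.
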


As above, we can state an immediate Corollary by modifying Condition \ref{cond:graphon-cond-deterministic}.

\begin{corollary}
    Assume Conditions \ref{cond:initial-conditions}, 
    \ref{cond:coefficients} and \ref{cond:almost-continuity}, \ref{cond:bound-initial-cond}.
    Further assume Condition \ref{cond:graphon-cond-deterministic} with $g_N$ therein given by $g_N(x,y):=g(x_i,x_j)$ for all $x\in [x_i,x_{i+1})$, $y\in [x_{j}, x_{j+1})$ and $g(f_N,f_N)$ dominated by a square integrable function.
    Then \eqref{eq:second.main.conv} holds.
\end{corollary}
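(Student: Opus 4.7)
The plan is to reduce the corollary directly to Theorem \ref{thm:convergence-unbounded-graphon-deterministic} by verifying each item of its hypotheses. Conditions \ref{cond:initial-conditions}, \ref{cond:coefficients}, \ref{cond:almost-continuity} and \ref{cond:bound-initial-cond} are assumed verbatim, so nothing new is required there, and the entire task reduces to checking Condition \ref{cond:graphon-cond-deterministic} for the specific choice $g_N(x,y):=g(x_i,x_j)$ on $[x_i,x_{i+1})\times[x_j,x_{j+1})$ and $\zeta^N_{ij}=g_N(x_i,x_j)\beta_N$.

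First I would dispose of the routine parts. Condition \ref{cond:graphon-cond-1}(1) holds trivially because $\zeta^N_{ij}$ is deterministic and therefore independent of every $\tilde B^x$. The piecewise-constant structure of $g_N$ on the cells $[x_i,x_{i+1})\times[x_j,x_{j+1})$ is built into its definition, and the sparsity parameters $\beta_N\in(0,1]$ with $\beta_Ng_N(x_i,x_j)\le 1$ and $N\beta_N\to\infty$ are inherited from the ambient assumption that Condition \ref{cond:graphon-cond-deterministic} is in force; this settles Condition \ref{cond:graphon-cond-1}(2) and Condition \ref{cond:graphon-cond-deterministic}(2). The domination hypothesis on $g\circ(f_N,f_N)=g_N$ takes care of Condition \ref{cond:graphon-cond-deterministic}(3) (noting that the ``slice'' $g\circ(f_N,\mathrm{id})$ is bounded pointwise by the same majorant via monotonicity of $g$-evaluation in the second slot is not automatic, so one would instead directly invoke the stated domination in the two-dimensional form already built into the corollary).

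The substantive point is Condition \ref{cond:graphon-cond-deterministic}(4), i.e. $\|g_N-g\|_4\to 0$. The plan is a standard dominated-convergence argument in two steps. Since $|f_N(x)-x|\le 1/N$ uniformly in $x\in I$, we have $f_N\to\mathrm{id}$ uniformly. Combining this with Condition \ref{cond:almost-continuity} and Fubini, one shows that for $\lambda\otimes\lambda$-a.e.\ $(x,y)\in I\times I$ the function $g$ is jointly continuous at $(x,y)$, and consequently $g_N(x,y)=g(f_N(x),f_N(y))\to g(x,y)$ pointwise a.e. Then, using $\|g\|_4<\infty$ together with the $L^4$-majorant supplied by the domination hypothesis, one has $|g_N-g|^4\le(|g_N|+|g|)^4$ dominated by an integrable function, and the dominated convergence theorem upgrades the pointwise convergence to $\|g_N-g\|_4\to 0$; in particular $\|g_N-g\|_2\to 0$ follows.

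The main obstacle is the first step of the $L^4$ argument: carefully using Fubini together with Condition \ref{cond:almost-continuity} to conclude that the set of $(x,y)$ at which $g$ is jointly continuous and $g_N(x,y)\to g(x,y)$ has full product measure. Once this is in place, all four items of Condition \ref{cond:graphon-cond-deterministic} are satisfied and Theorem \ref{thm:convergence-unbounded-graphon-deterministic} yields \eqref{eq:second.main.conv}.
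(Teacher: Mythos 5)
Your proposal is correct and is essentially the paper's own argument: the paper proves this corollary in one line by applying Theorem \ref{thm:convergence-unbounded-graphon-deterministic} with $g_N=g\circ(f_N,f_N)$ and remarking that Condition \ref{cond:graphon-cond-deterministic}(4) follows from the domination assumption, which is exactly your reduction, with your a.e.-continuity plus dominated-convergence step (in the spirit of Lemma \ref{lemma:cond-implies-convergence}) supplying the justification the paper leaves implicit. The one caveat is that you invoke an ``$L^4$-majorant supplied by the domination hypothesis,'' whereas the corollary only assumes a square-integrable majorant for $g\circ(f_N,f_N)$, which by itself yields $\|g_N-g\|_2\to 0$ rather than $\|g_N-g\|_4\to 0$; this imprecision is inherited verbatim from the paper's statement and proof (which would need domination in $L^4$ to literally deliver item (4)), so it does not mark a divergence of your route from theirs.
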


Again, the result follows by applying Theorem \ref{thm:convergence-unbounded-graphon-deterministic} with the sequence of functions $g_N$ given in the statement.
Notice that Condition \ref{cond:graphon-cond-deterministic}.$(4)$ follows from the domination assumption in the statement of the Corollary.

\subsection{Literature review}
\label{sec.literature}
The convergence of particle systems to their large population limit is a classical problem in applied probability with a long history and several important contributions.
Starting with the work of \citet{Kac56}, with classical references and surveys written by \citet{sznitman} and \citet{Chaos-review-2,Chaos-review-1}. 
To cite a few milestones among many others we can mention \cite{Durmus-Eberle20,Graham-sticky-1989,Hao-Roeckner-zhang24,Jabin18,Jabir-Wang16,Lacker18ECP,Meleard96,Mishler-Mou13,Misch-Mou-Wenn15,Laur-Tang22,Lacker23,Leonard-LLN,Serfaty20}. Due to its wider range of applications (in statistical physics, biology, finance and social sciences) and mathematical interests, the study of particle systems on graphs is quickly gaining traction.
This is also motivated by a growing interest in graphon games, see \cite{Caines-Huang18,Carmona-stochastic-graphon-games,Lacker-Soret23,Laur-Tangp-Zhou24,Parise23,Tangpi-Zhou24}.

The case of dense networks has been more extensively studied.
In the case of particle systems on Erd\"os-R\'enyi graphs, the usual mean field limit obtained for interacting particle systems persists and the limiting particle system is completely independent.
We refer for instance to \cite{Zoe22,Bet_Coppini_Nardi_2024,Bhamidi-weakly-interacting-part-syst-inhomogeneoud-random-graphs,Chiba-Kuramoto-graphs,Medvedev2019-nonlinear-heat-W-random} for works on limits of particle systems on dense graphs.
The paper \cite{coppini2024nonlineargraphonmeanfieldsystems} is more closely related to ours insofar that it treats the case of nonlinear interaction.
In this work, the authors study the propagation of chaos of a particle system with Brownian motions sampled from an e.p.i process $B\in L^2(I\times\Omega, \cC)$ to the graphon SDE \eqref{eq:SDE-nonlinear}. 
In contrast to the present paper, in \cite{coppini2024nonlineargraphonmeanfieldsystems} the limiting graphon SDE \eqref{eq:SDE-nonlinear} depends on a probability measure and more importantly, \cite{coppini2024nonlineargraphonmeanfieldsystems} focuses on dense and non-random graph models. 
Furthermore, the second condition in \eqref{eq:Assum.Coppini} excludes examples of graph sequences converging to a zero graphon. 
For instance, this would correspond to the case of a weakly interacting particle system on a (dense) weighted graph on $N$ vertices such that each edge has weight $1/N$. 
For such a graph sequence, the appropriate step graphon is $g_N \equiv \frac{1}{N}$ and it converges to the zero graphon.

In the (very sparse) case where the average degrees of vertices are uniformly bounded in $N$, the particles have strong interactions and particles remain correlated large population limit. Works in this setting typically rely on the notion of local weak convergence of sparse graphs.
See for instance \cite{Oliveira-Reis-interacting-diffusions-on-random-graphs,Budhiraja2017SupermarketMO,Lacker-local-weak-convergence-sparse-networks,ganguly2022hydrodynamic-limits, oliveira-interacting-on-sparse-graphs,Bhamid-sly21,Ganter-Sch20,Huang-Durrett}.
We also refer to \citet{interacting-stochastic-proceses-sparse} for a survey of recent results on the topic.
Much less work has been on the analysis of interacting particle systems on graphs between the two extrems of the spectrum.
Namely, between dense graphs with $\Omega(N^2)$ edge and very sparse graphs with $O(N)$ edges.
This is for instance the case of $W$-random graphs induced by unbouded graphons such as the power law graphon.
It is thus important to develop a convergence theory bridging these two extremes.
Works along these lines include \cite{Coppini-Luc-Poq23,Oliveira-Reis-interacting-diffusions-on-random-graphs} and \cite[Section 4]{bayraktar-graphon-mean-field-systems} who use a re-normalization of the graph as we do in \eqref{eq:definition.empirical.measure}.
However, these papers assume the limiting graphon to be bounded, thus covering  cases like the Erd\"os-R\'enyi graph, but not unbounded graph limits like the power law. 
In addition, this paper is restricted to the linear interaction case.
\citet{Medvedev-semilinear-heat-sparse} study differential equations interacting through a $W$-random graph. 
It is worth remarking that in this work the notion of $L^p$ convergence as defined by \citet{Borgs2014AnT} is used and the graphons considered are unbounded. 

\subsection{Outline of the rest of the paper}
The remainder of the paper is concerned with the proofs of the main results of the article.
The proof of the well-posedness Theorem \ref{thm:existence-uniqueness} is presented in Section \ref{sec:construction-graphon-integral} where, in particular, we address the measurability issues in full details for the sake of completeness and for further references.
In Section \ref{sec:proof-convergence} we prove the main convergence results stated in Theorem \ref{thm:convergence-bounded-graphon-L1}, Theorem \ref{thm:convergence-bounded-graphon}.
In this section, we also discuss the case where $\sigma$ is allowed to depend on the interaction term and derive various consequences of our main convergence results; such as conditions guaranteeing a convergence rate in Theorem \ref{thm:convergence-bounded-graphon-L1} and an analysis of the case of random points $(x_i)_{1\le i\le N}$.

\subsection{Frequently used notation}
\label{sec:notation}
For ease of reference, let us define here some frequently used notation.
We will consider the interval $I=[0,1]$, a finite time horizon $T\geq 0$ and dimensions $m,n\in \N$.
Denote by $\mathcal{C}=C([0,T],\mathbb{R}^n)$ the space of continuous functions from $[0,T]$ to $\R^n$ and equip this space with the uniform norm  $\|\omega\|_{\mathcal{C}} = \sup_{t\in [0,T]} |\omega(t)|$.
For a Polish space $E$ we will denote $\mathcal{P}_2(E)$ to the set of square integrable probability measures on $E$, that is, such that  $\mu\in \mathcal{P}_2(E)$ if $\int_{E} |x|_2^2 d\mu(x)< \infty$. We will also define $\mathcal{M}_+(E)$ to the set of non-negative and finite Borel measures on $E$, $\mathcal{M}(E)$ to the set of Borel measures on $E$,  $\mathcal{P}(E)$ to the set of Borel probability measures on $E$ and $\mathcal{B}(E)$ to the set of Borelians in $E$. We will equip all these spaces with the topology of weak convergence.
We denote:
\begin{equation*}
    1\text{-Lip}_b = \{f \in C(\mathbb{R}^n): |f(x)-f(y)|\leq |x-y|, \sup_{x\in \mathbb{R}^n}|f(x)|\leq 1\},
\end{equation*}
where $C(\mathbb{R}^n)$ is the set of continuous functions from $\mathbb{R}^n$ to $\mathbb{R}$. We also introduce the bounded Lipschitz distance and the bounded Lipschitz norm, this distance metricizes the set $\mathcal{M}_+(E)$. Hence, for $\mu,\nu \in \mathcal{M}_+(\mathbb{R}^n)$ as:
    \begin{equation*}
        d_{\mathrm{BL}} (\mu,\nu) = \sup_{f \in 1\text{-Lip}_b} \Big( \int_{\mathbb{R}^n} f d\mu - \int_{\mathbb{R}^n} f d\nu\Big), \quad \text{ and } \quad \|\mu\|_{BL} = \sup_{f\in 1\text{-Lip}_b } \int_{\mathbb{R}^n} f d\mu.
    \end{equation*}
Furthermore, we denote by $\cL(Z)$ the law of a random variable $Z$. Finally, we denote the $p$-norm of a graphon as
    \begin{equation*}
        \|g\|_p = \left(\int_0^1\int_0^1 |g(x,y)|^pdxdy\right)^{1/p}.
    \end{equation*}

\section{Existence, uniqueness and stability of graphon SDEs}
\label{sec:construction-graphon-integral}

This section is dedicated to the proof of Theorem \ref{thm:existence-uniqueness} as well as stability results for the graphon SDE.
We will start by discussing measure theoretic issues after which the proof of well-posedness will follow by a standard fixed point construction.

\subsection{Measure theoretic preliminaries}

We begin by making clear the meaning of $\int_I g(x,y) \mathcal{L}(X^y_t)\lambda(dy)$. 
This will be understood as a measure defined for every $A\in \cB(\mathbb{R}^n)$ as:
\begin{equation}
\label{def:nu-measure}
    \int_I g(x,y) \mathcal{L}(X^y_t)\lambda(dy)(A) = \int_I g(x,y) \mathcal{L}(X^y_t)(A)\lambda(dy).
\end{equation}
Let us clarify conditions under which this measure is well-defined.
More generally, let us be given a fixed family of probability measures $\mu^x \in \cP(E)$. 
We will prove a useful characterization of measurability for functions taking values on $\cP(E)$.
This result has appeared in the literature (see for example \cite{Sun-2006}), but as we could not find a proof we will include one for completeness. 
Based on this characterization, we will ensure existence of the measure  
\begin{equation}
\label{eq:graphon.integral}
    \nu^{\mu,x}:=\int_I g(x,y)\mu^{y}\lambda(dy)\quad \text{for almost all } x\in I
\end{equation}
and its relation with $\mu^y$.

\begin{proposition}
\label{prop:characterization-measurability-maps-taking values-on-positive-measures}
Let $E$ be a Polish space and let $(Y,\mathcal{Y},\lambda)$ be a measure space. Then,  $F : (Y,\mathcal{Y}) \rightarrow (\mathcal{P}(E), \mathcal{B}(\mathcal{P}(E)))$ is measurable if and only if  for every $A\in \mathcal{B}(E)$ the map $F_A:(Y,\mathcal{Y}) \rightarrow ([0,1], \mathcal{B}([0,1]))$ defined as $F_A(y) = F(y)(A)$ is measurable.
\end{proposition}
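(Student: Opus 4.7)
The plan is to reduce both directions to the fundamental fact that, for a Polish space $E$, the Borel $\sigma$-algebra $\mathcal{B}(\mathcal{P}(E))$ associated with the weak topology coincides with the $\sigma$-algebra generated by the evaluation maps $\pi_A : \mu \mapsto \mu(A)$ for $A \in \mathcal{B}(E)$. Once this characterization is in hand, both implications become formal: if $\Sigma$ denotes the $\sigma$-algebra on $\mathcal{P}(E)$ generated by $\{\pi_A : A \in \mathcal{B}(E)\}$, then measurability of $F$ as a map into $(\mathcal{P}(E),\mathcal{B}(\mathcal{P}(E)))$ is equivalent to measurability of $F$ into $(\mathcal{P}(E),\Sigma)$, which in turn is equivalent to measurability of each $\pi_A \circ F = F_A$.

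For the forward direction, assuming $F$ is Borel measurable, I would argue as follows. For open $U \subseteq E$, the map $\pi_U : \mu \mapsto \mu(U)$ is lower semicontinuous with respect to the weak topology by the Portmanteau theorem, hence Borel. The class $\mathcal{D} := \{A \in \mathcal{B}(E) : \pi_A \text{ is Borel measurable on } \mathcal{P}(E)\}$ contains the open sets and is easily seen to be a Dynkin class (closure under complements uses $\pi_{A^c} = 1 - \pi_A$, and closure under countable disjoint unions uses countable additivity of each measure together with the monotone convergence of a sum of nonnegative measurable functions). By the $\pi$--$\lambda$ theorem, $\mathcal{D} = \mathcal{B}(E)$. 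Composing, $F_A = \pi_A \circ F$ is measurable for every Borel $A$.

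For the reverse direction, I would first establish the identification $\mathcal{B}(\mathcal{P}(E)) = \sigma(\{\pi_A : A \in \mathcal{B}(E)\})$. One inclusion follows from the previous paragraph; the other uses that $E$ Polish makes $\mathcal{P}(E)$ separable metrizable, so $\mathcal{B}(\mathcal{P}(E))$ is generated by any countable subbase of the topology, and a standard construction (approximating $\int f\,d\mu$ for $f \in C_b(E)$ by simple functions) expresses such evaluation maps, and hence the generating weak-topology neighborhoods $\{\mu : |\int f_i d\mu - \int f_i d\nu_0| < \varepsilon\}$, as elements of $\sigma(\{\pi_A\})$. Granted the identification, assuming each $F_A$ is measurable, the preimage under $F$ of any generator $\pi_A^{-1}(B)$ (with $B \in \mathcal{B}([0,1])$) equals $F_A^{-1}(B) \in \mathcal{Y}$; since a map into a $\sigma$-algebra is measurable as soon as preimages of a generating family are measurable, $F$ is Borel measurable.

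The only subtle step is the identification $\mathcal{B}(\mathcal{P}(E)) = \sigma(\{\pi_A\})$, which is where Polishness of $E$ is genuinely used; I would either cite this as a classical fact (e.g. \cite[Thm.~A2.3]{Analysis-in-Banach} or standard references on weak convergence) or give the short argument sketched above using a countable $\|\cdot\|_\infty$-dense subset of $C_b(E)$ together with approximation of bounded continuous functions by simple Borel functions. The rest of the proof is purely formal $\sigma$-algebra manipulation.
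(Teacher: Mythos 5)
Your proof is correct and follows essentially the same route as the paper: the forward direction uses Portmanteau lower semicontinuity of $\mu\mapsto\mu(U)$ for open $U$ plus a $\pi$--$\lambda$ argument, and the reverse direction rests on the identification $\mathcal{B}(\mathcal{P}(E))=\sigma(\{\mu\mapsto\mu(A):A\in\mathcal{B}(E)\})$, which the paper simply cites (Bertsekas--Shreve, Prop.\ 7.25) while you offer to cite or sketch it. The only cosmetic difference is your choice of Dynkin-class axioms (complements and disjoint unions versus the paper's proper differences and increasing unions), which is immaterial.
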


\begin{proof}
$\Rightarrow)$ We can write $F_A$ as a composition: 
\begin{equation}
    y \xmapsto{F} F(y) \xmapsto{ev_A} F(y)(A),
\end{equation}
where $ev_A: \mathcal{P}(E) \rightarrow [0,1]$ is given by $ev_A(\mu) = \mu(A)$. The first map is measurable by assumption and it will be enough to show that the second one is also measurable for every $A\in \mathcal{B}(E)$. 

First take $A$ to be an open set. We will show that $ev_A$ is sequentially lower-semicontinuous. 
Let $(\eta_k)_{k\in \mathbb{N}}$ be a sequence in $\mathcal{P}(E)$ such that $\eta_k \rightarrow \eta \in \mathcal{P}(E)$, in the sense of weak convergence.
By Portmanteau's Theorem we have $\liminf_{k\rightarrow \infty} \eta_k(A) \geq \eta(A)$, which implies that $ev_A$ is lower semicontinuous and, in particular, measurable. 

Consider the set $\mathcal{A} = \{V \in \mathcal{B}(E) : ev_V(\cdot) \text{ is measurable}\}$. 
We claim that $\mathcal{A}$ is a $\lambda$-system. 
As $ev_E(\mu) = \mu(E) = 1$ for all $\mu$, we have $E\in \mathcal{A}$. 
If $A,B\in \mathcal{A}$ then $B\setminus A \in \mathcal{A}$. In fact, we can write
\begin{equation*}
    ev_{B\setminus A}(\mu) = \mu(B\setminus A) = \mu(B)-\mu(A) = ev_B(\mu) - ev_A(\mu).
\end{equation*}
Then $ev_{B\setminus A}$ is measurable. 
Finally, take a sequence $(A_k)_{k\geq 1}$ with $A_k \in \mathcal{A}$ and $A_k\subseteq A_{k+1}$ for every $k\in \mathbb{N}$.
Then, we have that if $A=\cup_{k\geq 1}A_k$, then
\begin{equation*}
    ev_A(\mu) = \mu(\cup_{k\geq 1} A_k) = \lim_{k\rightarrow \infty} \mu(A_k) = \lim_{k\rightarrow \infty} ev_{A_k}(\mu),
\end{equation*}
which shows that $ev_{A}$ is measurable. 
This proves the claim.

Then we have that $\{O \in \mathcal{B}(E): O \text{ is open}\}\subseteq \mathcal{A}$ and as the open sets form a $\pi$-system then by the $\pi-\lambda$ theorem we have that $\mathcal{B}(E) \subseteq \mathcal{A}$.

$\Leftarrow)$ By \cite[Proposition 7.25]{Stochastic-optimal-control-discrete-time-case}, we have that
\begin{equation*}
    \mathcal{B}(\mathcal{P}(E)) = \sigma\left(\{\mu \mapsto \mu(D)\}_{D\in \mathcal{B}(E)}\right).
\end{equation*}
To show that $F$ is measurable we need to show that if $B\in \mathcal{B}(\mathcal{P}(E))$ then $F^{-1}(B)\in \mathcal{Y}$. As $\mathcal{B}(\mathcal{P}(E))$ is generated by $\{\text{ev}_D : \mu \mapsto \mu(D)\}_{D\in \mathcal{B}(E)}$ it is enough to show that for $A\in \mathcal{B}([0,1])$ and $D\in \mathcal{P}(E)$, $F^{-1}(\text{ev}_D^{-1}(A)) \in \mathcal{Y}$ but this holds by hypothesis. 
\end{proof}

This proposition allows to give a precise meaning to the integral defining $\nu^{\mu,x}$ in equation \eqref{eq:graphon.integral}.
\begin{lemma}
\label{lemma:nu-defines-a-measure}
    If $(\mu^y)_{y\in I}$ is a measurable family of probability measures on $\cB(E)$, then for almost every $x\in I$,
    it holds $\nu^{\mu,x} \in \cM_+(E)$ and for every $A \in \cB(E)$ the map $x\mapsto \nu^{\mu,x}(A)$ is measurable.
\end{lemma}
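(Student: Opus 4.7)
The plan is to unpack the definition of $\nu^{\mu,x}$ and verify in order: (a) that for each $A\in\cB(E)$ the integrand $y\mapsto g(x,y)\mu^y(A)$ is measurable and non-negative, (b) that $\nu^{\mu,x}$ is a well-defined $\sigma$-additive measure on $\cB(E)$, (c) that it is finite for $\lambda$-a.e.\ $x$, and finally (d) that $x\mapsto\nu^{\mu,x}(A)$ is measurable. The two main ingredients will be Proposition \ref{prop:characterization-measurability-maps-taking values-on-positive-measures} (to trade the measurability of $y\mapsto\mu^y$ for scalar measurability) and Tonelli's theorem on $(I\times I,\cI\otimes\cI,\lambda\otimes\lambda)$ applied to the non-negative jointly measurable map $(x,y)\mapsto g(x,y)\mu^y(A)$.

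First, I would fix $A\in\cB(E)$ and invoke Proposition \ref{prop:characterization-measurability-maps-taking values-on-positive-measures} to conclude that $y\mapsto\mu^y(A)\in[0,1]$ is Borel measurable. Since $g\colon I\times I\to[0,\infty)$ is Borel measurable by assumption, the product $(x,y)\mapsto g(x,y)\mu^y(A)$ is jointly measurable and non-negative, so for every $x$ the integral $\int_I g(x,y)\mu^y(A)\lambda(dy)$ makes sense in $[0,\infty]$. This gives (a) and defines a set function $\nu^{\mu,x}$ on $\cB(E)$.

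For (b), $\nu^{\mu,x}(\emptyset)=0$ is immediate; for countable additivity, take a disjoint sequence $(A_n)\subset\cB(E)$ with union $A$. Since each $\mu^y$ is a measure, $\mu^y(A)=\sum_{n}\mu^y(A_n)$, and monotone convergence (applied to the partial sums, which are non-negative and increasing in $n$) yields
\begin{equation*}
\nu^{\mu,x}(A)=\int_I g(x,y)\sum_{n}\mu^y(A_n)\,\lambda(dy)=\sum_{n}\int_I g(x,y)\mu^y(A_n)\,\lambda(dy)=\sum_{n}\nu^{\mu,x}(A_n).
\end{equation*}
For (c), note $\nu^{\mu,x}(E)=\int_I g(x,y)\,\lambda(dy)$ because $\mu^y(E)=1$. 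Square-integrability of $g$ gives $\int_I\int_I g(x,y)^2\lambda(dy)\lambda(dx)<\infty$, so by Tonelli $\int_I g(x,y)^2\lambda(dy)<\infty$ for $\lambda$-a.e.\ $x$; Cauchy--Schwarz on $I$ (which has unit mass) then bounds $\int_I g(x,y)\lambda(dy)$ by $\bigl(\int_I g(x,y)^2\lambda(dy)\bigr)^{1/2}$, which is finite for $\lambda$-a.e.\ $x$. Hence $\nu^{\mu,x}\in\cM_+(E)$ for $\lambda$-almost every $x$.

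Finally, for (d), I would apply Tonelli directly to the jointly measurable non-negative function $(x,y)\mapsto g(x,y)\mu^y(A)$ on $(I\times I,\cI\otimes\cI,\lambda\otimes\lambda)$: this guarantees that the partial integral $x\mapsto\int_I g(x,y)\mu^y(A)\lambda(dy)=\nu^{\mu,x}(A)$ is measurable. None of the steps is genuinely difficult; the only point that requires care is recognising that the square-integrability hypothesis on the graphon is what rules out the pathological case $\nu^{\mu,x}(E)=\infty$ on a set of positive measure, and that Proposition \ref{prop:characterization-measurability-maps-taking values-on-positive-measures} is exactly what lets us translate the hypothesis ``$(\mu^y)$ is measurable'' into the scalar measurability of $y\mapsto\mu^y(A)$ that Tonelli requires.
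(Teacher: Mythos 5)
Your proof is correct and follows essentially the same route as the paper: Proposition \ref{prop:characterization-measurability-maps-taking values-on-positive-measures} reduces measurability of $y\mapsto\mu^y$ to scalar measurability of $y\mapsto\mu^y(A)$, and then a Fubini--Tonelli argument together with square-integrability of $g$ yields a.e.\ finiteness and measurability in $x$. Your use of Tonelli directly (rather than Fubini after an integrability bound) and the explicit monotone-convergence verification of countable additivity are only minor elaborations of the paper's argument.
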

\begin{proof}
    The measure $\nu^{\mu,x}$ is understood as
    \begin{equation*}
        \nu^{\mu,x}(A) = \int_{I}g(x,y)\mu^y(A)\lambda(dy), \quad A \in \cB(E).
    \end{equation*}
    By Proposition \ref{prop:characterization-measurability-maps-taking values-on-positive-measures}, for every $A\in \mathcal{B}(E)$ the map $y \mapsto \mu^y(A)$ is measurable. 
    This is because we can write this map as the composition $y\mapsto \mu^y \mapsto \mu^y(A)= ev_A(\mu^y)$.  
    Therefore, the map $(x,y)\mapsto g(x,y)\mu^{y}(A)$ is measurable as well.
    Moreover, for every $x,y \in I$ and $A\in \mathcal{B}(E)$, we have that
    \begin{equation*}
        \int_{I\times I} |g(x,y)|^2\mu^y(A) \lambda\otimes\lambda(dx,dy) \leq \int_{I\times I} |g(x,y)|^2 \lambda\otimes\lambda(dx,dy) < \infty.
    \end{equation*}
    Hence, by Fubini's Theorem the map $y\mapsto g(x,y)\mu^y(A)$ is integrable for almost every $x\in I$ and every $A \in \mathcal{B}(E)$.
    In particular, 
    \begin{equation*}
        x\mapsto \nu^{\mu,x}(A) = \int_I g(x,y) \mu^{y}(A) \lambda(dy)
    \end{equation*}
    is measurable for all $A \in \cB(E)$.

    Checking that $\nu^{\mu,x}$ is a measure follows from the fact that $\mu^y$ is a measure. The non-negativity follows from the fact that $g(x,y)\geq 0$ and the finiteness from the fact that $g$ is integrable. 
\end{proof}
If $(\mu^y)_{y\in I}$ is a family of measures on $\cB(\cC)$, then we will define the finite dimensional marginal on $\cB(\R^n)$ as usual.
In fact, considering projection $\pi_t : \mathcal{C} \rightarrow \mathbb{R}^n$ defined as $\pi_t(\omega) = \omega_t$,
for every $A\in \mathcal{B}(\mathbb{R}^n)$ we have
\begin{equation*}
    \nu_t^{\mu,x}(A) := \nu^{\mu,x}\left( \pi_t^{-1}(A)\right) = \int_I g(x,y)\mu^y(\pi_t^{-1}(A)) \lambda(dy) = \int_I g(x,y)\mu^y_t(A)\lambda(dy),
\end{equation*}
where $\mu^y_t(A) := \mu\circ \pi_t^{-1}(A)$.

Next, we describe integration with respect to the measure $\nu^{\mu,x}$. 
\begin{proposition}
\label{prop:graphon.integration}
   Let $(\mu^y)_{y\in I}$ be a measurable family of probability measures on $\cB(\cC)$.
   Let $f: \mathcal{C} \rightarrow \mathbb{R}$ be $\mu^y$-integable for almost every $y\in I$.
   Then $f$ is $\nu^{\mu,x}$-integrable for almost every $x \in I$ and we have that
    \begin{equation}
    \label{eq:transform-lambda-nu}
        \int_\mathcal{C} f(\omega) \nu^{\mu,x}(d\omega) = \int_I\int_\mathcal{C} g(x,y) f(\omega) \mu^y(d\omega) \lambda(dy).
    \end{equation}
    Similarly, if  $f: \mathbb{R}^n \rightarrow \mathbb{R}$ is $\mu^y_t$-integrable for almost every $y\in I$, then $f$ is $\nu^{\mu,x}_t$-integrable for almost every $x\in I$ and we have 
    \begin{equation*}
        \int_{\mathbb{R}^n} f(z) \nu^{\mu,x}_t(dz) = \int_I\int_{\mathbb{R}^n} g(x,y) f(z) \mu^y_t(dz) \lambda(dy).
    \end{equation*}
\end{proposition}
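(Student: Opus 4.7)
The plan is to apply the standard measure-theoretic machine, building up from indicators through simple functions to non-negative and then integrable functions, using Lemma \ref{lemma:nu-defines-a-measure} as the base case and Proposition \ref{prop:characterization-measurability-maps-taking values-on-positive-measures} to handle the measurability side-conditions that arise at each step.

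First, for $f = \mathbf{1}_A$ with $A \in \cB(\cC)$, identity \eqref{eq:transform-lambda-nu} is precisely the definition of $\nu^{\mu,x}$ given in \eqref{def:nu-measure}, and the fact that $x \mapsto \nu^{\mu,x}(A) = \int_I g(x,y)\mu^y(A)\lambda(dy)$ is well-defined $\lambda$-a.e.\ is exactly Lemma \ref{lemma:nu-defines-a-measure}; in particular, Proposition \ref{prop:characterization-measurability-maps-taking values-on-positive-measures} says that $y \mapsto \mu^y(A)$ is measurable, so the integrand on the right of \eqref{eq:transform-lambda-nu} is jointly measurable in $(x,y)$. By linearity, the formula then extends to all simple measurable $f$, and this simultaneously shows that $y \mapsto \int_\cC f\, d\mu^y$ is measurable for every simple $f$.

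Next, for non-negative measurable $f$, I would pick an increasing sequence of simple functions $f_n \uparrow f$ and apply monotone convergence twice: on the left, $\int_\cC f_n \, d\nu^{\mu,x} \uparrow \int_\cC f \, d\nu^{\mu,x}$ for each $x$ in the $\lambda$-full set where $\nu^{\mu,x}$ is a measure; on the right, monotone convergence first inside the $\mu^y$-integral and then, using that $g \geq 0$ and the integrands are non-negative, in the outer $\lambda$-integral. This yields identity \eqref{eq:transform-lambda-nu} for non-negative $f$, with both sides possibly equal to $+\infty$, and it identifies $y \mapsto \int_\cC f\, d\mu^y$ as a measurable function (as an increasing pointwise limit of measurable maps). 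For a general $f$ that is $\mu^y$-integrable for $\lambda$-a.e.\ $y$, I would apply the non-negative case to $|f|$: since $\int_\cC |f|\, d\nu^{\mu,x} = \int_I g(x,y)\int_\cC |f|\, d\mu^y\,\lambda(dy)$ and the inner integral is finite $\lambda$-a.e., Fubini--Tonelli on $(I,\lambda)^{\otimes 2}$ together with $\|g\|_2 < \infty$ implies this is finite for $\lambda$-a.e.\ $x$. Splitting $f = f^+ - f^-$ and using the non-negative result on each piece concludes the proof.

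The second assertion, concerning $\nu^{\mu,x}_t$, is an immediate corollary: applying what has just been proved to the function $\tilde{f} := f \circ \pi_t \in \cB(\cC)$ and noting that $\int_\cC \tilde{f}\, d\mu^y = \int_{\R^n} f\, d\mu^y_t$ and $\int_\cC \tilde{f}\, d\nu^{\mu,x} = \int_{\R^n} f\, d\nu^{\mu,x}_t$ gives exactly the stated identity. The only non-routine point in the whole argument is the bookkeeping of the $\lambda$-null exceptional sets (which depend on $A$, on $f_n$, and on $f$) and the verification that the map $y \mapsto \int_\cC f\, d\mu^y$ inherits measurability at every stage; everything else is the standard ascent from indicators to integrable functions.
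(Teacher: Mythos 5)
Your proposal is correct and follows essentially the same route as the paper's proof: the standard ascent from simple functions (where the identity holds by definition of $\nu^{\mu,x}$), to non-negative $f$ via monotone convergence and positivity of $g$, to general $f$ via the decomposition $f=f^+-f^-$, with the second assertion obtained by pulling back through the projection $\pi_t$. Your extra bookkeeping of the exceptional null sets and the Tonelli argument with $\|g\|_2<\infty$ for a.e.-$x$ integrability is just a more explicit rendering of steps the paper leaves implicit.
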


\begin{proof} 
    The proof uses classical arguments.
    We will give only the main steps.

    If $f$ is a simple function, i.e. $f=\sum_{i=1}^N a_i \mathbf{1}_{A_i}$ for some $a_i\in \mathbb{R}$ and $A_i\in \mathcal{B}(\mathcal{C})$, then the result is clear by definition of $\nu^{\mu,x}$.
    If $f\geq 0$ is measurable, then there exists a sequence of positive simple functions $(f_k)_{k\ge1}$ increasing to $f$ pointwise. 
    Then, we have
    \begin{equation*}
    \begin{split}
        \int_\mathcal{C} f(\omega) \nu^{\mu,x}(d\omega)
        =  \lim_{k\rightarrow \infty} \int_I \int_\mathcal{C} f_k(\omega)g(x,y) \mu^y(d\omega)\lambda(dy) 
        =  \int_I \int_\mathcal{C} f(\omega) g(x,y) \mu^y(d\omega) \lambda(dy), \\
    \end{split}
    \end{equation*}
    where we used the monotone convergence theorem and the fact that $g$ is positive. 
    If $f$ is a measurable function we can write it as $f= f^{+}-f^{-}$ where $f^{\pm}(x)=\max(\pm f(x),0)$, these are measurable and positive so the result holds from linearity of the integral. 

    For the second part we notice that: 
    \begin{equation*}
    \begin{split}
        \int_{\mathbb{R}^n} f(z)\nu^{\mu,x}_t(dz) = &  \int_\mathcal{C} f(\omega_t) \nu^{\mu,x}(d\omega) = \int_I \int_\mathcal{C} g(x,y) f(\omega_t) \mu^{y}(d\omega) \lambda(dy)
        =  \int_I \int_{\mathbb{R}^n} f(z) \mu^{y}(dz) \lambda(dy). 
    \end{split}
    \end{equation*}
\end{proof}

\subsection{Measurability of SDEs depending on a parameter}

The aim of this section is to discuss measurability (in the box product probability space) of SDEs depending on a given parameter.
This will be an essential building bloc of the existence proof.

Let $\nu^\cdot:I\ni x\mapsto \nu^x \in C([0,T],\cM_+(\R^n))$ be a measurable map and consider the SDE
\begin{equation}
\begin{split}
\label{eq:graphon-SDE-mu}
    dX_t^{x} = & b(t,X_t^{x}, \nu^x_t)dt + \sigma(t,X_t^{x}, \nu^x_t)dB_t^x, \quad 
    X_0^{x} =  \xi^x.
\end{split}
\end{equation}
The following result shows measurability of the family $\{X^x\}_{x\in I}$.
\begin{proposition}
 \label{prop:measurable.SDE}
    If Conditions \ref{cond:initial-conditions} and \ref{cond:coefficients} are satisfied and $\nu$ is such that
        \begin{equation*}
        \int_I \sup_{s\in[0,T]}\|\nu_{s}^{x}\|_{\mathrm{BL}}\lambda(dx) < \infty,
    \end{equation*} 
    then there is a unique family $X = (X^x)_{x\in I}$ of processes such that $X \in L^2_{\boxtimes}(I\times \Omega, \cC)$ and for almost every $x\in I$, $X^x$ satisfies \eqref{eq:graphon-SDE-mu} and is adapted to $\F^x$.
\end{proposition}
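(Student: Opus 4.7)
The plan is to obtain the solution by a Picard iteration performed on the Bochner space $L^2_\boxtimes(I \times \Omega, \cC)$, viewed as a Banach space under the norm induced by $\int_{I\times\Omega}\|X^x\|_{\cC}^2\,\lambda\boxtimes\P(dx,d\omega)$. Define the map
\begin{equation*}
\Phi(X)^x_t := \xi^x + \int_0^t b(s, X^x_s, \nu^x_s)\,ds + \int_0^t \sigma(s, X^x_s, \nu^x_s)\,dB^x_s,
\end{equation*}
on the closed subspace of $L^2_\boxtimes$ consisting of processes $X$ that are $\F^x$-progressive for $\lambda$-a.e.\ $x\in I$. Once $\Phi$ is shown to be a well-defined contraction on this space (after an exponential weighting $e^{-\beta t}$ in time if necessary), its fixed point gives both existence and uniqueness, and $\F^x$-adaptedness of each iterate survives in the limit.

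The main obstacle is to establish $\cI \boxtimes \cF$-measurability of the stochastic integral $(x,\omega) \mapsto \int_0^t \sigma(s, X^x_s, \nu^x_s)\,dB^x_s$, since both the integrand and the driving Brownian motion depend on the parameter $x$, and parameter-dependent SDE results such as \cite{Stricker1978CalculSD} only provide measurability with respect to the ordinary product $\cI \otimes \cF$, which is strictly weaker than what the box product requires. I would verify the claim first for elementary integrands of the form $H^x_s(\omega) = \sum_j \eta^x_{t_j}(\omega)\mathbf{1}_{[t_j, t_{j+1})}(s)$, where each $\eta^x_{t_j}$ is jointly $\cI \boxtimes \cF$-measurable and $\F^x_{t_j}$-measurable for a.e.\ $x$; in this case the integral reduces to the finite sum $\sum_j \eta^x_{t_j}(B^x_{t \wedge t_{j+1}} - B^x_{t \wedge t_j})$, whose joint measurability follows directly from the measurability of $(x,\omega)\mapsto B^x(\omega)$ provided by Theorem \ref{thm:construction-spaces}. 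A general $H \in L^2_\boxtimes$ that is $\F^x$-progressive for a.e.\ $x$ is then approximated in $L^2_\boxtimes$ by such elementary processes (applying a simple-function approximation pointwise in $x$ while taking care that the approximating sequence retains joint measurability), and It\^o's isometry applied $\lambda$-a.e., combined with Fubini on the box product, identifies the $L^2_\boxtimes$-limit with the classical stochastic integral for a.e.\ $x$, thereby transferring joint measurability to the limit.

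With this measurability in hand, the remainder is the standard McKean--Vlasov argument. The linear growth in Condition \ref{cond:coefficients}(3), the Burkholder--Davis--Gundy inequality, and the integrability of $\xi^x$ from Condition \ref{cond:initial-conditions}(2) together with the hypothesis $\int_I \sup_s \|\nu^x_s\|_{\mathrm{BL}}\,\lambda(dx) < \infty$ give an a priori bound of the form $\int_I \E[\sup_t |\Phi(X)^x_t|^2]\,\lambda(dx) < \infty$, so $\Phi$ maps $L^2_\boxtimes$ into itself. The Lipschitz estimate \eqref{eq:b-lipschitz-condition} applied pointwise in $x$ and then integrated against $\lambda$ using the Fubini property of the box product yields the contraction (after the exponential weighting if needed), and uniqueness follows from a Gronwall argument applied to the difference of two solutions and integrated over $x$. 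The technical work is thus concentrated in the joint-measurability step, which constitutes the essential difference from the classical McKean--Vlasov existence proof.
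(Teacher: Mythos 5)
Your proposal is correct and follows essentially the same route as the paper: a Banach fixed-point argument in the closed subspace of $\F^x$-adapted processes of $L^2_\boxtimes(I\times\Omega,\cC)$, with the crux being joint $\cI\boxtimes\cF$-measurability of the parameter-dependent stochastic integral, obtained by approximating with elementary (Riemann-type) sums that are manifestly box-measurable and then passing to the limit in $L^2_\boxtimes$ via the It\^o isometry integrated over $x$. The only cosmetic differences are that the paper exploits continuity of $t\mapsto\sigma(t,\cX^x_t,\nu^x_t)$ to use Riemann sums of the actual integrand (rather than a general elementary approximation of progressive processes) and replaces your exponential weighting by showing the $k$-fold composition is a contraction.
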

It is well-known, see e.g. \cite[Theorem 5.2.9]{Karatzas-Shreve} that for each $x$ such that $B^x$ is defined, the SDE \eqref{eq:graphon-SDE-mu} admits a unique $\F^x$-adapted solution $X^x$.
It remains to show that $X = (X^x)_{x\in I}$ belongs to $L^2_\boxtimes(I\times\Omega, \cC)$.
To this end, let $\S^2_\boxtimes(I\times\Omega, \cC)$ as the subset of $\F^x$-adapted processes in $L^2_\boxtimes(I\times\Omega, \cC)$.
That is,
\begin{equation*}
    \S^2_\boxtimes(I\times\Omega, \cC) := \left\{X\in L^2_\boxtimes(I\times\Omega, \cC): \text{ for $\lambda$-almost every $x\in I$, $X^x$ is $\F^x$-adapted}\right\}.
\end{equation*}

\begin{lemma}
\label{lem:complete}
    The space $\S^2_\boxtimes(I\times\Omega, \cC)$ is a closed subset of $L^2_\boxtimes(I\times\Omega, \cC)$ and hence complete. 
\end{lemma}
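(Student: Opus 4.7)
The plan is to show closedness directly: take a sequence $(X_n)_{n\ge 1}$ in $\S^2_\boxtimes(I\times\Omega,\cC)$ converging to some $X\in L^2_\boxtimes(I\times\Omega,\cC)$ and prove that (a version of) $X$ is $\F^x$-adapted for $\lambda$-a.e.\ $x$. Completeness then follows because $L^2_\boxtimes(I\times\Omega,\cC)$ is itself a Banach space and a closed subspace of a Banach space is complete.

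First, by the Fubini property of the extension $\lambda\boxtimes\P$, the $L^2_\boxtimes$-convergence rewrites as
\begin{equation*}
\int_I \E\bigl[\|X_n^x - X^x\|_\cC^2\bigr]\,\lambda(dx) \;\longrightarrow\; 0.
\end{equation*}
Extracting a subsequence (which I still denote $X_n$), I can arrange that $\E[\|X_n^x - X^x\|_\cC^2] \to 0$ for $\lambda$-a.e.\ $x\in I$, and even that $\sum_n \E[\|X_n^x-X^x\|_\cC^2] < \infty$ on that full-measure set. Fix such an $x$ for which additionally each $X_n^x$ is $\F^x$-adapted; the set of good $x$ is still of full $\lambda$-measure since the adaptedness exceptional set for each $n$ is $\lambda$-null.

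For such $x$, Borel--Cantelli gives $\|X_n^x-X^x\|_\cC \to 0$ $\P$-a.s., i.e.\ $X_n^x$ converges uniformly on $[0,T]$ to $X^x$ outside a $\P$-null set. In particular, for every $t\in[0,T]$, $X_{n,t}^x \to X_t^x$ $\P$-a.s. Since each $X_{n,t}^x$ is $\F^x_t$-measurable and $\F^x_t$ is $\P$-complete by construction, the $\P$-a.s.\ limit $X_t^x$ is $\F^x_t$-measurable as well. Because the paths of $X^x$ are continuous, pointwise $\F^x_t$-measurability for every $t$ yields progressive measurability, and in particular $\F^x$-adaptedness.

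The main (mild) obstacle is the bookkeeping of null sets: adaptedness of $X_n^x$ holds only outside an $x$-dependent $\lambda$-null set, and the convergence $\E[\|X_n^x-X^x\|_\cC^2]\to 0$ holds only outside another $\lambda$-null set. The point is that the countable union of these exceptional sets remains $\lambda$-null, so a single full-measure set of ``good'' $x$ works. Once that is handled, the use of the $\P$-completed filtration $\F^x$ makes the stability of adaptedness under $\P$-a.s.\ limits immediate, and closedness of $\S^2_\boxtimes(I\times\Omega,\cC)$ in $L^2_\boxtimes(I\times\Omega,\cC)$ follows.
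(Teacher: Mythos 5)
Your proposal is correct and follows essentially the same route as the paper: pass from $L^2_\boxtimes$-convergence to $\lambda$-a.e.\ $x$ convergence in $L^2(\Omega,\cC)$, extract a $\P$-a.s.\ convergent subsequence, and use the $\P$-completeness of $\F^x$ to conclude that adaptedness survives the limit. The only cosmetic difference is that you use a summable subsequence and Borel--Cantelli where the paper invokes Fatou's lemma and an $x$-dependent subsequence; if anything your bookkeeping of the null sets is slightly more explicit.
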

\begin{proof}
    Consider a sequence of processes $(X^k)_{k\in \N}$ in $\S^2_\boxtimes(I\times\Omega, \cC)$ converging to $X$ in $L^2_{\boxtimes}(I\times\Omega, \cC)$.
    As $L^2_\boxtimes(I\times\Omega, \cC)$ is closed then $X\in L^2_\boxtimes(I\times\Omega, \cC)$. 
    We claim that for $\lambda$-almost every $x\in I$ we have that $X^{k,x} \rightarrow X^x$ in $L^2(\Omega,\cC)$. 
    This follows from the fact that by definition of the sequence and by Fatou's lemma we have
    \begin{equation*}
         \int_I \liminf_{k\rightarrow\infty}\E\bigg[\sup_{t\in [0,T]}|X^{k,x}_t-X_t^x|^2\bigg]\lambda(dx) \leq  \lim_{k\rightarrow\infty} \int_I \E\Big[\sup_{t\in [0,T]}|X^{k,x}_t-X_t^x|^2\Big]\lambda(dx) = 0.
    \end{equation*}
    We thus that for $\lambda$-almost every $x\in I$
    \begin{equation*}
        \lim_{k\rightarrow\infty}\E\Big[\sup_{t\in [0,T]}|X^{k,x}_t-X_t^x|^2\Big] = 0.
    \end{equation*}
    Hence, for $\lambda$-almost every $x$ there exists a subsequence again denoted $(X^k)_{k\in \N}$ such that $X^{k,x} \rightarrow X^x$ $\P$-almost surely. 
    Thus, as $X^{k,x}$ is 
    $\F^x$-adapted, so is $X^x$.
\end{proof}

For any $\cX \in \S^2_{\boxtimes}(I\times\Omega, \cC)$, consider the family of processes $F(\cX)$ given by
\begin{equation}
\label{eq:def.F.fixedpoint}
    F(\cX)_t^x := \xi^x + \int_0^t b(s,\cX_s^{x},\nu_s^x)ds + \int_0^t \sigma(s,\cX_s^{x},\nu_s^x)dB^x_s. 
\end{equation}
Let us show that the function $F$ maps $\S^2_{\boxtimes}(I\times\Omega, \cC)$ into itself.

\begin{lemma}
\label{lemma:good-definition-of-psi} 
   Under the assumptions of Proposition \ref{prop:measurable.SDE}, the map 
    \begin{equation*}
        I \times \Omega \ni(x,\omega) \mapsto F(\cX)^{x}(\omega) \in \cC  
    \end{equation*}
    is $\mathcal{I}\boxtimes \mathcal{F}$-measurable and for almost every $x\in I$, $F(\cX)^x$ is $\F^x$-adapted. 
\end{lemma}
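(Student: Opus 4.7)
The plan is to decompose $F(\cX)^x_t = \xi^x + D^x_t + S^x_t$ into the initial datum, the drift integral $D^x_t := \int_0^t b(s,\cX^x_s,\nu^x_s)\,ds$, and the Itô integral $S^x_t := \int_0^t \sigma(s,\cX^x_s,\nu^x_s)\,dB^x_s$, and establish $\cI\boxtimes\cF$-measurability of each summand separately. The initial piece $\xi^x$ is $\cI\boxtimes\cF$-measurable by the construction of the Fubini extension recalled in Section~\ref{sec:probabilistic-setting}. The $\F^x$-adaptedness assertion is the easy half of the lemma: for every $x$ in a full-$\lambda$-measure set, $\cX^x$ is $\F^x$-adapted, $\nu^x$ is deterministic, and $B^x$ is an $\F^x$-Brownian motion, so classical single-probability-space theory immediately gives continuous $\F^x$-adapted versions of both $D^x$ and $S^x$.

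For the drift I would first check that the integrand $(s,x,\omega)\mapsto b(s,\cX^x_s(\omega),\nu^x_s)$ is $\cB([0,T])\otimes(\cI\boxtimes\cF)$-measurable. Since $\cX\in\S^2_\boxtimes(I\times\Omega,\cC)$ and $\cC$ is separable, the Remark following Condition~\ref{cond:initial-conditions} identifies strong and ordinary measurability, so $(x,\omega)\mapsto\cX^x(\omega)$ is $\cI\boxtimes\cF$-$\cB(\cC)$-measurable; composing with the continuous evaluation $(s,\omega')\mapsto\omega'(s)$ gives joint measurability of $(s,x,\omega)\mapsto\cX^x_s(\omega)$. The measurability of $(s,x)\mapsto\nu^x_s$ is part of the hypothesis on $\nu$, and $b$ is a Carathéodory function by Condition~\ref{cond:coefficients} (continuous in $t$, Lipschitz hence continuous in the remaining variables), hence jointly measurable. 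Fubini's theorem on $[0,T]\times(I\times\Omega)$ then yields $\cI\boxtimes\cF$-measurability of $(x,\omega)\mapsto D^x_t(\omega)$ for every $t$, and pathwise continuity promotes this to joint measurability as a $\cC$-valued map.

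The main obstacle is the stochastic integral: $B^x$ is $\cI\boxtimes\cF$-measurable but in general \emph{not} $\cI\otimes\cF$-measurable, so Stricker-type results on the product $\sigma$-algebra (\cite{Stricker1978CalculSD}) do not apply directly. My strategy is to approximate $\sigma(\cdot,\cX^\cdot_\cdot,\nu^\cdot_\cdot)$ by left-continuous step processes for which the Itô integral is a finite algebraic sum that is manifestly $\cI\boxtimes\cF$-measurable. Concretely, for $t^n_k=kT/n$ define
$$
\sigma^{(n),x}_s(\omega) := \sum_{k=0}^{n-1}\sigma\bigl(t^n_k,\cX^x_{t^n_k}(\omega),\nu^x_{t^n_k}\bigr)\,\mathbf{1}_{(t^n_k,t^n_{k+1}]}(s),
$$
whose stochastic integral evaluated at $t$ is the finite sum
$$
S^{(n),x}_t(\omega)=\sum_{k=0}^{n-1}\sigma\bigl(t^n_k,\cX^x_{t^n_k}(\omega),\nu^x_{t^n_k}\bigr)\bigl(B^x_{t^n_{k+1}\wedge t}(\omega)-B^x_{t^n_k\wedge t}(\omega)\bigr).
$$
This is $\cI\boxtimes\cF$-measurable as a finite combination of quantities that are themselves $\cI\boxtimes\cF$-measurable, the crucial input being that the Brownian increments are $\cI\boxtimes\cF$-measurable by Theorem~\ref{thm:construction-spaces}.

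To pass to the limit, Doob's inequality and Itô's isometry applied fibrewise in $x$, then integrated, give
$$
\int_I\E\Bigl[\sup_{t\le T}\bigl|S^{(n),x}_t-S^x_t\bigr|^2\Bigr]\lambda(dx)\le 4\int_I\E\biggl[\int_0^T\bigl|\sigma^{(n),x}_s-\sigma(s,\cX^x_s,\nu^x_s)\bigr|^2 ds\biggr]\lambda(dx).
$$
Pointwise convergence to zero of the integrand follows from continuity of $\sigma$ in time (Condition~\ref{cond:coefficients}(2)), its Lipschitz dependence on the remaining variables, and continuity of $s\mapsto\cX^x_s$ and $s\mapsto\nu^x_s$. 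The linear growth bound in Condition~\ref{cond:coefficients}(3), the $L^2_\boxtimes$-bound on $\cX$ and the integrability assumption $\int_I\sup_s\|\nu^x_s\|_{\mathrm{BL}}\lambda(dx)<\infty$ furnish an integrable majorant in $\lambda\boxtimes\P$, so dominated convergence yields convergence of $S^{(n),\cdot}$ to $S^\cdot$ in $L^2_\boxtimes(I\times\Omega,\cC)$. Extracting a $\lambda\boxtimes\P$-a.s. convergent subsequence, measurability passes to the limit by completeness of $\cI\boxtimes\cF$, giving the required $\cC$-valued $\cI\boxtimes\cF$-measurability of $S^\cdot$. The delicate point to verify carefully is the uniform-in-box domination used for dominated convergence, where one must combine linear growth of $\sigma$ in $\|\nu\|_{\mathrm{BL}}$ with the global integrability of $\nu$ and the $L^2_\boxtimes$ bound on $\cX$.
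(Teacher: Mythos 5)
Your proposal is correct and follows essentially the same route as the paper: Carath\'eodory measurability plus Fubini for the drift, a piecewise-constant approximation of the stochastic integral whose discrete sums are manifestly $\cI\boxtimes\cF$-measurable, and an It\^o-isometry/dominated-convergence upgrade to $L^2_\boxtimes$ convergence precisely to avoid $x$-dependent subsequences. The only cosmetic differences are that you control $\sup_{t\le T}$ directly via Doob's inequality rather than arguing at fixed $t$ and then invoking cylindrical sets, and that your domination step implicitly uses integrability in $x$ of $\sup_s\|\nu^x_s\|^2_{\mathrm{BL}}$ rather than of its first power --- the same harmless looseness present in the paper's own argument, where in the application it follows from $g\in L^2$ via Cauchy--Schwarz.
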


\begin{proof}
    The fact that $F(\cX)^x$ is $\F^x$-adapted is clear. Let us focus on the measurability in the box-product.

    We need to show that each of the summands defining $F(\cX)$ are measurable. 
    By definition, the map $(x,\omega) \mapsto \xi^{x}(\omega)$ is $\cI\boxtimes \cF$-measurable. 
    
    We first look at the Lebesgue integral. 
    Since for every $(x,\omega)$ the maps $t\mapsto \cX_t^{x}(\omega)$ and $t\mapsto \nu^x_t$ are continuous and for every $t\in [0,T]$ we have that $(x,\omega)\mapsto \cX^x_t(\omega)$ and $x\mapsto \nu_t^x$ are measurable  (this is because the evaluation on a fixed point is a continuous mapping and we can decompose the maps in a measurable map and a continuous map). 
    It follows by continuity of $b$ that $(s,x,\omega)\mapsto b(s,\cX_s^{x}(\omega), \nu_s^{x})$ is measurable with respect to $\mathcal{B}([0,T])\otimes (\mathcal{I}\boxtimes \mathcal{F})$~\cite[Lemma $4.51$]{aliprantis2007infinite}. 
    Thus by Fubini's theorem we have that $\int_0^t b(s,X_s^{x}(\omega), \nu_s^{x}) ds$ is $\mathcal{I}\boxtimes\mathcal{F}$-measurable.
    Notice that we are allowed to use Fubini's theorem because by the growth condition on $b$ and properties of integration under $\lambda\boxtimes \P$ we have that
        \begin{align*}
            \int_{I\times\Omega} \int_0^t &|b(s,X_s^{x}(\omega), \nu_s^{x})| ds \lambda\boxtimes \mathbb{P}(dx,d\omega)\\
            &  \leq  CT \bigg(1 + \int_I \mathbb{E}\Big[\sup_{s\in [0,T]}|\cX_s^{x}|\Big]\lambda(dx) + \int_I\sup_{s\in [0,T]}\|\nu_{s}^{x}\|_{\mathrm{BL}} \lambda(dx)\bigg)<\infty.
        \end{align*}

    Measurability of the stochastic integral is a bit more complex as it is not defined pathwise.
    Denote by $\mathfrak{P}_l := \{0=t_0 <t_1< \dots < t_l = T\}$ a partition of $[0,T]$ with $l$ elements, with $\|\mathfrak{P}_l\| = \max_{1\leq i\leq l}|t_i-t_{i-1}|$ and define
    \begin{equation*}
        \sigma^{\mathfrak{P}_l}(s,\cX_s^{x}, \nu_s^{x}) := \sum_{i=1}^l \sigma(t_{i-1}, \cX_{t_{i-1}}^{x},\nu_{t_{i-1}}^{x})\mathbf{1}_{s\in [t_{i-1},t_i)}.
    \end{equation*}
    For every fixed $t_i \in [0,T]$ we have that $(x,\omega) \mapsto \sigma(t_{i},\cX_{t_i}^x(\omega), \nu^x_{t_i})$ and $(x,\omega)\mapsto B_{t_i}^x(\omega)$ are $\cI\boxtimes\cF$-measurable. 
    Thus, the finite sum
    \begin{equation*}
        \int_0^t \sigma^{\mathfrak{P}_l}(s,\cX_s^{x},\nu_s^{x}) dB_s^{x} = \sum_{i=1}^l \sigma(t_{i-1}, \cX_{t_{i-1}}^{x},\nu_{t_{i-1}}^{x}) (B^x_{t_{i}}-B^x_{t_{i-1}})
    \end{equation*}
    is  $\mathcal{I}\boxtimes \mathcal{F}$-measurable as well.
    For $\lambda$-almost every $x\in I$ we have that
    \begin{equation}
    \label{eq:bounded.f}
        \mathbb{E}\Big[\sup_{t\in [0,T]} |\sigma(t,\cX_t^{x},\nu^{x}_t)|^2\Big] \leq CT \Big(1 + \E\Big[\sup_{t\in[0,T]}|\cX_t^{x}|^2\Big] + \sup_{t\in [0,T]}\|\nu^x_t\|^2_{\mathrm{BL}} \Big) < \infty
    \end{equation}
    and for every $(x,\omega)$ the map $t\mapsto f(t):=\sigma(t,\cX_t^{x}(\omega),\nu_t^{x})$ is continuous.
    Hence, we obtain by Cauchy-Schwarz inequality
    \begin{equation*}
        \begin{split}
            |\mathbb{E}[f(s_j)f(t_j)]-\mathbb{E}[f(s)f(t)]| &\leq  \mathbb{E}[|f(s_j)f(t_j)-f(s)f(t)|]\\
            &\leq  \mathbb{E}[|f(s_j)f(t_j)- f(s_j)f(t)|] + \mathbb{E}[|f(s_j)f(t)-f(s)f(t)|]\\
            &=  \mathbb{E}[|f(s_j)|^2]^{1/2}\mathbb{E}[|f(t_j)-f(t)|^2]^{1/2} \\
            & \quad + \mathbb{E}[|f(t)|^2]^{1/2}\mathbb{E}[|f(s_j)-f(s)|^2]^{1/2} \rightarrow 0,
    \end{split}
    \end{equation*}
    where the convergence uses $(s_j,t_j) \rightarrow (s,t)$, continuity of $f$, equation \eqref{eq:bounded.f} and the dominated convergence theorem. Thus, by \cite[Theorem 4.7.1]{Introduction-stochastic-integration}, for $\lambda$-almost every $x\in I$,
    \begin{equation*}
        \int_0^t \sigma^{\mathfrak{P}_l}(s,\cX_s^{x},\nu_s^{\mu,x}) dB_s^{x} \xrightarrow{\mathfrak{P}_l\to0} \int_0^t \sigma(s,\cX_s^{x},\nu_s^{x}) dB_s^{x} 
    \end{equation*}
    in the sense of $L^2(\Omega,\mathbb{R}^n)$-convergence. The idea would be to use the fact that $L^2$ convergence implies almost sure convergence along a subsequence and for the purpose of showing that the limit is measurable this is enough. However, the subsequence could depend on $x$ so we need to get that the convergence holds in $L^2_\boxtimes (I\times\Omega, \R^n)$. 
    
    A priori we know that $\E\left[ \int_0^t \sigma(s,\cX_s^{x},\nu_s^{x}) dB_s^{x}\right]$ is well defined for $\lambda$-almost every $x\in I$, but we still need to show that we can integrate with respect to $x$. 
   By It\^o isometry,
    \begin{equation*}
    \begin{split}
        \E\bigg[\bigg|\int_0^t \sigma(s,\cX_s^{x},\nu_s^{x})dB_s^x - \int_0^t \sigma^{\mathfrak{P}_l}(s,\cX_{s}^{x},\nu_{s}^{x})dB_s^x\bigg|^2\bigg] =  \E\bigg[  \int_0^t |\sigma^{\mathfrak{P}_l}(s,\cX_{s}^{x},\nu_{s}^{x})-|\sigma(s,\cX_s^{x},\nu_s^{x})|^2 ds\bigg].
    \end{split}
    \end{equation*}
    Since $\sigma$ satisfies the same conditions as $b$ the argument\footnote{We would only need to modify that $\sigma^{\mathfrak{P}_l}$ is not continuous but is piecewise constant in $s$ so we can claim measurability in $\cB([0,T])\otimes(\cI \boxtimes \cF)$ regardless.} used to obtain measurability of the Lebesgue integral gives us that  $(x,\omega) \mapsto \int_0^t |\sigma^{\mathfrak{P}_l}(s,\cX_{s}^{x},\nu_{s}^{x})-|\sigma(s,\cX_s^{x},\nu_s^{x})|^2 ds$ is $\cI\boxtimes\cF$-measurable and hence 
    \begin{equation*}
        x\mapsto \E\bigg[ \int_0^t |\sigma^{\mathfrak{P}_l}(s,\cX_{s}^{x},\nu_{s}^{x})-|\sigma(s,\cX_s^{x},\nu_s^{x})|^2 ds\bigg]
    \end{equation*}
    is $\cI$-measurable. In particular we can integrate with respect to the $x$ variable. This yields
    \begin{align*}
    \label{eq:A}
        &\int_I \mathbb{E}\bigg[\bigg|\int_0^t \sigma(s,\cX_s^{x},\nu_s^{x})dB_s^x - \int_0^t \sigma^{\mathfrak{P}_l}(s,\cX_{s}^{x},\nu_{s}^{x})dB_s^x\bigg|^2\bigg]\\
        &\le   \int_I \Big\|\int_0^t \sigma(s,\cX_s^{x},\nu_s^{x})dB^x_s - \int_0^t\sigma^{\mathfrak{P}_l}(s,\cX_{s}^{x},\nu_{s}^{x})dB_s^x\Big\|^2_{L^2(\Omega,\P)}d\lambda(y)\xrightarrow{\mathfrak{P}_l\to0}0,
    \end{align*}
    where the convergence follows by equation \eqref{eq:bounded.f}, the choice of $\cX$ and $\nu^x$, and the dominated convergence theorem. 
    We conclude that the convergence holds in $L^2_\boxtimes(I\times\Omega, \R^n)$, and this yields that the stochastic integral term is  $\cI\boxtimes\cF$-measurable.
    
    The above steps show that for each $t$, the random variable $F(\cX)^x_t$ is  $\cI\boxtimes\cF$-measurable.
    Now we conclude by extending this measurability to a the map $(x,\omega) \mapsto F(X)^x(\omega)\in \cC$. 
    This follows from the fact that $\mathcal{B}(\mathcal{C}) = \sigma(\text{cylindrical sets})$, where the cylindrical sets are intersection of a finite number of sets of the form
    \begin{equation*}
        C_{t,A} = \big\{(x,\omega) \in I\times \Omega : F(\cX)_t^{x}(\omega) \in A\big\}, \quad t\in [0,T], A\in \mathcal{B}(\mathbb{R}^n).
    \end{equation*}
    But by the first part of the proof we know that $C_{t,A}$ is $\mathcal{F}\boxtimes\mathcal{I}$-measurable. 
    This concludes the proof.
\end{proof}
Let us now turn to the measurability of the solution $X^x$ of the SDE
\eqref{eq:graphon-SDE-mu}.
\begin{proof}[Proof of Proposition \ref{prop:measurable.SDE}]
    Consider the map $F$ defined in \eqref{eq:def.F.fixedpoint}. 
    We know that $I\times \Omega \ni (x,\omega)\mapsto F(\cX)^x(\omega)\in\cC$ is measurable. 
    Moreover using integrability of $\xi^x$, Burkholder-Davis-Gundy inequality, and the bound \eqref{eq:bounded.f} (with a similar property holding for $b$) we have
    \begin{equation*}
    \begin{split}
        \E^{\boxtimes}\Big[\sup_{s\in [0,t]}|F(\cX)^x_s|^2\Big] 
        &\leq  \E^\boxtimes\left[\left|\xi^x\right|^2\right] + C \mathbb{E}^{\boxtimes}\bigg[\int_0^t |b(u,\cX_u^x, \nu^x_u)|^2du\bigg] + C_{BDG} \E^{\boxtimes}\bigg[\int_0^t |\sigma(u,\cX_u^x, \nu^x_u)|^2du\bigg]\\
        & \leq  \E^\boxtimes\left[\big|\xi^x\right|^2\big] + C \mathbb{E}^{\boxtimes}\bigg[\sup_{u\in[0,T]} |\cX_u^x|^2 + \sup_{u\in[0,T]}\|\nu_u^x\|_{BL}^2 du \bigg] < \infty.
    \end{split}
    \end{equation*}
    Notice that we are using the fact that  $\int_0^t \sigma(u,X_u^x,\nu_u^x)dB^x_u $ is a true martingale for $\lambda$-almost every $x\in I$, and then use the same argument as in the previous claim to show that we can still integrate with respect to $x$. 
    We conclude that $F(\cX)\in \S^2_\boxtimes(I\times\Omega, \cC)$.

    We now show that $F^k= F \circ \dots \circ F$ is a contraction for the composition of $k$ times $F$. 
    Take $\cX,\cY\in \S^2_\boxtimes(I\times\Omega, \cC)$ and bound using the Lipschitz continuity of $b$ and $\sigma$ and essentially the same arguments as above, we have
    \begin{equation*}
    \begin{split}
         \E^\boxtimes\bigg[\sup_{s\in [0,t]}|F(\cX)_s^x-F(\cY)_s^x|^2\bigg] 
        & \le C \E^\boxtimes\bigg[ \int_0^t \sup_{v\in [0,s]}|\cX^x_v-\cY^x_v|^2 ds \bigg].
    \end{split}
    \end{equation*}
    Then for $k\in \N$ we can iterate to get: 
    \begin{equation*}
    \begin{split}
        \E^\boxtimes\bigg[\sup_{s\in [0,t]}|F^k(\cX)_s^x-F^k(\cY)_s^x|^2\bigg] 
        \leq & C^k \E^\boxtimes\left[\sup_{v\in [0,T]}|\cX^x_v- \cY^x_v|^2\right]\int_0^t \int_0^{u_1}\dots \int_0^{u_{k-1}} du_1du_2 \dots du_k\\
        &\leq  \frac{C^k}{k!}\E^\boxtimes\bigg[\sup_{t\in [0,T]}|\cX^x_t - \cY^x_t|^2\bigg].\\
    \end{split}
    \end{equation*}
    Thus, for a large enough $k\in N$ we have that $F^k$ is a contraction.
    As $\S^2_\boxtimes(I\times\Omega, \cC)$ is complete (Lemma \ref{lem:complete}), it follows that there exists a unique $(X^x)_{x\in I}$ in $\S^2_\boxtimes(I\times\Omega, \cC)$ satisfying equation \eqref{eq:graphon-SDE-mu}. 
    This concludes the proof.
\end{proof}

In the proof of the existence theorem, we will need to use laws of processes like $X^x$ described above.
This will be done using the following remark:
    \begin{remark}
    Given $X\in L^2_\boxtimes(I\times \Omega, \mathcal{C})$, for every $y\in I$ we can define a function $X^y:\Omega \rightarrow \mathcal{C}$. 
    However, this map doesn't need to be a random variable, but it induces the map $I \ni y \mapsto X^y\in L^2(\Omega,\mathcal{C})$ that is almost surely well defined.
    That is, there exists a set $I_X\subseteq I$ such that for all $y\in I_X $, we have $X^y\in L^2(\Omega,\mathcal{C})$ and $\lambda(I_X^c) = 0$. 
As changing the map $y \mapsto X^y$ on a set of measure zero will not change the integral, we can (and will) define a modified map as follows:
given $X\in L^2_\boxtimes(I\times\Omega, \mathcal{C})$, we define the map $\Tilde{X} : I \rightarrow L^2(\Omega,\mathcal{C})$ as:
\begin{equation}
\label{eq:version}
    \Tilde{X}^y = \begin{cases}
    X^y \text{ if $X^y\in L^2(\Omega,\mathcal{C})$}\\
    0 \text{ otherwise.}
    \end{cases}
\end{equation}
We have $\tilde X^x\in L^2(\Omega, \cC)$ for every $x\in I$ and $\tilde X^x = X^x$ for $\lambda$-almost every $x$.
\end{remark}

\begin{lemma}
\label{eq:Lemma.integ.graphon.integr}
    Let $X\in L^2_{\boxtimes}(I\times \Omega, \mathcal{C})$. Then, the map $I\ni x \mapsto \mathcal{L}(X^x) \in \mathcal{P}(\mathcal{C})$ is measurable. 
    Consequently, the graphon integral $\nu^{ \mathcal{L}(X),x}$ defined in \eqref{eq:graphon.integral} belongs to $\cM_+(\cC)$ and satisfies
    \begin{equation*}
        \int_I \sup_{t \in[0,T]}\|\nu_{t}^{ \mathcal{L}(X),x}\|_{\mathrm{BL}}\lambda(dx) < \infty.
    \end{equation*}
\end{lemma}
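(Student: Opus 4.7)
The plan is to split the argument into a measurability part and an integrability part, invoking the tools already developed above.

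\textbf{Step 1 (measurability of $x\mapsto \cL(X^x)$).} By Proposition \ref{prop:characterization-measurability-maps-taking values-on-positive-measures} applied to $E=\cC$, it is enough to show that for every $A\in \cB(\cC)$ the map $x\mapsto \cL(X^x)(A)=\P(X^x\in A)$ is $\cI$-measurable. Since $X:I\times\Omega\to\cC$ is $\cI\boxtimes\cF$-measurable, the indicator $(x,\omega)\mapsto \mathbf{1}_A(X^x(\omega))$ is a bounded $\cI\boxtimes\cF$-measurable (hence $\lambda\boxtimes\P$-integrable) function. Applying the Fubini extension property yields that $x\mapsto \int_\Omega \mathbf{1}_A(X^x(\omega))\,\P(d\omega)=\cL(X^x)(A)$ is $\cI$-measurable, as required. (If needed, we may replace $X$ by the version $\tilde X$ constructed in \eqref{eq:version} so that $\cL(X^x)$ is well-defined for every $x\in I$; this does not affect anything $\lambda$-a.s.)

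\textbf{Step 2 (construction of $\nu^{\cL(X),x}$).} Step 1 shows that $(\cL(X^y))_{y\in I}$ is a measurable family of probability measures on $\cB(\cC)$. Hence Lemma \ref{lemma:nu-defines-a-measure} applies and yields that, for $\lambda$-almost every $x\in I$, $\nu^{\cL(X),x}\in \cM_+(\cC)$ and for every $A\in\cB(\cC)$ the map $x\mapsto \nu^{\cL(X),x}(A)$ is $\cI$-measurable. The same conclusion carries over to the time-$t$ marginal $\nu^{\cL(X),x}_t\in \cM_+(\R^n)$.

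\textbf{Step 3 (integrability bound).} The key observation is that, since $\nu^{\cL(X),x}_t$ is a positive measure and the constant function $f\equiv 1$ lies in $1\text{-Lip}_b$, one has
\begin{equation*}
    \|\nu^{\cL(X),x}_t\|_{\mathrm{BL}}=\nu^{\cL(X),x}_t(\R^n).
\end{equation*}
Applying Proposition \ref{prop:graphon.integration} with $f\equiv 1$ (so that $\int_{\R^n} f\,d\cL(X^y_t)=1$ for every $y$) gives
\begin{equation*}
    \nu^{\cL(X),x}_t(\R^n)=\int_I g(x,y)\,\lambda(dy),
\end{equation*}
a quantity independent of $t$. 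Taking supremum over $t\in[0,T]$ and integrating over $x$, Tonelli's theorem together with the Cauchy--Schwarz inequality on the probability space $(I\times I,\lambda\otimes\lambda)$ yields
\begin{equation*}
    \int_I \sup_{t\in[0,T]}\|\nu^{\cL(X),x}_t\|_{\mathrm{BL}}\,\lambda(dx)=\int_I\!\int_I g(x,y)\,\lambda(dy)\lambda(dx)=\|g\|_1\le \|g\|_2<\infty,
\end{equation*}
which is the desired bound.

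The only mildly delicate point is Step 1, where one must invoke the Fubini extension property (rather than the classical product Fubini theorem) to exchange integration with respect to $\P$ and measurability in $x$; once that is in place, Steps 2 and 3 are immediate consequences of earlier results and the fact that positive measures are normed by their total mass in $\|\cdot\|_{\mathrm{BL}}$.
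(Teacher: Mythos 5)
Your proposal is correct and follows essentially the same route as the paper: measurability of $x\mapsto \cL(X^x)(A)$ via the Fubini-extension property applied to the bounded indicator $(x,\omega)\mapsto\mathbf{1}_A(X^x(\omega))$ combined with Proposition \ref{prop:characterization-measurability-maps-taking values-on-positive-measures}, then Lemma \ref{lemma:nu-defines-a-measure} for the measure property, and finally the bound $\|\nu^{\cL(X),x}_t\|_{\mathrm{BL}}\le \int_I g(x,y)\lambda(dy)$ (uniform in $t$) together with $\|g\|_1\le\|g\|_2<\infty$. Your observation that for a positive measure the BL norm equals the total mass is just a slightly more explicit phrasing of the paper's estimate $|\phi|\le 1$, and your handling of the supremum over $t$ is, if anything, marginally cleaner.
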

\begin{proof}
    For every $B\in \mathcal{B}(\mathcal{C})$ we have that
    \begin{equation*}
    \begin{split}
        \mathcal{L}(X^y)(B)
        = \int_{\Omega} \mathbf{1}_{\{X^y(\omega)\in B\}}\mathbb{P}(d\omega).
    \end{split}
    \end{equation*}
    As we know that the function $\mathbf{1}_{\{X^y(\omega)\in B\}}$ is $\mathcal{I}\boxtimes\mathcal{F}$-measurable and bounded, it is integrable.
    By the definition of a Fubini extention we have that $y\mapsto \mathcal{L}(X^y)(B)$ is integrable and hence $\mathcal{I}$-measurable. 
    The second claim is immediate from Lemma \ref{lemma:nu-defines-a-measure}.
    
    In view of Proposition \ref{prop:graphon.integration}, the integrability property follows from    
    \begin{equation*}
    \begin{split}
        \int_I\|\nu_{s}^{ \mathcal{L}(X),x}\|_{\mathrm{BL}} \lambda(dx) &= \int_I\sup_{\phi \in 1\text{-Lip}_b} \int_{\mathbb{R}^n} \phi(z) \nu_{s}^{ \mathcal{L}(X) ,x}(dz) \lambda(dx)\\
        & \leq \int_I \sup_{\phi \in 1\text{-Lip}_{b}} \int_I  g(x,y)\mathbb{E}\left[|\phi(X_s^{y})|\right] \lambda(dy)\lambda(dx)\\
        & \leq \int_I\int_I  g(x,y) \lambda(dy)\lambda(dx) <\infty,
    \end{split}
    \end{equation*}
    where we used that $g$ is non-negative and square integrable on $I\times I$.
\end{proof}

\subsection{Proof of Theorem \ref{thm:existence-uniqueness}}
\label{sec:proof-existence-uniqueness}

The proof of the existence result will use a Banach fixed point argument in the Banach space $\S^2_{\boxtimes}(I\times \Omega, \mathcal{C})$. 
In fact, let us be given a process $Z\in \S^2_{\boxtimes}(I\times \Omega, \mathcal{C})$.
Denote again by $Z$ the version defined in \eqref{eq:version} and consider the mapping $\mu = \mathcal{L}({Z}) : I \rightarrow \mathcal{P}_2(\mathcal{C})$.
The SDE
\begin{equation}
\begin{split}
\label{eq:graphon-SDE-mu.proof}
    dX_t^{\mu,x} &=  b\bigg(t,X_t^{\mu,x}, \int_I g(x,y)\mu_t^y \lambda(dy)\bigg)dt + \sigma\bigg(t,X_t^{\mu.x}, \int_I g(x,y)\mu_t^y \lambda(dy)\bigg)dB_t^x,\quad
    X_0^{\mu,x}= \xi^x,
\end{split}
\end{equation}
parameterized by $\mu$ admits a unique solution.
Thus by Proposition \ref{prop:measurable.SDE}, the map
\begin{equation}
    \psi :  \S^2_{\boxtimes}(I\times \Omega, \mathcal{C}) \rightarrow \S^2_{\boxtimes}(I\times \Omega, \mathcal{C}), \quad Z \mapsto X^{\mu,\cdot},\quad \text{with} \quad \cL(Z) =\mu, 
\end{equation}
 is well-defined.
It remains to show that this mapping admits a unique fixed point in $\S^2_{\boxtimes}(I\times \Omega,\cC)$. 
We will show that there is $k\ge1$ such that the $k$-fold composition $\psi^k = \psi\circ \dots\circ\psi$ is a contraction. 
Then, by Banach's fixed point theorem we will conclude the proof.

\begin{proof}[Proof of Theorem \ref{thm:existence-uniqueness}]
    Let $Z,Z'\in \S^2_{\boxtimes}(I\times \Omega, \mathcal{C})$ be fixed. For simplicity denote $\mu = \mathcal{L}(Z)$ and $\mu' = \mathcal{L}(Z')$. Also let $\psi(Z)=X^{\mu,\cdot}$ and $\psi(Z') = X^{\mu',\cdot}$ with $X^{\mu,\cdot}$ and $X^{\mu',\cdot}$ satisfying equation \eqref{eq:graphon-SDE-mu.proof} with $\mu$ and $\mu'$ respectively.
    Thus, for $\lambda$-almost every $x\in I$, by Lipschitz continuity of $b,\sigma$ and Burkholder-Davis-Gundy inequality (recalling that $ \int_0^s \sigma(u,X_u^{\mu,x},\nu_u^{\mu,x})-  \sigma(u,X_u^{\mu',x},\nu_u^{\mu',x}) dB^x_u$ is a square integrable martingale in view of the growth condition of $\sigma$ and Lemma \ref{eq:Lemma.integ.graphon.integr}) we have that
    \begin{equation*}
    \begin{split}
        \mathbb{E}\bigg[\sup_{s\in [0,t]}|X^{\mu,x}_s-X^{\mu',x}_s|^2\bigg]
         & \leq  C\bigg(\int_0^t \mathbb{E}\Big[\sup_{v\in [0,u]}|X_v^{\mu,x}-X_v^{\mu',x}|^2 \Big] du  +  d_{\mathrm{BL}}^2(\nu_u^{\mu,x},\nu_u^{\mu',x})du \bigg).
    \end{split} 
    \end{equation*}
    Then, by Gronwall's inequality and Proposition \ref{prop:graphon.integration}, we have
    \begin{equation*}
    \begin{split}
        \mathbb{E}\bigg[\sup_{s\in [0,t]}|X^{\mu,x}_s-X^{\mu',x}_s|^2\bigg] 
        & \leq   C \int_0^t d_{\mathrm{BL}}^2(\nu_u^{\mu,x},\nu_u^{\mu',x}) du \\
         & =  C \int_0^t \bigg(\sup_{\phi\in 1\text{-Lip}_b} \int_I g(x,y)\mathbb{E}\Big[\phi(Z_u^{y})-\phi((Z')_u^{y})\Big]\lambda(dy)\bigg)^2 \\
         & \leq  C \bigg(\int_I g(x,y)^2 \lambda(dy)\bigg) \int_0^t\int_I \mathbb{E}\Big[\big|Z_u^{y}-(Z')_u^{y}\big|^2\Big]\lambda(dy) du ,\\
    \end{split}
    \end{equation*}
    where in the last line we used Cauchy-Schwarz inequality and Jensen's inequality. 
    Finally, to recover the expectation $\mathbb{E}^{\boxtimes}$ we integrate with respect to $\lambda(dx)$ on both sides and use again Cauchy-Schwarz inequality to obtain
    \begin{equation*}
    \begin{split}
        \mathbb{E}^{\boxtimes}\bigg[\sup_{s\in [0,t]}|\psi(Z)_s-\psi(Z')_s|^2\bigg] 
        &\leq  C \left(\int_{I\times I} g(x,y)^2 \lambda(dy)\lambda(dx)\right) \int_0^t \mathbb{E}^\boxtimes\left[\left|Z_u-Z'_u\right|^2\right] du \\ 
        & \leq  C  \int_0^t \mathbb{E}^\boxtimes\Big[\sup_{v\in [0,s]}\left|Z_v-Z'_v\right|^2\Big] ds .
    \end{split}
    \end{equation*}

    We will now iterate this bound $k$ times to obtain an estimation for $\psi^k$:
    \begin{equation*}
    \begin{split}
        \mathbb{E}^{\boxtimes}\bigg[\sup_{s\in [0,T]}|\psi^k(Z)_s-\psi^k(Z')_s|^2\bigg] 
        &\leq  C^k \int_0^T \int_0^{u_1} \dots \int_0^{u_{k-1}}  \mathbb{E}^\boxtimes\bigg[\sup_{v\in [0,u_k]}\left|Z_v-Z'_v\right|^2\bigg]du_k \dots du_1 \\
        &\leq   \frac{C^kT^k}{k!}  \mathbb{E}^{\boxtimes}\bigg[\sup_{v\in [0,T]}\left|Z_v-Z'_v\right|^2\bigg].
    \end{split}
    \end{equation*}
    Since for $k$ large enough we have $\frac{C^kT^k}{k!}<1$, it follows that $\psi^k$ is a contraction.
    Therefore, we conclude the proof using Banach fixed point theorem.
\end{proof}

\subsection{Stability of graphon SDEs}
We conclude this section by discussing stability of the graphon SDE with $L^2$-graphons.
This result will serve us well when proving the propagation of chaos result in the next section.
\begin{proposition}
\label{prop:stability-of-g-integral}
    Let Conditions \ref{cond:initial-conditions} and \ref{cond:coefficients} be satisfied.
    Let $g$ and $h$ be two square integrable graphons, and denote by $(X^{x,g})_{x\in I}$ and $(X^{x,h})_{x\in I}$ the solutions of the SDE \eqref{eq:graphon-SDE} with respective graphons $g$ and $h$. 
    Then, it holds that 
    \begin{equation*}
        \int_I \mathbb{E}\Big[\sup_{t\in [0,T]}|X^{y,g}_t-X^{y,h}_t|^2\Big] \lambda(dy) \le  C\|g-h\|^2_{2},
    \end{equation*}
    for a constant $C$ that depends only on the Lipschitz constants of $b$ and $\sigma$ and on $\|h\|_2$.
\end{proposition}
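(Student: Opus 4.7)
The plan is to follow the standard stability argument for SDEs with Lipschitz coefficients, with the crucial care devoted to controlling the bounded-Lipschitz distance between the two graphon-weighted measures $\nu^{g,x}_t:=\int_I g(x,y)\mathcal{L}(X^{y,g}_t)\lambda(dy)$ and $\nu^{h,x}_t:=\int_I h(x,y)\mathcal{L}(X^{y,h}_t)\lambda(dy)$. First I would subtract the two SDEs for $X^{x,g}$ and $X^{x,h}$, take $\sup_{t\in[0,s]}$, apply the Burkholder--Davis--Gundy inequality to the martingale part (noting that the stochastic integral is a true martingale by the linear growth in Condition \ref{cond:coefficients} combined with Lemma \ref{eq:Lemma.integ.graphon.integr}), and use the Lipschitz continuity of $b$ and $\sigma$ to obtain, for $\lambda$-a.e.\ $x\in I$,
\begin{equation*}
    \mathbb{E}\Big[\sup_{t\in[0,s]}|X^{x,g}_t-X^{x,h}_t|^2\Big]\leq C\int_0^s \mathbb{E}\Big[\sup_{v\in[0,u]}|X^{x,g}_v-X^{x,h}_v|^2\Big]du + C\int_0^s d_{\mathrm{BL}}^2(\nu^{g,x}_u,\nu^{h,x}_u)\,du.
\end{equation*}

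The heart of the argument is then to estimate the last integrand after integration in $x$. Writing the telescoping decomposition
\begin{equation*}
    \nu^{g,x}_u-\nu^{h,x}_u=\int_I h(x,y)\bigl[\mathcal{L}(X^{y,g}_u)-\mathcal{L}(X^{y,h}_u)\bigr]\lambda(dy)+\int_I\bigl[g(x,y)-h(x,y)\bigr]\mathcal{L}(X^{y,g}_u)\lambda(dy),
\end{equation*}
I would bound the first piece using that $\phi\in 1$-$\mathrm{Lip}_b$ is Lipschitz (yielding $\int h(x,y)\mathbb{E}|X^{y,g}_u-X^{y,h}_u|\lambda(dy)$) and the second piece using that $\phi$ is bounded by $1$ (yielding $\int |g(x,y)-h(x,y)|\lambda(dy)$). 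Squaring, integrating in $x$, and applying the Cauchy--Schwarz inequality in the $y$-integration gives
\begin{equation*}
    \int_I d_{\mathrm{BL}}^2(\nu^{g,x}_u,\nu^{h,x}_u)\lambda(dx)\leq 2\|h\|_2^2\int_I \mathbb{E}\bigl[|X^{y,g}_u-X^{y,h}_u|^2\bigr]\lambda(dy) + 2\|g-h\|_2^2.
\end{equation*}

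With this in hand, set $\Phi(s):=\int_I \mathbb{E}[\sup_{t\in[0,s]}|X^{y,g}_t-X^{y,h}_t|^2]\lambda(dy)$, which is finite by the fact that both solutions lie in $L^2_\boxtimes(I\times\Omega,\mathcal{C})$ (Theorem \ref{thm:existence-uniqueness}) together with Fubini in the box extension. Integrating the first displayed bound over $x$ and inserting the second bound yields
\begin{equation*}
    \Phi(s)\leq C(1+\|h\|_2^2)\int_0^s \Phi(u)\,du + CT\|g-h\|_2^2,
\end{equation*}
and Gronwall's lemma closes the estimate. The main technical point to be careful about is the choice of decomposition above: splitting so that the Lipschitz-in-$\phi$ piece is weighted by $h$ (rather than $g$) is what produces a constant depending only on $\|h\|_2$ as stated, and using $\|\phi\|_\infty\leq 1$ on the remaining piece is what makes the linear growth of the measures (which are not probability measures) irrelevant. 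No further measurability issues arise here since integrability of all quantities in $x$ has already been established in Lemma \ref{eq:Lemma.integ.graphon.integr}.
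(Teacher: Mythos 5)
Your proposal is correct and follows essentially the same route as the paper: the identical telescoping decomposition of $\nu^{g,x}_u-\nu^{h,x}_u$ (difference of laws weighted by $h$, handled via the Lipschitz property of $\phi$, plus the graphon difference paired with $\mathcal{L}(X^{y,g}_u)$, handled via $\|\phi\|_\infty\le 1$), followed by Cauchy--Schwarz in $y$ and Gronwall applied to the $x$-integrated quantity. The only cosmetic difference is that the paper integrates over $x$ from the start rather than stating the pointwise-in-$x$ estimate first, which changes nothing in substance.
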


\begin{proof}
    We will use the notation $\nu^{y,g}_{X_t}:=\int_Ig(x,y)\cL(X^{g,y}_t)\lambda(dy)$.
    Since $X^{x,h}$ and $X^{x,g}$ satisfy Equation \eqref{eq:graphon-SDE} for the respective graphons $h$ and $g$, using Burkholder-Davis-Gundy inequality and Condition \ref{cond:coefficients} we have:
    \begin{equation}
    \label{eq:stability.estim}
    \begin{split}
        \int_I \mathbb{E}\Big[\sup_{s\in [0,T]}|X^{y,g}_{s} - X^{y,h}_s|^2\Big]\lambda(dy)
        & \leq  C \int_I \mathbb{E}\bigg[ \int_0^T |X^{y,g}_u-X^{y,g}_u|^2 + d^2_{\mathrm{BL}}(\nu^{y,g}_{X_u}, \nu^{y,h}_{X_u}) du\bigg] \lambda(dy) \\
        & \leq C \int_0^T \int_I\Big( \E \Big[\sup_{r\in [0,u]}\big| X_r^{y,g}-X_r^{y,h}\big|^2\Big] +  d_{\mathrm{BL}}^2(\nu^{y,g}_{X_u}, \nu^{y,h}_{X_u})  \Big) \lambda (dy) du. 
    \end{split}
    \end{equation}
    Observe that by definition of the bounded Lipschitz distance we have
    \begin{equation*}
    \begin{split}
        \int_I  d^2_{\mathrm{BL}}(\nu^{y,g}_{X_u}, \nu^{y,h}_{X_u})  \lambda(dy) 
        & =  \int_I \sup_{\phi\in 1\text{-Lip}_{b}} \left(\int_I \mathbb{E}[\phi(X^{z,g}_u)]g(y,z) - \mathbb{E}[\phi(X^{z,h}_u)]h(y,z)  \lambda(dz)\right)^2 \lambda(dy)\\
         & \leq 2 \int_I  \sup_{\phi\in 1\text{-Lip}_{b}}  \left( \int_I \mathbb{E}[ \phi(X^{z,g}_u)]\Big(g(y,z) - h(y,z)\Big)\lambda(dz) \right)^2  \\
        & \quad + \sup_{\phi\in 1\text{-Lip}_{b}}\left( \int_I \mathbb{E}[  \phi(X^{z,g}_u)] - \mathbb{E}[\phi(X^{z,h}_u)] h^2(y,z)\lambda(dz)\right)^2 \lambda(dy)\\
        & \leq \int_I  \sup_{\phi\in 1\text{-Lip}_{b}}  \int_I \mathbb{E}[\left| \phi(X^{z,g}_u)\right|^2]\lambda(dz) \int_I \Big(g(y,z) - h(y,z)\Big)^2\lambda(dz)\\
        & \quad +  \int_I \sup_{\phi\in 1\text{-Lip}_{b}} \int_I \left(\mathbb{E}[  \phi(X^{z,g}_u)] - \phi(X^{z,h}_u)] \right)^2 \lambda(dy) \int_{I} h^2(y,z)\lambda(dz) \lambda(dy) \\
        & \le \|g-h\|_2^2 + \|h\|_2^2 \int_I \mathbb{E}\Big[\sup_{r\in [0,u]}\big|X^{z,g}_r - X^{z,h}_r\big|^2\Big] \lambda(dy).
    \end{split}
    \end{equation*}
    Notice that we can take the square inside the supremum in the first line because if $\phi$ is $1$-Lip$_{b}$, then $-\phi$ is also in $1$-Lip$_{b}$ and thus we could use $-\phi$ to get a larger value, so the supremum is always positive. Furthermore, we used Cauchy-Schwarz inequality and the fact that $h$ is square integrable. 

    Coming back to equation \eqref{eq:stability.estim}, we then have that 
    \begin{align*}
        \int_I \mathbb{E}\Big[\sup_{s\in [0,T]}|X^{y,g}_{s} - X^{y,h}_s|^2\Big]\lambda(dy) & \leq  C \int_0^T \int_I \mathbb{E}\Big[  \sup_{r\in [0,u]}|X^{y,g}_r-X^{y,h}_r|^2\Big]\lambda(dy) du +  C\|g-h\|_{2}^2.   
    \end{align*}   
    Applying Gronwall's inequality allows to finish the proof.
\end{proof}

\section{Propagation of chaos and related convergence results}
\label{sec:proof-convergence}

Let us now turn our attention to the proof of the main propagation of chaos results stated in the introduction.
We will also derive a few by-products of the main results.
The proofs build upon several steps and is thus divided into subsections.
We begin by deriving yet another stability result for the graphon SDE with respect to the graphon.
In contrast to Proposition \ref{prop:stability-of-g-integral}, here we look at stability with respect to another distance. 
Then, we will prove moment estimates.
In the ensuing part we clarify that the usual synchronous coupling argument of \cite{sznitman} extends to graphon SDEs.
We will first prove convergence of the particle system when the underlying graphon $g$ is bounded, then use the stability results to extend the result to the case of $L^2$-graphons.

We will present the proof only for the case where Condition \ref{cond:graphon-cond-1} is satisfied.
The case where Condition \ref{cond:graphon-cond-2} holds follows exactly the same but replacing $g_N$ by $g$ in all the computations.

\subsection{Stability and moment estimates}

To begin with, let us clarify that the particle system considered in this paper is well-defined.
\begin{proposition}
\label{prop:existence-part-system}
    Assume that for each $i=1,\dots,N$,  $\E[|\xi^i|^2]<\infty$.
    Let Condition \ref{cond:coefficients} and Condition \ref{cond:graphon-cond-1} be satisfied and assume that the graphon $g$ is square integrable. 
    Then for every $N\ge1$ there exist unique solutions to the systems of SDEs \eqref{eq:intro.part.system} and \eqref{eq:part.system.B}.
\end{proposition}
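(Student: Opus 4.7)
My plan is to reduce each particle system to a classical finite-dimensional Itô SDE in $\R^{nN}$ with random but $\mathcal{F}_0$-measurable Lipschitz coefficients, and then invoke the standard strong existence and uniqueness result. The comment following \eqref{eq:part.system.B} allows me to take the driving Brownian motions in \eqref{eq:part.system.B} to be jointly independent without loss of generality (they agree in law with the pairwise independent family by uniqueness), so the two systems can be treated by a single argument; I will write it for \eqref{eq:intro.part.system}.

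Concretely, I would stack the $N$ equations into a single $\R^{nN}$-valued SDE $d\mathbf{Y}_t = \mathbf{b}(\omega;t,\mathbf{Y}_t)\,dt + \boldsymbol{\sigma}(\omega;t,\mathbf{Y}_t)\,d\mathbf{W}_t$, where $\mathbf{W}$ is the $Nm$-dimensional Brownian motion obtained by concatenating the $W^i$, the drift is $\mathbf{b}_i(\omega;t,\mathbf{y}) = b(t, y^i, m^{i,N}_{\mathbf{y}}(\omega))$, and $\boldsymbol{\sigma}(\omega;t,\mathbf{y})$ is block-diagonal with blocks $\sigma(t, y^i, m^{i,N}_{\mathbf{y}}(\omega))$. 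Since $(\zeta^N_{ij})_{i,j}$ is $\mathcal{F}_0$-measurable, conditioning on $\mathcal{F}_0$ freezes the weights and turns $\mathbf{b}, \boldsymbol{\sigma}$ into deterministic functions of $(t,\mathbf{y})$; it then suffices to verify Lipschitz continuity and linear growth in $\mathbf{y}$, together with continuity in $t$, for each fixed graph realization.

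The key estimate, obtained directly from the $1$-Lipschitz-and-bounded characterization of $d_{\mathrm{BL}}$ together with $\zeta^N_{ij} \in \{0,1\}$, is that for any $\mathbf{y}, \mathbf{y}' \in \R^{nN}$,
\begin{equation*}
d_{\mathrm{BL}}\bigl(m^{i,N}_{\mathbf{y}}, m^{i,N}_{\mathbf{y}'}\bigr) \;\le\; \frac{1}{N\beta_N}\sum_{j=1}^N \zeta^N_{ij}\, |y^j - (y')^j| \;\le\; \beta_N^{-1}\,|\mathbf{y} - \mathbf{y}'|,
\end{equation*}
and analogously $\|m^{i,N}_{\mathbf{y}}\|_{\mathrm{BL}} \le m^{i,N}_{\mathbf{y}}(\R^n) \le \beta_N^{-1}$. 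Combined with Condition \ref{cond:coefficients}(1) and (3), these give Lipschitz continuity and linear growth of $\mathbf{b}$ and $\boldsymbol{\sigma}$ with constants depending on $N,\beta_N$ but not on $\mathbf{y}$; continuity in $t$ is immediate from Condition \ref{cond:coefficients}(2). I then apply the standard strong existence and uniqueness theorem for Lipschitz SDEs, e.g.\ \cite[Theorem 5.2.9]{Karatzas-Shreve}, pathwise in $\mathcal{F}_0$ and using that $(\xi^i)_{i=1}^N \in L^2$. I do not anticipate any genuine obstacle; the only mild bookkeeping issue is that the Lipschitz constant blows up like $\beta_N^{-1}$, which is harmless since $N$ is fixed throughout the statement.
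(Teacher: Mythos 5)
Your proposal is correct and follows essentially the same route as the paper: reduce to a standard finite-dimensional SDE with (random, $\mathcal{F}_0$-measurable) Lipschitz coefficients, verify the Lipschitz and growth bounds from Condition \ref{cond:coefficients}, the boundedness of $\zeta^N_{ij}$ and the definition of $d_{\mathrm{BL}}$, with a constant depending on $N$ (and $\beta_N$). Your explicit $d_{\mathrm{BL}}$ estimate and the freezing of the graph realization are just the details the paper leaves implicit.
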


\begin{proof}
    The proof is obvious and follows from the standard theory of SDEs with (random) Lipschitz coefficients.
    It suffices to verify that the functions 
    $$
        (x_1\dots,x_N)\mapsto b\Big(t, x_i, \frac{1}{N}\sum_{j=1}^N\frac{\zeta^N_{ij}}{\beta_N}\delta_{x_j} \Big)
        \quad \text{and}\quad 
        (x_1\dots,x_N)\mapsto \sigma\Big(t, x_i, \frac{1}{N}\sum_{j=1}^N\frac{\zeta^N_{ij}}{\beta_N}\delta_{x_j} \Big)
    $$
    are Lipschitz continuous.
    This can be verified using Condition \ref{cond:coefficients} as well as boundedness of the random variables $\zeta^N_{ij}$ and definition of $d_{\mathrm{BL}}$.
    Notice that the Lipschitz-constant will depend on $N$.
\end{proof}

Next, we show that the moment bounds on the initial position of the particle system imply a (uniform in $N$) moment bound on the whole particle system.
In what follows, let $(X^{x,g})_{x\in I}$ and $(X^{i,N})_{i=1,\dots,N}$ be respectively the solution of the graphon SDE \eqref{eq:graphon-SDE} and of the particle system \eqref{eq:part.system.B}.
We will use the notation 
\begin{equation}
\label{eq:def.Xi.nu}
    X^{i,g}:= X^{x_i,g}\quad \text{and} \quad \nu^{i,g}_{X}(\cdot):=\int_Ig(x_i,y)\cL(X^{y,g})(\cdot)\lambda(dy), \quad i=1,\dots,N.
\end{equation}

\begin{lemma}
\label{lemma:L2-boundness-solutions}
    Assume Conditions \ref{cond:initial-conditions}, \ref{cond:coefficients} and \ref{cond:graphon-cond-1} hold. 
    Moreover, assume that $g$ is bounded and there exists a constant $C'>0$ independent of $N$ such that 
    $$
        \sup_{1\le i\leq N}\E\big[|\xi^i|^k\big]\leq C'
    $$
    for some integer $k\ge 2$. 
    Then, there exists a constant $C>0$ independent of $N$ such that 
    \begin{equation*}
        \sup_{1\leq i\leq N}\E\Big[\sup_{t\in [0,T]}|X^{x_i,g}_t|^{k}\Big] \leq C.
    \end{equation*}
    Moreover, if $g_N$ is  uniformly bounded, then there exists $C>0$ independent of $N$ such that
    \begin{equation*}
        \sup_{1\leq i\leq N} \E\Big[\sup_{t\in [0,T]}|X^{i,N}_t|^{k}\Big] \leq C.
    \end{equation*}
\end{lemma}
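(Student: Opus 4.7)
The plan is a standard Gronwall argument applied to the $k$-th moment of the sup-norm of the processes, but with careful attention to the interaction measure, which is the only place where the parameter $N$ and the graph randomness can produce blow-up. I would treat the two assertions in parallel and distinguish them only at the step of controlling the $k$-th moment of the interaction measure.

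For the graphon SDE, the key observation is that any $\phi\in 1\text{-Lip}_b$ satisfies $|\phi|\le 1$, so the bounded Lipschitz norm is majorized by the total mass:
\[
    \|\nu^{i,g}_{X_s}\|_{\mathrm{BL}} \le \int_I g(x_i,y)\,\lambda(dy) \le \|g\|_\infty.
\]
Substituting this into the SDE \eqref{eq:graphon-SDE} written in integral form, applying Burkholder-Davis-Gundy together with Jensen's inequality on the martingale piece, and using the linear growth condition \eqref{eq:b-growth-condition} with the elementary bound $(a+b+c)^k \le 3^{k-1}(a^k+b^k+c^k)$, I obtain
\[
    \E\Big[\sup_{s\le t}|X^{x_i,g}_s|^k\Big] \le C_1 + C_2 \int_0^t \E\Big[\sup_{r\le s}|X^{x_i,g}_r|^k\Big]\,ds,
\]
with $C_1, C_2$ depending only on $k, T, \|g\|_\infty$ and the uniform moment bound on $\xi^i$, but not on $N$ or $i$. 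Gronwall's inequality then yields the first claim.

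For the particle system, the same scheme gives the conclusion provided $\E\big[\|m^{i,N}_{X_s}\|_{\mathrm{BL}}^k\big]$ is bounded uniformly in $N$ and $i$; \textbf{this is the main obstacle}, because the pointwise bound $\|m^{i,N}_{X_s}\|_{\mathrm{BL}} \le (N\beta_N)^{-1}\sum_{j=1}^N \zeta^N_{ij}$ involves weights $1/\beta_N$ which diverge in the sparse regime. The saving is that, by Condition \ref{cond:graphon-cond-1}(4), the variables $\zeta^N_{ij}$ are independent Bernoullis with $\E[\zeta^N_{ij}] = g_N(x_i,x_j)\beta_N$, so that $S^{i,N} := \sum_{j=1}^N\zeta^N_{ij}$ has mean at most $\|g_N\|_\infty N\beta_N$. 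A factorial-moment expansion for sums of independent Bernoullis gives
\[
    \E\big[(S^{i,N})^k\big] \le C_k\big(1 + (\|g_N\|_\infty N\beta_N)^k\big),
\]
hence
\[
    \E\Big[\|m^{i,N}_{X_s}\|_{\mathrm{BL}}^k\Big] \le C_k\Big(\|g_N\|_\infty^k + (N\beta_N)^{-(k-1)}\Big),
\]
which is uniformly bounded in $N$ and $i$ by the uniform boundedness of $(g_N)$ and the assumption $N\beta_N\to\infty$. With this moment estimate in hand, the same BDG-growth-Gronwall chain applied to \eqref{eq:part.system.B} completes the proof; the uniformity in $i$ of the moment bound on $\xi^i$ ensures that the final constant depends on neither $N$ nor $i$.
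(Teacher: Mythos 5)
Your proof is correct, and for the graphon part it coincides with the paper's argument: bound $\|\nu^{i,g}_{X_s}\|_{\mathrm{BL}}\le\|g\|_\infty$ using $|\phi|\le 1$, then BDG, linear growth and Gronwall. Where you diverge is the key step for the particle system, namely the uniform $k$-th moment bound on the interaction mass. The paper splits $\frac{1}{N}\sum_j\frac{\zeta^N_{ij}}{\beta_N}$ into $\frac{1}{N\beta_N}\sum_j\big(\zeta^N_{ij}-\beta_N g_N(x_i,x_j)\big)$ plus $\frac1N\sum_j g_N(x_i,x_j)$, bounds the second term by $\|g_N\|_\infty$, and controls the centered term by a multinomial expansion in which every monomial containing a first power vanishes and the surviving terms are counted to give $O\big((N\beta_N)^{-k/2}\big)$. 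You instead bound the raw (uncentered) moment $\E\big[(\sum_j\zeta^N_{ij})^k\big]$ directly through factorial moments of a sum of independent Bernoullis, $\E[(S)_r]\le\mu^r$ with $\mu\le\|g_N\|_\infty N\beta_N$, and then divide by $(N\beta_N)^k$. Both are legitimate combinatorial moment estimates on the Bernoulli weights; your version is a bit leaner for this lemma, since a mere bound (rather than a vanishing fluctuation term) is all that is needed, whereas the paper's centering decomposition is the one it reuses later in the convergence proofs, where the $O\big((N\beta_N)^{-k/2}\big)$ decay of the centered term actually matters. One small point either way: the constant you obtain involves $(N\beta_N)^{-(k-1)}$ (or $(N\beta_N)^{-k}$), which is uniformly bounded over all $N$ because $\beta_N>0$ and $N\beta_N\to\infty$, exactly the same finitely-many-exceptions reasoning the paper relies on; it is worth stating explicitly, as is the standard a priori finiteness (or localization) needed before invoking Gronwall, which both you and the paper leave implicit.
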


\begin{proof}
    Fix $N\in \N$ and $0\leq i\leq N$.
    Using that $X^{i,g}_t$ satisfies the graphon SDE \eqref{eq:graphon-SDE}, by Condtion \ref{cond:coefficients} and Burkholder-Davis-Gundy inequality we have
    \begin{equation*}
    \begin{split}
        \E\Big[\sup_{s\in [0,t]}|X^{i,g}_s|^{k}\Big] 
        & \leq  C \E\bigg[1+|\xi^i|^{k} + \int_0^t \sup_{r\in [0,u]}\left|X^{i,g}_r\right|^{k} + \sup_{\phi \in 1\text{-Lip}_b} \bigg(\int_{\R^n} \phi(z)\nu^{i,g}_{X_u}(dz)\bigg)^{k}du \bigg]\\
        & \quad + C \E\bigg[ \bigg(\int_0^t |X^{i,g}_u|^2 + \|\nu_{X_u}^{i,g}\|_{\mathrm{BL}}^{2} du\bigg)^{k/2}\bigg]\\
        & \leq  C \E\bigg[1+|\xi^i|^{k} + \int_0^t \sup_{r\in [0,u]}\left|X^{i,g}_r\right|^{k} + \sup_{\phi \in 1\text{-Lip}_b} \bigg(\int_I\E[\phi(X^{y,g}_u) ]g(x_i,y)\lambda(dy)\bigg)^{k} du \bigg] \\
        & \leq  C \E\bigg[1+|\xi^i|^{k} + \int_0^t \sup_{r\in [0,u]}\left|X^{i,g}_r\right|^{k} + \|g\|_{\infty}^{k} \bigg], \\
    \end{split}
    \end{equation*}
    where in the second line we used the fact that $k/2\geq 1$ to apply Hölder's inequality. Then, using Gronwall's inequality we get the result. 

    The proof of the moment bound of the particle system is a bit harder because $\beta_N$ can become arbitrarily small.
    Observe that if Condition \ref{cond:graphon-cond-1} holds, then we have that $\E[\zeta^{N}_{ij}]= \beta_Ng_N(x_i,x_j)$.
    Notice as above that 
    \begin{equation}
    \label{eq:sol-bounded-intermetiate}
    \begin{split}
         \E\Big[\sup_{s\in [0,t]}|X^{i,N}_t|^{k}\Big] 
         & \leq C\E\bigg[1+|\xi^i|^{k} + \int_0^t \sup_{r\in [0,u]}|X^{i,N}_r|^{k} + \sup_{\phi \in 1\text{-Lip}_b}\Big( \frac{1}{N}\sum_{j=1}^N \frac{\zeta^{N}_{ij}}{\beta_N}\phi(X^{j,N}_u)\Big)^{k} du \bigg] \\
         & \leq C\E\bigg[1+|\xi^i|^{k} + \int_0^t \sup_{r\in [0,u]}|X^{i,N}_r|^{k} du + \bigg( \frac{1}{N}\sum_{j=1}^N \frac{\zeta^{N}_{ij}}{\beta_N}-g_N(x_i,x_j)\bigg)^{k}  \\
         &  \quad + \bigg( \frac{1}{N}\sum_{j=1}^N g_N(x_i,x_j)\bigg)^{k}  \bigg].
    \end{split}
    \end{equation}

    Notice that the third term converges to $0$. We get this result by expanding the $k$-th power:
    \begin{equation*}
        \frac{1}{N^{k}\beta_N^{k}}\E\bigg[\Big(\sum_{j=1}^N\Gamma_{ij}\Big)^{k}\bigg] = \frac{1}{N^{k}\beta_N^{k}}\E\Bigg[\sum_{\substack{k_1,\dots,k_N\\ k_1+\dots+k_N = k}}\Gamma_{i1}^{k_1}\dots\Gamma_{iN}^{k_N}\Bigg]\quad \text{with } \Gamma_{ij}=\zeta_{ij}^N - \beta_N g_N(x_i,x_j).
    \end{equation*}
    If $k_j=1$ we obtain that $\E[\Gamma_{ij}]=0$. Hence, we only need to sum over the combinations such that $k_1,\dots, k_N \neq 1$. Then, using that the centered moments of a Bernoulli random variable are bounded by its expectation we can write
    \begin{equation*}
    \begin{split}
        \frac{1}{N^{k}\beta_N^{k}}\E\bigg[\Big(\sum_{j=1}^N\Gamma_{ij}\Big)^{k}\bigg]&  = \frac{1}{N^{k}\beta_N^{k}} \E\left[\sum_{j=1}^{\lfloor\frac{k}{2}\rfloor} \sum_{\substack{k_1,\dots,k_N\neq 1\\ k_1+\dots+k_N = k\\ |\{l:k_l\neq 0\}|=j}}\Gamma_{i1}^{k_1}\dots\Gamma_{iN}^{k_N}\right]\\
        & \leq \frac{C}{N^{k}\beta_N^{k}} \sum_{j=1}^{\lfloor\frac{k}{2}\rfloor} \sum_{\substack{k_1,\dots,k_N\neq 1\\ k_1+\dots+k_N = k \\ |\{l:k_l\neq 0\}|=j}}\beta_N^j \\
        & = \frac{C}{N^{k}\beta_N^{k}} \sum_{j=1}^{\lfloor\frac{k}{2}\rfloor}\binom{N}{j}\binom{k+j-1}{j-1}\beta_N^j \\
        & \leq \frac{C}{N^{k}\beta_N^{k}} \sum_{j=1}^{\lfloor\frac{k}{2}\rfloor} \binom{k+j-1}{j-1}N^{j}\beta_N^j \\
        & \leq \frac{C}{N^{\lfloor\frac{k}{2}\rfloor}\beta_N^{\frac{k}{2}}} \sum_{j=1}^{\frac{k}{2}} \binom{k+j-1}{j-1}.
    \end{split}
    \end{equation*}

    Thus, going back to equation \eqref{eq:sol-bounded-intermetiate} and using that $g_N$ is uniformly bounded we obtain
    \begin{equation*}
    \begin{split}
        \E\Big[\sup_{s\in [0,t]}|X^{i,N}_t|^{k}\Big] & \leq C\E\big[|\xi^i|^{k}\big] + C \int_0^t\E\Big[ \sup_{r\in [0,u]}|X^{i,N}_r|^{k} \Big] du+ \frac{C}{N^{\frac{k}{2}}\beta_N^{\frac{k}{2}}} \sum_{j=1}^{\lfloor\frac{k}{2}\rfloor} \binom{k+j-1}{j-1} + C.  \\
    \end{split}
    \end{equation*}
    Using Gronwall inequality and that $N\beta_N\rightarrow \infty$, we get the result.
\end{proof}

\begin{lemma}
\label{lemma:L2-boundness-solutions-g-unbounded-part-1}
    Assume Conditions \ref{cond:initial-conditions}, \ref{cond:coefficients} and Condition \ref{cond:graphon-cond-1} hold. 
    Moreover, assume that $g$ is $k$-integrable and there exists a constant $C'>0$ independent of $N$ such that 
    $$
        \sup_{1\le i\leq N}\E\big[|\xi^i|^k\big]\leq C'
    $$
    for some integer $k\ge 2$. 
    Then, there exists a constant $C>0$ independent of $N$ such that 
    \begin{equation*}
        \frac{1}{N}\sum_{i=1}^N\E\Big[\sup_{t\in [0,T]}|X^{x_i,g}_t|^{k}\Big] \leq C.
    \end{equation*}
\end{lemma}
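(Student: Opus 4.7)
The plan is to replicate the argument of Lemma \ref{lemma:L2-boundness-solutions} for $X^{x_i,g}$ but in averaged form over $i$, so that the unboundedness of $g$ can be absorbed into its $L^k$-norm. Starting from the graphon SDE satisfied by $X^{x_i,g}$, I would apply Burkholder-Davis-Gundy together with the linear growth in Condition \ref{cond:coefficients} (valid since $k/2 \geq 1$, so Jensen/Hölder converts the bracket process into a $du$-integral of $k$-th moments) to obtain, for $\lambda$-a.e.\ $i$,
\begin{equation*}
\E\Big[\sup_{s \leq t} |X^{x_i, g}_s|^k\Big] \leq C\big(1 + \E[|\xi^i|^k]\big) + C\int_0^t \E\Big[\sup_{r \leq u} |X^{x_i, g}_r|^k\Big]\, du + C\int_0^t \|\nu^{i,g}_{X_u}\|_{\mathrm{BL}}^k\, du,
\end{equation*}
where $\nu^{i,g}_{X_u} := \int_I g(x_i, y)\mathcal{L}(X^{y,g}_u)\lambda(dy)$ is the graphon integral defined in \eqref{eq:def.Xi.nu}.

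The crucial deterministic estimate is the bound on the graphon term. Since every $\phi\in 1\text{-Lip}_b$ satisfies $|\phi|\leq 1$, the bounded-Lipschitz norm is controlled by total mass,
$$\|\nu^{i,g}_{X_u}\|_{\mathrm{BL}} \leq \nu^{i,g}_{X_u}(\mathbb{R}^n) = \int_I g(x_i, y)\lambda(dy),$$
and Jensen's inequality (applied to the probability measure $\lambda$ and the convex map $t\mapsto t^k$) gives $\|\nu^{i,g}_{X_u}\|_{\mathrm{BL}}^k \leq \int_I g(x_i, y)^k \lambda(dy)$. Averaging the BDG inequality over $i=1,\dots,N$ and using the hypothesis $\sup_i \E[|\xi^i|^k] \leq C'$ then yields
\begin{equation*}
\frac{1}{N}\sum_{i=1}^N \E\Big[\sup_{s\le t}|X^{x_i,g}_s|^k\Big] \leq C\Big(1 + \|\widehat g_N\|_k^k\Big) + C\int_0^t \frac{1}{N}\sum_{i=1}^N \E\Big[\sup_{r\le u}|X^{x_i,g}_r|^k\Big]\, du,
\end{equation*}
where $\widehat g_N(x,y):=g(x_i,y)$ for $x\in[x_i,x_{i+1})$ is the step approximation of $g$ in the first variable, and I used the identity $\tfrac{1}{N}\sum_i \int_I g(x_i,y)^k \lambda(dy) = \|\widehat g_N\|_k^k$. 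Gronwall's inequality then furnishes the desired uniform bound, \emph{provided} one has $\sup_N \|\widehat g_N\|_k^k < \infty$.

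The main obstacle is precisely this final uniform $L^k$-bound on the one-sided step approximation $\widehat g_N$: Condition \ref{cond:graphon-cond-1}(3) gives only $L^2$-convergence of the full two-sided step graphon $g_N\to g$, so an extra ingredient is needed. The natural route is to leverage the almost-continuity in Condition \ref{cond:almost-continuity} to deduce $\widehat g_N(x,y)\to g(x,y)$ for $\lambda\otimes\lambda$-a.e.\ $(x,y)$, and to upgrade this to $L^k$-convergence (hence boundedness of $\|\widehat g_N\|_k$) by dominated convergence against a suitable integrable majorant built from $g$; in the special case where $g$ is itself constant on the rectangles of the partition $(x_i)$, one has $\widehat g_N=g$ and the bound is trivial. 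Once this uniform $L^k$-bound is in hand, the averaged Gronwall estimate closes the proof.
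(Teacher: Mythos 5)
Your main chain of estimates is exactly the paper's: the paper also reruns the argument of Lemma \ref{lemma:L2-boundness-solutions} in averaged form, bounds the interaction term by its total mass, $\|\nu^{i,g}_{X_u}\|_{\mathrm{BL}}\le\int_I g(x_i,y)\lambda(dy)$ (test functions are bounded by one), raises to the $k$-th power, averages over $i$, and closes with Gronwall. The only point of divergence is the step you single out as the ``main obstacle'': the paper disposes of it in one line by writing
\begin{equation*}
\frac1N\sum_{i=1}^N\sup_{\phi\in 1\text{-Lip}_b}\Big(\int_I \E\big[\phi(X^{y,g}_u)\big]\,g(x_i,y)\,\lambda(dy)\Big)^{k}\;\le\;\|g\|_k^k,
\end{equation*}
i.e.\ it identifies the grid average $\frac1N\sum_i\int_I g(x_i,y)^k\lambda(dy)$ (your $\|\widehat g_N\|_k^k$) with a bound by $\|g\|_k^k$ without further comment. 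Your hesitation there is legitimate: for a general $k$-integrable $g$ the values $g(x_i,\cdot)$ along the null set of grid lines are not controlled by any integral norm, so as literally stated the bound needs either that $g$ be constant on the partition rectangles or a continuity/domination hypothesis in the spirit of Condition \ref{cond:graphon-cond-deterministic}(3) or Lemma \ref{lemma:cond-implies-convergence}. Note, however, that the repair you sketch is not available under the lemma's stated hypotheses: Condition \ref{cond:almost-continuity} is not assumed here, and a $k$-integrable majorant of $g\circ(f_N,\mathrm{id})$ need not exist, so your suggested route does not close the general case either. In the paper the lemma is in fact invoked only for piecewise-constant graphons (the $h_N$ of Lemma \ref{lemma:middle-term-reg-L1}), which is precisely your trivial case $\widehat g_N=g$, where the step is exact. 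In short: your proposal follows the same route as the paper, and the caveat you raise points to an imprecision that the paper's own one-line proof passes over silently, not to an idea you are missing.
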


\begin{proof}
    For the proof of Lemma \ref{lemma:L2-boundness-solutions} we obtain: 
    \begin{equation*}
    \begin{split}
        \frac{1}{N}\sum_{i=1}^N\E\Big[\sup_{s\in [0,t]}|X^{i,g}_s|^{k}\Big] 
        & \leq  \frac{C}{N}\sum_{i=1}^N \E\bigg[1+|\xi^i|^{k} + \sup_{\phi \in 1\text{-Lip}_b} \bigg(\int_I\E[\phi(X^{y,g}_u) ]g(x_i,y)\lambda(dy)\bigg)^{k} du \bigg] \\
        & \leq  C\E\bigg[1+\frac{1}{N}\sum_{i=1}^N |\xi^i|^{k} +  \|g\|^k_k du \bigg]. \\
    \end{split}
    \end{equation*}
\end{proof}

We now present a stability result for graphon SDE in terms of the average.
Recall the function $f_N$ was defined as
\begin{equation*}
    f_N(x) := \sum_{i=1}^{N-1} x_i {\mathbf{1}_{\{x\in [x_i,x_{i+1})\}}} + x_N \mathbf{1}_{\{x\in [x_N,1]\}}.
\end{equation*}

\begin{proposition}
\label{prop:stability-of-g-average}
    Let Conditions \ref{cond:initial-conditions}, \ref{cond:coefficients} and \ref{cond:almost-continuity} be satisfied. 
    For every $N\in \N$ consider an arbitrary sequence of points $x_1=0 < x_2 < \dots < x_N < 1$. 
    For any two square integrable graphons $g$ and $h$ there exists a constant $C>0$ independent of $N,h$ but dependent on $\|g\|_2$ such that 
     \begin{equation}
     \label{eq:average-stability-discrete}
     \begin{split}
        \frac{1}{N}\sum_{i=1}^N \mathbb{E}\bigg[\sup_{t\in [0,T]}|X^{i,g}_t-X^{i,h}_t|^2\bigg] & \leq  \frac{C}{N}\sum_{i=1}^N \int_I\left(h(x_i,y)-g(x_i,y)\right)^2\lambda(dy)\\
        & \quad +  \frac{C}{N}\sum_{i=1}^N\int_I|g(x_i,y)|^2\lambda(dy)  \|g-h\|_2^2.
    \end{split}
    \end{equation}
    If Condition \ref{cond:graphon-cond-1} is satisfied and $g$ is bounded,
    then
    \begin{equation}
    \label{eq:average-stability-g_N}
        \frac{1}{N}\sum_{i=1}^N \mathbb{E}\bigg[\sup_{t\in [0,T]}|X^{i,g}_t-X^{i,g_N}_t|^2\bigg] 
        \xrightarrow{N\rightarrow \infty} 0.
    \end{equation}
\end{proposition}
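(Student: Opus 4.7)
By Theorem \ref{thm:existence-uniqueness} we may couple $X^{i,g}$ and $X^{i,h}$ synchronously, on the same Brownian motion $B^{x_i}$ and the same initial condition $\xi^i$. Taking the difference of the two instances of equation \eqref{eq:graphon-SDE} and applying the Burkholder--Davis--Gundy inequality together with Condition \ref{cond:coefficients} yields, for each $i$ and $t\in[0,T]$,
\begin{equation*}
\mathbb{E}\Big[\sup_{s\in[0,t]}|X^{i,g}_s - X^{i,h}_s|^2\Big] \le C\int_0^t \mathbb{E}\Big[\sup_{r\in[0,u]}|X^{i,g}_r - X^{i,h}_r|^2\Big] du + C\int_0^t d_{\mathrm{BL}}^2\big(\nu^{i,g}_{X_u},\nu^{i,h}_{X_u}\big)\, du.
\end{equation*}

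\textbf{The $d_{\mathrm{BL}}$ bound (asymmetric split).} For $\phi\in 1\text{-Lip}_b$ I would write
\begin{equation*}
\int\phi\,d\nu^{i,g}_{X_u} - \int\phi\,d\nu^{i,h}_{X_u} = \int_I g(x_i,y)\big(\mathbb{E}[\phi(X^{y,g}_u)]-\mathbb{E}[\phi(X^{y,h}_u)]\big)\lambda(dy) + \int_I (g-h)(x_i,y)\,\mathbb{E}[\phi(X^{y,h}_u)]\lambda(dy).
\end{equation*}
Using $|\phi|\le 1$ and Cauchy--Schwarz on the second term yields a squared bound of $\int_I (g-h)(x_i,y)^2\lambda(dy)$. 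For the first term, Lipschitz continuity of $\phi$ combined with Jensen and Cauchy--Schwarz gives, after squaring, a bound of $\int_I g(x_i,y)^2\lambda(dy)\cdot\int_I \mathbb{E}|X^{y,g}_u-X^{y,h}_u|^2\lambda(dy)$, and Proposition \ref{prop:stability-of-g-integral} controls the latter inner factor by $C\|g-h\|_2^2$. Applying $(a+b)^2\le 2a^2+2b^2$, averaging over $i=1,\dots,N$, and closing with Gronwall's lemma produces \eqref{eq:average-stability-discrete}.

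\textbf{Convergence \eqref{eq:average-stability-g_N}.} I would specialize \eqref{eq:average-stability-discrete} to $h=g_N$. Boundedness of $g$ bounds $\frac{1}{N}\sum_i\int_I g(x_i,y)^2\lambda(dy)\le\|g\|_\infty^2$, and $\|g-g_N\|_2\to 0$ by Condition \ref{cond:graphon-cond-1}(3), so the second summand vanishes. For the first, since $g_N$ is constant in its first argument on each $[x_i,x_{i+1})$,
\begin{equation*}
\frac{1}{N}\sum_{i=1}^N \int_I (g-g_N)(x_i,y)^2\lambda(dy) = \int_I\int_I \big(g(f_N(x),y)-g_N(x,y)\big)^2\lambda(dx)\lambda(dy) \le 2\|g\circ(f_N,\mathrm{id})-g\|_2^2 + 2\|g-g_N\|_2^2.
\end{equation*}
The second term again tends to $0$; for the first, Condition \ref{cond:almost-continuity} combined with $|f_N(x)-x|\le 1/N\to 0$ gives $g(f_N(x),y)\to g(x,y)$ for $(\lambda\otimes\lambda)$-a.e.\ $(x,y)$, and boundedness of $g$ licenses the dominated convergence theorem.

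\textbf{Main obstacle.} The delicate point is the choice of asymmetric split that places $g^2$ (rather than $h^2$) inside the integral factor of the second term in \eqref{eq:average-stability-discrete}. Were this factor to involve $h$, the estimate would be useless in the sparse regime where $g_N$ itself may be large, whereas the reference $g$ is assumed bounded in \eqref{eq:average-stability-g_N}. This asymmetry is precisely what enables the stability estimate to accommodate (possibly unbounded) step graphons $g_N$ against a bounded limit $g$, a point that will be used repeatedly in the subsequent propagation of chaos arguments.
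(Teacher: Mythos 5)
Your proof is correct and follows essentially the same route as the paper: the same asymmetric split of $d_{\mathrm{BL}}(\nu^{i,g}_{X_u},\nu^{i,h}_{X_u})$ (putting $g(x_i,\cdot)$ against the difference of laws and $(g-h)(x_i,\cdot)$ against the bounded test function), Cauchy--Schwarz plus Proposition \ref{prop:stability-of-g-integral} for the inner factor, and Gronwall to close \eqref{eq:average-stability-discrete}. For \eqref{eq:average-stability-g_N} you also reproduce the paper's argument, rewriting the average over $i$ as $\|g\circ(f_N,\mathrm{id})-g_N\|_2^2$ and invoking Condition \ref{cond:almost-continuity} with dominated convergence exactly as in Lemma \ref{lemma:cond-implies-convergence}.
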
 

\begin{remark}
    Notice that if we want to consider the $L^1$ norm then equation \eqref{eq:average-stability-discrete} will take the form: 
    \begin{equation*}
         \frac{1}{N}\sum_{i=1}^N \mathbb{E}\bigg[\sup_{t\in [0,T]}|X^{i,g}_t-X^{i,h}_t|\bigg] \leq  \frac{C}{N}\sum_{i=1}^N \int_I\left|h(x_i,y)-g(x_i,y)\right|\lambda(dy) +  \frac{C}{N}\sum_{i=1}^N\int_I|g(x_i,y)| \lambda(dy)  \|g-h\|_2.
    \end{equation*}
\end{remark}

In the proof of this proposition, Condition \ref{cond:almost-continuity} will be used only to claim the convergence in the last step. 
In particular, we will use it through the following lemma :

\begin{lemma}
\label{lemma:cond-implies-convergence}
    If $g$ is a graphon that satisfies Condition \ref{cond:almost-continuity} and there exists a square integrable function $h$ that dominates $f\circ (f_N, \mathrm{id})$\footnote{in particular, this assumption holds when $g$ is bounded.}, then
    \begin{equation*}
        \lim_{N\rightarrow 0}\int_I\int_I |g(f_N(x),y)-g(x,y)|^2\lambda(dx)\lambda(dy) = 0.
    \end{equation*}
\end{lemma}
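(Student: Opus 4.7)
The plan is to apply the dominated convergence theorem to the integrand $\varphi_N(x,y) := |g(f_N(x),y) - g(x,y)|^2$ on $I \times I$, equipped with $\lambda \otimes \lambda$.

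First, I would establish pointwise almost everywhere convergence $\varphi_N \to 0$. From the definition of $f_N$, one has $|f_N(x) - x| \leq 1/N$ for every $x \in I$, so $f_N(x) \to x$ uniformly. By Condition \ref{cond:almost-continuity}, there is a null set $N_0 \subset I$ such that for every $x \in N_0^c$ there exists a null set $I_x$ with $g$ continuous at $(x,y)$ for all $y \in I_x^c$. For such $(x,y)$, $(f_N(x), y) \to (x,y)$ together with continuity of $g$ at $(x,y)$ yields $g(f_N(x), y) \to g(x,y)$, hence $\varphi_N(x,y) \to 0$. The exceptional set $E := \{(x,y) \in I \times I : x \in N_0 \text{ or } y \in I_x\}$ has $\lambda \otimes \lambda$-measure zero: by Fubini, $(\lambda\otimes\lambda)(E) \leq \lambda(N_0) + \int_{N_0^c} \lambda(I_x)\,\lambda(dx) = 0$, where the measurability concerns are standard because $g$ is jointly Borel measurable.

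Second, I would produce an integrable dominator. The hypothesis that $g \circ (f_N, \mathrm{id})$ is dominated by a square-integrable function $h$ yields $|g(f_N(x),y)|^2 \leq h(x,y)^2$, so
\begin{equation*}
    \varphi_N(x,y) \leq 2|g(f_N(x),y)|^2 + 2|g(x,y)|^2 \leq 2h(x,y)^2 + 2g(x,y)^2,
\end{equation*}
which lies in $L^1(I \times I)$ since $g$ is square integrable by standing assumption and $h \in L^2(I\times I)$. The dominated convergence theorem then gives the desired limit.

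I do not expect any serious obstacle here; the only subtlety is the bookkeeping with the family of exceptional sets $\{I_x\}_{x \in N_0^c}$, which must be handled via Fubini to conclude that the joint exceptional set has $\lambda\otimes\lambda$-measure zero. This step uses the joint Borel measurability of $g$ (and hence of the map $(x,y) \mapsto \mathbf{1}_{\{y \in I_x\}}$ as a $\lambda\otimes\lambda$-null indicator after completion), which is built into the definition of a graphon.
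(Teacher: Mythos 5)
Your proposal is correct and follows essentially the same route as the paper: pointwise a.e.\ convergence of the integrand from Condition \ref{cond:almost-continuity} together with the uniform convergence $f_N(x)\to x$, followed by the dominated convergence theorem. If anything, your argument is slightly more careful than the paper's write-up, since you exhibit the explicit dominator $2h^2+2g^2$ coming from the stated domination hypothesis (the paper's proof invokes boundedness of $g$ at that step) and you handle the exceptional sets $\{I_x\}$ jointly via Fubini rather than splitting the inner integral, both of which are harmless variations of the same argument.
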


\begin{proof}
    For every $x\in I$ Condition \ref{cond:almost-continuity} gives a set $I_x$ with $\lambda(I_x)=0$ such that $g$ is continuous at $(x,y)$ for all $y \in I^c_x$.
    Thus,
    \begin{equation*}
    \begin{split}
        \int_I\int_I |g(f_N(x),y)-g(x,y)|^2\lambda(dy)\lambda(dx) & = \int_I\int_{I_x} |g(f_N(x),y)-g(x,y)|^2\lambda(dy)\lambda(dx) \\
        & \quad +\int_I\int_{I_x^c} |g(f_N(x),y)-g(x,y)|^2\lambda(dy)\lambda(dx) \\
        & = \int_I\int_{I_x^c} |g(f_N(x),y)-g(x,y)|^2\lambda(dy)\lambda(dx).
    \end{split}
    \end{equation*}
    Then, we have that $g$ is continuous at every pair $(x,y)$ in the domain of integration.
    Hence, because $f_N(x) \rightarrow x$ uniformly as $N\to\infty$, we have $g(f_N(x),y) \rightarrow g(x,y)$. 
    Since $g$ is bounded, by dominated convergence theorem we have that
    \begin{equation*}
        \lim_{N\rightarrow \infty} \int_I\int_{I_x^c} |g(f_N(x),y)-g(x,y)|^2\lambda(dy)\lambda(dx)  = 0.
    \end{equation*}   
\end{proof}

\begin{proof}[Proof of Proposition \ref{prop:stability-of-g-average}]
By the same argument as in the proof of Proposition \ref{prop:stability-of-g-integral} we have:
    \begin{equation}
    \label{eq:lemma:average-stability-estimate-0}
    \begin{split}
        \frac{1}{N}\sum_{i=1}^N\mathbb{E}\Big[\sup_{s\in [0,T]}|X^{i,g}_{s} - X^{i,h}_s|^2\Big] 
        & \leq C \int_0^T  \frac{1}{N}\sum_{i=1}^N\E \Big[\sup_{r\in [0,u]}\big| X_r^{i,g}-X_r^{i,h}\big|^2\Big] +  \frac{1}{N}\sum_{i=1}^N d_{\mathrm{BL}}^2(\nu^{i,g}_{X_u}, \nu^{i,h}_{X_u}) du. \\
    \end{split}
    \end{equation}
    To bound the second term on the right hand side of equation \eqref{eq:lemma:average-stability-estimate-0} we use the definition of $d_{\mathrm{BL}}$, boundedness of $g$ and Proposition \ref{prop:stability-of-g-integral} 
    \begin{equation}
    \label{lemma:estimate-1-average-stability-lemma}
    \begin{split}
          \frac{1}{N}\sum_{i=1}^N d^2_{\mathrm{BL}}(\nu^{i,g}_{X_u}, \nu^{i,h}_{X_u})  
         & \leq  \frac{2}{N}\sum_{i=1}^N \sup_{\phi\in 1\text{-Lip}_{b}}  \left( \int_I \mathbb{E}\left[ \phi(X^{y,h}_u)\right]\Big(h(x_i,y) - g(x_i,y)\Big)\lambda(dy) \right)^2  \\
        & \quad + \frac{2}{N}\sum_{i=1}^N\sup_{\phi\in 1\text{-Lip}_{b}}\left( \int_I \mathbb{E}\big[  \phi(X^{y,h}_u)- \phi(X^{y,g}_u)\big] g(x_i,y)\lambda(dy)\right)^2 \\
        & \le \frac{2}{N}\sum_{i=1}^N \int_I \Big(h(x_i,y) - g(x_i,y)\Big)^2\lambda(dy)\\
        &\quad + \frac2N\sum_{i=1}^N\int_I|g(x_i,y)|^2\lambda(dy)   \int_I\E\Big[\sup_{t\in[0,T]}|X^{y,h}_t - X^{y,g}_t|^2 \Big]\lambda(dy)\\
        &\le  \frac{2}{N}\sum_{i=1}^N \int_I \Big(h(x_i,y) - g(x_i,y)\Big)^2\lambda(dy) + \frac{C}{N}\sum_{i=1}^N\int_I|g(x_i,y)|^2\lambda(dy)  \|g-h\|_2^2,
    \end{split}
    \end{equation}
        where $id$ is the identity function $id(x) = x$.
    Combining equations \eqref{eq:lemma:average-stability-estimate-0} and \eqref{lemma:estimate-1-average-stability-lemma} and applying Gronwall's inequality we get \eqref{eq:average-stability-discrete}.

    To show equation \eqref{eq:average-stability-g_N} we notice that by introducing $f_N$ and letting $h = g_N$ in equation \eqref{lemma:estimate-1-average-stability-lemma} we obtain
    \begin{equation}
    \label{eq:from.average.to.integral}
    \begin{split}
        \frac{1}{N}\sum_{i=1}^N d^2_{\mathrm{BL}}(\nu^{i,g}_{X_u}, \nu^{i,h}_{X_u})  
         & \leq \frac{1}{N}\sum_{i=1}^N \int_I \Big(g_N(x_i,y) - g(x_i,y)\Big)^2\lambda(dy) + C\|g\circ (f_N,\mathrm{id})\|_2^2 \|g-g_N\|_2^2 \\
         &  =\int_I \int_I \Big(g_N(x,y) - g(f_N(x),y)\Big)^2\lambda(dx)\lambda(dy)  + C\|g\circ (f_N,\mathrm{id})\|_2^2 \|g-g_N\|_2^2 \\
        & \leq C\big( \|g\circ (f_N, \mathrm{id}) - g\|^2_2 + \|g-g_N\|^2_2\big) +  + C\|g\circ (f_N,\mathrm{id})\|_2^2 \|g-g_N\|_2^2. 
    \end{split}
    \end{equation}
    Finally using Lemma \ref{lemma:cond-implies-convergence} we obtain  \eqref{eq:average-stability-g_N}.
\end{proof}

\subsection{Synchronous coupling of graphon SDEs}
The proof of Theorem \ref{thm:convergence-bounded-graphon-L1} will make use of the synchronous coupling idea of \citet{sznitman} adapted to the present case of particles in heterogenous interaction.
The construction of a synchronous coupling of the graphon SDE \eqref{eq:graphon-SDE} is based on the following lemma:
\begin{lemma}
\label{lemma:sol-sde-are-epi}
    Let the assumptions of Theorem \ref{thm:existence-uniqueness} be satisfied and let $X\in L^2_{\boxtimes}(I\times \Omega,\cC)$ be the unique strong solution of the SDE \eqref{eq:graphon-SDE}. 
    Then, $(X^y)_{y\in I}$ forms a family of essentially pairwise independent processes.
\end{lemma}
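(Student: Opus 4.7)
The plan is to exploit two facts and combine them mechanically. First, by the construction based on Theorem~\ref{thm:construction-spaces}, the family of drivers and initial conditions $(B^x,\xi^x)_{x\in I}$ is essentially pairwise independent. Second, for $\lambda$-a.e.\ $x$, the solution $X^x$ can be written as a \emph{deterministic} measurable functional of the pair $(B^x,\xi^x)$. Independence of the drivers then passes through the functional representation, yielding essential pairwise independence of $(X^y)_{y\in I}$.

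\textbf{Step 1: functional representation $X^x=F^x(B^x,\xi^x)$.} Recall from Lemma~\ref{eq:Lemma.integ.graphon.integr} that the map $y\mapsto \mathcal{L}(X^y)$ is measurable, and hence for $\lambda$-a.e.\ $x$ the measure-valued process
\[
\nu^x_t \;=\; \int_I g(x,y)\,\mathcal{L}(X^y_t)\,\lambda(dy)
\]
is well-defined and, crucially, \emph{deterministic} in $\omega$ (it depends only on laws). Fix such an $x$. The coefficients $(s,z)\mapsto b(s,z,\nu^x_s)$ and $(s,z)\mapsto \sigma(s,z,\nu^x_s)$ are Lipschitz in $z$ with linear growth and continuous in $s$ by Condition~\ref{cond:coefficients}, and they depend only on the deterministic parameter $x$. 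Thus \eqref{eq:graphon-SDE} for $X^x$ reduces to a classical Lipschitz SDE driven by the Brownian motion $B^x$ with initial condition $\xi^x$, whose strong solution is unique. By Definition~\ref{def:solution-sde}$(2)$, $X^x$ is $\mathbb{F}^x$-adapted, where $\mathbb{F}^x$ is the $\P$-completion of the filtration generated by $(B^x,\xi^x)$. Consequently (by Doob's functional representation, or equivalently by Yamada--Watanabe), there exists a Borel-measurable map $F^x:\mathcal{C}\times\mathbb{R}^n\to\mathcal{C}$ such that $X^x = F^x(B^x,\xi^x)$ $\P$-a.s. Let $I_1\subseteq I$ be the full-$\lambda$-measure set of $x$ for which this representation holds.

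\textbf{Step 2: pushing independence through $F^x$.} By essential pairwise independence of $(B^x,\xi^x)_{x\in I}$ (Theorem~\ref{thm:construction-spaces}), there exists a full-$\lambda$-measure set $I_2\subseteq I$ such that for each $x\in I_2$, the pair $(B^x,\xi^x)$ is independent of $(B^y,\xi^y)$ for $y$ in a set $I_{2,x}\subseteq I$ of full $\lambda$-measure. Fix $x\in I_1\cap I_2$ and $y\in I_1\cap I_{2,x}$. Since $F^x$ and $F^y$ are deterministic Borel maps,
\[
X^x=F^x(B^x,\xi^x) \quad\text{and}\quad X^y=F^y(B^y,\xi^y)
\]
are independent as measurable images of independent random elements. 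Since $I_1\cap I_2$ and, for each $x$, $I_1\cap I_{2,x}$ have full $\lambda$-measure, this establishes essential pairwise independence of $(X^y)_{y\in I}$, which is the claim.

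The only genuinely delicate point is Step~1: one must justify that for $\lambda$-a.e.\ $x$, the SDE satisfied by $X^x$ has coefficients depending on $\omega$ only through $(B^x,\xi^x)$. This is where it is essential that the interaction enters via \emph{laws} $\mathcal{L}(X^y)$, so that $\nu^x$ is non-random once $x$ is fixed; the joint measurability of $X$ in $L^2_\boxtimes(I\times\Omega,\mathcal{C})$ guaranteed by Theorem~\ref{thm:existence-uniqueness} (via Proposition~\ref{prop:measurable.SDE}) is what ensures that this deterministic version of $\nu^x$ exists and that an $\mathbb{F}^x$-adapted solution can be selected. Everything else is routine strong-uniqueness bookkeeping.
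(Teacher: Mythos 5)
Your proposal is correct and follows essentially the same route as the paper: fix $x,y$ for which the drivers $(B^x,\xi^x)$, $(B^y,\xi^y)$ are independent, note that the interaction term $\nu^x$ is deterministic so each $X^z$ solves a classical Lipschitz SDE, represent the strong solution as a measurable functional of $(\xi^z,B^z)$ (the paper cites Ikeda--Watanabe's notion of strong solution, you invoke the same idea via Yamada--Watanabe), and push the essential pairwise independence of the drivers through these deterministic maps. Your additional bookkeeping with the full-measure sets $I_1$, $I_2$, $I_{2,x}$ is just a more explicit rendering of the paper's argument, not a different approach.
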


\begin{proof}
    Let $x,y \in I$ be fixed and consider $\tilde B^x:=(\xi^x,B^x)$ and $\tilde B^y= (\xi^y, B^y)$.
    Assume that $\tilde B^x$ and $\tilde B^y$ are independent. 
    We know by Theorem \ref{thm:existence-uniqueness} that $X^x$ and $X^y$ are unique strong solutions of the SDE \eqref{eq:graphon-SDE-mu} with $\nu^x$ therein replaced by $\nu^{x,g}_X:=\int_Ig(x,z)\cL(X^z)\lambda(dz)$ and $\nu^{y,g}_{X}:=\int_{I}g(y,z)\cL(X^z)\lambda(dz)$, respectively.  
    Thus, by \cite[Definition $4.1.6$]{Ikeda-Watanabe-SDE-and-diffusion-processes}, there exists a measurable function $F_{\nu^{z,g}_X}:\R^n\times \cC \to\R^n$ such that $X^{z} = F_{\nu^{z,g}_X}(\xi^z, B^z) = F_{\nu^{z,g}_X}(\tilde B^z)$ for $z=x,y$. Then, we get that $X^{x}$ and $X^{y}$ are independent. 
    Now, because the family $(\tilde B^x)_{x\in I}$ is essentially pairwise independent, it follows that $(X^x)_{x\in I}$ is also essentially pairwise independent.
\end{proof}

By Lemma \ref{lemma:sol-sde-are-epi}, the family $(X^{i,g})_{i=1,\dots,N}$ is independent. 
We also have the following immediate result:
\begin{lemma}
\label{lemma:independence-zeta-X}
   For every $1\leq i,j \leq N$ and $x\in I$. 
   We have that $\zeta_{ij}^N$ and $X^{x,g}$ are independent.
\end{lemma}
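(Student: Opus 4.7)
My plan is to exploit the strong-solution representation that was already used in the proof of Lemma \ref{lemma:sol-sde-are-epi}. Recall that $X^{x,g}$ solves the SDE \eqref{eq:graphon-SDE-mu} with the deterministic family of measures $\nu^{x,g}_X = \int_I g(x,z)\mathcal{L}(X^z)\lambda(dz)$ driving the coefficients; by \cite[Definition 4.1.6]{Ikeda-Watanabe-SDE-and-diffusion-processes} (together with Proposition \ref{prop:measurable.SDE} and Theorem \ref{thm:existence-uniqueness}), strong uniqueness yields a Borel-measurable functional $F_{\nu^{x,g}_X}:\mathbb{R}^n\times \mathcal{C}\to \mathcal{C}$ such that
\begin{equation*}
X^{x,g} = F_{\nu^{x,g}_X}\bigl(\xi^x,B^x\bigr) = F_{\nu^{x,g}_X}\bigl(\tilde B^x\bigr).
\end{equation*}

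From here the conclusion is immediate: by Condition \ref{cond:graphon-cond-1}(1), $\tilde B^x$ and $\zeta^N_{ij}$ are independent, and independence is preserved under measurable transformations of $\tilde B^x$, so $X^{x,g}$ and $\zeta^N_{ij}$ are independent as well.

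The only technical point that deserves attention is that the equality $X^{x,g}=F_{\nu^{x,g}_X}(\tilde B^x)$ holds only for $\lambda$-almost every $x\in I$ (it is at this stage that Theorem \ref{thm:existence-uniqueness} is invoked). For the exceptional null set of $x$'s, one uses the convention introduced just after Theorem \ref{thm:existence-uniqueness} (choosing a representative of $X^{x,g}$ via \eqref{eq:version}) so that $X^{x,g}\equiv 0$ on that null set and independence is automatic. I do not anticipate any genuine obstacle: the whole content of the lemma is the functional dependence of $X^{x,g}$ on $\tilde B^x$, which is built into the strong-solution framework we set up, combined with the independence hypothesis already placed on $\zeta^N_{ij}$ in Condition \ref{cond:graphon-cond-1}(1).
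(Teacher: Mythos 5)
Your proof is correct and follows essentially the same route as the paper: represent $X^{x,g}$ as a measurable functional $F_{\nu^{x,g}_X}(\tilde B^x)$ of $(\xi^x,B^x)$ via the strong-solution framework (as in Lemma \ref{lemma:sol-sde-are-epi}) and then invoke Condition \ref{cond:graphon-cond-1}(1) together with preservation of independence under measurable maps. Your additional remark about handling the exceptional null set of $x$'s via the chosen representative is a harmless refinement of the same argument.
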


\begin{proof}
    Fix $1\leq i,j\leq N$ and $x\in I$. 
    As in the proof of Lemma $\ref{lemma:sol-sde-are-epi}$, there exists a measurable function $F_{\nu^{x,g}_X}: \mathbb{R}^n \times \mathcal{C} \rightarrow \cC$ such that $X^x = F_{\nu^{x,g}_X}(\xi^x, B^x)$. 
    As $\zeta_{ij}^N$ and $\Tilde{B}^x$ are independent then $X^{x,g}$ and $\zeta_{ij}^N$ are independent. 
\end{proof}

\subsection{Convergence in the bounded case}
With the above preliminaries out of the way, we are now ready to present the proof of the first propagation of chaos result. Recall that the measure $m^{i,N}_{\X_t}$ was defined as
\begin{equation*}
    m_{\mathbb{X}_t}^{i,N} := \frac{1}{N} \sum_{j=1}^N \frac{\zeta_{ij}^N}{\beta_N} \delta_{X_t^{j,N}}.
\end{equation*}

The whole of this subsection is dedicated to the proof of Theorem \ref{thm:convergence-bounded-graphon-L1}, the assumptions of this result are thus in force in the rest of the subsection.
Let us introduce the vector $\X^g$  and the (weighted) empirical measure $m_{\X^g}^{i,N}$ defined as
\begin{equation}
\label{eq:def-coupled-measure}
    m_{\X^g}^{i,N}: = \frac1N\sum_{j=1}^N\frac{\zeta^N_{ij}}{\beta_N}\delta_{X^{j,g}}\quad \text{and}\quad \X^g:= (X^{x_1,g},\dots, X^{x_N,g}).
\end{equation}
Using the dynamics of $X^{i,N}$ and $X^{i,g}$, Condition \ref{cond:coefficients} and Burkholder-Davis-Gundy inequality we have
    \begin{equation}
    \begin{split}
        \frac{1}{N}\sum_{i=1}^N \E\bigg[\sup_{s\in[0,T]}\big|X^{i,N}_s-X^{i,g}\big|\bigg]
        & \leq   \frac{C}{N}\sum_{i=1}^N   \mathbb{E}\bigg[ \int_0^T |X^{i,N}_u-X^{i,g}_u| + d_{\mathrm{BL}}( m^{i,N}_{\X_u}, \nu^{i,g}_{X_u}) du\bigg] \\
        &\quad +  \frac{C}{N}\sum_{i=1}^N  \mathbb{E}\bigg[ \bigg(\int_0^T |X^{i,N}_u-X^{i,g}_u|^2 du\bigg)^{1/2}\bigg]\\
        & \leq   \frac{C}{N}\sum_{i=1}^N   \mathbb{E}\bigg[ \int_0^T |X^{i,X}_u-X^{i,g}_u| + d_{\mathrm{BL}}(m^{i,N}_{\X_u}, \nu^{i,g}_{X_u}) du\bigg] \\
        &\quad +  \frac{C}{N}\sum_{i=1}^N  \mathbb{E}\bigg[ \Big(\sup_{s\in[0,T]} |X^{i,N}_s-X_s^{i,g}| \Big)^{1/2}\bigg(\int_0^T |X^{i,N}_u - X^{i,g}_u|du \bigg)^{1/2} \bigg].
    \end{split}
    \end{equation}

    Fix $\eta>0$ and using that for any $a,b\in \R_{+}$ we have $ab\leq \eta^2a^2 + \eta^{-2}b^2$, we obtain
    \begin{equation*}
    \begin{split}
        \frac{1}{N}\sum_{i=1}^N \mathbb{E}\Big[\sup_{s\in [0,T]}|X^{i,N}_s-X^{i,g}_{s}|\Big] & \leq  \frac{C}{N}\sum_{i=1}^N   \mathbb{E}\left[ \int_0^t |X^{i,N}_u-X^{i,g}_u| + d_{\mathrm{BL}}(m^{i,N}_{\X_u}, \nu^{i,g}_{X_u}) du\right] \\
        &\quad +  \frac{C}{N}\sum_{i=1}^N  \mathbb{E}\bigg[ \eta^2\sup_{s\in[0,T]} |X_s^{i,N}-X_s^{i,g}| +  \frac{1}{\eta^2}\int_0^T |X^{i,N}_u-X^{i,g}_u|du \bigg]. 
    \end{split}
    \end{equation*}

    By rearranging and choosing $\eta$ sufficiently small, we obtain
    \begin{equation}
    \label{eq:first-estimate-L1}
    \begin{split}
        \frac{1}{N}\sum_{i=1}^N \mathbb{E}\Big[\sup_{s\in [0,T]}|X^{i,N}_{s} - X^{i,g}_s|\Big] & \leq  
        \frac{C}N\sum_{i=1}^N   \mathbb{E}\bigg[ \int_0^T |X^{i,N}_u-X^{i,g}_u| du\bigg] \\
         & \quad + \frac{C}{N}\sum_{i=1}^N   \mathbb{E}\bigg[ \int_0^t  d_{\mathrm{BL}}(\nu^{i,g}_{X_u} , m^{i,N}_{\X_u}) du\bigg].
    \end{split}
    \end{equation}
    
We will divide the proof into intermediate Lemmas.
\begin{lemma} \label{lem:First.estim.conf.proof-L1} 
For every $t\in [0,T]$ it holds that: 
\begin{equation}
\label{eq:estimation.proof.1-L1}
\begin{split}
    \frac1N\sum_{i=1}^N\E\bigg[d_{\mathrm{BL}}(m^{i,N}_{\X_t},  \nu^{i,g}_{X_t}) \bigg] & \le \frac{C}{\sqrt{N\beta_N}} + \frac{C}{N}\sum_{j=1}^N  \E\bigg[\sup_{s\in [0,t]}\left|X^{j,N}_s - X^{j,g}_s\right|\bigg] \\
    & \quad + \frac{1}{N}\sum_{i=1}^N\E\Big[d_{\mathrm{BL}}(\nu^{i,g}_{X_t}, m^{i,N}_{\X_t^g})\Big].\\
\end{split}
\end{equation}
    for some constant $C>0$ that does not depend on $N$.
\end{lemma}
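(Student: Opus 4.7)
The plan is to use the triangle inequality
\begin{equation*}
d_{\mathrm{BL}}(m^{i,N}_{\X_t}, \nu^{i,g}_{X_t}) \le d_{\mathrm{BL}}(m^{i,N}_{\X_t}, m^{i,N}_{\X_t^g}) + d_{\mathrm{BL}}(m^{i,N}_{\X_t^g}, \nu^{i,g}_{X_t}),
\end{equation*}
after which the second summand already matches the third term of the claim once averaged in $i$ and expected. The whole argument therefore reduces to showing that $\frac{1}{N}\sum_i\E\big[d_{\mathrm{BL}}(m^{i,N}_{\X_t}, m^{i,N}_{\X_t^g})\big]$ is bounded by $\frac{C}{\sqrt{N\beta_N}} + \frac{C}{N}\sum_j\E\big[\sup_{s\le t}|X^{j,N}_s - X^{j,g}_s|\big]$.

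Since every $\phi \in 1\text{-Lip}_b$ satisfies $|\phi(X^{j,N}_t) - \phi(X^{j,g}_t)| \le |X^{j,N}_t - X^{j,g}_t|$, the first step is the pathwise bound
\begin{equation*}
d_{\mathrm{BL}}(m^{i,N}_{\X_t}, m^{i,N}_{\X_t^g}) \le \frac{1}{N}\sum_{j=1}^N \frac{\zeta^N_{ij}}{\beta_N}\big|X^{j,N}_t - X^{j,g}_t\big|.
\end{equation*}
The next step is to center the edge weights through the decomposition $\zeta^N_{ij}/\beta_N = g_N(x_i,x_j) + (\zeta^N_{ij} - \beta_N g_N(x_i,x_j))/\beta_N$. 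Taking expectation and averaging over $i$, the deterministic piece $g_N(x_i,x_j)$ is controlled using the standing uniform boundedness of $g_N$ assumed in Theorem~\ref{thm:convergence-bounded-graphon-L1}, giving $\frac{1}{N^2}\sum_{i,j} g_N(x_i,x_j)\E[|X^{j,N}_t - X^{j,g}_t|] \le \frac{\|g_N\|_\infty}{N}\sum_j\E[\sup_{s\le t}|X^{j,N}_s - X^{j,g}_s|]$, which already matches the second term of the claim.

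The main obstacle is the remaining centered contribution
\begin{equation*}
\frac{1}{N^2}\sum_{i,j=1}^N\E\bigg[\frac{\zeta^N_{ij} - \beta_N g_N(x_i,x_j)}{\beta_N}\big|X^{j,N}_t - X^{j,g}_t\big|\bigg],
\end{equation*}
because $X^{j,N}_t$ depends on the full edge matrix and so is not independent of the centered factor. The plan is to swap the order of summation, pull the absolute value outside of the $i$-sum (legitimate since $|X^{j,N}_t - X^{j,g}_t|\ge 0$), and then apply Cauchy--Schwarz to obtain, for each fixed $j$,
\begin{equation*}
\E\bigg[\Big|\sum_{i=1}^N\frac{\zeta^N_{ij} - \beta_N g_N(x_i,x_j)}{\beta_N}\Big|\,\big|X^{j,N}_t - X^{j,g}_t\big|\bigg]\le\sqrt{\E\bigg[\Big(\sum_{i=1}^N\frac{\zeta^N_{ij} - \beta_N g_N(x_i,x_j)}{\beta_N}\Big)^2\bigg]}\sqrt{\E\big[|X^{j,N}_t - X^{j,g}_t|^2\big]}.
\end{equation*}
Condition~\ref{cond:graphon-cond-1}(4) makes $(\zeta^N_{ij})_{i}$ mutually independent Bernoullis of parameter $\beta_N g_N(x_i,x_j)$, so the variance under the first square root equals $\sum_i\mathrm{Var}(\zeta^N_{ij})/\beta_N^2 \le N\|g_N\|_\infty/\beta_N$, while Lemma~\ref{lemma:L2-boundness-solutions} bounds the second square root uniformly in $j$ and $N$. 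Multiplying these two factors, dividing by $N^2$ and summing over $j$ then yields precisely the $C/\sqrt{N\beta_N}$ contribution and closes the argument.
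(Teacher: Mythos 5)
Your proposal is correct and takes essentially the same route as the paper's proof: triangle inequality through $m^{i,N}_{\X_t^g}$, the pathwise $1$-Lipschitz bound, centering of the weights $\zeta^N_{ij}$ around $\beta_N g_N(x_i,x_j)$, and then Cauchy--Schwarz together with the independence of the Bernoulli variables and the uniform moment bound of Lemma \ref{lemma:L2-boundness-solutions} to produce the $C/\sqrt{N\beta_N}$ term. The only cosmetic difference is that you bound the variance sum via $\|g_N\|_\infty$ while the paper phrases it through $\|g_N\|_2$, which is immaterial under the uniform boundedness assumption in force.
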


\begin{proof}
    By triangular inequality we have
    \begin{equation}
    \label{eq:first-lemma-estim-1-L1}
    \begin{split}
        \frac{1}{N}\sum_{i=1}^N\E\Big[d_{\mathrm{BL}}(m^{i,N}_{\X_t}, \nu^{i,g}_{X_t})\Big] & \le \frac{1}{N}\sum_{i=1}^N \E\Big[d_{\mathrm{BL}}(m^{i,N}_{\X_t}, m^{i,N}_{\X_t^g})\Big] +  \frac{1}{N}\sum_{i=1}^N \E\Big[d_{\mathrm{BL}}(m^{i,N}_{\X_t^g} , \nu^{i,g}_{X_t})\Big]\\
        &\le \frac{1}{N}\sum_{i=1}^N \E\bigg[\sup_{\phi\in 1\text{-Lip}_{b}}\frac1N\sum_{j=1}^N\frac{\zeta^N_{ij}}{\beta_N}\left(\phi(X^{j,N}_t) - \phi(X^{j,g}_t)\right)\bigg]\\
        & \quad + \frac{1}{N}\sum_{i=1}^N\E\Big[d_{\mathrm{BL}}(m^{i,N}_{\X_t^g} , \nu^{i,g}_{X_t})\Big]\\
        &\le \frac{1}{N}\sum_{i=1}^N \E\bigg[\frac1N\sum_{j=1}^N\frac{\zeta^N_{ij}}{\beta_N}\left|X^{j,N}_t - X^{j,g}_t\right|\bigg]+ \frac{1}{N}\sum_{i=1}^N\E\Big[d_{\mathrm{BL}}(\nu^{i,g}_{X_t}, m^{i,N}_{\X_t^g})\Big]\\
        &\le \frac{1}{N\beta_N}\sum_{j=1}^N \E\bigg[\frac1N\sum_{i=1}^N \big( \zeta^N_{ij} - \beta_Ng_N(x_i,x_j) \big)\left|X^{j,N}_t - X^{j,g}_t\right|\bigg] \\
        & \quad + \frac{1}{N}\sum_{i=1}^N \E\bigg[\frac1N\sum_{j=1}^N g_N(x_i,x_j)\left|X^{j,N}_t - X^{j,g}_t\right|\bigg]  \\
        & \quad + \frac{1}{N}\sum_{i=1}^N\E\Big[d_{\mathrm{BL}}(m^{i,N}_{\X_t^g} , \nu^{i,g}_{X_t} )\Big]. \\
    \end{split}
    \end{equation}

    To bound the first term we use Hölder's inequality:
    \begin{equation}
    \label{eq:first-lemma-eq-1}
    \begin{split}
        & \frac{1}{N\beta_N}\sum_{j=1}^N \E\bigg[\frac1N\sum_{i=1}^N (\zeta^N_{ij} - \beta_Ng_N(x_i,x_j))\big|X^{j,N}_t - X^{j,g}_t\big|\bigg]\\
        & \le \frac{1}{N^2\beta_N}\sum_{j=1}^N \E\bigg[\bigg(\sum_{i=1}^N \zeta^N_{ij} - \beta_Ng_N(x_i,x_j)\bigg)^2 \bigg]^{1/2}\E\Big[\big|X^{j,N}_t - X^{j,g}_t\big|^2\Big]^{1/2} \\
        & \le \frac{C}{N^2\beta_N}\sum_{j=1}^N\Big(\sum_{i=1}^N  \E\Big[(\zeta^N_{ij} - \beta_Ng_N(x_i,x_j))^2\Big]\Big)^{1/2} \\
        & \le \frac{C}{N^2\beta_N}\sum_{j=1}^N\Big(\sum_{i=1}^N\beta_Ng_N(x_i,x_j) \Big)^{1/2} \\
        & \le \frac{C\|g_N\|_2}{\sqrt{N\beta_N}},
    \end{split}
    \end{equation}
    where in the third line we used Lemma \ref{lemma:L2-boundness-solutions} and the fact that when expanding the square the crossed terms vanish as $\E[\zeta_{ij}^N-g_N(x_i,x_j)]=0$ and $\zeta_{ij}^N$ and $\zeta_{ik}^N$ are independent for $j\neq k$. 
    Finally, in the fourth line we used that $\E[(\zeta_{ij}^N-g_N(x_i,x_j))^2] \leq \beta_Ng_N(x_i,x_j)\le C\beta_N$.
    Combining equations \eqref{eq:first-lemma-estim-1-L1} and \eqref{eq:first-lemma-eq-1} we get the desired result. 
\end{proof}

\begin{remark}
    In the proof of Lemma \ref{lem:First.estim.conf.proof-L1} we use that $g$ is bounded to estimate the second term in equation \eqref{eq:first-estimate-L1} and to estimate the first term through Lemma \ref{lemma:L2-boundness-solutions}.
\end{remark}

Observe that equation \eqref{eq:first-estimate-L1} and Lemma \ref{lem:First.estim.conf.proof-L1} together with Gronwall's inequality allows to write 
\begin{equation}
\label{eq:partial-result-convergence-L1}
\begin{split}
    \frac{1}{N}\sum_{i=1}^N\E\Big[ \sup_{t\in [0,T]}\left|X^{i,N}_{t}-X^{i,g}_t\right|^2 \Big] & \leq  \frac{C}{\sqrt{N\beta_N}} +\int_0^T\frac{C}{N}\sum_{i=1}^N \E\left[ d_{\mathrm{BL}}( \nu^{i,g}_{X_u}, m_{\X_u^g}^{i,N}) \right]du.\\
\end{split}
\end{equation}

The rest of the proof consists in estimating the second term on the right hand side of equation \eqref{eq:partial-result-convergence-L1}. To this end, let $h_N:[0,1]\times [0,1] \to \R_{\geq 0}$ be a sequence of measurable and piecewise constant functions such that 
\begin{equation*}
    \int_I\int_I |g(x,y)-h_N(x,y)|^2 dxdy \rightarrow 0.
\end{equation*}
\begin{remark}
\label{rk:def-h_N-in-continuous-case-L1}
    It is clear that if $g$ is continuous the sequence $h_N$ exists as we can define $h_N(x,y) = g(x_i,x_j) = g(f_N(x),f_N(y))$ where $x\in [x_i,x_{i+1})$ and $y\in [x_j,x_{j+1})$. As $f_N$ converges uniformly to the identity and $g$ is continuous then $g\circ (f_N,f_N) \xrightarrow{L^2} g$.  Under Condition \ref{cond:almost-continuity} we can do the same construction and by Lemma \ref{lemma:cond-implies-convergence} we have the condition we require.
\end{remark}
Let $X^{x,h_N}$ be the solution of the graphon SDE \eqref{eq:graphon-SDE} with graphon $h_N$ and let us put $\X^{h_N}:= (X^{1,h_N},\dots, X^{N,h_N})$.

Using triangular inequality, we have
\begin{equation}
\label{eq:proof.trans.to.stability-L1}
\begin{split}
    \frac{1}{N}\sum_{i=1}^N\E\Big[d_{\mathrm{BL}}(\nu^{i,g}_{X_u}, m^{i,N}_{\X_u^g})\Big] & \le \frac{1}{N}\sum_{i=1}^N \left\{d_{\mathrm{BL}}(\nu^{i,g}_{X_u}, \nu^{i,h_N}_{X_u}) + \E\Big[d_{\mathrm{BL}}(\nu^{i,h_N}_{X_u}, m^{i,N}_{\X_u^{h_N}})\Big]\right.\\
    & \quad\left. + \E\Big[d_{\mathrm{BL}}(m^{i,N}_{\X^{h_N}_u}, m^{i,N}_{\X^{g}_u})\Big]\right\}. \\
\end{split}
\end{equation}

The idea is now to use the stability results to bound the first and third terms. The first term is estimated by using Hölder's inequality and the bounded Lipschitz properties of $\phi$: 
\begin{align}
    \notag
     \frac{1}{N}\sum_{i=1}^N d_{\mathrm{BL}}(\nu^{i,g}_{X_u}, \nu^{i,h_N}_{X_u})
    & = \frac{1}{N}\sum_{i=1}^N \sup_{\phi \in     1\text{-Lip}_b}\bigg( \int_Ig(x_i,y)\E[\phi(X^{y,g}_u)]\lambda(dy) - \int_Ih_N(x_i,y)\E[\phi(X^{y,h_N}_u)]\lambda(dy)\bigg) \\\notag
    &\le \frac{1}{N}\sum_{i=1}^N \sup_{\phi \in     1\text{-Lip}_b}\int_I\big( g(x_i,y) - h_N(x_i,y)\big)\E[\phi(X^{y,g}_u)] \lambda(dy) \\\notag
    & \quad +  \frac{1}{N}\sum_{i=1}^N \sup_{\phi \in     1\text{-Lip}_b}\int_I h_N(x_i,y)\E\big[|\phi(X^{y,g}_u) - \phi(X^{y,h_N}_u)| \big]\lambda(dy) \\\notag
    & \le   \int_I \left(\int_I |g(f_N(x),y) - h_N(x,y)|^2 \lambda(dy)\right)^{1/2}\lambda(dx) \\\label{eq:before.intro.norms}
    & \quad +  \|h_N\|_2 \left(\int_I\E\big[|X^{y,g}_u - X^{y,h_N}_u|^2\big]\lambda(dy)\right)^{1/2} \\\notag
    & \le  \|g\circ(f_N,\mathrm{id}) - h_N\|_2 +  C \|g-h_N\|_2 \\\label{eq:d-nu-g-h_N-intermediate-L1}
    & \le  \|g\circ(f_N,\mathrm{id}) - g\|_2 +  C \|g-h_N\|_2. 
\end{align}
Where we used Hölder's inequality and the stability result (see Proposition \ref{prop:stability-of-g-integral}).

The third term in equation \eqref{eq:proof.trans.to.stability-L1} is estimated similarly:
\begin{align*}
    \frac{1}{N}\sum_{i=1}^N \E\Big[d_{\mathrm{BL}}(m^{i,N}_{\X^{h_N}_u}, m^{i,N}_{\X^{g}_u})\Big]
     &  \le \frac{1}{N}\sum_{i=1}^N  \E\bigg[\frac1N\sum_{j=1}^N\frac{\zeta^N_{ij}}{\beta_N}\big|X^{j,h_N}_u - X^{j,g}_u\big| \bigg] \\
    & \leq  \frac{1}{N}\sum_{i=1}^N  \E\bigg[\frac1N\sum_{j=1}^N g_N(x_i,x_j)\big|X^{j,h_N}_u - X^{j,g}_u\big|\bigg]\\
    & \leq \frac{1}{N}\sum_{i=1}^N \frac{1}{N}\sum_{j=1}^N g_N(x_i,x_j) \frac{1}{N}\sum_{j=1}^N \E\left[|X^{j,h_N}_u - X^{j,g}_u\big| \right]\\
    & \leq\frac{\|g_N\|_2}{N}\sum_{j=1}^N \E\left[|X^{j,h_N}_u - X^{j,g}_u\big| \right].
\end{align*}
By Proposition \ref{prop:stability-of-g-average} we can further bound:  
\begin{equation}
\label{eq:dist-m-h_N-g-intermediate-L1}
\begin{split}
    \frac{1}{N}\sum_{i=1}^N \E\Big[d_{\mathrm{BL}}(m^{i,N}_{\X^{h_N}_u}, m^{i,N}_{\X^{g}_u})\Big] \leq & C \|g_N\|_2\left( \|h_N - g\circ(f_N,\mathrm{id})\|_1 + \|g\circ(f_N,\mathrm{id})\|_2\|g-h_N\|_2\right)\\
    \leq & C \|g_N\|_2\left( \|h_N - g\|_2 + \|g-g\circ(f_N,\mathrm{id})\|_2 + \|g\circ(f_N,\mathrm{id})\|_2\|g-h_N\|_2\right).
\end{split}
\end{equation}
Hence, using equations \eqref{eq:dist-m-h_N-g-intermediate-L1}, \eqref{eq:d-nu-g-h_N-intermediate-L1} and \eqref{eq:partial-result-convergence-L1} we obtain:
\begin{equation}
\label{eq:reduction.2-L1}
\begin{split}
    \frac{1}{N}\sum_{i=1}^N\E\Big[ \sup_{s\in [0,t]}\big|X^{i,N}_{t}-X^{i,g}_t\big| \Big] & \leq  \frac{C}{\sqrt{N\beta_N}} + (1+\|g_N\|_2)\|g\circ (f_N, \mathrm{id}) - g\|_2 + C(1+\|g_N\|_2)\|h_N-g\|_2 \\
    & \quad + C \|g_N\|_2 \|g\circ(f_N,\mathrm{id})\|_2\|g-h_N\|_2\\
    & \quad + \frac{C}{N}\sum_{i=1}^N\int_0^t\E\Big[d_{\mathrm{BL}}(\nu^{i,h_N}_{X_u}, m^{i,N}_{\X_u^{h_N}})\Big]du. \\
\end{split}
\end{equation}

Let us now focus on bounding the last term in equation \eqref{eq:reduction.2-L1}. Fix $\varepsilon>0$ and let $\cN^{\eps} \sim \cN(0, \eps^2\textrm{Id}_n)$ be the multivariate normal distribution on $\R^n$ with mean vector $0$ and covariance matrix $\eps^2\textrm{Id}_n$ where $\textrm{Id}_n$ is the $n\times n$ identity matrix.
For a generic measure $\mu \in \cM_+(\R^n)$, denote by $\mu^\eps:= \cN^\eps\star \mu$, the convolution of $\mu$ and $\cN^\eps$.
This is the positive measure defined as
\begin{equation*}
    \int_{\R^n}\phi(x)\mu^\eps(dx) := \int_{\R^n}\int_{\R^n}\phi(y + z)\mu(dy)\cN^\eps(dz)
 \end{equation*} 
 for any bounded Borel measurable function $\phi:\R^n \to \R$.
Observe that for such a function, we have
\begin{equation}
\label{eq:def-regularized-measure}
    \int_{\R^n}\phi(x)\mu^\eps(dx) = \int_{\R^n}\phi(x)h^\eps(x)dx \quad \text{with}\quad h^\eps(x):= \int_{\R^n}\varphi_\eps(x-y)\mu(dy) = :\varphi_\eps\star \mu(x),
\end{equation}
where $\varphi_\eps$ is the density function of $\cN^\eps$.
We call (with some abuse) $h^\eps$ the density function of $\mu\star \cN^\eps$.
In particular, the density function of 
\begin{equation}
\label{eq:def-regularized-measure-density}
m^{i,N,\eps}_{ \X^{h_N}_u}: = m^{i,N}_{\X^{h_N}_u}\star \cN^\eps \quad \text{is}\quad h^{i,N,\eps}_u(x):= \frac1N\sum_{j=1}^N\frac{\zeta^N_{ij}}{\beta_N}\varphi_\eps(x - X^{j,h_N}_u)
\end{equation}

and the density function of
\begin{equation*}
    \nu^{i,g,\eps}_{X_u} := \nu^{i,g}_{X_u}\star \cN^{\eps} \quad \text{is}\quad h^{i,\eps}_u(x) := \int_I g(x_i,y)\E[\varphi_\eps(x - X^{y,g}_u)]\lambda(dy).
\end{equation*}

By triangular inequality we have that
\begin{equation}
\label{eq:reduction-reg-L1}
\begin{split}
    \E\Big[d_{\mathrm{BL}}(\nu^{i,h_N}_{X_u}, m^{i,N}_{\X^{h_N}_u})\Big]
     &\le  \E\Big[d_{\mathrm{BL}}(\nu^{i,h_N}_{X_u}, \nu^{i,h_N,\eps}_{X_u})\Big] + \E\Big[d_{\mathrm{BL}}(\nu^{i,h_N,\eps}_{X_u}, m^{i,N,\eps}_{\X^{h_N}_u})\Big]\\
    &\quad + \E\Big[d_{\mathrm{BL}}(m^{i,N,\eps}_{\X_u^{h_N}}, m^{i,N}_{\X_u^{h^N}} )\Big]. \\
\end{split}
\end{equation}

We bound the first and last terms on the right hand side thanks to the following lemma:

\begin{lemma}
\label{lemma:distance-regularised-measures-L1}
    Let $\eps>0$. For every $N\geq 1$ we have that there exists a constant $C>0$ independent of $N$ such that: 
    \begin{equation*}
        \frac{1}{N}\sum_{i=1}^N\E\Big[d_{\mathrm{BL}}( m^{i,N}_{\X_u^{h_N}}, m^{i,N,\eps}_{\X_u^{h_N}})\Big] \le C\sqrt{n}\eps \quad \text{ and } \quad \E\Big[d_{\mathrm{BL}}(\nu^{i,h_N}_{X_u}, \nu^{i,h_N,\eps}_{X_u})\Big] \le C\sqrt{n}\eps.
    \end{equation*}
\end{lemma}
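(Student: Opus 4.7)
The plan is to use the duality characterization of $d_{\mathrm{BL}}$ together with the Lipschitz property of the test functions to directly control the shift induced by the Gaussian convolution, and then separately bound the total masses of the two families of measures.

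First, for a general finite non-negative measure $\mu \in \cM_+(\R^n)$ with $\mu^\eps = \mu \star \cN^\eps$, and any $\phi \in 1\text{-Lip}_b$, I would write
\begin{equation*}
    \int_{\R^n}\phi\, d\mu - \int_{\R^n}\phi\, d\mu^\eps = \int_{\R^n}\int_{\R^n}\big(\phi(y) - \phi(y+z)\big)\mu(dy)\cN^\eps(dz),
\end{equation*}
and use $|\phi(y) - \phi(y+z)| \le |z|$ together with $\int |z|\, \cN^\eps(dz) \le \big(\int |z|^2 \cN^\eps(dz)\big)^{1/2} = \sqrt{n}\,\eps$, to get the basic estimate
\begin{equation*}
    d_{\mathrm{BL}}(\mu,\mu^\eps) \le \mu(\R^n)\sqrt{n}\,\eps.
\end{equation*}

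Next, I apply this bound to the two relevant families. For the empirical measure, $m^{i,N}_{\X_u^{h_N}}(\R^n) = \frac{1}{N}\sum_{j=1}^N \zeta^N_{ij}/\beta_N$, so taking expectation and averaging in $i$ gives $\frac{1}{N}\sum_{i=1}^N \E[m^{i,N}_{\X_u^{h_N}}(\R^n)] = \frac{1}{N^2}\sum_{i,j} g_N(x_i,x_j) \le \|g_N\|_\infty$, which is uniformly bounded since $g_N$ is uniformly bounded by hypothesis. For the graphon measure, $\nu^{i,h_N}_{X_u}(\R^n) = \int_I h_N(x_i,y)\lambda(dy) \le \|h_N\|_\infty$, which is also uniformly bounded provided $h_N$ is chosen as a uniformly bounded piecewise constant approximant of the bounded graphon $g$ (as in Remark \ref{rk:def-h_N-in-continuous-case-L1}, $h_N = g \circ (f_N,f_N)$ is bounded by $\|g\|_\infty$). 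Combining these two observations with the basic estimate yields both inequalities stated in the lemma.

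There is no real obstacle here beyond bookkeeping; the only subtle point is ensuring the total mass bounds hold \emph{uniformly in $N$ and $i$}, which is exactly where the standing boundedness assumption on $g$ (and the choice of $h_N$) is used. The factor $\sqrt{n}$ arises from $\E[|Z|] \le \sqrt{n}\,\eps$ for $Z \sim \cN^\eps$, which is the source of the dimension dependence in the stated bound.
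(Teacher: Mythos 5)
Your proof is correct, and it takes a somewhat more elementary route than the paper. The paper first uses positivity and linearity of the weights to reduce to the distance between each single component ($\delta_{X^{j,h_N}_u}$, resp.\ $\cL(X^{y,h_N}_u)$) and its Gaussian regularization, then invokes the chain $d_{\mathrm{BL}}\le \cW_1\le \cW_2$ together with the known bound $\cW_2^2(\mu,\mu\star\cN^\eps)\le n\eps^2$ from \cite{HK94}, and finally uses $\E[\zeta^N_{ij}]=\beta_N g_N(x_i,x_j)$ to control the expected total weight. You instead prove the single inequality $d_{\mathrm{BL}}(\mu,\mu\star\cN^\eps)\le \mu(\R^n)\sqrt{n}\,\eps$ directly from the duality formula, the $1$-Lipschitz property of the test functions and $\E|Z|\le\sqrt{n}\,\eps$ for $Z\sim\cN^\eps$, and then bound the expected total masses $\frac{1}{N}\sum_j\E[\zeta^N_{ij}]/\beta_N$ and $\int_I h_N(x_i,y)\lambda(dy)$ exactly as the paper does. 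Your version has the small advantage of working directly on the finite positive measures $m^{i,N}_{\X_u^{h_N}}$ and $\nu^{i,h_N}_{X_u}$ without passing through Wasserstein distances (which are only defined for probability measures, forcing the paper to decompose into unit-mass components first) and without citing an external lemma; both arguments rely in the same place on the uniform boundedness of $g_N$ and of the approximant $h_N$ (e.g.\ $h_N=g\circ(f_N,f_N)\le\|g\|_\infty$ as in Remark \ref{rk:def-h_N-in-continuous-case-L1}) to make the constant independent of $N$ and $i$, which you correctly flag as the only delicate point.
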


\begin{proof}
    By definition of the bounded Lipschitz metric and the fact that the weights $\zeta^N_{ij}$ are positive, we have
    \begin{align*}
        & \E\Big[d_{\mathrm{BL}}( m^{i,N}_{\X_u^{h_N}}, m^{i,N,\eps}_{\X_u^{h_N}})\Big]\\
        & = \E\bigg[\sup_{\phi \in     1\text{-Lip}_b} \frac{1}{N}\sum_{j=1}^N \frac{\zeta^N_{ij}}{\beta_N}\left(\int_{\R^n} \phi(z)\delta_{X^{j,h_N}_u}(dz) - \int_{\R^n}\int_{\R^n} \phi(y+z)\delta_{X^{j,h_N}_u}(dy)\cN^\eps(dz)\right)\bigg]\\
        & \le \E\bigg[ \frac{1}{N}\sum_{j=1}^N \frac{\zeta^N_{ij}}{\beta_N} d_{\mathrm{BL}}\left( \delta_{X_u^{j,h_N}}, \delta_{X_u^{j,h_N}}\star \cN^{\eps}\right)\bigg]\\
        & \le \E\bigg[ \frac{1}{N}\sum_{j=1}^N \frac{\zeta^N_{ij}}{\beta_N} \cW_2\left( \delta_{X_u^{j,h_N}}, \delta_{X_u^{j,h_N}}\star \cN^{\eps}\right)\bigg],
    \end{align*}
    where $\cW_p$ is the $p$-th order Wasserstein distance. The bound on the last inequality holds as by the Kantorovich-Rubinstein duality $d_{\mathrm{BL}}\leq \cW_1$ and $\cW_1\leq \cW_2$~\cite[Remark $6.6$]{villani-optimal-transport}. We know from \cite[Lemma 3.1]{HK94} that $\cW_2^2(\delta_{X^{j,h_N}_u},\delta_{ X^{j,h_N}_u}\star \cN^\eps) \le n\eps^2$ and thus, 
    using the facts that $E[\zeta^N_{ij}]=\beta_Ng_N(x_i,x_j)$ and $\beta_N>0$ we obtain
    \begin{align*}
        \frac1N\sum_{i=1}^N\E\Big[d_{\mathrm{BL}}( m^{i,N}_{\X_u^{h_N}}, m^{i,N,\eps}_{\X_u^{h_N}})\Big] \le \sqrt{n}\varepsilon\|g_N\|_2.
     \end{align*}
     This concludes the argument for the first claimed bound.

    The proof of the second bound in the statement is the same, replacing the weighted average with weights $\zeta^{N}_{ij}/\beta_N$ by the integral $\int_I h_N(x_i,y)\lambda(dy)$ and the measure $\delta_{X^{j,h_N}_u}$ by $\cL(X^{y,h_N}_u)$. Notice that $X^{y,h_N}_u$ is square integrable $\lambda$-almost surely and hence by \cite[Lemma $3.1$]{HK94} we recover the bound. 
    We omit the proof.
\end{proof}

The following Lemma will allow us to bound the second term in \eqref{eq:reduction-reg-L1}:

\begin{lemma}
\label{lemma:middle-term-reg-L1}
    There is a constant $C(\varepsilon)>0$ which does not depend on $N$, but may depend on $\eps>0$ such that for every $N\geq 1$ it holds that:
    \begin{equation*}
    \begin{split}
        \frac1N\sum_{i=1}^N\E\Big[d_{\mathrm{BL}}(\nu^{i,h_N,\eps}_{X_u}, m^{i,N,\eps}_{\X^{h_N}_u})\Big] & \le \frac{C}{\eps^{n/2}\sqrt{N\beta_N}}\|g_N\|^{1/4}_2 + \frac{C\|g_N\|^{1/2}_2}{\eps^{n/2} \sqrt{N\beta_N}} \left(\frac{1}{N}\sum_{j=1}^N\E[|X^{j,h_N}_u|^{2n+2}]\right)^{1/4}   \\
        & \quad + \frac{C}{\eps^{n/2}\sqrt{N}}\|h_N-g_N\|_2 + \frac{C\|h_N-g_N\|_2}{\eps^{n/2} \sqrt{N}} \left(\frac{1}{N}\sum_{j=1}^N\E[|X^{j,h_N}_u|^{n+1}]\right)^{1/2}\\
        & \quad + \frac{C\|h_N\|_2}{\eps^{n/2}\sqrt{N}} + \frac{C\|h_N\|_4}{\eps^{n/2}\sqrt{N}} \left(\frac{1}{N}\sum_{j=1}^N \E\left[|X_u^{y_j,h_N}|^{2n+2}\right]\right)^{1/4}.
    \end{split}
    \end{equation*}

    In particular, we obtain:
    \begin{equation*}
        \limsup_{N\rightarrow \infty} \frac{C}{N}\sum_{i=1}^N\int_0^t\E\Big[d_{\mathrm{BL}}(\nu^{i,h_N}_{X_u}, m^{i,N}_{\X_u^{h_N}})\Big]du = \sqrt{n}\eps \|g_N\|_2.
    \end{equation*}
\end{lemma}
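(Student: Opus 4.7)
My strategy would be to exploit that, by the convolution identity~\eqref{eq:def-regularized-measure}, both $\nu^{i,h_N,\eps}_{X_u}$ and $m^{i,N,\eps}_{\X^{h_N}_u}$ are absolutely continuous with explicit densities $h^{i,\eps}_u$ and $h^{i,N,\eps}_u$ given in~\eqref{eq:def-regularized-measure-density}. Since $|\phi|\le 1$ for any $\phi\in 1\text{-Lip}_b$, the bounded Lipschitz distance between the two regularized measures is controlled by the $L^1$-norm of the density difference. To make this amenable to variance computations, I would use the weight-trick inequality
\begin{equation*}
\|f\|_{L^1(\R^n)}\le C_n\,\|(1+|x|^{k})f\|_{L^2(\R^n)},\qquad k>n/2,
\end{equation*}
combined with the weighted Gaussian estimate $\int(1+|x|^{2k})\varphi_\eps(x-X)^2\,dx \le C\eps^{-n}(1+|X|^{2k})$ obtained by translation and rescaling. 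This is exactly where the $\eps^{-n/2}$ factor in every summand originates.

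The main step is then a three-term decomposition of the difference of densities that uses the fact that $h_N$ is piecewise constant on the blocks $[x_i,x_{i+1})\times[x_j,x_{j+1})$:
\begin{align*}
h^{i,N,\eps}_u - h^{i,\eps}_u
&= \underbrace{\tfrac1N\textstyle\sum_j\bigl(\tfrac{\zeta^N_{ij}}{\beta_N} - g_N(x_i,x_j)\bigr)\varphi_\eps(\cdot - X^{j,h_N}_u)}_{A_1^i} \\
&\quad + \underbrace{\tfrac1N\textstyle\sum_j(g_N-h_N)(x_i,x_j)\,\varphi_\eps(\cdot - X^{j,h_N}_u)}_{A_2^i}\\
&\quad + \underbrace{\tfrac1N\textstyle\sum_j h_N(x_i,x_j)\bigl(\varphi_\eps(\cdot - X^{j,h_N}_u) - \E[\varphi_\eps(\cdot - X^{j,h_N}_u)]\bigr)}_{A_3^i}.
\end{align*}
For $A_1^i$, the centred variables $\zeta^N_{ij}/\beta_N - g_N(x_i,x_j)$ are independent across $j$ by Condition~\ref{cond:graphon-cond-1}, independent of the family of graphon solutions by Lemma~\ref{lemma:independence-zeta-X}, and have variance bounded by $g_N(x_i,x_j)/\beta_N$; the cross terms in $\E[|A_1^i(x)|^2]$ therefore vanish. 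Combining the resulting diagonal estimate with the weighted Gaussian bound, the identity $\|g_N\|_2^2 = N^{-2}\sum_{i,j}g_N(x_i,x_j)^2$, and two applications of Cauchy--Schwarz---once against $j$ and once in the averaging over $i$---produces the first two summands of the statement, with $\|g_N\|_2^{1/4}$, $\|g_N\|_2^{1/2}$, and the $(\tfrac1N\sum_j\E[|X^{j,h_N}_u|^{2n+2}])^{1/4}$ moment. For $A_3^i$ the same variance argument applies, the key input now being pairwise independence of $(X^{j,h_N}_u)_{j=1}^N$: since $h_N$ is block-constant one may replace the integral defining $\nu^{i,h_N}_{X_u}$ by $\tfrac1N\sum_j h_N(x_i,x_j)\cL(X^{y_j,h_N}_u)$ for any representatives $y_j\in[x_j,x_{j+1})$, and Lemma~\ref{lemma:sol-sde-are-epi} allows one to choose the $y_j$ so that pairwise independence holds while preserving the laws; this yields the $\|h_N\|_2$ and $\|h_N\|_4$ contributions. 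For $A_2^i$ no variance is needed: a single Cauchy--Schwarz in $j$ applied to $A_2^i(x)^2$, followed by spatial integration and averaging in $i$, generates the $\|h_N-g_N\|_2$ terms, the relevant weight being the lighter $k=(n+1)/2$, which is why this part contains $(\tfrac1N\sum_j\E[|X^{j,h_N}_u|^{n+1}])^{1/2}$ rather than the heavier $2n+2$ moment.

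The $\limsup$ assertion is then immediate: under Condition~\ref{cond:graphon-cond-1} the norms $\|g_N\|_2,\|h_N\|_2,\|h_N\|_4$ stay uniformly bounded, $\|h_N-g_N\|_2\to 0$ (because both approximate $g$ in $L^2$), $N\beta_N\to\infty$, and the averaged moments $\tfrac1N\sum_j\E[|X^{j,h_N}_u|^{2n+2}]$ remain bounded thanks to Lemma~\ref{lemma:L2-boundness-solutions-g-unbounded-part-1}. Every explicit term in the bound therefore vanishes as $N\to\infty$, and the only surviving contribution is the regularization defect $\sqrt{n}\,\eps\|g_N\|_2$ from Lemma~\ref{lemma:distance-regularised-measures-L1} that was peeled off in the triangle inequality~\eqref{eq:reduction-reg-L1} before invoking the present lemma. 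I expect the main obstacle to be purely combinatorial: the various double sums mixing $g_N(x_i,x_j)$, $h_N(x_i,x_j)$, $(g_N-h_N)(x_i,x_j)$ and moments of $X^{j,h_N}_u$ have to be controlled by different functionals of the graphons, and Cauchy--Schwarz must be applied in the right order and against the right variable to recover the precise exponents of $\eps$, $N$, $\beta_N$ and of the graphon norms claimed in the statement.
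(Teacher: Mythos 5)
Your proposal is correct and follows essentially the same route as the paper: the identical three-term decomposition of the regularized densities (graph noise $\zeta^N_{ij}/\beta_N-g_N$, graphon mismatch $g_N-h_N$, and sampling fluctuation of $\varphi_\eps(\cdot-X^{j,h_N}_u)$ around its mean), the same Carlson-type weighted $L^1$--$L^2$ inequality with the Gaussian estimate producing the $\eps^{-n/2}$ factors, vanishing cross terms via independence of $(\zeta^N_{ij})_j$ and pairwise independence of $(X^{j,h_N})_j$ on the blocks of $h_N$, and the same passage to the limsup via \eqref{eq:reduction-reg-L1}, Lemma \ref{lemma:distance-regularised-measures-L1} and the uniform moment bounds. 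The only cosmetic difference is your attribution of the lighter $(n+1)$-moment in the $A_2$ term to a smaller weight exponent, whereas in the paper it comes from applying Cauchy--Schwarz only once there (versus twice in the other terms), a point you yourself flag at the end.
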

\begin{proof}
     We introduce the notation:
    \begin{equation*}
        \alpha_j(z) = \varphi_\eps(z - X_u^{j,h_N}), \quad \alpha_y(z) = \varphi_\eps(z - X_u^{y,h_N})\quad \text{and} \quad \Gamma_{ij} = \zeta^N_{ij} - \beta_N g_N(x_i,x_j).
    \end{equation*} 
    We can first notice that $x\mapsto \cL( X^{x,h_N})$ are piecewise constant over intervals $[x_i,x_{i+1})$. 
    This follows from the fact that as $h_N$ is piecewise constant if $x,y\in [x_i,x_{i+1})$ then $\nu^{x,h_N}_X = \nu^{y,h_N}_X$, so that $X^{x,h_N}$ and $X^{y,h_N}$ satisfy the same SDE and by (the proof of) Lemma \ref{lemma:sol-sde-are-epi}, their laws are the same. 
    
    Let us subsequently use the definition of $d_{\mathrm{BL}}$, definition of the convolution, and the fact that $h_N$ and $x\mapsto \cL( X^{x,h_N})$ are piecewise constant to obtain the following inequalities:
    \begin{align*}
        & \E\Big[d_{\mathrm{BL}}(\nu^{i,h_N,\eps}_{X_u}, m^{i,N,\eps}_{\X^{h_N}_u})\Big]\\
        & = \E\bigg[ \sup_{\phi \in     1\text{-Lip}_b} \int_{\R^n}\phi(z)\frac1N\sum_{j=1}^N\frac{\zeta^{N}_{ij}}{\beta_N}\alpha_j(z) dz  - \int_{\R^n}\int_{I}\phi(z)h_N(x_i,y)\E[\alpha_y(z)]\lambda(dy)dz \bigg]\\
        &\le \E\bigg[\sup_{\phi \in 1\text{-Lip}_b} \int_{\R^n}\phi(z)\frac1N\sum_{j=1}^N\frac{\zeta^{N}_{ij}}{\beta_N} \alpha_j(z) dz -\int_{\R^n} \frac1N \sum_{j=1}^N h_N(x_i,x_j) \phi(z) \alpha_j(z)dz \bigg]\\
        & \quad + \E\bigg[\sup_{\phi \in 1\text{-Lip}_b}  \int_{\R^n}\frac1N\sum_{j=1}^Nh_N(x_i,x_j)\phi(z)\alpha_j(z)dz - \int_{\R^n}\frac{1}{N}\sum_{j=1}^N\phi(z)h_N(x_i,x_j)\E[\alpha_y(z)]dz \bigg]\\
        &\le C\E\bigg[\sup_{\phi \in 1\text{-Lip}_b} \frac1N\sum_{j=1}^N\Big( \frac{\zeta^{N}_{ij}}{\beta_N} - g_N(x_i,x_j)\Big)\int_{\R^n}\phi(z)\alpha_j(z)dz \bigg]\\
        & \quad + C\E\bigg[\sup_{\phi \in 1\text{-Lip}_b} \frac1N\sum_{j=1}^N\Big( g_N(x_i,x_j)-h_N(x_i,x_j)\Big)\int_{\R^n}\phi(z)\alpha_j(z))dz \bigg]\\
        & \quad + C \E\bigg[\sup_{\phi \in 1\text{-Lip}_b}  \int_{\R^n}\phi(z)\frac1N\sum_{j=1}^Nh_N(x_i,x_j)\Big(\alpha_j(z) - \E[\alpha_j(z)]\Big)dz \bigg]\\
        & = C (L^i_1 + L^i_2 + L^i_3).\\
    \end{align*}

    We start by bounding $L^i_1$: 
    \begin{equation*}
    \begin{split}
        \frac{1}{N}\sum_{i=1}^N L^i_1 & =  \frac{1}{N}\sum_{i=1}^N\E\bigg[\sup_{\phi \in 1\text{-Lip}_b} \int_{\R^n}\phi(z) \frac{1}{N\beta_N}\sum_{j=1}^N \Gamma_{ij}\alpha_j(z)dz \bigg] \\
        & \le \frac{1}{N}\sum_{i=1}^N \E\bigg[\int_{\R^n}\bigg| \frac{1}{N\beta_N}\sum_{j=1}^N \Gamma_{ij}\alpha_j(z)\bigg|dz \bigg] .\\
    \end{split}
    \end{equation*}

    To bound this integral we will use a modification of Carlson's Lemma given in \cite{HK94}: 
    \begin{equation}
    \label{eq:Carson}
        \bigg(\int_{\R^n}\bigg| \frac{1}{N\beta_N}\sum_{j=1}^N \Gamma_{ij}\alpha_j(z)\bigg|dz \bigg)^2 \leq C \int_{\R^n} (1+|z|^{n+1})\left( \frac{1}{N\beta_N}\sum_{j=1}^N \Gamma_{ij}\alpha_j(z)\right)^2dz. 
    \end{equation}
    Then, we can expand the square and notice that we can factor the expectation by independence. 
    Moreover, the cross terms vanish as $\E[\Gamma_{ij}]=0$ for $i\neq j$ and we use that $\mathrm{Var}[\Gamma_{ij}] = \beta_Ng_N(x_i,x_j)(1-\beta_Ng_N(x_i,x_j))\leq C\beta_N$. 
    Then, we obtain
    \begin{equation*}
    \begin{split}
        \frac{1}{N}\sum_{i=1}^N L_1^i \leq & \frac{C}{N^2\beta_N}\sum_{i=1}^N \E\left[\left(\int_{\R^n} (1+|z|^{n+1})\left(\sum_{j=1}^N \Gamma_{ij}\alpha_j(z)\right)^2dz \right)^{1/2}\right] \\
        \leq & \frac{C}{N^2\beta_N}\sum_{i=1}^N \left(\sum_{j=1}^N\int_{\R^n} (1+|z|^{n+1}) \mathrm{Var}[\zeta^N_{ij}]\E[\alpha^2_j(z)]dz \right)^{1/2} \\
        \leq & \frac{C}{N^2\beta_N} \sum_{i=1}^N\left(\sum_{j=1}^N g_N(x_i,x_j)\beta_N  \int_{\R^n} (1+|z|^{n+1}) \E[\alpha^2_j(z)]dz \right)^{1/2} .\\
    \end{split}
    \end{equation*}

    The next step is to bound the remaining integral uniformly in $j$. We first notice that
    \begin{equation*}
        \varphi_\eps^2(x) = \frac{1}{(4\pi)^{n/2}\eps^{n}}\varphi_{\tilde\eps}(x), \quad \text{where $\tilde\eps = \frac{\eps}{\sqrt{2}}$}.
    \end{equation*}
    We can bound the integral term by noticing that for $\eps$ small we can estimate:
    \begin{equation}
    \label{eq:Li1-bound-L1}
    \begin{split}
        \E\left[\int_{\R^n} (1+|z|^{n+1})\alpha_j^2(z)dz\right]
       & =   \int_{\R^n} \int_{\R^n} (1+|z|^{n+1})\varphi_\eps^2(z-x)dz \cL(X^{j,h_N}_u)(dx)\\
        & \le  \frac{1}{(4\pi)^{n/2}\eps^{n}} \int_{\R^n} \int_{\R^n} (1+(|z-x|+|x|)^{n+1})\varphi_{\tilde\eps}(z-x)dz \cL(X^{j,h_N}_u)(dx)\\
        & \le  \frac{1}{(4\pi)^{n/2}\eps^{n}} \int_{\R^n} \int_{\R^n} \varphi_{\tilde\eps}(z-x)dz \cL(X^{j,h_N}_u)(dx) \\
        & \quad +  \frac{2^n}{(4\pi)^{n/2}\eps^{n}} \int_{\R^n} \int_{\R^n} (|z-x|^{n+1}+|x|^{n+1})\varphi_{\tilde\eps}(z-x)dz \cL(X^{j,h_N}_u)(dx)\\
        & \le  \frac{1}{(4\pi)^{n/2}\eps^{n}} +  \frac{1}{\pi^{n/2}\eps^{n}} \int_{\R^n} |v|^{n+1} \varphi_{\tilde\eps}(v) dv\\
        &\quad +  \frac{1}{\pi^{n/2}\eps^{n}} \int_{\R^n} |x|^{n+1}\cL(X^{j,h_N}_u)(dx)\\
        & = \frac{1}{(4\pi)^{n/2}\eps^{n}} +  \frac{\E[\tilde\eps^{n+1} |\cN^1|^{n+1}]}{\pi^{n/2}\eps^{n}} +  \frac{\E[|X^{j,h_N}_u|^{n+1}]}{\pi^{n/2}\eps^{n}}\\
        & \leq C\eps^{-n} + \frac{\E[|X^{j,h_N}_u|^{n+1}]}{\pi^{n/2}\eps^{n}},
    \end{split}
    \end{equation}
    where for the last line we used the fact that for $\P$-almost every $\omega\in \Omega$ $\varphi_{\tilde\eps}(z- X^{j,h_N}_u(\omega))$ is a probability density and it integrates to $1$ for the first term. For the second term we used a change of variable and that the Lebesgue measure is invariant against translations. Hence, we conclude that
    \begin{equation}
    \label{eq:bound.L1}
    \begin{split}
        \frac{1}{N} \sum_{i=1} L^i_1 \leq & \frac{C}{\eps^{n/2}\sqrt{N\beta_N}}\|g_N\|^{1/2}_2 + \frac{C}{\eps^{n/2} N^2\sqrt{\beta_N}} \sum_{i=1}^N\left(\sum_{j=1}^N g_N(x_i,x_j)\E[|X^{j,h_N}_u|^{n+1}]\right)^{1/2}\\
        \leq & \frac{C}{\eps^{n/2}\sqrt{N\beta_N}}\|g_N\|^{1/2}_2 + \frac{C}{\eps^{n/2} N^2\sqrt{\beta_N}} \sum_{i=1}^N\left(\sum_{j=1}^N g_N(x_i,x_j)\E[|X^{j,h_N}_u|^{n+1}]\right)^{1/2} \\
        \leq & \frac{C}{\eps^{n/2}\sqrt{N\beta_N}}\|g_N\|^{1/2}_2 + \frac{C}{\eps^{n/2} N^2\sqrt{\beta_N}} \sum_{i=1}^N\left(\sum_{j=1}^N g^2_N(x_i,x_j)\sum_{j=1}^N\E[|X^{j,h_N}_u|^{2n+2}]\right)^{1/4} \\
        \leq & \frac{C}{\eps^{n/2}\sqrt{N\beta_N}}\|g_N\|^{1/4}_2 + \frac{C\|g_N\|^{1/2}_2}{\eps^{n/2} \sqrt{N\beta_N}} \left(\frac{1}{N}\sum_{j=1}^N\E[|X^{j,h_N}_u|^{2n+2}]\right)^{1/4}.
    \end{split}
    \end{equation}
    Notice that the last average is bounded by Lemma \ref{lemma:L2-boundness-solutions-g-unbounded-part-1} if $g$ is such that $\|g\|_k < \infty$.

    We next bound $L^i_2$:     
    \begin{equation*}
    \begin{split}
        & \frac{1}{N}\sum_{i=1}^N L^i_2 \\
        & = \frac{1}{N}\sum_{i=1}^N\E\bigg[\sup_{\phi \in 1\text{-Lip}_b} \int_{\R^n}\phi(z) \frac1N\sum_{j=1}^N \Big(g_N(x_i,x_j)-h_N(x_i,x_j)\Big)\varphi_\eps(z - X^{j,h_N}_u)dz \bigg]\\
        & = \frac{1}{N}\sum_{i=1}^N\E\bigg[\int_{\R^n}\bigg|\frac1N\sum_{j=1}^N \Big(g_N(x_i,x_j)-h_N(x_i,x_j)\Big)\varphi_\eps(z - X^{j,h_N}_u)\bigg| dz \bigg]\\
        & \le \frac{C}{N}\sum_{i=1}^N\E\left[\left(\int_{\R^n}(1+|z|^{n+1})\left(\frac1N\sum_{j=1}^N \Big(g_N(x_i,x_j)-h_N(x_i,x_j)\Big)\varphi_\eps(z - X^{j,h_N}_u)\right)^2 dz\right)^{1/2} \right] \\
        & \le \frac{C}{N}\sum_{i=1}^N\left(\frac1N\sum_{j=1}^N \Big(g_N(x_i,x_j)-h_N(x_i,x_j)\Big)^2\frac{1}{N}\sum_{j=1}^N\E\left[\int_{\R^n}(1+|z|^{n+1})\varphi^2_\eps(z - X^{j,h_N}_u) dz \right]\right)^{1/2} \\
        & \le \frac{C}{\eps^{n/2}\sqrt{N}}\|h_N-g_N\|_2 + \frac{C\|h_N-g_N\|_2}{\eps^{n/2} \sqrt{N}} \left(\frac{1}{N}\sum_{j=1}^N\E[|X^{j,h_N}_u|^{n+1}]\right)^{1/2},
    \end{split}
    \end{equation*}
    where the sixth line follows by the same bound obtained in equation \eqref{eq:Li1-bound-L1}. 
    
    We now turn our attention to $L^i_3$. Using Carlson's Lemma we can bound $L^i_3$ as follows:

    \begin{equation}
    \label{eq:bound-Li3}
    \begin{split}
        \frac{1}{N}\sum_{i=1}^N L^i_3 & \le  \frac{C}{N}\sum_{i=1}^N\E\bigg[\sup_{\phi \in 1\text{-Lip}_b} \int_{\R^n}\phi(z)\frac1N\sum_{j=1}^N h_N(x_i,x_j)\Big(\varphi_\eps(z - X^{j,h_N}_u) - \E[\varphi_\eps(z - X^{j, h_N}_u)]\Big)dz\bigg]\\
        & \le  \frac{C}{N}\sum_{i=1}^N\E\left[  \int_{\R^n}\left|\frac1N\sum_{j=1}^Nh_N(x_i,x_j) \Big(\varphi_\eps(z - X^{j,h_N}_u) - \E[\varphi_\eps(z - X^{j, h_N}_u)]\right|dz \right]\\
        & \le  \frac{C}{N}\sum_{i=1}^N\E\left[  \int_{\R^n}(1+|z|^{n+1})\left(\frac1N\sum_{j=1}^N h_N(x_i,x_j)\Big(\varphi_\eps(z - X^{j,h_N}_u) - \E[\varphi_\eps(z - X^{j, h_N}_u)]\right)^2dz \right]^{1/2}.\\
    \end{split}
    \end{equation}
    As before we expand the square and notice that the crossed terms vanish by independence of $X^{j}$ and $X^{k}$. Hence, we obtain
    \begin{equation*}
    \begin{split}
        \frac{1}{N}\sum_{i=1}^N L^i_3 & \le  \frac{C}{N}\sum_{i=1}^N \E\left[  \int_{\R^n}(1+|z|^{n+1})\frac{1}{N^2}\sum_{j=1}^N h^2_N(x_i,x_j)\mathrm{Var}[\varphi_\eps(z - X^{j,h_N}_u)]dz \right]^{1/2}\\
        & \le  \frac{C}{N}\sum_{i=1}^N \left(\frac{1}{N^2}\sum_{j=1}^N h^2_N(x_i,x_j)\E\left[ \int_{\R^n}(1+|z|^{n+1})\E[\varphi^2_\eps(z - X^{j,h_N}_u)]dz \right]\right)^{1/2}\\
        & \leq \frac{C\|h_N\|_2}{\eps^{n/2}\sqrt{N}} + \frac{C}{\eps^{n/2}N}\sum_{i=1}^N \left(\frac{1}{N^2}\sum_{j=1}^N h^2_N(x_i,x_j)\E\left[|X_u^{j,h_N}|^{n+1}\right]\right)^{1/2} \\
        & \leq  \frac{C\|h_N\|_2}{\eps^{n/2}\sqrt{N}} + \frac{C\|h_N\|_4}{\eps^{n/2}\sqrt{N}} \left(\frac{1}{N}\sum_{j=1}^N \E\left[|X_u^{j,h_N}|^{2n+2}\right]\right)^{1/4}, \\
    \end{split}
    \end{equation*}
    where for the second line we used the fact that $\mathrm{Var}[\varphi_\eps(z - X^{j,h_N}_u)] \leq \E\left[\varphi^2_\eps(z - X^{j,h_N}_u)\right]$ and for the third line we used that the integral is bounded by the calculation done in equation \eqref{eq:Li1-bound-L1}. 
    This finishes the proof. 

    The second part follows from equation \eqref{eq:reduction-reg-L1}, Lemma \ref{lemma:distance-regularised-measures-L1} and the fact that as $g_N,h_N$ are bounded then all their norms are uniformly bounded and the averge of the moments of $X^{j,h_N}_u$ are uniformly bounded by Lemma \ref{lemma:L2-boundness-solutions}.
\end{proof}

Using equation \eqref{eq:reduction.2-L1} and Lemma \ref{lemma:middle-term-reg-L1} we obtain:

\begin{equation}
\label{eq:reduction.3-L1}
\begin{split}
    & \limsup_{N\rightarrow \infty} \frac{1}{N}\sum_{i=1}^N\E\Big[ \sup_{s\in [0,t]}\left|X^{i,N}_{t}-X^{i,g}_t\right|^2 \Big] \\
    & \leq \limsup_{N\rightarrow \infty} \frac{C}{\sqrt{N\beta_N}} + (1+\|g_N\|_2)\|g\circ (f_N, \mathrm{id}) - g\|_2 + C(1+\|g_N\|_2)\|h_N-g\|_2 \\
    & \quad + C \|g_N\|_2 \|g\circ(f_N,\mathrm{id})\|_2\|g-h_N\|_2\\
    & \quad + \frac{C}{N}\sum_{i=1}^N\int_0^t\E\Big[d_{\mathrm{BL}}(\nu^{i,h_N}_{X_u}, m^{i,N}_{\X_u^{h_N}})\Big]du \\
    & = C\sqrt{n}\eps. \\
\end{split}
\end{equation}

As equation \eqref{eq:reduction.3-L1} holds for every $\eps>0$ we conclude that 
\begin{equation*}
    \frac{1}{N}\sum_{i=1}^N\E\Big[ \sup_{s\in [0,T]}\left|X^{i,N}_{s}-X^{i,g}_s\right|^2 \Big]  \xrightarrow{N\rightarrow \infty} 0. 
\end{equation*}

 For the second part, by Lemma \ref{lem:First.estim.conf.proof-L1}, equations \eqref{eq:proof.trans.to.stability-L1},\eqref{eq:d-nu-g-h_N-intermediate-L1}, \eqref{eq:dist-m-h_N-g-intermediate-L1} and \eqref{eq:reduction-reg-L1}, Lemma \ref{lemma:distance-regularised-measures-L1} and Lemma \ref{lemma:middle-term-reg-L1} we have
\begin{equation}
\label{eq:estim.dBL.main}
\begin{split}
    \limsup_{N\rightarrow  \infty}\frac{1}{N}\sum_{i=1}^N\E\Big[d_{\mathrm{BL}}(m_{\X_t}^{i,N},\nu_{X_t}^{i,g}) \Big] 
    & \leq \limsup_{N\rightarrow  \infty} \left(\frac{C(\eta)}{N}\sum_{i=1}^N   \mathbb{E}\left[ \int_0^t \sup_{r\in [0,u]}|X^{i,N}_r-X^{i,g}_r|du\right] + \frac{C(\eps)}{\sqrt{N\beta_N}}\right.\\
    & \quad \left. + \frac{1}{N}\sum_{i=1}^N\E\Big[d_{\mathrm{BL}}(\nu^{i,g}_{X_t}, m^{i,N}_{\X_t^g})\Big]\right)\\
    & = C\sqrt{n}\eps.
\end{split}
\end{equation}
Again, as this bound holds for every $\eps>0$ we obtain that
\begin{equation*}
    \frac{1}{N}\sum_{i=1}^N\E\Big[d_{\mathrm{BL}}(m_{\X_t}^{i,N},\nu_{X_t}^{i,g}) \Big] \xrightarrow{N\rightarrow \infty} 0. 
\end{equation*}
This finishes the proof.

\subsection{Some extensions and consequences of the main convergence result}
\label{sec:corollaries.conve}
In this Section we state and prove a few by-products of Theorem \ref{thm:convergence-bounded-graphon-L1} and proof.
We start by providing the proof of Corollary \ref{coro:convergence-laws-L1}. 

\begin{proof}[Proof of Corollary \ref{coro:convergence-laws-L1}]

By definition of $d_\mathrm{BL}$ and the fact that $\cL(X^{i,N}_t) = \cL(Y^{i,N}_t)$, we have
\begin{equation*}   
\begin{split}
    & \frac{1}{N}\sum_{i=1}^N \E\Big[\sup_{t\in [0,T]}d_{\mathrm{BL}}(\cL(Y^{i,N}_t), \cL(X^{i,g}_t))\Big]
     = \frac{1}{N}\sum_{i=1}^N \E\Big[\sup_{t\in [0,T]}d_{\mathrm{BL}}(\cL(X^{i,N}_t), \cL(X^{i,g}_t))\Big] \\
    & = \frac{1}{N}\sum_{i=1}^N \E\bigg[\sup_{t\in [0,T]}\sup_{\phi \in 1\text{-Lip}_b} \int_{\R^n} \phi(z)\cL(X^{i,N}_t)(dz) - \int_{\R^n} \phi(z)\cL(X^{i,g}_t)(dz) \bigg] \\ 
    & = \frac{1}{N}\sum_{i=1}^N \E\bigg[\sup_{t\in [0,T]}\sup_{\phi \in 1\text{-Lip}_b}\big|  \phi(X^{i,N}_t) - \phi(X^{i,g}_t)\big| \bigg] \\ 
    & \le \frac{1}{N}\sum_{i=1}^N \E\Big[\sup_{t\in [0,T]}\big|  X^{i,N}_t - X^{i,g}_t\big| \Big]. 
\end{split}
\end{equation*}
By Theorem \ref{thm:convergence-bounded-graphon-L1} the last term goes to zero and we can conclude. 
\end{proof}

The next result states that the (random) empirical measure of the interacting particle system converges in probability to the integral of the laws of the solution of the graphon SDE \eqref{eq:graphon-SDE}.
This is in contrast to the complete graph (as well as the Erd\"os-R\'enyi graph) case where the empirical measure converges to the law of the limiting distribution.
Naturally, if the processes $(X^x)_{x\in I}$ have the same law we recover the standard result.

\begin{corollary}\label{coro:convergence-empirical-to-int}
    Under the assumptions of Theorem \ref{thm:convergence-bounded-graphon-L1}, for every fixed $t\in [0,T]$, it holds that
    \begin{equation*}
        \frac{1}{N}\sum_{i=1}^N \delta_{X^{i,N}_t} \rightarrow \int_I \cL(X^{y}_t) \lambda(dy) \quad \text{in probability. }
    \end{equation*}
\end{corollary}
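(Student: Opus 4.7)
The plan is to reduce to convergence of scalar test-function averages and then upgrade to the bounded-Lipschitz metric via tightness. Since Lemma \ref{lemma:L2-boundness-solutions} yields a uniform second-moment bound on $X^{i,N}_t$, the family $\{\cL(X^{i,N}_t)\}_{i,N}$ is tight, so it is enough to show that for every fixed $\phi\in 1\text{-Lip}_b$,
\begin{equation*}
    \frac{1}{N}\sum_{i=1}^N\phi(X^{i,N}_t) \longrightarrow \int_I \E[\phi(X^{y,g}_t)]\lambda(dy) \quad \text{in probability as } N\to\infty.
\end{equation*}
I would then split the difference as $T_1 + T_2 + T_3$, where $T_1 := \frac{1}{N}\sum_i[\phi(X^{i,N}_t)-\phi(X^{i,g}_t)]$, $T_2 := \frac{1}{N}\sum_i[\phi(X^{i,g}_t)-\E\phi(X^{i,g}_t)]$, and $T_3 := \frac{1}{N}\sum_i\E\phi(X^{i,g}_t) - \int_I\E\phi(X^{y,g}_t)\lambda(dy)$.

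The first two terms are routine. For $T_1$, the $1$-Lipschitz property of $\phi$ gives $|T_1|\le \frac{1}{N}\sum_i|X^{i,N}_t - X^{i,g}_t|$, and the right-hand side tends to zero in $L^1$ by Theorem \ref{thm:convergence-bounded-graphon-L1}. For $T_2$, Lemma \ref{lemma:sol-sde-are-epi} combined with the standing assumption that the driving Brownian motions $(B^{x_i})_i$ are pairwise independent implies pairwise independence of $(X^{i,g}_t)_{i=1,\dots,N}$; expanding $\E[T_2^2]$ all cross terms vanish and one finds $\E[T_2^2]\le \|\phi\|_\infty^2/N\to 0$, so $T_2\to 0$ in $L^2$ and hence in probability.

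The hard part is $T_3$, a deterministic Riemann-sum error for the measurable function $y\mapsto \E[\phi(X^{y,g}_t)]$. Using the step function $f_N$ introduced just above Condition \ref{cond:graphon-cond-deterministic}, one can rewrite $T_3 = \int_I\E[\phi(X^{f_N(y),g}_t) - \phi(X^{y,g}_t)]\lambda(dy)$. Condition \ref{cond:almost-continuity} together with Lemma \ref{lemma:cond-implies-convergence} yields $\int_I d_{\mathrm{BL}}(\nu^{f_N(y),g}_u,\nu^{y,g}_u)\lambda(dy)\to 0$ uniformly in $u\in[0,T]$, and a Gronwall-type stability estimate for \eqref{eq:graphon-SDE} analogous to Proposition \ref{prop:stability-of-g-integral} --- this time varying the basepoint $y$ rather than the graphon --- then bounds $T_3$ in terms of this graphon discrepancy and of the $y$-variation of the initial law $\gamma^y$. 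Handling the latter is the principal difficulty since $y\mapsto \gamma^y$ is only assumed measurable; my plan is to exploit Sun's exact law of large numbers on the Fubini extension, which identifies $\int_I \phi(X^{y,g}_t(\omega))\lambda(dy)$ with the deterministic value $\int_I \E\phi(X^{y,g}_t)\lambda(dy)$ for $\P$-almost every $\omega$, thereby recasting the Riemann-sum control in $T_3$ as an e.p.i.\ cross-sectional statement amenable to the Gaussian-regularization tightness device used in the proof of Theorem \ref{thm:convergence-bounded-graphon-L1}.
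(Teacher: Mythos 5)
Your reduction to fixed test functions, the treatment of $T_1$ via Theorem \ref{thm:convergence-bounded-graphon-L1}, and the variance bound for $T_2$ using pairwise independence of $(X^{i,g}_t)_i$ are all sound (and for a fixed $\phi$ they are indeed more elementary than the paper's uniform-in-$\phi$ treatment, which needs Gaussian regularization and Carlson-type $L^1$ estimates precisely because it bounds $\E[d_{\mathrm{BL}}]$ directly). The genuine gap is in $T_3$. Writing $T_3=\int_I \E[\phi(X^{f_N(y),g}_t)-\phi(X^{y,g}_t)]\lambda(dy)$ and trying to close it with a Gronwall/basepoint-stability estimate cannot work under the standing assumptions: $X^{f_N(y),g}$ and $X^{y,g}$ are driven by different Brownian motions and, more importantly, have initial conditions with laws $\gamma^{f_N(y)}$ and $\gamma^{y}$, and Condition \ref{cond:initial-conditions} gives only measurability of $y\mapsto\gamma^y$. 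Any such estimate necessarily carries an irreducible term of the type $d_{\mathrm{BL}}(\gamma^{f_N(y)},\gamma^y)$, which has no reason to vanish; equivalently, $y\mapsto\E[\phi(X^{y,g}_t)]$ is merely measurable, so its Riemann sums over the deterministic grid $x_i=(i-1)/N$ need not converge to the Lebesgue integral. Your proposed rescue via the exact law of large numbers is a non sequitur here: the ELLN identifies the $\lambda$-integral of the sample path $y\mapsto\phi(X^{y,g}_t(\omega))$ with $\int_I\E[\phi(X^{y,g}_t)]\lambda(dy)$, but $T_3$ compares a discrete average over grid points with that integral, and neither the ELLN nor Gaussian regularization (which smooths in the state variable, not in the index $y$) supplies the missing regularity in $y$.

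The paper avoids this comparison altogether. Its proof of Corollary \ref{coro:convergence-empirical-to-int} inserts the step-graphon system $X^{\cdot,g_N}$ and only ever compares solutions at the \emph{same} basepoint with different graphons: $\cT_2$ is controlled by Proposition \ref{prop:stability-of-g-average} (same $\xi^{x_i}$, same $B^{x_i}$, graphon $g$ versus $g_N$) and $\cT_6$ by Proposition \ref{prop:stability-of-g-integral}. The grid-versus-integral discrepancy is then killed exactly, not estimated: since $g_N$ is piecewise constant, $y\mapsto\cL(X^{y,g_N}_t)$ is constant on each cell $[x_i,x_{i+1})$ (see the argument in Lemma \ref{lemma:middle-term-reg-L1}), so $\int_I\E[\varphi_\eps(z-X^{y,g_N}_t)]\lambda(dy)$ coincides with $\frac1N\sum_i\E[\varphi_\eps(z-X^{i,g_N}_t)]$, and only a fluctuation term remains, handled by independence as in \eqref{eq:estimate-T4}. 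If you replace your $T_3$ step by this detour through $g_N$ (your $T_2$-type variance bound then plays the role of $\cT_4$ for fixed $\phi$), the rest of your argument goes through.
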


\begin{proof}
    Let $t\in [0,T]$ be fixed.
    By Markov's inequality it is enough to show that
    \begin{equation}
    \label{eq:coro-empirical-to-int-goal}
        \E\bigg[d_{\mathrm{BL}}\bigg(\frac{1}{N}\sum_{i=1}^N \delta_{X^{i,N}_t}, \int_I \cL(X^{y,g}_t)\lambda(dy)\bigg)\bigg] \rightarrow 0. 
    \end{equation}
    To simplify the notation, let us put $m^N_t := \frac{1}{N}\sum_{i=1}^N \delta_{X^{i,N}_t}$, $m^{N,g}_t := \frac{1}{N}\sum_{i=1}^N \delta_{X^{i,g}_t}$ and $\nu^{g}_t := \int_I \cL(X^{y,g}_t)\lambda(dy)$,  and denote by $m_t^{N,\eps}, m^{N,g,\eps}$ and $\nu^{g,\eps}_t$ the respective Gaussian regularizations as defined in \eqref{eq:def-regularized-measure}.

    To show \eqref{eq:coro-empirical-to-int-goal} we proceed as in the proof of Theorem \ref{thm:convergence-bounded-graphon-L1} and consider the sequence of approximating graphons $g_N$.
    Given $\eps>0$, by triangle inequality, we have
    \begin{equation}
    \label{eq:coro-conv-law-estimate-1}
        \begin{split}
            \E\big[d_{\mathrm{BL}}\big(m^N_t, \nu^{g}_t\big)\big]  & \leq \E\big[d_{\mathrm{BL}}\big(m^N_t, m^{N,g}_t\big)\big] + \E\big[d_{\mathrm{BL}}\big(m^{N,g}_t, m^{N,g_N}_t\big)\big] + \E\big[d_{\mathrm{BL}}\big(m^{N,g_N}_t, m^{N,g_N,\eps}_t\big)\big]\\
            & \quad + \E\big[d_{\mathrm{BL}}\big( m^{N,g_N,\eps}_t, \nu^{g_N,\eps}_t\big)\big] + \E\big[d_{\mathrm{BL}}\big( \nu^{g_N,\eps}_t, \nu_t^{g_N}\big)\big] + \E\big[d_{\mathrm{BL}}\big( \nu_t^{g_N}, \nu^{g}_t \big)\big] \\
            & =: \cT_1 + \cT_2 + \cT_3 + \cT_4 + \cT_5 + \cT_6.
    \end{split}
    \end{equation}
    We can estimate $\cT_1$ as:
    \begin{equation}
    \label{eq:estimate-T1}
        \begin{split}
        \cT_1 
          &  \leq  \E\Big[\frac{1}{N}\sum_{i=1}^N  |X^{i,N}_t - X^{i,g}_t|\Big].
        \end{split}
    \end{equation}
    Thus, using Theorem \ref{thm:convergence-bounded-graphon-L1} we conclude that $\cT_1\rightarrow 0.$
    For $\cT_2$ we obtain by Proposition \ref{prop:stability-of-g-average} that
    \begin{equation}
    \label{eq:estimate-T2}
    \begin{split}
        \cT_2 \leq \E\Big[ \frac{1}{N}\sum_{i=1}^N |X_t^{i,g}-X^{i,g_N}_t|\Big]\rightarrow 0.
    \end{split}
    \end{equation}
    With a similar argument but using Proposition \ref{prop:stability-of-g-integral} we can bound $\cT_6$ by
    \begin{equation}
    \label{eq:estimate-T6}
        \begin{split}
            \cT_6 &\leq  E\bigg[\sup_{\phi\in 1\text{-Lip}_b} \int_I  \E\big[\phi(X^{y,g_N}_t) - \phi(X^{y,g}_t) \big]\lambda(dy) \bigg]\\
            & \leq \int_I  \E[|X^{y,g_N}_t - X^{y,g}_t|\lambda(dy) \leq C \|g_N-g\|_2 \rightarrow 0.
        \end{split}
    \end{equation}
    Furthermore, with the same argument used in Lemma \ref{lemma:distance-regularised-measures-L1} we obtain
    \begin{equation}
    \label{eq:estimate-T3-and-T5}
        \cT_3 + \cT_5 \leq Cn\sqrt{\eps}.
    \end{equation}
    Finally, to bound $\cT_4$ we use independence of the solutions of the graphon SDE \eqref{eq:graphon-SDE}.
    Namely,
    \begin{equation}
    \label{eq:estimate-T4}
        \begin{split}
            \cT_4 & \leq \E\bigg[ \sup_{\phi \in 1\text{-Lip}_b} \int_{\R^n}|\phi(z)|\bigg|\frac{1}{N}\sum_{i=1}^N \varphi_\eps(z-X^{i,g_N}_t) - \int_I \E[\varphi_\eps(z-X^{y,g_N}_t)]\bigg|dz\bigg]\\
            & \leq  \E\bigg[\int_{\R^n} \bigg|\frac{1}{N}\sum_{i=1}^N \left(\varphi_\eps(z-X^{i,g_N}_t) - \E[\varphi_\eps(z-X^{i,g_N}_t)]\right)\bigg|dz\bigg]\\
            & \leq  \E\bigg[\int_{\R^n} (1+|z|^{n+1})\bigg(\frac{1}{N}\sum_{i=1}^N \left(\varphi_\eps(z-X^{i,g_N}_t) - \E[\varphi_\eps(z-X^{i,g_N}_t)]\right)\bigg)^2dz \bigg]^{1/2}\\
            & \leq  \bigg(\frac{1}{N^2}\sum_{i=1}^N\int_{\R^n} (1+|z|^{n+1})\E\left[\varphi^2_\eps(z-X^{i,g_N}_t)\right]dz \bigg)^{1/2}\\
            & \leq \frac{C(\eps)}{\sqrt{N}},
        \end{split}
    \end{equation}
    where we bounded the integral term uniformly in $i$ using equation \eqref{eq:Li1-bound-L1} for the last line. 

Combining equation \eqref{eq:coro-conv-law-estimate-1} and equations \eqref{eq:estimate-T1}, $\eqref{eq:estimate-T2}$, $\eqref{eq:estimate-T3-and-T5}$, \eqref{eq:estimate-T4} and \eqref{eq:estimate-T6} we arrive at 
    \begin{equation*}
        \lim_{N\rightarrow \infty}\E\left[d_{\mathrm{BL}}\left(m^N_t, \nu^{g}_t\right)\right] = 0. 
    \end{equation*}
This concludes the proof. 
\end{proof}

The next Corollary states that if we assume stronger conditions on the graphon $g$, then we can get a rate of convergence in Theorem \ref{thm:convergence-bounded-graphon-L1}.

\begin{corollary}
\label{coro:rate-convergence}
    Assume Conditions \ref{cond:initial-conditions}, 
    \ref{cond:coefficients}, \ref{cond:graphon-cond-2}, \ref{cond:almost-continuity}, \ref{cond:bound-initial-cond}. If $\sigma$ does not depend on the measure argument and we further assume that $g$ is Lipschitz then we have that there exists a constant $C>0$ independent of $N$ such that for every $N\geq 1$ it holds that
    \begin{equation}
    \label{eq:coro-rate-1}
        \frac{1}{N}\sum_{i=1}^N\E\Big[ \sup_{t\in [0,T]}\big|X^{i,N}_{t}-X^{i,g}_t\big| \Big] \leq  \frac{C}{(N\beta_N)^{\frac{1}{2(n+1)}}}
    \end{equation}
    and that
    \begin{equation*}
    \label{eq:coro-rate-3}
        \frac1N\sum_{i=1}^N\E\Big[d_{\mathrm{BL}}(m^{i,N}_{\X_t},  \nu^{i,g}_{X_t}) \Big] + \frac{1}{N}\sum_{i=1}^N \E\Big[\sup_{t\in [0,T]}d_{\mathrm{BL}}(\cL(Y^{i,N}_t), \cL(X^{i,g}_t))\Big] \leq \frac{C}{(N\beta_N)^{\frac{1}{2(n+1)}}}.
    \end{equation*}
\end{corollary}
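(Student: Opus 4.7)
The plan is to quantify every error term in the proof of Theorem \ref{thm:convergence-bounded-graphon-L1} and then choose the Gaussian regularization parameter $\varepsilon$ as an explicit function of $N\beta_N$ to obtain the announced rate. Since a Lipschitz graphon on $[0,1]^2$ is automatically bounded, I can take the natural discretization $g_N(x,y):=g(x_i,x_j)$ for $(x,y)\in[x_i,x_{i+1})\times[x_j,x_{j+1})$ and work within the setting of Corollary \ref{coro:bounded-L1-coro-Wrand}. The Lipschitz property then yields the two pivotal estimates $\|g\circ(f_N,\mathrm{id})-g\|_2\le L/N$ and $\|g-g_N\|_2\le CL/N$, upgrading the qualitative convergences used in the proof of Theorem \ref{thm:convergence-bounded-graphon-L1} to explicit $O(1/N)$ rates.

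Next, I would take the piecewise-constant approximation $h_N=g_N$ throughout equation \eqref{eq:proof.trans.to.stability-L1}, which annihilates the $\|g_N-h_N\|_2$ and $L_2^i$ contributions in \eqref{eq:reduction.2-L1} and Lemma \ref{lemma:middle-term-reg-L1} outright. Then I would rework Lemma \ref{lemma:middle-term-reg-L1} under the $(n+1)$-moment Condition \ref{cond:bound-initial-cond}: boundedness of $g$ implies $\sup_j\E[|X^{j,g_N}_u|^{n+1}]\le C$ by Lemma \ref{lemma:L2-boundness-solutions}, and inserting this uniform bound into the Carlson estimate \eqref{eq:Carson} -- avoiding the Cauchy-Schwarz step that would otherwise require $2(n+1)$ moments -- produces the clean bound
\begin{equation*}
    \frac{1}{N}\sum_{i=1}^N\E\Big[d_{\mathrm{BL}}(\nu^{i,g_N}_{X_u},m^{i,N}_{\X_u^{g_N}})\Big]\le C\sqrt{n}\,\varepsilon+\frac{C}{\varepsilon^{n/2}\sqrt{N\beta_N}}.
\end{equation*}

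Assembling this with the $O(1/\sqrt{N\beta_N})$ bound of Lemma \ref{lem:First.estim.conf.proof-L1} and the $O(1/N)$ Lipschitz contributions, equation \eqref{eq:partial-result-convergence-L1} together with Gronwall's inequality gives
\begin{equation*}
    \frac{1}{N}\sum_{i=1}^N\E\Big[\sup_{t\in[0,T]}|X^{i,N}_t-X^{i,g}_t|\Big]\le C\Big(\varepsilon+\frac{1}{\varepsilon^{n/2}\sqrt{N\beta_N}}+\frac{1}{\sqrt{N\beta_N}}+\frac{1}{N}\Big).
\end{equation*}
Choosing $\varepsilon=(N\beta_N)^{-1/(n+1)}$ makes the second term equal to $(N\beta_N)^{-1/(2(n+1))}$, which dominates the first term $(N\beta_N)^{-1/(n+1)}$ and absorbs the remaining pieces (since $N\beta_N\to\infty$); this proves \eqref{eq:coro-rate-1}. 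The bound on $\frac{1}{N}\sum_i\E[d_{\mathrm{BL}}(m_{\X_t}^{i,N},\nu_{X_t}^{i,g})]$ with the same rate follows from the quantitative form of \eqref{eq:estim.dBL.main}, and the bound on $\frac{1}{N}\sum_i\E[\sup_t d_{\mathrm{BL}}(\cL(Y^{i,N}_t),\cL(X^{i,g}_t))]$ from the proof of Corollary \ref{coro:convergence-laws-L1}, which shows this quantity is controlled by $\frac{1}{N}\sum_i\E[\sup_t|X^{i,N}_t-X^{i,g}_t|]$.

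The main technical obstacle is the revisit of Lemma \ref{lemma:middle-term-reg-L1} with only $(n+1)$ moments rather than the $2(n+1)$ moments implicitly used in the Cauchy-Schwarz step there: under the boundedness granted by the Lipschitz hypothesis the cruder estimate $\sum_j g_N(x_i,x_j)\E[|X^j|^{n+1}]\le C\|g_N\|_\infty N$ is available and turns the problematic $\|g_N\|_2^{1/4}\E[|X|^{2n+2}]^{1/4}$ factor into a harmless constant. Beyond this, the argument is a bookkeeping exercise: the Lipschitz hypothesis gives explicit $1/N$ rates for every graphon approximation error that was only shown to be $o(1)$ in Theorem \ref{thm:convergence-bounded-graphon-L1}, and the resulting clean $\varepsilon$-dependence then yields the announced rate after the straightforward minimization above.
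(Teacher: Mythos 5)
Your proposal is correct and follows essentially the same route as the paper's proof: quantify each error term in the proof of Theorem \ref{thm:convergence-bounded-graphon-L1} by taking $h_N = g_N = g\circ(f_N,f_N)$ and using the Lipschitz estimates $\|g-h_N\|_2\le C/N$, $\|g\circ(f_N,\mathrm{id})-g\|_2\le C/N$, then choose $\eps = (N\beta_N)^{-1/(n+1)}$ so that $\eps^{n/2}\sqrt{N\beta_N} = (N\beta_N)^{1/(2(n+1))}$, with the second display following from the quantitative form of \eqref{eq:estim.dBL.main} and the proof of Corollary \ref{coro:convergence-laws-L1}, exactly as in the paper. The only divergence is your reworking of Lemma \ref{lemma:middle-term-reg-L1} to use boundedness of $g_N$ directly so that only the $(n+1)$-moments of Condition \ref{cond:bound-initial-cond} are invoked, which is a slightly more careful justification of the same bound than the paper's blanket absorption of the moment factors into constants.
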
 

\begin{proof}
    It is enough to show equation \eqref{eq:coro-rate-1}.
    The other bound follows by the same argument, see \eqref{eq:estim.dBL.main}. 
    In the proof of Theorem \ref{thm:convergence-bounded-graphon-L1} we showed that
    \begin{equation}
    \label{eq:reduction.3-coro}
    \begin{split}
        \frac{1}{N}\sum_{i=1}^N\E\Big[ \sup_{t\in [0,T]}\left|X^{i,N}_{t}-X^{i,g}_t\right|\Big] 
        & \leq   \frac{C}{\sqrt{N\beta_N}} + C(1+\|g_N\|_2)\|g\circ (f_N, \mathrm{id}) - g\|_2 + C(1+\|g_N\|_2)\|h_N-g\|_2 \\
        & \quad + C \|g_N\|_2 \|g\circ(f_N,\mathrm{id})\|_2\|g-h_N\|_2  + C\sqrt{n}||g_N||_2 \eps \\
        & \quad + \frac{C}{\eps^{n/2}\sqrt{N\beta_N}}\|g_N\|^{1/4}_2 + \frac{C\|g_N\|^{1/2}_2}{\eps^{n/2} \sqrt{N\beta_N}} \left(\frac{1}{N}\sum_{j=1}^N\E[|X^{j,h_N}_u|^{2n+2}]\right)^{1/4}   \\
        & \quad + \frac{C}{\eps^{n/2}\sqrt{N}}\|h_N-g_N\|_2 + \frac{C\|h_N-g_N\|_2}{\eps^{n/2} \sqrt{N}} \left(\frac{1}{N}\sum_{j=1}^N\E[|X^{j,h_N}_u|^{n+1}]\right)^{1/2}\\
        & \quad + \frac{C\|h_N\|_2}{\eps^{n/2}\sqrt{N}} + \frac{C\|h_N\|_4}{\eps^{n/2}\sqrt{N}} \left(\frac{1}{N}\sum_{j=1}^N \E\left[|X_u^{j,h_N}|^{2n+2}\right]\right)^{1/4}. \\
    \end{split}
    \end{equation}
    
    As $g$ is continuous we can choose $h_N(x,y)=g(f_N(x),f_N(y))$ as was mentioned in Remark \ref{rk:def-h_N-in-continuous-case-L1}. Moreover, $g_N$ is just $g_N(x,y)=g(f_N(x),f_N(y))$. Notice that by Lemma \ref{lemma:L2-boundness-solutions} and the fact that $g$ is bounded we can write:
    \begin{equation}
    \begin{split}
        \frac{1}{N}\sum_{i=1}^N\E\Big[ \sup_{t\in [0,T]}\left|X^{i,N}_{t}-X^{i,g}_t\right|\Big] 
        & \leq   \frac{C}{\sqrt{N\beta_N}} + C \|g\circ (f_N, \mathrm{id}) - g\|_2 + C \|h_N-g\|_2 \\
        & \quad  + C\sqrt{n}(\|g_N - g\|_2 + \|g\|_2)\eps \\
        & \quad + \frac{C}{\eps^{n/2}\sqrt{N\beta_N}} + \frac{C}{\eps^{n/2}\sqrt{N}}. \\
    \end{split}
    \end{equation}
    Notice that we can estimate $\|g-h_N\|_2$ by definition as
    \begin{align}
    \notag
        \|g-h_N\|_2^2 & = \int_I\int_I |g(x,y)-g(f_N(x),f_N(y))|^2 dxdy\\\label{eq:coro-lipschitz-norm}
        & \leq C \int_I\int_I |x-f_N(x)|^2 + |y-f_N(y)|^2 dxdy \leq \frac{C}{N^2}.
    \end{align}
    With a similar argument we obtain that:
    \begin{equation}
    \label{eq:coro-lipschitz-norm-2}
    \begin{split}
        \|g\circ(f_N,\mathrm{id})-g\|_2^2 \leq \frac{C}{N^2}.
    \end{split}
    \end{equation}
    
    Combining equations \eqref{eq:reduction.3-coro}, \eqref{eq:coro-lipschitz-norm} and \eqref{eq:coro-lipschitz-norm-2} we obtain: 
    \begin{equation}
    \label{eq:reduction.4-coro}
    \begin{split}
        \frac{1}{N}\sum_{i=1}^N\E\Big[ \sup_{t\in [0,T]}\left|X^{i,N}_{t}-X^{i,g}_t\right|\Big] 
        & \leq   \frac{C}{\sqrt{N\beta_N}} + \frac{C}{N} + \frac{C\eps}{N} + C\|g\|_2\eps + \frac{C}{\eps^{n/2}\sqrt{N\beta_N}} + \frac{C}{\eps^{n/2}\sqrt{N}}. \\
    \end{split}
    \end{equation}
    Notice that if $\eps <1$ then we have:
    \begin{equation*}
    \frac{\varepsilon}{N} \le \frac1N\le \frac{1}{\sqrt{N}} \le \frac{1}{\sqrt{N\beta_N}}.
    \end{equation*}
    Hence, we can bound equation \eqref{eq:reduction.4-coro} to obtain: 
    \begin{equation*}
        \frac{1}{N}\sum_{i=1}^N\E\Big[ \sup_{t\in [0,T]}\left|X^{i,N}_{t}-X^{i,g}_t\right|\Big] 
         \leq \frac{C}{\sqrt{N\beta_N}}  + C\|g\|_2\eps + \frac{C}{\eps^{n/2}\sqrt{N\beta_N}}.
    \end{equation*}
    Now, let up pick $\varepsilon = \frac{1}{(N\beta_N)^{\frac{1}{n+1}}}$. Then,
    \begin{equation*}
        \varepsilon^{n/2}\sqrt{N\beta_N} = (N\beta_N)^{\frac{1}{2(n+1)}}.
    \end{equation*}
    Therefore, we have that: 
    \begin{equation*}
        \frac{C}{\sqrt{N\beta_N}}  + C\|g\|_2\eps + \frac{C}{\eps^{n/2}\sqrt{N\beta_N}} \le \frac{C}{(N\beta_N)^{\frac{1}{2(n+1)}}}.
    \end{equation*}
    This finishes the proof.
\end{proof}

We conclude this subsection by showing that the main convergence result Theorem \ref{thm:convergence-bounded-graphon-L1} can be extended to the case where the diffusion coefficient $\sigma$ also depends on the interaction term.
This is done under a stronger sparsity condition on the random graph.
Namely, we replace the condition $N\beta_N\to\infty$ by $N\beta^2_N\to\infty$.

\begin{theorem} 
\label{thm:convergence-bounded-graphon}
    Assume Conditions \ref{cond:initial-conditions}, 
    \ref{cond:coefficients}, \ref{cond:almost-continuity}, \ref{cond:bound-initial-cond} and Condition \ref{cond:graphon-cond-1} hold. Moreover, assume that $g$ is bounded and $N\beta_N^2 \rightarrow \infty$. 
    Then for every $t\in [0,T]$ it holds
    \begin{equation*}
         \frac{1}{N}\sum_{i=1}^N\E\Big[ \sup_{t\in [0,T]}\left|X^{i,N}_{t}-X^{i,g}_t\right|^2 \Big] +  \frac1N\sum_{i=1}^N\E\Big[d^2_{\mathrm{BL}}(m^{i,N}_{\X_t},  \nu^{i,g}_{X_t}) \Big] \xrightarrow{N\rightarrow \infty} 0. 
    \end{equation*}
\end{theorem}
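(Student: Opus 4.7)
The argument follows the strategy of Theorem \ref{thm:convergence-bounded-graphon-L1}, but all estimates must now be performed in $L^2$ because the dependence of $\sigma$ on the measure variable forces us to control $\E[d_{\mathrm{BL}}^2(m^{i,N}_{\X_u},\nu^{i,g}_{X_u})]$ after Burkholder--Davis--Gundy (BDG) rather than its $L^1$ counterpart. The stronger sparsity condition $N\beta_N^2\to\infty$ appears when one squares the fluctuation bounds coming from the centered weights $\Gamma_{ij}:=\zeta^N_{ij}-\beta_N g_N(x_i,x_j)$.

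\emph{Step 1 (main inequality).} Starting from the SDEs satisfied by $X^{i,N}$ and $X^{i,g}$, Lipschitz continuity of $b,\sigma$ (Condition \ref{cond:coefficients}), BDG and Jensen's inequality yield
\begin{equation*}
\frac1N\sum_{i=1}^N\E\Big[\sup_{s\le t}|X^{i,N}_s-X^{i,g}_s|^2\Big]\le C\int_0^t\frac1N\sum_{i=1}^N\E\Big[\sup_{r\le u}|X^{i,N}_r-X^{i,g}_r|^2\Big]du+C\int_0^t\frac1N\sum_{i=1}^N\E\big[d_{\mathrm{BL}}^2(m^{i,N}_{\X_u},\nu^{i,g}_{X_u})\big]du,
\end{equation*}
which is the $L^2$ analog of \eqref{eq:first-estimate-L1}. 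Once the second summand is controlled, Gronwall's inequality delivers the first half of the statement. Squaring the triangle-inequality decomposition of \eqref{eq:proof.trans.to.stability-L1} then also gives the $d_{\mathrm{BL}}^2$ term in \eqref{eq:first.main.conv}, as in \eqref{eq:estim.dBL.main}.

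\emph{Step 2 ($L^2$ analog of Lemma \ref{lem:First.estim.conf.proof-L1}).} I would decompose $\E[d_{\mathrm{BL}}^2(m^{i,N}_{\X_t},\nu^{i,g}_{X_t})]\le 2\E[d_{\mathrm{BL}}^2(m^{i,N}_{\X_t},m^{i,N}_{\X^g_t})]+2\E[d_{\mathrm{BL}}^2(m^{i,N}_{\X^g_t},\nu^{i,g}_{X_t})]$. For the first piece I would write $\zeta^N_{ij}/\beta_N=g_N(x_i,x_j)+\Gamma_{ij}/\beta_N$, expand the square, and exploit (a) the independence of $\{\Gamma_{ij}\}_{i,j}$ across $(i,j)$ coming from Condition \ref{cond:graphon-cond-1}(4), (b) the independence of $\Gamma_{ij}$ from the graphon solutions $(X^{y,g})_{y\in I}$ (cf.~Lemma \ref{lemma:independence-zeta-X}), and (c) the variance bound $\E[\Gamma_{ij}^2]\le \beta_N g_N(x_i,x_j)$. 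Averaging over $i$, the cross terms $j\neq k$ vanish, the diagonal $j=k$ produces $\tfrac{C}{N\beta_N^2}$ after combining Cauchy--Schwarz with the moment bounds of Lemma \ref{lemma:L2-boundness-solutions}, and the deterministic $g_N$-part yields the coupling term $C\cdot\tfrac1N\sum_j\E[\sup_{s\le t}|X^{j,N}_s-X^{j,g}_s|^2]$. Hence
\begin{equation*}
\frac1N\sum_{i=1}^N\E\big[d_{\mathrm{BL}}^2(m^{i,N}_{\X_t},\nu^{i,g}_{X_t})\big]\le \frac{C}{N\beta_N^2}+\frac{C}{N}\sum_{j=1}^N\E\Big[\sup_{s\le t}|X^{j,N}_s-X^{j,g}_s|^2\Big]+\frac1N\sum_{i=1}^N\E\big[d_{\mathrm{BL}}^2(m^{i,N}_{\X_t^g},\nu^{i,g}_{X_t})\big].
\end{equation*}

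\emph{Step 3 (Gaussian regularization) and Step 4 (conclusion).} To estimate the last term I would repeat the piecewise-constant approximation $h_N$ followed by Gaussian regularization $\mu\mapsto\mu\star\cN^\eps$ carried out for the $L^1$ proof. Squaring Lemma \ref{lemma:distance-regularised-measures-L1} (via the $W_2$ inequality) gives regularization error $Cn\eps^2$, and squaring the decomposition of Lemma \ref{lemma:middle-term-reg-L1} (using Carlson's inequality \eqref{eq:Carson} together with the conditional independence of $\Gamma_{ij}$ from $X^{j,h_N}$, bounded moments up to order $n+1$ from Lemma \ref{lemma:L2-boundness-solutions} and Condition \ref{cond:bound-initial-cond}, and the stability Propositions \ref{prop:stability-of-g-integral}--\ref{prop:stability-of-g-average} in $L^2$) yields a bound of the form $\frac{C}{\eps^n N\beta_N}+\frac{C}{\eps^n N}+$ approximation terms that vanish as $N\to\infty$. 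Inserting everything into Step 1 and applying Gronwall gives
\begin{equation*}
\frac1N\sum_{i=1}^N\E\Big[\sup_{s\le T}|X^{i,N}_s-X^{i,g}_s|^2\Big]\le e^{CT}\Big(\tfrac{C}{N\beta_N^2}+\tfrac{C}{\eps^n N\beta_N}+C\eps^2+o_N(1)\Big),
\end{equation*}
and sending first $N\to\infty$ (using $N\beta_N^2\to\infty$) and then $\eps\to 0$ concludes the proof.

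\textbf{Main obstacle.} The delicate step is the $L^2$ coupling estimate in Step 2. In the $L^1$ argument, the Hölder bound on $\tfrac{1}{N^2\beta_N}\sum_j\E[|\sum_i\Gamma_{ij}|\cdot|X^{j,N}_t-X^{j,g}_t|]$ gives the rate $\tfrac{1}{\sqrt{N\beta_N}}$; squaring this naively would only produce $\tfrac{1}{N\beta_N}$, but the more careful analysis of the cross-terms separating the diagonal index $i=j$ (where $\Gamma_{ii}$ is no longer independent of $X^{i,N}$ and has to be bounded crudely by $|\Gamma_{ii}|\le C$) from the off-diagonal $i\neq j$ is what forces the precise threshold $N\beta_N^2\to\infty$. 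Handling this together with the fact that $m^{i,N}_{\X^g_u}$ is a non-probability measure, and so the standard probability-measure stability arguments of \cite{coppini2024nonlineargraphonmeanfieldsystems} cannot be imported, is the technical heart of the proof.
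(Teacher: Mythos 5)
Your overall architecture matches the paper's: the paper indeed reuses the proof of Theorem \ref{thm:convergence-bounded-graphon-L1} verbatim except for the coupling estimate of $\E\big[d^2_{\mathrm{BL}}(m^{i,N}_{\X_u}, m^{i,N}_{\X^g_u})\big]$, splits off the deterministic $g_N$-part exactly as you do, and that is where $N\beta_N^2\to\infty$ enters. However, your Step 2 contains a genuine gap at precisely this point. You claim that after expanding
$\E\big[\big(\tfrac{1}{N\beta_N}\sum_j \Gamma_{ij}\,|X^{j,N}_t-X^{j,g}_t|\big)^2\big]$
the cross terms $j\neq k$ vanish, invoking independence of $\Gamma_{ij}$ from the graphon solutions (Lemma \ref{lemma:independence-zeta-X}). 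This is false: the factor $|X^{j,N}_t-X^{j,g}_t|$ involves the \emph{particle-system} solution $X^{j,N}$, which is a functional of the entire adjacency matrix $(\zeta^N_{lm})_{l,m}$ (particle $j$ interacts with all particles, and those interactions are weighted by the $\zeta$'s), so $\Gamma_{ij}$ and $\Gamma_{ik}$ are not independent of $|\Delta X^j_t|\,|\Delta X^k_t|$ and $\E[\Gamma_{ij}\Gamma_{ik}|\Delta X^j_t||\Delta X^k_t|]$ need not be zero. Lemma \ref{lemma:independence-zeta-X} only covers $X^{x,g}$, not $X^{j,N}$; the same caveat already forces the Cauchy--Schwarz step in the $L^1$ estimate \eqref{eq:first-lemma-eq-1}.

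The paper's actual treatment keeps the $j\neq k$ terms and decouples weights from particle differences by Cauchy--Schwarz/H\"older,
$\E\big[\big(\sum_i\Gamma_{ij}\Gamma_{ik}\big)|\Delta X^j_t||\Delta X^k_t|\big]\le \E\big[\big(\sum_i\Gamma_{ij}\Gamma_{ik}\big)^2\big]^{1/2}\E[|\Delta X^j_t|^4]^{1/4}\E[|\Delta X^k_t|^4]^{1/4}$,
which is why fourth moments (Lemma \ref{lemma:L2-boundness-solutions} with $k=4$, hence Condition \ref{cond:bound-initial-cond}) are needed. Independence of the Bernoulli entries across rows is then used only inside $\E\big[\big(\sum_i\Gamma_{ij}\Gamma_{ik}\big)^2\big]\le CN\beta_N^2$, and it is exactly these off-diagonal ($j\neq k$) contributions, of order $C/\sqrt{N\beta_N^2}$ after summation (see \eqref{eq:bound-zeta-variance}), that dictate the threshold $N\beta_N^2\to\infty$; the diagonal $j=k$ terms only contribute $C/(N\beta_N)^{3/2}+C/(N\beta_N)$. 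Your ``main obstacle'' paragraph misplaces the difficulty in a diagonal index $i=j$ of the adjacency matrix (which is zero, the graph having no loops); the true obstacle is the non-vanishing $j\neq k$ cross terms just described, and without the double Cauchy--Schwarz argument your Step 2 estimate, and hence the appearance of the sharp condition $N\beta_N^2\to\infty$, is not justified. The remaining steps (Gaussian regularization, stability in $L^2$, Gronwall) are consistent with the paper.
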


\begin{proof}
    The proof of this result pretty much the same as that of Theorem \ref{thm:convergence-bounded-graphon-L1}.
    Therefore we only outline the step where the stronger condition $N\beta_N^2\to\infty$ is needed.
    Using standard Lipschitz SDE estimations we have
    \begin{equation}
    \label{eq:sigma.1}
    \begin{split}
        \frac{1}{N}\sum_{i=1}^N\E\Big[ \sup_{s\in [0,T]}\left|X^{i,N}_{s}-X^{i,g}_s\right|^2 \Big] & \leq \frac{C}{N}\sum_{i=1}^N \int_0^T 
        \E\big[d^2_{\mathrm{BL}}(m^{i,N}_{\X_u},\nu_{X_u}^{i,g})\big] du\\
        &\le  \frac{C}{N}\sum_{i=1}^N \int_0^T \E\big[d^2_{\mathrm{BL}}(m^{i,N}_{\X_u}, m^{i,N}_{\X^g_u})\big] + d^2_{\mathrm{BL}}(m^{i,N}_{\X^g_u},\nu_{X_u}^{i,g})\big]du.
    \end{split}
    \end{equation}
    The estimation of the second term on the right hand side is the same as in the proof of Theorem \ref{thm:convergence-bounded-graphon-L1}.
    The difficulty is in the estimation of the first term on the right hand side.
    This is obtained as follows:
    \begin{align}
    \notag
        \E\big[d^2_{\mathrm{BL}}(m^{i,N}_{\X_u}, m^{i,N}_{\X^g_u})\big] & \le 2\E\bigg[\Big(\frac1N\sum_{j=1}^N\Big(\frac{\zeta^N_{ij}}{\beta_N}- g_N(x_i,x_j)\Big)|X^{j,N}_t - X^{j,g}_t|\Big)^2\bigg]\\\label{eq:sigma.2}
        & \quad + 2\E\bigg[\Big(\frac1N\sum_{j=1}^N g_N(x_i,x_j) |X^{j,N}_t - X^{j,g}_t|\Big)^2\bigg].
    \end{align}
    Since $g_N$ is bounded, we have
    \begin{equation}
    \label{eq:sigma.3}
        \begin{split}
        \frac{1}{N}\sum_{i=1}^N\E\bigg[\Big(\frac{1}{N}\sum_{j=1}^N g_N(x_i,x_j) |X^{j,N}_t-X^{j,g}_t|\Big)^2\bigg]
        & \le 
        \frac{C}{N}\sum_{j=1}^N \E\Big[ |X^{j,N}_t-X^{j,g}_t|^2\Big].
    \end{split}
    \end{equation}
    Now, put
    \begin{equation*}
        \Gamma_{ij} = \zeta_{ij}^N - \beta_Ng_N(x_i,x_j) \quad \text{ and }\quad \Delta X^j_t = X^{j,N}_t - X^{j,g}_t
    \end{equation*}
    and recall that as $\Gamma_{ij}$ is a centered Bernoulli, $\Gamma_{ij}$ and $\Gamma_{lj}$ are independent and $g_N$ are uniformly bounded we have
    \begin{equation}
    \label{eq:bounds-centered-ber}
        \E[\Gamma_{ij}^4] \leq  g_N(x_i,x_j)\beta_N \leq C\beta_N, \quad \E[\Gamma_{ij}]=0 \quad \text{ and }\quad\E[\Gamma_{ij}^2\Gamma_{lj}^2]\leq g_N(x_i,x_j)g_N(x_l,x_j)\beta_N^2\leq C\beta_N^2.
    \end{equation}
    Thus, using Hölder inequality and Lemma \ref{lemma:L2-boundness-solutions} we obtain
    \begin{equation}
    \label{eq:bound-zeta-variance-prelim}
        \begin{split}
        & \frac{1}{N}\sum_{i=1}^N\E\bigg[\Big(\frac1N\sum_{j=1}^N\Big(\frac{\zeta^N_{ij}}{\beta_N}- g_N(x_i,x_j)\Big)|X^{j,N}_t - X^{j,g}_t|\Big)^2\Big] \\
        & = \frac{1}{N^3\beta_N^2} \sum_{j=1}^N \E\bigg[\Big(\sum_{i=1}^N\Gamma_{ij}^2\Big)|\Delta X^j_t|^2\bigg] + \frac{1}{N^3\beta_N^2} \sum_{j\neq k = 1}^N \E\bigg[\Big(\sum_{i=1}^N\Gamma_{ij}\Gamma_{ik}\Big)|\Delta X_t^j\|\Delta X_t^{k}|\bigg]\\
        & \leq \frac{C}{N^3\beta_N^2} \sum_{j=1}^N \E\bigg[\Big(\sum_{i=1}^N\Gamma_{ij}^2\Big)^2\bigg]^{1/2} + \frac{1}{N^3\beta_N^2} \sum_{j\neq k = 1}^N \E\bigg[\Big(\sum_{i=1}^N\Gamma_{ij}\Gamma_{ik}\Big)^2\bigg]^{1/2}\E\big[|\Delta X_t^j|^4\big]^{1/4}\E\big[|\Delta X_t^{k}|^4\big]^{1/4}\\
        & \leq \frac{C}{N^3\beta_N^2} \sum_{j=1}^N \E\bigg[\sum_{i=1}^N\Gamma_{ij}^4 + \sum_{i\neq l=1}^N\Gamma_{ij}^2\Gamma_{lj}^2 \bigg]^{1/2} + \frac{C}{N^3\beta_N^2} \sum_{j\neq k = 1}^N \E\bigg[\Big(\sum_{i=1}^N\Gamma_{ij}\Gamma_{ik}\Big)^2\bigg]^{1/2}.
    \end{split}
    \end{equation}
    Replacing equation \eqref{eq:bounds-centered-ber} in equation \eqref{eq:bound-zeta-variance-prelim} we continue the estimation as
    \begin{equation}
    \label{eq:bound-zeta-variance}
        \begin{split}
        &  \frac{1}{N}\sum_{i=1}^N\E\bigg[\Big(\frac1N\sum_{j=1}^N\Big(\frac{\zeta^N_{ij}}{\beta_N}- g_N(x_i,x_j)\Big)|X^{j,N}_t - X^{j,g}_t|\Big)^2\bigg] \\
    & \leq \frac{C}{N^2\beta_N^2}\left(N\beta_N + N^2\beta_N^2\right)^{1/2}  + \frac{C}{N^3\beta_N^2}N^2\left(N\beta_N^2\right)^{1/2}\\
    & \le \frac{C}{(N\beta_N)^{3/2}} + \frac{C}{N\beta_N} + \frac{C}{\sqrt{N\beta^2_N}}.
    \end{split}
    \end{equation}
    Thus, putting together \eqref{eq:sigma.1} \eqref{eq:sigma.2} and \eqref{eq:sigma.3}, and using Gronwall's inequality we conclude the proof.
\end{proof}

\subsection{Extension to random points}

In the next corollary we allow the points $(x_i)_{1\leq i\leq N}$ to be chosen randomly on the unit interval. 
In that case, we need to state the convergence result in terms of convergence in probability with respect to the randomness associated to the sampling of the points $(x_i)_{1\leq i\leq N}$.

\begin{condition}
\label{cond:random-points}
    For every $N \in \N$ consider independent and uniformly distributed and ordered random variables $U_2,\dots, U_{N}$ on the unit interval and set $U_1=0$ and $U_{N+1} = 1$ with probability $1$. We will assume that we sample the variables $U_1,\dots, U_{N+1}$ on a probability space $\widetilde \P$. Furthermore, if $x_i$ is the realization of $U_i$, assume Condition \ref{cond:graphon-cond-1}$(1)$ hold and assume that for every $N$, there exists a set of independent random variables $(\zeta^N_{ij})_{1\leq i,j\leq N}$ with distribution $\zeta^N_{ij} = \mathrm{Bernouilli}(g(x_i,x_j)\beta_N)$ where $\beta_N$ is a sequence of non-negative reals such that $N\beta_N\rightarrow \infty$, $\beta_N\leq 1$ and $\beta_Ng(x_i,x_j)\leq 1$.
\end{condition}

\begin{corollary}
\label{coro:random-points}
    Assume Conditions \ref{cond:initial-conditions}, \ref{cond:coefficients}, \ref{cond:random-points} and \ref{cond:bound-initial-cond} holds. 
    If $g$ is Lipschitz, then for every $\eta>0$ we have
    \begin{equation*}
      \lim_{N\rightarrow \infty} \widetilde \P\Big( \frac{1}{N}\sum_{i=1}^N\E\Big[\sup_{t\in [0,T]}|X_t^{U_i,N}-X_t^{U_i,g}|\Big| (U_i)_{1\leq i\leq N} \Big] > \eta \Big) = 0.
    \end{equation*}
\end{corollary}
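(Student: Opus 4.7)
The plan is to prove the claim by conditioning on the realization of the random points $(U_i)$. Fix such a realization $(x_i)_{1\le i\le N+1}$. Conditionally, the particle system and the graphon SDE solutions are well-defined, and essentially every step in the proofs of Theorem \ref{thm:convergence-bounded-graphon-L1} and Corollary \ref{coro:rate-convergence} goes through with $x_i=(i-1)/N$ replaced by $U_i$. Indeed, at no point in those proofs is equidistant spacing used per se; the grid only enters through the piecewise-constant approximating graphon $h_N$ and through the associated approximation errors. In particular, all constants appearing in Lemmas \ref{lemma:L2-boundness-solutions}, \ref{lem:First.estim.conf.proof-L1}, \ref{lemma:distance-regularised-measures-L1} and \ref{lemma:middle-term-reg-L1} depend only on $\|g\|_\infty$, $\|g\|_2$ and on the moment bounds of the initial data, not on the location of the grid points in $[0,1]$.

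Following this, I would define the random analog of $f_N$,
\begin{equation*}
    f_N^U(x):=\sum_{i=1}^{N-1}U_i\mathbf{1}_{\{x\in[U_i,U_{i+1})\}}+U_N\mathbf{1}_{\{x\in[U_N,1]\}},
\end{equation*}
and set $h_N:=g\circ(f_N^U,f_N^U)$. Lipschitz continuity of $g$ with constant $L$ yields the pathwise bound
\begin{equation*}
    \|g-h_N\|_2\vee\|g\circ(f_N^U,\mathrm{id})-g\|_2\le L\delta_N,\qquad \delta_N:=\max_{1\le i\le N}(U_{i+1}-U_i),
\end{equation*}
which is the random counterpart of the deterministic bounds \eqref{eq:coro-lipschitz-norm}--\eqref{eq:coro-lipschitz-norm-2}. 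Feeding these into the optimization in $\varepsilon$ performed at the end of the proof of Corollary \ref{coro:rate-convergence} gives
\begin{equation*}
    \frac{1}{N}\sum_{i=1}^N\E\Big[\sup_{t\in[0,T]}|X_t^{U_i,N}-X_t^{U_i,g}|\,\Big|\,(U_i)_{1\le i\le N}\Big]\le C_1\delta_N+\frac{C_2}{(N\beta_N)^{1/(2(n+1))}},
\end{equation*}
for deterministic constants $C_1,C_2>0$ independent of the realization of $(U_i)$.

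To conclude, it suffices to observe that $\delta_N\to 0$ in $\widetilde\P$-probability. In fact, by the classical theory of uniform order spacings, $\delta_N=O(\log N/N)$ almost surely under $\widetilde\P$. Combined with the hypothesis $N\beta_N\to\infty$, the right-hand side of the above display tends to $0$ in $\widetilde\P$-probability, and Markov's inequality yields the stated convergence.

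The main obstacle is bookkeeping: one must verify that every constant arising in the proof of Corollary \ref{coro:rate-convergence} is indeed uniform in the grid and depends only on global features of $g$ and on the moment bounds of the initial data. Once this pathwise uniformity is in place, the conditioning argument is automatic. A minor subtlety is that the Lipschitz assumption on $g$ implies Condition \ref{cond:almost-continuity}, so the applications of Proposition \ref{prop:stability-of-g-average} transfer verbatim to the conditional setting.
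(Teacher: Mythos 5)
Your overall route coincides with the paper's: condition on the grid, rerun the bounded-graphon argument of Theorem \ref{thm:convergence-bounded-graphon-L1} with $x_i$ replaced by $U_i$, control the graphon-approximation error through Lipschitz continuity of $g$, and conclude with a probability estimate over $\widetilde\P$. The gap is in the blanket claim that ``at no point in those proofs is equidistant spacing used per se'' and that the grid enters only through $h_N$. Equidistance is used in every step that converts a $\frac1N$-weighted sum into a Lebesgue integral (the paper flags this explicitly, see \eqref{eq:from.average.to.integral}), and most critically in the law-of-large-numbers step, Lemma \ref{lemma:middle-term-reg-L1}: there one uses that
\begin{equation*}
\nu^{x_i,h_N}_{X_u}=\int_I h_N(x_i,y)\,\mathcal L(X^{y,h_N}_u)\,\lambda(dy)=\frac1N\sum_{j=1}^N h_N(x_i,x_j)\,\mathcal L(X^{x_j,h_N}_u),
\end{equation*}
which holds only because each cell has length exactly $1/N$. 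On the random grid the Lebesgue weights are the spacings $U_{j+1}-U_j$, so comparing $\nu^{U_i,h_N}_{X_u}$ with the $\tfrac1N$-weighted (regularized) empirical measure creates an extra error of the form $\sum_j\big(\tfrac1N-(U_{j+1}-U_j)\big)g(U_i,U_j)\E[\varphi_\eps(z-X^{U_j,h_N}_u)]$. Pathwise, this is only bounded by $\|g\|_\infty\sum_j|\tfrac1N-(U_{j+1}-U_j)|$, which is $O(1)$ (and $N\delta_N\sim\log N$ diverges), so it cannot be folded into your $C_1\delta_N$; your claimed conditional rate $C_1\delta_N+C_2(N\beta_N)^{-1/(2(n+1))}$ therefore does not follow from the bookkeeping you describe. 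Controlling this mismatch requires exploiting cancellation through the joint law of the spacings under $\widetilde\P$ (second-moment computations such as $\E[(U_{i+1}-U_i)^2]=\tfrac{2}{(N+1)(N+2)}$), which is exactly how the paper treats the grid randomness: it keeps the discrete sums, uses $h_N(U_i,U_j)=g(U_i,U_j)$, bounds the graphon errors by $\sum_i(U_{i+1}-U_i)^{3/2}$ and its $\widetilde\P$-expectation, keeps $\eps$ fixed, and sends $\eps\to0$ only after $N\to\infty$, rather than extracting a pathwise rate in the mesh.

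Two smaller omissions: the variance computations in the analog of Lemma \ref{lemma:middle-term-reg-L1} (term $L^i_3$) need pairwise independence of the limiting particles at the random indices, which requires an argument of the type of Lemma \ref{lemma:independence-BM} (or the choice of representative points within each cell) and is not automatic from ``essential'' pairwise independence; and the a.s.\ bound $\delta_N=O(\log N/N)$ is more than is needed and is not how the paper argues — only spacing moments (or convergence in probability of the relevant spacing functionals) are used.
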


\begin{proof}
    We start by bounding $\frac{1}{N}\E\left[\sup_{t\in [0,T]}|X_t^{U_i,N}-X_t^{U_i,g}|\mid (U_i)_{1\leq i\leq N}\right]$. 
    We will denote by $x_i$ a given realization of $U_i$.

    Observe that in the proof of Theorem \ref{thm:convergence-bounded-graphon-L1}, the only steps where we are using that the points are deterministic is where we need to transform an average into an integral, see for instance \eqref{eq:from.average.to.integral} (among many other places). 
    In particular, equations \eqref{eq:partial-result-convergence-L1} and \eqref{eq:proof.trans.to.stability-L1} still hold in the present random case. 
    We will now explain how to continue the proof from \eqref{eq:proof.trans.to.stability-L1}, and explain only how to deal with random points.

    Let us now define $h_N(x,y) := g(f_N(x),f_N(y))$.
    By equation \eqref{eq:before.intro.norms} and Proposition \ref{prop:stability-of-g-integral} we have
\begin{equation}
\begin{split}
    \label{eq:d-nu-g-h_N-intermediate-random}
    & \frac{1}{N}\sum_{i=1}^N d_{\mathrm{BL}}(\nu^{i,g}_{X_u}, \nu^{i,h_N}_{X_u})\\
    & \le   \frac{1}{N}\sum_{i=1}^N \sup_{\phi \in  1\text{-Lip}_b}\bigg(\int_I |g(x_i,y) - h_N(x_i,y)|^2 \lambda(dy)\bigg)^{1/2}\bigg( \int_I (\E[|\phi(X^{y,g}_u)|])^2 \lambda(dy)\bigg)^{1/2} \\
    & \quad +  \frac{1}{N}\sum_{i=1}^N \sup_{\phi \in 1\text{-Lip}_b}\bigg(\int_I h_N(x_i,y)^2\lambda(dy)\bigg)^{1/2} \bigg(\int_I\E\big[|\phi(X^{y,g}_u) - \phi(X^{y,h_N}_u)|^2\big]\lambda(dy)\bigg)^{1/2} \\
    & \le \frac{1}{N}\sum_{i=1}^N \bigg(\int_I |g(x_i,y) - h_N(x_i,y)|^2 \lambda(dy)\bigg)^{1/2} +  C \bigg(\int_I\E\big[|X^{y,g}_u - X^{y,h_N}_u|^2\big]\lambda(dy)\bigg)^{1/2}. \\
     & \le   \frac{1}{N}\sum_{i=1}^N \bigg(\int_I |g(x_i,y) - h_N(x_i,y)|^2 \lambda(dy)\bigg)^{1/2} + C \|g-h_N\|_2.
\end{split}
\end{equation}
The analogue of equation \eqref{eq:dist-m-h_N-g-intermediate-L1} is obtained from the bound on \eqref{eq:first-lemma-eq-1} and Proposition \ref{prop:stability-of-g-average} before introducing the norms
\begin{equation}
    \label{eq:dist-m-h_N-g-intermediate-random}
    \begin{split}
        \frac{1}{N}\sum_{i=1}^N \E\Big[d_{\mathrm{BL}}(m^{i,N}_{\X^{h_N}_u}, m^{i,N}_{\X^{g}_u})\Big]  \leq & \frac{\|g\|_\infty}{N}\sum_{i=1}^N \E\Big[\big|X^{i,h_N}_u-X^{i,g}_u\big|\Big] \\
        \leq & \frac{C}{N}\sum_{i=1}^N \int_I\left|h_N(x_i,y)-g(x_i,y)\right|\lambda(dy) +  C \|g\circ (f_N,id)\|_{2} \|g-h_N\|_2\\
        \leq & \frac{C}{N}\sum_{i=1}^N \int_I\left|h_N(x_i,y)-g(x_i,y)\right|\lambda(dy) +  C(\|g\|_\infty) \|g-h_N\|_2.\\
    \end{split}
\end{equation}

Hence, combining equations \eqref{eq:partial-result-convergence-L1}, \eqref{eq:proof.trans.to.stability-L1}, \eqref{eq:d-nu-g-h_N-intermediate-random} and \eqref{eq:dist-m-h_N-g-intermediate-random} we obtain
\begin{equation}
\label{eq:reduction.2-L1-coro}
\begin{split}
    \frac{1}{N}\sum_{i=1}^N\E\Big[ \sup_{s\in [0,t]}\big|X^{i,N}_{t}-X^{i,g}_t\big| \Big] & \leq   \frac{C}{\sqrt{N\beta_N}}  +\frac{C}{N}\sum_{i=1}^N \int_0^t\E\Big[d_{\mathrm{BL}}(\nu^{i,h_N}_{X_u}, m^{i,N}_{\X_u^{h_N}})\Big]du + C \|g-h_N\|_2\\
    & \quad + \frac{C}{N}\sum_{i=1}^N \bigg(\int_I \left|h_N(x_i,y)-g(x_i,y)\right|^2\lambda(dy)\bigg)^{1/2}\\
    &\le   \frac{C(\varepsilon)}{\sqrt{N\beta_N}} 
    +  C \|g-h_N\|_2 + \frac{C}{N}\sum_{i=1}^N \bigg(\int_I \left|h_N(x_i,y)-g(x_i,y)\right|^2\lambda(dy)\bigg)^{1/2}\\
    &\quad +\frac{C(\eps)}{\sqrt{N}} + Cn\sqrt{\varepsilon},
\end{split}
\end{equation}
where we used \eqref{eq:reduction-reg-L1}, Lemma \ref{lemma:distance-regularised-measures-L1} and the fact that $h_N(x_i,x_j) = g(x_i,x_j)$. Notice that here we are using the same regularization of measure argument as in the proof of Theorem \ref{thm:convergence-bounded-graphon-L1}. Hence, we need Condition \ref{cond:bound-initial-cond} to be able to claim that the bound still holds.

Now notice that by Lipschitz-continuity of $g$ we have
\begin{equation}
\label{eq:bound-random-norm}
\begin{split}
    \|g-h_N \|_2 & = \bigg(\int_I\int_I |g(x,y)-h_N(x,y)|^2\lambda(dx)\lambda(dy) \bigg)^{1/2}\\
    & = \bigg(\sum_{i,j=1}^N \int_{x_i}^{x_{i+1}}\int_{x_j}^{x_{j+1}} |g(x,y)-g(x_i,x_j)|^2\lambda(dx)\lambda(dy) \bigg)^{1/2}\\
    & \leq \bigg( 2 \sum_{i,j=1}^N \int_{x_i}^{x_{i+1}}\int_{x_j}^{x_{j+1}} |x-x_i|^2 + |y-x_j|^2\lambda(dx)\lambda(dy)\bigg)^{1/2} \\
    & \leq C \sum_{i=1}^N (x_{i+1}-x_i)^{3/2}. \\
\end{split}
\end{equation}

The same argument yields
\begin{equation}
\label{eq:second-bound-random-points}
    \frac{1}{N}\sum_{i=1}^N \bigg(\int_I \left|h_N(x_i,y)-g(x_i,y)\right|^2\lambda(dy)\bigg)^{1/2} \leq C \sum_{i=1}^N (x_{i+1}-x_i)^{3/2}.
\end{equation}
Combining equations \eqref{eq:reduction.2-L1-coro}, \eqref{eq:bound-random-norm} and \eqref{eq:second-bound-random-points} we obtain
\begin{equation*}
    \frac{1}{N}\sum_{i=1}^N\E\Big[ \sup_{s\in [0,t]}\big|X^{i,N}_{t}-X^{i,g}_t\big| \Big] \leq   Cn\sqrt{\eps} +\frac{C(\eps)}{\sqrt{N\beta_N}} + \frac{C(\eps)}{\sqrt{N}}  +  C \sum_{i=1}^N (x_{i+1}-x_i)^{3/2}.\\
\end{equation*}
For the random variables $(U_i)_{1\le i\le N}$, we thus have for every $\eta>0$
\begin{equation*}
\begin{split}
    & \widetilde\P\bigg( \frac{1}{N}\sum_{i=1}^N\E\Big[\sup_{t\in [0,T]}|X_t^{U_i,N}-X_t^{U_i,g}|\Big| (U_i)_{1\leq i\leq N} \Big] > \eta \bigg) \\
    & \leq \widetilde \P\bigg( Cn\sqrt{\eps} +\frac{C(\eps)}{\sqrt{N\beta_N}} + \frac{C(\eps)}{\sqrt{N}}  +  C \sum_{i=1}^N (U_{i+1}-U_i)^{3/2}  > \eta \bigg)\\
    & \leq \frac{1}{\eta}\bigg(Cn\sqrt{\eps} +\frac{C(\eps)}{\sqrt{N\beta_N}} + \frac{C(\eps)}{\sqrt{N}}  +  C \sum_{i=1}^N \E\left[(U_{i+1}-U_i)^{3/2}\right]\bigg)\\
    & \leq \frac{1}{\eta}\bigg(Cn\sqrt{\eps} +\frac{C(\eps)}{\sqrt{N\beta_N}} + \frac{C(\eps)}{\sqrt{N}}  +  C \sum_{i=1}^N \E\left[(U_{i+1}-U_i)^{2}\right]^{3/4}\bigg),\\
\end{split}
\end{equation*}
where the last inequality follows by Hölder's inequality with parameter $4/3$. By \cite[Section $2.1$]{pyke-spacings} we know that $\E[(U_{i+1}-U_i)^2] = \frac{2}{(N+1)(N+2)}$ which implies that
\begin{equation}
\label{eq:random-second-est}
\begin{split}
     & \limsup_{N\rightarrow \infty} \widetilde \P\bigg( \frac{1}{N}\E\Big[\sup_{t\in [0,T]}|X_t^{U_i,N}-X_t^{U_i,g}| \Big| (U_i)_{1\leq i\leq N}\Big] > \eta \bigg) \\
     & \leq \limsup_{N\rightarrow \infty} \frac{1}{\eta}\bigg(Cn\sqrt{\eps} +\frac{C(\eps)}{\sqrt{N\beta_N}} + \frac{C(\eps)}{\sqrt{N}}  +  C\Big(\frac{N^{4/3}}{(N+1)(N+2)}\Big)^{3/4}\bigg)\\
     & = \frac{Cn\sqrt{\eps}}{\eta}.\\
\end{split}
\end{equation}
As equation \eqref{eq:random-second-est} holds for every $\eps>0$, the desired limit follows.
\end{proof}

Before stating the Weak Law of Large Numbers we will need the following Lemma:
\begin{lemma}
\label{lemma:independence-BM} Define for every $x\in I$
    \begin{equation*}
        A_x = \{y \in I : B^x \text{ and } B^y \text{ are independent}\}
    \end{equation*}
    and define $A$ to be the set of points such that $\lambda(A_x)=1$. Assume that $U_1\in A$. Then, for every $N\in \N$, we have that
    \begin{equation*}
        \widetilde \P\left( B^{U_1}, \dots, B^{U_N} \text{ are pairwise independent}\right) = 1.
    \end{equation*}
\end{lemma}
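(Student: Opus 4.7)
The goal is to reduce the claim to a statement about a single pair: for every fixed $1\le i<j\le N$, I would aim to show $\widetilde{\P}(B^{U_i}$ and $B^{U_j}$ are independent$)=\widetilde{\P}(U_j\in A_{U_i})=1$, after which a finite union bound over the $\binom{N}{2}$ pairs gives the lemma.

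The first step is to observe that the essential pairwise independence of the family $(B^x)_{x\in I}$ supplied by Theorem \ref{thm:construction-spaces} is precisely the statement $\lambda(A)=1$. Next, for each $k\in\{2,\dots,N\}$, $U_k$ is (a reordering of) an order statistic of $N-1$ i.i.d.\ uniforms on $[0,1]$ and hence has an absolutely continuous law with respect to Lebesgue measure; in particular $\widetilde{\P}(U_k\in A)=1$. Combined with the standing hypothesis $U_1\in A$, this shows that $\lambda(A_{U_i})=1$ $\widetilde{\P}$-almost surely for every $i\in\{1,\dots,N\}$.

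Now fix the pair $i<j$. Conditionally on $U_i$, the random variable $U_j$ is an order statistic of i.i.d.\ uniforms on the residual interval $[U_i,1]$ (or $[0,1]$ when $i=1$), so its conditional law is absolutely continuous with respect to Lebesgue measure on $[U_i,1]$. Since $\lambda(A_{U_i}^c)=0$ almost surely, we therefore obtain $\widetilde{\P}(U_j\in A_{U_i}\mid U_i)=1$ almost surely, and integrating in $U_i$ yields $\widetilde{\P}(U_j\in A_{U_i})=1$.

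The only delicate point I foresee is measurability: to make the conditioning argument above rigorous one needs the set $\{(x,y):B^x\perp B^y\}$ to be jointly measurable, and a priori $A_x$ is only defined via the non-trivial notion of independence of two processes. I would circumvent this by encoding pairwise independence through a countable determining family (e.g.\ checking factorization of characteristic functions at a countable dense collection of time parameters and spatial frequencies). Each of the relevant maps $(x,y)\mapsto \E\bigl[\exp(i\langle u, B^x_t\rangle + i\langle v, B^y_s\rangle)\bigr]$ and its marginal product are jointly measurable in $(x,y)$ by the $\cI\boxtimes\cF$-measurability of $\tilde B$ provided by Theorem \ref{thm:construction-spaces} together with Fubini's theorem on the Fubini extension, which realizes $A_x$ as a section of a jointly measurable set and legitimizes the preceding computations.
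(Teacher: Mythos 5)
Your argument is correct and is essentially the paper's proof: both reduce to showing $\widetilde\P(U_j\in A_{U_i})=1$ by conditioning on $U_i$, using that $U_i$ lies in $A$ almost surely (the hypothesis for $U_1$, absolute continuity for $i\ge 2$) so that $\lambda(A_{U_i})=1$, and that the (conditional) law of $U_j$ is absolutely continuous; the paper merely packages the finite union bound as an induction on $N$. Your closing remark on realizing $\{(x,y):B^x\perp B^y\}$ as a jointly measurable set via a countable determining family of characteristic functions addresses a measurability point the paper passes over silently, and is a welcome addition rather than a deviation.
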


\begin{proof}
    This follows from the fact that the Brownian motions $(B^{x})_{x\in I}$ are essentially pairwise independent. Formally, As $\lambda$ is an extension of the Lebesgue measure, we can obtain the result by induction on $N$. For $N=2$, we get that:
    \begin{equation*}
        \widetilde\P\left(B^{U_1}, B^{U_2} \text{ are independent} \right) = \widetilde\P\left( U_2\in A_{0}|U_1=0\right) \widetilde\P\left(U_1=0\right)= \widetilde\P\left( U_2\in A_{0}\right)=1.
    \end{equation*}
    This holds because we know $U_1=0$ with probability $1$ and the last term is $1$ because as $U_2$ is sorted uniformly at random then $\widetilde \P( U_2\in A_{0})=\lambda(A_0)=1$.

    If we know that $B^{U_1},\dots, B^{U_k}$ are pairwise independent (p.i). Then, 
    \begin{equation*}
    \begin{split}
        \widetilde\P\left(B^{U_1},\dots, B^{U_{k+1}} \text{ are p.i}\right) &= \widetilde\P\left(B^{U_1},\dots, B^{U_{k+1}} \text{ are p.i }|B^{U_1},\dots, B^{U_{k}} \text{ are p.i }\right) \widetilde\P\left(B^{U_1},\dots, B^{U_{k}} \text{ are p.i}\right)\\
        &= \widetilde \P\left(B^{U_1},\dots, B^{U_{k+1}} \text{ are p.i }|B^{U_1},\dots, B^{U_{k}} \text{ are p.i }\right).
    \end{split}
    \end{equation*}
    We will show the complement is zero:
    \begin{equation*}
    \begin{split}
        \widetilde\P\left(B^{U_1},\dots, B^{U_{k+1}} \text{ are not p.i }|B^{U_1},\dots, B^{U_{k}} \text{ are p.i }\right) & \leq  \widetilde\P\left(U_{k+1}\not\in A_{U_1} \right) + \dots + \widetilde\P\left(U_{k+1}\not\in A_{U_k} \right).
    \end{split}
    \end{equation*}
    However, each of these summands is zero as for $i\neq k+1$
    \begin{equation*}
         \widetilde\P\left(U_{k+1}\not\in A_{U_i} \right) = \int_I \widetilde\P\left(U_{k+1}\not\in A_{x}\big| U_i = x \right)\widetilde\P\left(U_i \in \lambda(dx)\right) = \int_A \widetilde\P\left(U_{k+1}\not\in A_{x}\big| U_i = x \right)\widetilde\P\left(U_i \in \lambda(dx)\right) = 0,
    \end{equation*}
    where the last equality follows from the fact that for every $x\in A$, $\widetilde\P\left(U_{k+1}\not\in A_{x} \right) = \mathrm{Leb}(A^c_x) = \lambda(A^c_x)=0$. 
\end{proof}

\begin{corollary}
\label{coro:WLLN}
    Assume Conditions \ref{cond:initial-conditions}, \ref{cond:coefficients}, \ref{cond:random-points} and \ref{cond:bound-initial-cond} holds. Moreover, assume that the hypothesis of Lemma \ref{lemma:independence-BM} hold. If $g$ is Lipschitz, then for every $\eta>0$ we have
    \begin{equation*}
      \lim_{N\rightarrow \infty} \widetilde\P\bigg( \frac{1}{N}\sum_{i=1}^N\E\bigg[\Big|\frac{1}{N}\sum_{i=1}^N X_t^{i,N}-\int_I X_t^{y,g}\lambda(dy)\Big| \bigg| (U_i)_{1\leq i\leq N} \bigg] > \eta \bigg) = 0.
    \end{equation*}
\end{corollary}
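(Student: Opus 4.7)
The plan is to reduce the assertion to two subclaims via a triangle-inequality split, one handled by Corollary \ref{coro:random-points} and the other by the Exact Law of Large Numbers of \cite{Sun-2006} combined with the pairwise independence guaranteed by Lemmas \ref{lemma:sol-sde-are-epi} and \ref{lemma:independence-BM}. Writing
\begin{equation*}
\frac{1}{N}\sum_{i=1}^N X_t^{U_i,N} - \int_I X_t^{y,g}\lambda(dy) = T^1_N + T^2_N,
\end{equation*}
with
\begin{equation*}
T^1_N := \frac{1}{N}\sum_{i=1}^N\bigl(X_t^{U_i,N}-X_t^{U_i,g}\bigr),\qquad T^2_N := \frac{1}{N}\sum_{i=1}^N X_t^{U_i,g} - \int_I X_t^{y,g}\lambda(dy),
\end{equation*}
the conditional $L^1$-norm of $T^1_N$ is exactly the quantity bounded in Corollary \ref{coro:random-points}, so it converges to $0$ in $\widetilde{\P}$-probability. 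It therefore suffices to show that $\E[|T^2_N|\mid (U_i)_i]\to 0$ in probability.

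For $T^2_N$, the first key observation is that $\int_I X_t^{y,g}\lambda(dy)$ is $\P$-almost surely equal to the deterministic quantity $\bar X_I := \int_I \E[X_t^{y,g}]\lambda(dy)$. Indeed, by Lemma \ref{lemma:sol-sde-are-epi} the family $(X^{y,g})_{y\in I}$ is essentially pairwise independent, and since $X^{\cdot,g}\in L^2_\boxtimes(I\times\Omega,\cC)$ it is Bochner-integrable, so applying Sun's Exact Law of Large Numbers coordinate-wise (\cite{Sun-2006,Law-large-numbers-Uhlig-1996}) identifies the $\omega$-integral with the cross-sectional mean for $\P$-a.e.\ $\omega$. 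Setting $\bar X(y):=\E[X_t^{y,g}]$ (which is measurable in $y$ by the Fubini-extension property), I further split
\begin{equation*}
T^2_N = \underbrace{\frac{1}{N}\sum_{i=1}^N\bigl(X_t^{U_i,g}-\bar X(U_i)\bigr)}_{=:S^1_N} + \underbrace{\frac{1}{N}\sum_{i=1}^N \bar X(U_i) - \bar X_I}_{=:S^2_N}.
\end{equation*}

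For $S^1_N$, the hypothesis of Lemma \ref{lemma:independence-BM} guarantees that $\widetilde{\P}$-a.s.\ the Brownian motions $B^{U_1},\dots,B^{U_N}$ are pairwise independent, and the same argument gives pairwise independence of $(\xi^{U_i},B^{U_i})_i$. Through the functional representation $X^{y,g}=F_{\nu^{y,g}_X}(\xi^y,B^y)$ used in Lemma \ref{lemma:sol-sde-are-epi}, this yields that, $\widetilde\P$-a.s., the variables $(X_t^{U_i,g})_i$ are pairwise independent conditionally on $(U_i)_i$, with conditional mean $\bar X(U_i)$. Using Lemma \ref{lemma:L2-boundness-solutions} to obtain a uniform bound on $\sup_y \E[|X_t^{y,g}|^2]$, I compute
\begin{equation*}
\E\bigl[|S^1_N|^2 \,\big|\, (U_i)_i\bigr] = \frac{1}{N^2}\sum_{i=1}^N \mathrm{Var}\bigl(X_t^{U_i,g}\,\big|\,U_i\bigr) \le \frac{C}{N},
\end{equation*}
so by Jensen's inequality $\E[|S^1_N|\mid (U_i)_i]\to 0$ in $L^2(\widetilde\P)$, hence in probability. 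For $S^2_N$, the $\bar X(U_i)$ are i.i.d.\ with mean $\bar X_I$ and bounded second moment (since $\bar X$ is bounded in $L^2$), so the classical weak law of large numbers gives $S^2_N\to 0$ in $L^2(\widetilde\P)$. Combining these via Markov's inequality concludes the proof.

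The main obstacle is the clean application of Sun's Exact LLN to the $\R^n$-valued process $X^{\cdot,g}_t$ living on the Fubini extension, together with the subsequent verification that conditioning on $(U_i)_i$ preserves pairwise independence of $(X_t^{U_i,g})_i$; both steps are essentially bookkeeping once Lemmas \ref{lemma:sol-sde-are-epi} and \ref{lemma:independence-BM} are in hand, but they are the reason why the graphon SDE had to be constructed in $L^2_\boxtimes$ rather than merely pointwise in $x$.
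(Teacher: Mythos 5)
Your overall skeleton (peel off $\frac1N\sum_i|X^{U_i,N}_t-X^{U_i,g}_t|$ via Corollary \ref{coro:random-points}, then treat the remaining term with the Exact Law of Large Numbers and the $\widetilde\P$-a.s.\ pairwise independence from Lemma \ref{lemma:independence-BM}) is the same as the paper's, and your fluctuation estimate $S^1_N$ (conditional variance of order $1/N$) mirrors the paper's treatment of its term $\cR_3$. Where you genuinely diverge is that you work directly with $g$ and dispose of the bias term $S^2_N=\frac1N\sum_i\bar X(U_i)-\int_I\bar X(y)\lambda(dy)$ by a classical weak law of large numbers for the sampled means $\bar X(U_i)$, whereas the paper first replaces $g$ by the random step graphon $h_N=g\circ(f_N,f_N)$ (paying stability errors controlled by the Lipschitz continuity of $g$, Propositions \ref{prop:stability-of-g-integral} and \ref{prop:stability-of-g-average}) and then exploits that $y\mapsto\cL(X^{y,h_N}_t)$ is piecewise constant on the random partition, so that the integral of means becomes the finite sum $\sum_i(U_{i+1}-U_i)\E[X^{U_i,h_N}_t\mid(U_i)]$, with the discrepancy to $\frac1N\sum_i$ controlled by the spacing moments $\E[(U_{i+1}-U_i)^2]=\tfrac{2}{(N+1)(N+2)}$.

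The step where your shortcut breaks is precisely $S^2_N$. First, under Condition \ref{cond:random-points} the $U_i$ are \emph{ordered} with $U_1=0$ fixed, so "$\bar X(U_i)$ are i.i.d." is literally false; this alone is repairable by symmetry of the sum. The substantive problem is that $\bar X(y)=\E[X^{y,g}_t]$ is only known to be measurable with respect to $\cI$, the \emph{extension} of the Lebesgue $\sigma$-algebra carried by the Fubini extension (Lemma \ref{eq:Lemma.integ.graphon.integr} gives $\cI$-measurability of $y\mapsto\cL(X^{y,g})$, not Borel or Lebesgue measurability; the initial laws $\gamma^y$ are themselves only measurable in this extended sense by Condition \ref{cond:initial-conditions}). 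Consequently $\bar X(U_i)$ need not be a well-defined random variable on $\widetilde\P$, and even if it were, there is no reason its expectation equals $\int_I\bar X\,d\lambda$, since $\lambda$ strictly extends the Lebesgue measure: the classical WLLN cannot be invoked. This is exactly the obstruction the paper's detour through $h_N$ is designed to remove, because for a step graphon the cross-sectional law map is a step function on the random partition and all integrals reduce to finite sums in the spacings. A telltale symptom is that your argument never actually uses the Lipschitz hypothesis on $g$ beyond boundedness, while in the paper it is what makes the $g\leftrightarrow h_N$ stability errors (your missing approximation step) vanish. To repair your proof you would have to either establish Borel/Lebesgue regularity of $y\mapsto\E[X^{y,g}_t]$ (not available in this framework) or reintroduce the step-graphon approximation and the spacing estimate, i.e.\ essentially revert to the paper's argument.
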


\begin{proof}
    Notice that by Markov's inequality we have that: 
    \begin{equation*}
        \widetilde\P\Big( \E\Big[\Big|\frac{1}{N}\sum_{i=1}^N X_t^{U_i,N}-\int_I X_t^{y,g}\lambda(dy)\Big|\Big| (U_i)_{1\leq i\leq N} \Big] > \eta \Big) \leq \frac{\E\bigg[\E\Big[\Big|\frac{1}{N}\sum_{i=1}^N X_t^{U_i,N}-\int_I X_t^{y,g}\lambda(dy)\Big|\bigg| (U_i)_{1\leq i\leq N} \Big]\bigg]}{\eta}.
    \end{equation*}
    As in the Proof of Theorem \ref{thm:convergence-bounded-graphon-L1} we introduce an approximating step graphon $h_N$, which in this case we will specialize to be $h_N(x,y)=g(f_N(x),f_N(y))$. Notice that in this case $f_N$ is defined as
    \begin{equation*}
    f_N(x) := \sum_{i=1}^{N-1} U_i {\mathbf{1}_{\{x\in [U_i,U_{i+1})\}}} + U_N \mathbf{1}_{\{x\in [U_N,1]\}}.
    \end{equation*}
    We obtain the estimate
    \begin{equation*}
    \begin{split}
        & \E\bigg[ \Big|\frac{1}{N}\sum_{i=1}^N X^{i,N}_t  - \int_I X^{y,g}_t\lambda(dy)\Big|\bigg| (U_i)_{1\leq i\leq N}\bigg]\\
        & \leq 
        \E\bigg[ \frac{1}{N}\sum_{i=1}^N   \big|X^{i,N}_t  -  X_t^{U_i,g}\big|\Big| (U_i)_{1\leq i\leq N}\bigg] + \E\bigg[\frac{1}{N}\sum_{i=1}^N \big|  X^{U_i,g}_t  - X_t^{U_i,h_N}\big|\Big| (U_i)_{1\leq i\leq N}\bigg]\\
        & \quad +\E\bigg[\Big|\frac{1}{N}\sum_{i=1}^N X_t^{U_i,h_N}  - \int_I X^{y,h_N}_t \lambda(dy) \Big|\bigg| (U_i)_{1\leq i\leq N} \bigg]  + \E\bigg[\int_I \big| X^{y,h_N}_t - X^{y,g}_t\big|\lambda(dy)\Big| (U_i)_{1\leq i\leq N}\bigg]\\
        & = \cR_1 + \cR_2 + \cR_3 + \cR_4. \\
    \end{split}
    \end{equation*}

    We need to show that $\E[\cR_1+\cR_2+\cR_3+\cR_4] \rightarrow 0$ as $N\rightarrow \infty$. $\E[\cR_1]\rightarrow 0$ by Corollary \ref{coro:random-points}. $\cR_2$ can be dealt with using Proposition \ref{prop:stability-of-g-average} we obtain\footnote{Notice that in this case $f_N$ is a random function depending on the random points $\{U_i\}_{1\leq i \leq N}$.}: 
    \begin{equation*}
    \begin{split}
        \cR_2 \leq & \frac{C}{N} \sum_{i=1}^N \int_I |g(U_i,f_N(y))-g(U_i,y)| \lambda(dy) + \frac{C}{N}\sum_{i=1}^N\int_I|g(U_i,y)|^2\lambda(dy) \|g-h_N\|_2\\
        \leq & C\sum_{i=1}^N (U_{i+1}-U_i)^{3/2},\\
    \end{split}
    \end{equation*}
    where we used the fact that $g$ is Lipschitz and bounded and the same idea as in equations \eqref{eq:bound-random-norm} and  \eqref{eq:second-bound-random-points}. Taking expectation we obtain with the same argument as in equation \eqref{eq:random-second-est}: 
    \begin{equation*}
        \E[\cR_2] \leq C\E\left[\sum_{i=1}^N (U_{i+1}-U_i)^{3/2}\right] \leq \left(\frac{CN^{4/3}}{(N+1)(N+2)}\right)^{3/4} \rightarrow 0. 
    \end{equation*}
    $\cR_4$ is straightforward to estimate using the stability of the solutions of the graphon SDE (Proposition \ref{prop:stability-of-g-integral}):
    \begin{equation*}
    \begin{split}
        \E\left[\cR_4\right] \leq & C \E\left[ \left(\int_I\int_I |g(f_N(x),f_N(y))-g(x,y)|^2\right)^{1/2}\right] = C \E\left[\sum_{i=1}^N(U_{i+1}-U_i)^{3/2}\right]\rightarrow 0.\\
    \end{split}
    \end{equation*}
    We are left to estimate $\cR_3$.    
    By using the Exact Law of Large Numbers we can estimate: 
    \begin{equation*}
    \begin{split}
        \cR_3 & \leq  \E\left[\Big| \frac{1}{N}\sum_{i=1}^NX^{U_i,h_N}_t - \int_I \E[X^{y,h_N}_t\Big| (U_i)_{1\leq i\leq N}]\lambda(dy)\Big|\Big| (U_i)_{1\leq i\leq N}\right]\\
        \leq & \E\left[\Big| \frac{1}{N}\sum_{i=1}^NX^{U_i,h_N}_t - \frac{1}{N}\sum_{i=1}^N \E\left[X^{U_i,h_N}_t\Big| (U_i)_{1\leq i\leq N}\right]\Big|\Big| (U_i)_{1\leq i\leq N}\right] \\
        &\quad + \E\left[\Big|\frac{1}{N}\sum_{i=1}^N \E\left[X^{U_i,h_N}_t\Big| (U_i)_{1\leq i\leq N}\right] - \int_I \E[X^{y,h_N}_t\big| (U_i)_{1\leq i\leq N}]\lambda(dy)\Big|\Big| (U_i)_{1\leq i\leq N} \right] \\
        & \leq  \E\left[\Big( \frac{1}{N}\sum_{i=1}^N X^{U_i,h_N}_t - \E\left[X^{U_i,h_N}_t\Big| (U_i)_{1\leq i\leq N}\right]\Big)^2\Big| (U_i)_{1\leq i\leq N}\right]^{1/2}   \\
        & \quad + \E\left[\Big|\frac{1}{N}\sum_{i=1}^N\left(\frac{1}{N}-(U_{i+1}-U_{i})\right)\E[X^{U_i,h_N}_t\big| (U_i)_{1\leq i\leq N}]\Big|^2\Big| (U_i)_{1\leq i\leq N} \right]^{1/2} \\
        & \leq  \E\left[\frac{1}{N^2}\sum_{i=1}^N \Big( X^{U_i,h_N}_t - \E\left[X^{U_i,h_N}_t\Big| (U_i)_{1\leq i\leq N}\right]\Big)^2 \Big| (U_i)_{1\leq i\leq N}\right]^{1/2} \\
        & \quad + \left(\frac{1}{N}\sum_{i=1}^N\left(\frac{1}{N}-(U_{i+1}-U_{i})\right)^2\frac{1}{N}\sum_{i=1}^N\E\left[ \Big|X^{U_i,h_N}_t\Big|^2\Big| (U_i)_{1\leq i\leq N} \right]\right)^{1/2}.
    \end{split}
    \end{equation*}
    Where we used the fact that by Lemma \ref{lemma:independence-BM} the processes $X^{U_i}$ and $X^{U_j}$ are independent with probablity $1$. By Lemma \ref{lemma:L2-boundness-solutions} we can bound: 
    \begin{equation*}
    \begin{split}
        \cR_3 \leq &  \frac{1}{\sqrt{N}}\E\left[\frac{1}{N}\sum_{i=1}^N \Big| X^{U_i,h_N}_t |^2 \Big| (U_i)_{1\leq i\leq N}\right]^{1/2} + \left(\frac{1}{N}\sum_{i=1}^N\left(\frac{1}{N}-(U_{i+1}-U_{i})\right)^2\right)^{1/2} \\
        \leq &  \frac{1}{\sqrt{N}} + \left(\frac{1}{N}\sum_{i=1}^N\left(\frac{1}{N}-(U_{i+1}-U_{i})\right)^2\right)^{1/2}.
    \end{split}
    \end{equation*}
    Taking expectation we obtain: 
    \begin{equation*}
    \begin{split}
        \E\left[\cR_3\right] \leq &\frac{1}{\sqrt{N}} + \left(\frac{1}{N} \sum_{i=1}^N \frac{1}{N^2} -\frac{2}{N}\E[U_{i+1}-U_i] + \E\left[(U_{i+1}-U_i)^2\right]\right)^{1/2}\\
        \leq & \frac{1}{\sqrt{N}} + \left(\frac{1}{N} \sum_{i=1}^N \frac{1}{N^2} -\frac{2}{N}+ \E\left[(U_{i+1}-U_i)^2\right]\right)^{1/2}\\
        \leq & \frac{1}{\sqrt{N}} + \frac{1}{\sqrt{N}}\left(\frac{1}{N }- 2 + \frac{2N}{(N+1)(N+2)}\right)^{1/2}\rightarrow 0.\\
    \end{split}
    \end{equation*}

    Where we used \cite[Section $2.1$]{pyke-spacings} for the last bound. This finishes the proof.

\end{proof}

\subsection{Extensions to the unbounded case}

We start by a moment bound analogous to Lemma \ref{lemma:L2-boundness-solutions} but for the unbounded case. 

\begin{lemma}
\label{lemma:L2-boundness-solutions-g-unbounded}
    Assume Condition \ref{cond:coefficients} and Condition \ref{cond:graphon-cond-1} hold. Assume that there exists a constant $C'>0$ independent of $N$ such that $\sup_{i\leq N}\E[|\xi^i|^4]\leq C$. Then, there exists a constant $C>0$ independent of $N$ such that : 
    \begin{equation*}
        \sup_{0\leq i\leq N}\E\left[\sup_{t\in [0,T]}|X^{i,g}_t|^{4}\right] \leq C.
    \end{equation*}
    Moreover, if $\|g_N-g\|_4\rightarrow 0$ and $\|g\|_4 < \infty$ we have that
    Then there exists $C>0$ independent of $N$ such that
    \begin{equation*}
        \frac{1}{N}\sum_{i=1}^N \E\Big[\sup_{t\in [0,T]}|X^{i,N}_t|^{4}\Big] \leq C.
    \end{equation*}
\end{lemma}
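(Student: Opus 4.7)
The proof will parallel that of Lemma \ref{lemma:L2-boundness-solutions}, with the averaging device used in Lemma \ref{lemma:L2-boundness-solutions-g-unbounded-part-1} in order to cope with the unboundedness of $g$. The first claim is obtained as in the first half of Lemma \ref{lemma:L2-boundness-solutions}: apply Burkholder-Davis-Gundy and the linear growth condition from Condition \ref{cond:coefficients}(3) to get
\begin{equation*}
    \E\Big[\sup_{s\in[0,t]} |X^{i,g}_s|^4\Big] \le C\Big(1 + \E[|\xi^i|^4] + \int_0^t \E\big[\sup_{r\le u}|X^{i,g}_r|^4\big] du + \int_0^t \|\nu^{i,g}_{X_u}\|_{\mathrm{BL}}^4 du\Big).
\end{equation*}
Because $\phi\in 1\text{-Lip}_b$ satisfies $|\phi|\le 1$, one has the crucial deterministic estimate $\|\nu^{i,g}_{X_u}\|_{\mathrm{BL}}\le \int_I g(x_i,y)\lambda(dy)$, which together with Jensen's inequality and $\|g\|_4<\infty$ provides the required control. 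Gronwall's inequality then yields the uniform bound.

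For the second claim, the same SDE manipulation with BDG gives, after averaging over $i$,
\begin{equation*}
    \frac1N\sum_{i=1}^N\E\Big[\sup_{s\le t}|X^{i,N}_s|^4\Big] \le C + C\int_0^t \frac1N\sum_i \E\big[\sup_{r\le u}|X^{i,N}_r|^4\big] du + C\int_0^t \frac1N\sum_i \E\big[\|m^{i,N}_{\X_u}\|_{\mathrm{BL}}^4\big] du.
\end{equation*}
The new work is to control $\frac1N\sum_i \E[\|m^{i,N}_{\X_u}\|_{\mathrm{BL}}^4]$ \emph{without} a uniform-in-$N$ bound on $g_N$. I would again exploit $|\phi|\le 1$ to get
$\|m^{i,N}_{\X_u}\|_{\mathrm{BL}} \le \frac{1}{N\beta_N}\sum_j \zeta^N_{ij}$, and then decompose
\begin{equation*}
    \frac{1}{N\beta_N}\sum_{j=1}^N \zeta^N_{ij} = \frac{1}{N\beta_N}\sum_{j=1}^N \Gamma_{ij} + \frac1N\sum_{j=1}^N g_N(x_i,x_j),\qquad \Gamma_{ij}:=\zeta^N_{ij}-\beta_N g_N(x_i,x_j).
\end{equation*}
Raising to the fourth power and averaging, the deterministic summand is handled by the power-mean inequality:
$\frac1N\sum_i \big(\frac1N\sum_j g_N(x_i,x_j)\big)^4 \le \frac{1}{N^2}\sum_{i,j}g_N(x_i,x_j)^4$, which is a Riemann sum for $\|g_N\|_4^4$ and thus bounded via $\|g_N\|_4\le \|g\|_4+\|g_N-g\|_4$.

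The remaining term, involving the centered variables $\Gamma_{ij}$, is treated by expanding $(\sum_j \Gamma_{ij})^4$ as in the proof of Lemma \ref{lemma:L2-boundness-solutions}: independence of the $\Gamma_{ij}$'s (in $j$) and $\E[\Gamma_{ij}]=0$ kill all but the pure-fourth-power terms and the paired squared terms, and $|\Gamma_{ij}|\le 1$ gives $\E[\Gamma_{ij}^{\ell}]\le C\beta_N g_N(x_i,x_j)$ for $\ell\ge 2$. This produces contributions of order
\begin{equation*}
    \frac{1}{(N\beta_N)^3}\cdot \frac{1}{N^2}\sum_{i,j}g_N(x_i,x_j) \;+\; \frac{1}{(N\beta_N)^2}\cdot \frac{1}{N}\sum_i \Big(\frac1N\sum_j g_N(x_i,x_j)\Big)^2,
\end{equation*}
both of which remain bounded (in fact tend to zero) thanks to $N\beta_N\to\infty$ and the $L^4$ (hence $L^1$, $L^2$) control of $g_N$. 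Concluding with Gronwall's inequality closes the bound.

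The main obstacle is the combinatorial moment expansion of $(\sum_j \Gamma_{ij})^4$: one must identify every surviving index pattern, verify the variance/fourth-moment estimate $\E[\Gamma_{ij}^{\ell}]\le C\beta_N g_N(x_i,x_j)$, and keep careful track of which $L^p(g_N)$ norm each surviving term requires so that the $N\beta_N\to\infty$ and $\|g_N\|_4$-boundedness assumptions are exactly what is needed to close the estimate.
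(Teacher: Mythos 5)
Your argument is correct and essentially the paper's: the paper likewise bounds $\|m^{i,N}_{\X_u}\|_{\mathrm{BL}}$ by $\frac{1}{N\beta_N}\sum_j\zeta^N_{ij}$ via $|\phi|\le 1$, expands the fourth moment using independence in $j$ and the Bernoulli moment identities, identifies the surviving averages with the norms $\|g_N\|_1$, $\|g_N\|_2^2$, $\|g_N\|_3^3$, $\|g_N\|_4^4$ divided by the appropriate powers of $N\beta_N$ (all bounded since $\|g_N\|_4$ is bounded and $N\beta_N\to\infty$), and closes with Gronwall's inequality. The only cosmetic difference is that the paper expands $\big(\sum_j\zeta^N_{ij}\big)^4$ directly, using $\E[(\zeta^N_{ij})^k]=\beta_N g_N(x_i,x_j)$ for all $k\ge 1$, rather than first centering with $\Gamma_{ij}$ and splitting off the deterministic mean as you do (your centering is exactly the device the paper employs in the bounded-case Lemma \ref{lemma:L2-boundness-solutions}); both routes give the same bound under the same hypotheses.
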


\begin{proof}
    As in the proof of Lemma \ref{lemma:L2-boundness-solutions} we have that
    \begin{equation*}
    \begin{split}
          \frac{1}{N}\sum_{i=1}^N \E\Big[\sup_{t\in [0,T]}|X^{i,N}_t|^{4}\Big] \leq &  \frac{C}{N}\sum_{i=1}^N\E\bigg[1+|\xi^i|^{4} + \int_0^t \sup_{r\in [0,u]}|X^{i,N}_r|^{4} + \Big( \frac{1}{N}\sum_{j=1}^N \frac{\zeta^{N}_{ij}}{\beta_N}\Big)^{4} du \bigg]. \\
    \end{split}
    \end{equation*}
    Notice that the first two terms are clearly bounded and the third one can be absorbed using Gronwall's inequality. Hence, we only need to show that the last term is uniformly bounded in $N$. To do this we will expand the power as follows: 
    \begin{equation*}
    \begin{split}
        \frac{1}{N}\sum_{i=1}^N \E\left[\Big( \frac{1}{N}\sum_{j=1}^N \frac{\zeta^{N}_{ij}}{\beta_N}\Big)^{4}\right] \leq  & \frac{1}{N}\sum_{i=1}^N \frac{C}{(N\beta_N)^4} \left(\sum_{j_1=1}^N \E[(\zeta_{ij_1}^N)^4] + \sum_{j_1,j_2=1}^N \left(\E[(\zeta_{ij_1}^N)^3]\E[\zeta_{ij_2}^N] + \E[(\zeta_{ij_1}^N)^2]\E[(\zeta_{ij_2}^N)^2]\right)\right. \\
        & \left. +  \sum_{j_1,j_2,j_3=1}^N \E[(\zeta_{ij_1}^N)^2]\E[\zeta_{ij_2}^N]\E[\zeta_{ij_3}^N] + \sum_{j_1,j_2,j_3,j_4=1}^N \E[\zeta_{ij_1}^N]\E[\zeta_{ij_2}^N]\E[\zeta_{ij_3}^N]\E[\zeta_{ij_4}^N]\right)\\
        = &  \frac{1}{N}\sum_{i=1}^N\left(\frac{C}{N^4\beta_N^3}\sum_{j_1=1}^N g_N(x_i,x_{j_1}) +  \frac{C}{N^4\beta_N^2} \sum_{j_1,j_2=1}^N g_N(x_i,x_{j_1})g_N(x_i,x_{j_2})\right.\\
        &   + \frac{C}{N^4\beta_N} \sum_{j_1,j_2,j_3=1}^N g_N(x_i,x_{j_1})g_N(x_i,x_{j_2})g_N(x_i,x_{j_3}) \\
        & \left. + \frac{C}{N^4} \sum_{j_1,j_2,j_3,j_4=1}^N g_N(x_i,x_{j_1})g_N(x_i,x_{j_2})g_N(x_i,x_{j_3})g_N(x_i,x_{j_4}) \right) \\
        = & \frac{\|g_N\|_1}{(N\beta_N)^3} + \frac{\|g_N\|^2_2}{(N\beta_N)^2}  + \frac{\|g_N\|^3_3}{N\beta_N} + \|g_N\|_4.\\
    \end{split}
    \end{equation*}
    The previous calculation finishes the proof. 
\end{proof}

\begin{theorem}
    \label{thm:convergence-unbounded-graphon}
    Assume Conditions \ref{cond:initial-conditions}, 
    \ref{cond:coefficients}, \ref{cond:almost-continuity}, \ref{cond:bound-initial-cond} and \ref{cond:graphon-cond-1} hold. Furthermore, assume that:
    \begin{itemize}
        \item $N\beta_N^2 \rightarrow \infty$;
        \item $\|g\|_4 < +\infty$ and $\|g_N-g\|_4\rightarrow 0$;
        \item $g\circ (f_N,\mathrm{id})$ is dominated by a square integrable function;
    \end{itemize}
    
    Then for every $t\in [0,T]$ it holds
    \begin{equation}
    \label{eq:result-unbounded}
         \frac{1}{N}\sum_{i=1}^N\E\Big[ \sup_{t\in [0,T]}\left|X^{i,N}_{t}-X^{i,g}_t\right|^2 \Big] +  \frac1N\sum_{i=1}^N\E\Big[d^2_{\mathrm{BL}}(m^{i,N}_{\X_t},  \nu^{i,g}_{X_t}) \Big] \xrightarrow{N\rightarrow \infty} 0. 
    \end{equation}
\end{theorem}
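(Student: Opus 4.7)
The plan is to follow the architecture of the proofs of Theorem \ref{thm:convergence-bounded-graphon-L1} and Theorem \ref{thm:convergence-bounded-graphon}, carefully replacing every appearance of $\|g\|_\infty$ or $\|g_N\|_\infty$ by suitable $L^p$-norms of $g$ and $g_N$ with $p\in\{2,4\}$, and replacing pointwise moment bounds on the particles by the averaged bounds of Lemma \ref{lemma:L2-boundness-solutions-g-unbounded} and Lemma \ref{lemma:L2-boundness-solutions-g-unbounded-part-1}. First, from the SDEs \eqref{eq:graphon-SDE} and \eqref{eq:part.system.B}, Condition \ref{cond:coefficients}, Burkholder--Davis--Gundy, and Gronwall, I reduce the problem to showing that for every $t\in[0,T]$,
\begin{equation*}
    \frac{1}{N}\sum_{i=1}^N \E\big[d^2_{\mathrm{BL}}\big(m^{i,N}_{\X_u},\nu^{i,g}_{X_u}\big)\big] \xrightarrow{N\to\infty} 0,
\end{equation*}
uniformly on $[0,T]$, after which \eqref{eq:result-unbounded} follows.

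With the coupled vector $\X^g$ as in \eqref{eq:def-coupled-measure}, I split
\begin{equation*}
    d^2_{\mathrm{BL}}(m^{i,N}_{\X_u},\nu^{i,g}_{X_u}) \le 2\,d^2_{\mathrm{BL}}(m^{i,N}_{\X_u},m^{i,N}_{\X^g_u}) + 2\,d^2_{\mathrm{BL}}(m^{i,N}_{\X^g_u},\nu^{i,g}_{X_u}).
\end{equation*}
For the first, coupling-type piece I bound $d_{\mathrm{BL}}(m^{i,N}_{\X_u},m^{i,N}_{\X^g_u})$ by $\frac{1}{N}\sum_j \frac{\zeta^N_{ij}}{\beta_N}|X^{j,N}_u-X^{j,g}_u|$, write $\zeta^N_{ij}/\beta_N = g_N(x_i,x_j) + \Gamma_{ij}/\beta_N$, and estimate the squared average in the two resulting terms exactly as in the proof of Theorem \ref{thm:convergence-bounded-graphon}; using $\|g_N\|_2\le \|g\|_2 + \|g_N-g\|_2$ (bounded) and the fourth-moment inequalities for Bernoulli sums, the centered part produces a term of order $1/(N\beta_N^2)+1/(N\beta_N)^{3/2}$, which vanishes thanks to the assumption $N\beta_N^2\to\infty$, while the ``$g_N$ part'' is absorbed by Gronwall.

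For the second, empirical-to-graphon piece I introduce a piecewise constant approximating graphon $h_N$ (e.g. $h_N(x,y):=g(f_N(x),f_N(y))$, so that $h_N(x_i,x_j)=g_N(x_i,x_j)$ and $\|h_N-g\|_2\to 0$ by the domination hypothesis, Lemma \ref{lemma:cond-implies-convergence}, and the $L^4$-convergence of $g_N$ to $g$) and triangulate
\begin{equation*}
\begin{split}
    d_{\mathrm{BL}}(m^{i,N}_{\X^g_u},\nu^{i,g}_{X_u}) & \le d_{\mathrm{BL}}(m^{i,N}_{\X^g_u},m^{i,N}_{\X^{h_N}_u}) + d_{\mathrm{BL}}(m^{i,N}_{\X^{h_N}_u},\nu^{i,h_N}_{X_u}) + d_{\mathrm{BL}}(\nu^{i,h_N}_{X_u},\nu^{i,g}_{X_u}).
\end{split}
\end{equation*}
The first and third summands are handled exactly as in \eqref{eq:d-nu-g-h_N-intermediate-L1}--\eqref{eq:dist-m-h_N-g-intermediate-L1}, using the stability Propositions \ref{prop:stability-of-g-integral} and \ref{prop:stability-of-g-average}; the resulting bounds involve $\|g-h_N\|_2$, $\|g\circ(f_N,\mathrm{id})-g\|_2$ and $\|g_N\|_2$, all of which are controlled by the assumptions. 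For the middle term, I regularize by convolution with a Gaussian $\mathcal{N}^\varepsilon$ and bound
\begin{equation*}
    d_{\mathrm{BL}}(m^{i,N}_{\X^{h_N}_u},\nu^{i,h_N}_{X_u}) \le d_{\mathrm{BL}}(m^{i,N}_{\X^{h_N}_u},m^{i,N,\varepsilon}_{\X^{h_N}_u}) + d_{\mathrm{BL}}(m^{i,N,\varepsilon}_{\X^{h_N}_u},\nu^{i,h_N,\varepsilon}_{X_u}) + d_{\mathrm{BL}}(\nu^{i,h_N,\varepsilon}_{X_u},\nu^{i,h_N}_{X_u}),
\end{equation*}
exactly as in \eqref{eq:reduction-reg-L1}. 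The outer two pieces give $O(\varepsilon)$ (in $L^2$ mean, after Cauchy--Schwarz, as in Lemma \ref{lemma:distance-regularised-measures-L1}), while the middle, oscillatory piece is treated by the Carlson-type inequality \eqref{eq:Carson} as in Lemma \ref{lemma:middle-term-reg-L1}, producing factors of $\|g_N\|_2,\|h_N\|_2,\|h_N\|_4$ and $\frac{1}{N}\sum_j \E|X^{j,h_N}_u|^{2n+2}$. Since $\|g_N\|_p$ and $\|h_N\|_p$ are bounded for $p\in\{2,4\}$ and the $(2n+2)$-th averaged moment is bounded uniformly in $N$ by Lemma \ref{lemma:L2-boundness-solutions-g-unbounded-part-1} (using Condition \ref{cond:bound-initial-cond} and $\|g\|_4<\infty$), this term is of order $1/\varepsilon^{n/2}\sqrt{N\beta_N^2}+1/\varepsilon^{n/2}\sqrt{N}$, which again vanishes under $N\beta_N^2\to\infty$. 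Taking $\varepsilon\to 0$ after $N\to\infty$ concludes.

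The main obstacle is that without boundedness of $g$ the squared quantities $d^2_{\mathrm{BL}}(m^{i,N}_{\cdot},\cdot)$ produce cross-terms in $g_N(x_i,x_j)g_N(x_i,x_k)$ and $h_N^2(x_i,x_j)$ whose $i$-averages are $\|g_N\|_2^2$ and $\|h_N\|_2^2$, and on the regularization side one encounters $\|h_N\|_4$; hence the proof hinges on the $L^4$-control of $g$, the $L^4$-convergence $\|g_N-g\|_4\to 0$, and the averaged moment Lemma \ref{lemma:L2-boundness-solutions-g-unbounded-part-1}, all of which are precisely what the additional hypotheses furnish. The sparsity upgrade from $N\beta_N\to\infty$ to $N\beta_N^2\to\infty$ is what makes the $L^2$ concentration of the Bernoulli adjacency go through in this unbounded setting.
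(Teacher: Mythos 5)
Your overall architecture differs from the paper's: the paper does \emph{not} redo the bounded-case argument with $L^p$ bookkeeping. Instead it fixes $\eta>0$, picks a \emph{bounded continuous} graphon $\widetilde g$ with $\|g-\widetilde g\|_2\le\eta$, samples an auxiliary adjacency matrix $\widetilde\zeta^N_{ij}\sim\mathrm{Bernoulli}(\beta_N\widetilde g_N(x_i,x_j))$ and the corresponding particle system $\widetilde X^{i,N}$, and splits $X^{i,N}-X^{i,g}$ into $(X^{i,N}-\widetilde X^{i,N})+(\widetilde X^{i,N}-X^{i,\widetilde g})+(X^{i,\widetilde g}-X^{i,g})$; the middle term is Theorem \ref{thm:convergence-bounded-graphon}, the last is Proposition \ref{prop:stability-of-g-average}, and all Gaussian-regularization/Carlson work is performed only for the bounded system $X^{j,\widetilde g_N}$, so the unbounded $g$ enters those estimates only through $\|g_N\|_1$, $\|g_N\|_2$ and $\|g_N-\widetilde g_N\|_2$.

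This is where your direct route has a genuine gap. In your treatment of the middle term $d_{\mathrm{BL}}(m^{i,N,\varepsilon}_{\X^{h_N}_u},\nu^{i,h_N,\varepsilon}_{X_u})$ you invoke the Carlson-type bound \eqref{eq:Carson} as in Lemma \ref{lemma:middle-term-reg-L1}, whose output involves $\frac1N\sum_j\E[|X^{j,h_N}_u|^{n+1}]$ and $\frac1N\sum_j\E[|X^{j,h_N}_u|^{2n+2}]$ for the graphon system driven by the \emph{unbounded} approximant $h_N$. You claim these are bounded by Lemma \ref{lemma:L2-boundness-solutions-g-unbounded-part-1} ``using Condition \ref{cond:bound-initial-cond} and $\|g\|_4<\infty$'', but that lemma requires $g$ to be $k$-integrable and $\sup_i\E[|\xi^i|^k]<\infty$ for the $k$-th averaged moment; with $k=n+1$ (let alone $k=2n+2$) this follows from the theorem's hypotheses only in low dimension ($\|g\|_4$ does not control $\|g\|_{n+1}$ for $n\ge 4$, and Condition \ref{cond:bound-initial-cond} provides only $(n+1)$-moments of $\xi$). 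So the key quantitative input of your regularization step is unjustified under the stated assumptions, and it is precisely to avoid needing high moments of solutions attached to unbounded graphons that the paper routes everything through the bounded $\widetilde g$. A secondary issue: your choice $h_N=g\circ(f_N,f_N)$ needs $\|h_N-g\|_2\to 0$, which via Lemma \ref{lemma:cond-implies-convergence} requires a square-integrable dominant for $g\circ(f_N,f_N)$, whereas the theorem only assumes one for $g\circ(f_N,\mathrm{id})$; also $h_N(x_i,x_j)=g(x_i,x_j)$ need not coincide with $g_N(x_i,x_j)$ under Condition \ref{cond:graphon-cond-1}. Your handling of the coupling term (centered Bernoulli part of order $1/\sqrt{N\beta_N^2}$, ``$g_N$ part'' absorbed by Gronwall through $\|g_N\|_2^2$) is essentially sound and matches the spirit of the paper's $\cT_1$ estimate, but the regularization step as proposed does not go through.
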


As before it is straightforward to obtain a Corollary in terms of Condition \ref{cond:graphon-cond-2}. 

The proof follows from noticing that we can apply Theorem \ref{thm:convergence-unbounded-graphon} using $g_N(x,y)=g(f_N(x),f_N(y))$ and that by dominated convergence Theorem $\|g_N-g\|\rightarrow 0$. Hence, we can apply the Theorem. 

\begin{corollary}
    Assume Conditions \ref{cond:initial-conditions}, 
    \ref{cond:coefficients}, \ref{cond:almost-continuity}, \ref{cond:bound-initial-cond} and \ref{cond:graphon-cond-2} hold. Furthermore, assume that $\|g\|_4<+\infty$, $N\beta_N\rightarrow \infty$, $g\circ (f_N,\mathrm{id})$ and $g\circ (f_N,f_N)$ are dominated by a square integrable function. Then, for every $t\in [0,T]$ equation \eqref{eq:result-unbounded}.    
\end{corollary}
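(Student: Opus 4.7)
The plan is to reduce this statement to a direct application of Theorem~\ref{thm:convergence-unbounded-graphon} by choosing an appropriate step graphon $g_N$ that matches the random-graph data prescribed by Condition~\ref{cond:graphon-cond-2}. Specifically, I would set
\begin{equation*}
    g_N(x,y) := g(f_N(x), f_N(y)), \qquad (x,y)\in I\times I,
\end{equation*}
so that $g_N$ is piecewise constant on the rectangles $[x_i,x_{i+1})\times [x_j,x_{j+1})$ and, by construction, $g_N(x_i,x_j) = g(x_i,x_j)$ for every $1\le i,j\le N$. Consequently the Bernoulli distributions imposed on the adjacency entries in Condition~\ref{cond:graphon-cond-2}, namely $\zeta^N_{ij}\sim \mathrm{Bernoulli}(g(x_i,x_j)\beta_N)$, coincide with the Bernoulli distributions required in Condition~\ref{cond:graphon-cond-1}$(4)$. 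The compatibility of $\beta_N g_N(x_i,x_j)\le 1$ and the independence of $\zeta^N_{ij}$ and $\tilde B^x$ are then inherited directly from Condition~\ref{cond:graphon-cond-2}.

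Next I would verify the convergence hypothesis $\|g_N - g\|_4\to 0$ required by Theorem~\ref{thm:convergence-unbounded-graphon}. Because $|x - f_N(x)|\le 1/N$, the functions $f_N$ converge uniformly to the identity on $I$. Using Condition~\ref{cond:almost-continuity}, for $\lambda$-almost every $x$ the graphon $g$ is continuous in $(x,y)$ for $y$ outside a $\lambda$-null set $I_x$, and an analogous Fubini-type argument gives that $g(f_N(x), f_N(y))\to g(x,y)$ for $\lambda\otimes\lambda$-almost every $(x,y)$. The hypothesis that $g\circ(f_N,f_N)$ is dominated by an integrable function (read as $L^4$-integrable, which is what is required for the dominated convergence argument on $|g_N-g|^4$ to go through, using also $\|g\|_4<\infty$) then gives $\|g_N - g\|_4\to 0$ via dominated convergence. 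The auxiliary condition that $g\circ(f_N,\mathrm{id})$ is dominated by a square-integrable function is precisely the extra integrability hypothesis imposed by Theorem~\ref{thm:convergence-unbounded-graphon}, and it is assumed here directly.

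Finally, the remaining hypotheses of Theorem~\ref{thm:convergence-unbounded-graphon} are immediate from the assumptions: the sparsity parameters satisfy the required divergence (taking the strengthened form $N\beta_N^2\to\infty$ as in the cited theorem), $\|g\|_4<\infty$ is assumed, and Conditions~\ref{cond:initial-conditions}, \ref{cond:coefficients}, \ref{cond:almost-continuity}, \ref{cond:bound-initial-cond} are inherited verbatim. Applying Theorem~\ref{thm:convergence-unbounded-graphon} with this specific choice of $g_N$ then yields the desired convergence \eqref{eq:result-unbounded}.

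The main technical point — and the step that I expect to require the most care — is the verification of $\|g_N - g\|_4\to 0$. The delicate matter is identifying the right $L^p$ class for the dominating function: since the convergence needed is in $L^4$, one must ensure that the domination assumption is strong enough (i.e.\ $L^4$ rather than merely $L^2$) to allow dominated convergence on $|g_N - g|^4 \lesssim |g_N|^4 + |g|^4$. Once this is granted, the rest of the proof is essentially a bookkeeping exercise translating the random-graph formulation of Condition~\ref{cond:graphon-cond-2} into the step-graphon formulation of Condition~\ref{cond:graphon-cond-1}.
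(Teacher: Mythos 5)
Your proposal is exactly the paper's argument: the corollary is deduced by applying Theorem \ref{thm:convergence-unbounded-graphon} with $g_N:=g\circ(f_N,f_N)$, observing that the Bernoulli data of Condition \ref{cond:graphon-cond-2} then coincide with those of Condition \ref{cond:graphon-cond-1} and that $\|g_N-g\|\to 0$ by dominated convergence (using Condition \ref{cond:almost-continuity} for a.e.\ convergence). The two caveats you flag --- that the dominating function must really be in $L^4$ to conclude $\|g_N-g\|_4\to 0$, and that the cited theorem formally requires $N\beta_N^2\to\infty$ rather than the stated $N\beta_N\to\infty$ --- are mismatches already present in the paper's own statement and proof sketch, not defects introduced by your argument.
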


\begin{proof}
    Fix $1\geq \eta>0$ and consider the function $\widetilde g$ defined as a continuous function such that $\|g-\widetilde g\|_2 \leq \eta$. 
    Further define $\widetilde g_N(x,y) = \widetilde g(x_i,x_j)$ if $x\in [x_i,x_{i+1}), y\in [x_j,x_{j+1})$ and $\widetilde\zeta_{ij}^{N} \sim \mathrm{Bernoulli}(\beta_N\widetilde g_N(x_i,x_j))$  where $(\widetilde \zeta_{ij}^{N})_{i,j}$ are independent. 
    Denote $ m^{i,N}_{\widetilde \X} = \frac{1}{N}\sum_{j=1}^N \frac{\widetilde\zeta_{ij}^{N}}{\beta_N} \delta_{\widetilde X^{i,N}}$ where $\widetilde X^{i,N}$ solves the SDE \eqref{eq:part.system.B} with $(\zeta^N_{ij})_{1\le i,j\le N}$ replaced by $(\widetilde\zeta^N_{ij})_{1\le i,j\le N}$. 
    Then, we have
\begin{equation}
\label{eq:unbounded-est-1}
\begin{split}
    \frac{1}{N}\sum_{i=1}^N \E\Big[\sup_{s\in [0,T]}\left| X^{i,N}_s - X^{i,g}_s\right|^2\Big] & \leq \frac{C}{N}\sum_{i=1}^N \E\Big[\sup_{s\in [0,T]}\big| X^{i,N}_s - \widetilde X^{i,N}_s\big|^2\Big] + \frac{C}{N}\sum_{i=1}^N \E\Big[\sup_{s\in [0,T]}\big| \widetilde X^{i,N}_s - X^{i,\widetilde g}_s\big|^2\Big]\\
    & \quad +\frac{C}{N}\sum_{i=1}^N \E\Big[\sup_{s\in [0,T]}\big|X^{i,\widetilde g}_s - X^{i,g}_s\big|^2\Big].
\end{split}
\end{equation}   
Notice that
\begin{equation*}
    \|\widetilde g_N-\widetilde g\|_2 = \|\widetilde g\circ (f_N,f_N) - \widetilde g\|_2\xrightarrow{N\rightarrow \infty}0.
\end{equation*}
Thus we can use Theorem \ref{thm:convergence-bounded-graphon} to claim that the second term in \eqref{eq:unbounded-est-1} converges to zero. 
The third term in \eqref{eq:unbounded-est-1} can be bounded thanks to Proposition \ref{prop:stability-of-g-average} as
\begin{align*}
    \frac{1}{N}\sum_{i=1}^N \E\Big[\sup_{s\in [0,T]}\big|X^{i,\widetilde g}_s - X^{i,g}_s\big|^2\Big] &\le  \frac{C}{N}\sum_{i=1}^N \int_I\left(\widetilde g(x_i,y)-g(x_i,y)\right)^2\lambda(dy) +  C \|g\circ (f_N,id)\|^2_{2} \|g-\widetilde g\|_2^2  \\
    & \leq C \|\tilde g\circ (f_N,\mathrm{id}) - \tilde g\|_2^2 +C\| \tilde g-g\|_2^2  + C\|g-g\circ (f_N,\mathrm{id})\|_2^2 \\
    & \quad + C\|g\circ (f_N,id) - g\|_2^2\|g- \tilde g\|_2^2 + C\|g\|_2^2\|g- \tilde g\|_2^2\\
    & \leq C \|\tilde g\circ (f_N,\mathrm{id}) - \tilde g\|_2^2 +C\eta  + C\|g-g\circ (f_N,\mathrm{id})\|_2^2 \\
    & \quad + C\|g\circ (f_N,id) - g\|_2^2\eta + C\|g\|_2^2\eta. \\
\end{align*}
We first take the limit as $N$ goes to infinity using continuity of $\widetilde g$ and Lemma \ref{lemma:cond-implies-convergence}.
Then, we let $\eta$ go to zero
to obtain that the third term in  \eqref{eq:unbounded-est-1} goes to zero.

We are only left with showing that the first term in equation \eqref{eq:unbounded-est-1} converges to zero. 
Define $m^{i,N}_{\X_u^{\tilde g_N}}$ and $\widetilde m^{i,N}_{\X_u^{\tilde g_N}}$ as in equation \eqref{eq:def-coupled-measure} (replacing $\zeta_{ij}^N$ for $\widetilde \zeta_{ij}^N$ for defining $m^{i,N}_{\X_u^{\widetilde g_N}}$) and 
$m^{i,N,\eta}_{\X^{\tilde g_N}_u}$ and $\widetilde m^{i,N,\eta}_{\X^{\widetilde g_N}_u}$ are the regularized measures defined as the convolution with the Gaussian distribution with variance $\eta^2$ (see equation \eqref{eq:def-regularized-measure-density}). It is straightforward to obtain the estimate:
    \begin{equation}
    \label{eq:unbounded-est-2}
        \begin{split}
            \frac{1}{N}\sum_{i=1}^N &\E\bigg[\sup_{s\in [0,T]}\big| X^{i,N}_s - \widetilde X^{i,N}_s\big|^2\bigg]
             \leq \frac{C}{N}\sum_{i=1}^N \int_0^T \E\Big[d^2_{\mathrm{BL}}(m^{i,N}_{\X_u}, \widetilde m^{i,N}_{\X_u})\Big]du\\
            & \leq  \frac{C}{N}\sum_{i=1}^N \int_0^T \E\Big[d^2_{\mathrm{BL}}(m^{i,N}_{\X_u}, m^{i,N}_{ \X_u^{\widetilde g_N}})\Big] + \E\Big[d^2_{\mathrm{BL}}( m^{i,N}_{ \X_u^{\widetilde g_N}}, m^{i,N,\eta}_{ \X_u^{\widetilde g_N}})\Big]  +\E\Big[d^2_{\mathrm{BL}}( m^{i,N,\eta}_{ \X_u^{\widetilde g_N}}, \widetilde m^{i,N,\eta}_{ \X_u^{\widetilde g_N}})\Big]\\
            &\quad + \E\Big[d^2_{\mathrm{BL}}(\widetilde m^{i,N,\eta}_{\X^{\widetilde g_N}_u} , \widetilde m^{i,N}_{\X^{\widetilde g_N}_u})\Big] + \E\Big[d^2_{\mathrm{BL}}( \widetilde m^{i,N}_{\X^{\widetilde g_N}_u} , \tilde m^{i,N}_{\X_u} )\Big] du \\
            & \leq  \frac{C}{N}\sum_{i=1}^N \int_0^T \E\Big[d^2_{\mathrm{BL}}(m^{i,N}_{\X_u}, m^{i,N}_{ \X_u^{\widetilde g_N}})\Big] +\E\Big[d^2_{\mathrm{BL}}( m^{i,N,\eta}_{ \X_u^{\widetilde g_N}}, \widetilde m^{i,N,\eta}_{ \X_u^{\tilde g_N}})\Big]\\
            &\quad + \E\Big[d^2_{\mathrm{BL}}( \widetilde m^{i,N}_{\X^{\widetilde g_N}_u} , \widetilde m^{i,N}_{\X_u} )\Big] du + C\sqrt{n}\eta \\
            & = \int_0^t \cT_1(u) + \cT_2(u)+ \cT_3(u)du + C\sqrt{n}\eta,
\end{split}
\end{equation}    
where  we used Lemma \ref{lemma:distance-regularised-measures-L1} to obtain bounds for $\E\Big[d^2_{\mathrm{BL}}( m^{i,N}_{ \X_u^{\tilde g_N}}, m^{i,N,\eta}_{ \X_u^{\tilde g_N}})\Big]$ and $\E\Big[d^2_{\mathrm{BL}}(\tilde m^{i,N,\eta}_{\X^{\tilde g_N}_u} , \tilde m^{i,N}_{\X^{\tilde g_N}_u})\Big]$.  

To bound $\cT_1$ we use the definition of bounded Lipschitz distance:
\begin{equation}
\label{eq:unbounded-est-2.1}
\begin{split}
    \cT_1 
    & \leq \frac{1}{N}\sum_{i=1}^N \E\left[\sup_{\phi \in 1\text{-Lip}_b} \left(\frac{1}{N}\sum_{j=1}^N \frac{\zeta_{ij}^N}{\beta_N} \phi(X^{j,N}_{u}) - \frac{\zeta_{ij}^N}{\beta_N} \phi(X^{j,\widetilde g_N}_u)\right)^2 \right]\\
    & \leq \frac{C}{N}\sum_{i=1}^N \E\left[\left(\frac{1}{N}\sum_{j=1}^N \left(\frac{\zeta_{ij}^N}{\beta_N} -\widetilde g_N(x_i,x_j)\right)\left|X^{j,N}_{u}- X^{j,\widetilde g_N}_u\right|\right)^2\right] \\
    & \quad + \frac{1}{N}\sum_{i=1}^N \E\left[\left(\frac{1}{N}\sum_{j=1}^N \widetilde g_N(x_i,x_j) \left|X^{j,N}_{u}- X^{j,\widetilde g_N}_u\right|\right)^2\right].
\end{split}
\end{equation}
To deal with the first term we introduce the notation: 
\begin{equation*}
    \Gamma_{ij}=\zeta_{ij}^N-\beta_N \widetilde g_N(x_i,x_j) \quad \text{ and } \quad \big|\Delta X^j\big| = \big| X^{j,N}_u-X^{j,\widetilde g_N}_u\big|.
\end{equation*}
Then we can expand the square on equation \eqref{eq:unbounded-est-2.1} and use Hölder's inequality on the second term to obtain:
\begin{equation}
\label{eq:unbounded-est-2.2}
\begin{split}
    \cT_1 & \leq \frac{C}{N}\sum_{i=1}^N \E\left[\frac{1}{\beta^2_N N^2}\sum_{j=1}^N \Gamma^2_{ij}\left|\Delta X^j\right|^2 + \frac{1}{\beta_N^2 N^2} \sum_{j\neq k=1}^N \Gamma_{ij}\Gamma_{ik}\left|\Delta X^j\right| \big|\Delta X^k\big|\right] \\
    & \quad + \frac{1}{N}\sum_{i=1}^N \frac{1}{N}\sum_{j=1}^N \widetilde g^2_N(x_i,x_j) \E\left[ \frac{1}{N}\sum_{j=1}^N \left|\Delta X^j\right| \right]\\ 
    & \leq \frac{C}{\beta^2_N N^3}\sum_{j=1}^N   \E\left[\sum_{i=1}^N \Gamma^2_{ij}\left|\Delta X^j\right|^2\right] + \frac{C}{\beta_N^2 N^3} \sum_{j\neq k=1}^N\E\left[ \sum_{i=1}^N \Gamma_{ij}\Gamma_{ik}\left|\Delta X^j\right| \big|\Delta X^k\big|\right] \\
    & \quad + \|g_N\|_2^2 \E\left[ \frac{1}{N}\sum_{j=1}^N\left|\Delta X^j\right|\right].\\ 
\end{split}
\end{equation}

Using Cauchy-Schwarz inequality, Lemma \ref{lemma:L2-boundness-solutions} and the fact that the expectation of $\Gamma_{ij}$ is zero to obtain:  
\begin{equation}
\begin{split}
    \cT_1 & \leq \frac{C}{\beta^2_N N^3}\sum_{j=1}^N \E\left[\left(\sum_{i=1}^N \Gamma^2_{ij}\right)^2\right]^{1/2} + \frac{C}{\beta_N^2 N^3} \sum_{j\neq k}^N\E\left[ \left(\sum_{i=1}^N \Gamma_{ij}\Gamma_{ik}\right)^2\right]^{1/2} + \|g_N\|_2^2 \E\left[ \frac{1}{N}\sum_{j=1}^N\left|\Delta X^j\right|\right]\\ 
    & \leq \frac{C}{\beta^2_N N^3}\sum_{j=1}^N \E\left[\sum_{i=1}^N \Gamma^4_{ij}\right]^{1/2} + \frac{C}{\beta^2_N N^3}\sum_{j=1}^N \E\left[\sum_{i\neq l=1}^N \Gamma^2_{ij}\Gamma^2_{il}\right]^{1/2} + \frac{C}{\beta_N^2 N^3} \sum_{j\neq k=1}^N\E\left[ \sum_{i=1}^N \Gamma^2_{ij}\Gamma^2_{ik}\right]^{1/2} \\
    & \quad + \|g_N\|_2^2 \E\left[ \frac{1}{N}\sum_{j=1}^N\left|\Delta X^j\right|\right].
\end{split}
\end{equation}

Finally, using te bound in equations \eqref{eq:bounds-centered-ber} we obtain:
\begin{equation}
\begin{split}
    \cT_1 & \leq \frac{C}{N^3\beta_N^2} \sum_{j=1}^N \left(\sum_{i=1}^N g_N(x_i,x_j)\beta_N\right)^{1/2} + \frac{C}{N^3\beta_N^2} \sum_{j=1}^N\left(\sum_{i\neq l=1}^N \beta_N^2 g_N(x_i,x_j)g_N(x_l,x_j)\right)^{1/2}\\
    & \quad + \frac{C}{N^3\beta_N^2} \sum_{j\neq k=1}^N\left(\sum_{i=1}^N \beta_N^2g_N(x_i,x_j)g_N(x_i,x_k)\right)^{1/2} +  \frac{C\|g_N\|_2^2 }{N}\sum_{j=1}^N \E\left[ \left|X^{j,N}_{u}- X^{j,\widetilde g_N}_u\right|^2\right]\\
    & \leq \frac{C}{(N\beta_N)^{3/2}} \int_I\left(\int_I g_N(y,x)\lambda(dy)\right)^{1/2}\lambda(dx) + \frac{C}{N\beta_N} \int_I \int_I g_N(y,x)\lambda(dy)\lambda(dx)\\
    & \quad + \frac{C}{N\beta_N} \int_I \int_I \left(\sum_{i=1}^N g_N(x_i,y)g_N(x_i,z)\right)^{1/2}\lambda(dy)\lambda(dz) + \frac{C\|\widetilde g_N\|_2}{N}\sum_{j=1}^N \E\left[ \left|X^{j,N}_{u}- X^{j,\widetilde g_N}_u\right|^2\right]\\
    & \leq \frac{C\|g_N\|_1^{1/2}}{(N\beta_N)^{3/2}} + \frac{C\|g_N\|_1}{N\beta_N} + \frac{C\|g_N\|_2}{N^{1/2}\beta_N} + \frac{C\|\widetilde g_N\|_2}{N}\sum_{j=1}^N \E\left[ \left|X^{j,N}_{u}- \widetilde X^{j,N}_u \right|^2\right] \\
    & \quad+ \frac{C\|\widetilde g_N\|_2}{N}\sum_{j=1}^N \E\left[ \left|\widetilde X^{j,N}_u - X^{j,\widetilde g_N}_u\right|^2\right],
\end{split}
\end{equation}
where we used the estimations leading to \eqref{eq:bound-zeta-variance} (using Lemma \ref{lemma:L2-boundness-solutions-g-unbounded} instead of Lemma \ref{lemma:L2-boundness-solutions}). The first three terms converge to zero as $N\rightarrow \infty$. The fourth one can be absorbed using Gronwall's inequality. Moreover, the last term also converges to zero as
\begin{equation*}
\begin{split}
     \frac{1}{N}\sum_{j=1}^N \E\left[ \left|\widetilde X^{j,N}_u - X^{j,\widetilde g_N}_u\right|^2\right]\leq & \frac{1}{N}\sum_{j=1}^N \E\left[ \left|\widetilde X^{j,N}_u - X^{j,\widetilde g}_u \right|^2 \right] + \frac{1}{N}\sum_{j=1}^N \E\left[\left|X^{j,\widetilde g}_u - X^{j,\widetilde g_N}_u\right|^2\right] \rightarrow 0.\\
\end{split}
\end{equation*}
Where the first term goes to zero by Theorem \ref{thm:convergence-bounded-graphon} and the second one by Proposition \ref{prop:stability-of-g-average}. 

The term $\cT_3$ converges to zero by the same estimates used for $\cT_1$. Hence, we are left with $\cT_2$.  
\begin{equation}
\label{eq:unbounded-T_2}
\begin{split}
   \cT_2 (u) & =  
   \frac{1}{N}\sum_{i=1}^N \E\left[d^2_{\mathrm{BL}}( m^{i,N,\eta}_{ \X_u^{\widetilde g_N}}, \widetilde m^{i,N,\eta}_{ \X_u^{\widetilde g_N}} )\right]\\
   &\leq  \frac{1}{N}\sum_{i=1}^N \E\left[\sup_{\phi \in 1\text{-Lip}_b}\left( \int_{\R^n}\phi(z) \frac{1}{N}\sum_{j=1}^N \frac{\zeta_{ij}^N}{\beta_N} \varphi_\eta(z-X_u^{j,\widetilde g_N}) -  \frac{\widetilde \zeta_{ij}^N}{\beta_N}\varphi_\eta(z- X_u^{j,\widetilde g_N}) dz\right)^2\right]\\
   &\leq  \frac{1}{N}\sum_{i=1}^N \E\left[\int_{\R^n}(1+|z|^{n+1})\left( \frac{1}{\beta_N N}\sum_{j=1}^N (\zeta_{ij}^N -  \widetilde \zeta_{ij}^N)\varphi_\eta(z- X_u^{j,\widetilde g_N})\right)^2 dz\right]\\
   &\leq  \frac{C}{N}\sum_{i=1}^N \E\left[\int_{\R^n}(1+|z|^{n+1})\left( \frac{1}{\beta_N N}\sum_{j=1}^N (\zeta_{ij}^N -  \beta_Ng_N(x_i,x_j))\varphi_\eta(z- X_u^{j,\widetilde g_N})\right)^2 dz\right]\\
   & \quad + \frac{C}{N}\sum_{i=1}^N \E\left[\int_{\R^n}(1+|z|^{n+1})\left( \frac{1}{N}\sum_{j=1}^N ( g_N(x_i,x_j)- \widetilde g_N(x_i,x_j))\varphi_\eta(z- X_u^{j,\widetilde g_N})\right)^2 dz\right]\\
   & \quad + \frac{C}{N}\sum_{i=1}^N \E\left[\int_{\R^n}(1+|z|^{n+1})\left( \frac{1}{\beta_N N}\sum_{j=1}^N ( \beta_N\widetilde g_N(x_i,x_j)-\widetilde \zeta_{ij}^N)\varphi_\eta(z- X_u^{j,\widetilde g_N})\right)^2 dz\right]\\
   & =  \cL_1+ \cL_2+ \cL_3.\\
\end{split}
\end{equation}

We start by bounding $\cL_1$. Recall that we can define the family $\{y_i\}_{\{1\leq i\leq N\}}$ such that $\{X^{y_i,\tilde g_N}\}_{\{1\leq i\leq N\}}$ are independent (see proof of Theorem \ref{thm:convergence-bounded-graphon-L1}). Hence, we obtain by independence:
\begin{equation}
\label{eq:unbounded-L1}
\begin{split}
    \cL_1 & \leq \frac{C}{N}\sum_{i=1}^N \E\left[\int_{\R^n}(1+|z|^{n+1})\left( \frac{1}{\beta_N N}\sum_{j=1}^N (\zeta_{ij}^N -  \beta_Ng_N(x_i,x_j))\varphi_\eta(z- X_u^{j,\widetilde g_N})\right)^2 dz\right] \\
    & \leq \frac{C}{N^3\beta^2_N}\sum_{i=1}^N \int_{\R^n}(1+|z|^{n+1})\sum_{j=1}^N \mathrm{Var}[\zeta_{ij}^N]\E\left[\varphi^2_\eta(z- X_u^{j,\widetilde g_N})\right] dz \\
    & \leq  \frac{C(\eta)}{N^3\beta_N}\sum_{i=1}^N \sum_{j=1}^N g_N(x_i,x_j) \\
    & \leq  \frac{C(\eta)}{N\beta_N}\int_I \int_I g_N(x,y)\lambda(dy)\lambda(dx) \\
    & \leq \frac{C(\eta)}{N\beta_N}\|g_N\|_1.
\end{split}
\end{equation}

Similarly, we get that the third term can be estimated by:
\begin{equation}
\label{eq:unbounded-L3}
    \cL_3 \leq \frac{C(\eta)}{N\beta_N} \|\widetilde g_N\|_1.
\end{equation}
Finally, we bound $\cL_2$ using Cauchy-Schwarz inequality we obtain: 
\begin{equation}
\label{eq:unbounded-L2}
\begin{split}
    \cL_2 & \leq \frac{1}{N}\sum_{i=1}^N \E\left[\int_{\R^n}(1+|z|^{n+1})\left( \frac{1}{N}\sum_{j=1}^N ( g_N(x_i,x_j)- \tilde g_N(x_i,x_j))^2\right) \left(\frac{1}{N}\sum_{j=1}^N\varphi^2_\eta(z- X_u^{j,\tilde g_N})\right) dz\right]\\
    & \leq  C(\eta) \|g_N-\tilde g_N\|^2_2 \\
    & \leq C(\eta) (\|g_N-g\|^2_2 + \|g-\tilde g\|^2_2 + \|\tilde g -\tilde g_N\|^2_2). \\
\end{split}
\end{equation}

Combining equations \eqref{eq:unbounded-L1}, \eqref{eq:unbounded-L2} and \eqref{eq:unbounded-L3} we obtain:
\begin{equation*}
    \cT_3(u) \leq \frac{C(\eta)}{N\beta_N}\|g_N\|_1+ \frac{C(\eta)}{N\beta_N}\|\widetilde g_N\|_1 + C(\eta) (\|g_N-g\|^2_2 + \|g-\tilde g\|^2_2 + \|\tilde g -\tilde g_N\|^2_2)\rightarrow 0. 
\end{equation*}
Hence, we conclude that:
\begin{equation*}
    \limsup_{N\rightarrow \infty } \frac{1}{N}\sum_{i=1}^N \E\bigg[\sup_{s\in [0,T]}\big| X^{i,N}_s - \widetilde X^{i,N}_s\big|^2\bigg] \leq C\sqrt{n}\eta.
\end{equation*}

Finally, we can take $\eta\rightarrow 0$ to conclude. 

\end{proof}

In the case where the adjacency matrix of the graph $\zeta^N$ is deterministic we can state Theorem \ref{thm:convergence-unbounded-graphon} in terms of the sharp condition $N\beta_N\rightarrow \infty$.

\begin{proof}[Proof of Theorem \ref{thm:convergence-unbounded-graphon-deterministic}]
    Notice that the only point in the proof of Theorem \ref{thm:convergence-unbounded-graphon} where we used the hypotesis that $N\beta_N\rightarrow \infty$ is on equation \eqref{eq:unbounded-est-2.1}. Using the fact that now $\zeta_{ij}^N = \beta_Ng_N(x_i,x_j)$ we can modify equation \eqref{eq:unbounded-est-2.1} as follows: 
    \begin{equation*}
    \begin{split}
        \cT_1 & \leq \frac{1}{N}\sum_{i=1}^N \E\left[\sup_{\phi \in 1\text{-Lip}_b} \left(\frac{1}{N}\sum_{j=1}^N g_N(x_i,x_j) \phi(X^{j,N}_{u}) - g_N(x_i,x_j)  \phi(X^{j,\widetilde g_N}_u)\right)^2 \right]\\
        & \leq \frac{C}{N}\sum_{i=1}^N \E\left[\left(\frac{1}{N}\sum_{j=1}^N g_N(x_i,x_j) \left|X^{j,N}_{u}- X^{j,\widetilde g_N}_u\right|\right)^2\right]\\
        & \leq \frac{C}{N}\sum_{i=1}^N \E\left[\left(\frac{1}{N}\sum_{j=1}^N g_N(x_i,x_j) \left|X^{j,N}_{u}- X^{j,\widetilde g_N}_u\right|\right)^2\right]\\
        & \leq \frac{C}{N}\sum_{i=1}^N \E\left[\frac{1}{N}\sum_{j=1}^N g^2_N(x_i,x_j) \frac{1}{N}\sum_{j=1}^N \left|X^{j,N}_{u}- X^{j,\widetilde g_N}_u\right|^2\right]\\
        & \leq C \|g_N\|_2^2,\\
    \end{split}
    \end{equation*}

    where we used Lemma \ref{lemma:L2-boundness-solutions-g-unbounded}. This finishes the claim. 
    
\end{proof}

\subsection{Graph-theoretic formulation of the main result}
\label{sec:graph-theoretic-formulation}

We conclude the paper by stating the main convergence result of the paper in abstract graph theoretic terms.
This is for the convenience of readers that prefer the graph-theoretic language.

Let $\cG := (V, E)$ be a finite, simple (and thus undirected, with not loop) graph with vertex set $V = \{x_1,\dots,x_N\}$ and edge set $E$.
In particular, the  cardinality of $V$ is
$$
    |V| = N
$$ 
and we assume without loss of generality that $V\subset I$ and that for each $i$, $\lambda((x_i, x_{i+1}]) = 1/N$.
Each player $x \in V$ interacts with the rest of the population (or its neighbors) through the graph--weighted empirical measure
\begin{equation*}
    m^{x,\cG}_{\X} := \frac{1}{\mathrm{deg}_\cG(x)}\sum_{(x,y)\in E}\delta_{X^y},
\end{equation*}
with $\X:= (X^y)_{y\in V}$, where and $\mathrm{deg}_\cG(x)$ is the degree\footnote{That is, the number of neighbors of $x$, i.e. $\mathrm{deg}_\cG(x) = |\{y \in V: (x,y)\in E\}|$.} of the vertex $x$ and the notation $y\sim x$ means that $(x,y)\in E$.
  Consider the interacting particle system
\begin{equation}
\label{eq:part.syst.graph}
    dX^{x,V}_t = b(t, X^{x, V}_t, m^{x,\cG}_{\X^V}) dt + \sigma(t, X^{x, V}_t, m^{x,\cG}_{\X^V})dW_t^x,\quad X^x_0 = \xi^x\quad x\in V,
\end{equation}
where we use the superscript $V$ to emphasize that $\X^V$ is a vector of $|V|$ coordinates depending on the vertices of the graph $(V, E)$.
To each graph $\cG$, we associate a graphon $g_{\cG}:I\times I\to I$.
This is the function defined as follows:
We enumerate $V$ as $V=\{x_1,\dots, x_{|V|}\}$ and let $(I_1,\dots, I_{|V|})$ be a partition of $I$ by measurable sets such that for each $i$, we have $\lambda(I_i) = 1/|V|$ and $x_i\in I_i$.
We put
\begin{equation*}
    g_{\cG}(x,y) := 
    \begin{cases}
        1 &\text{if } (x,y) \in I_i \times I_j \text{ such that } (x_i, x_j)\in E\\
        0&\mathrm{otherwise}.
    \end{cases}
\end{equation*}
In particular, $ g_\cG$ is a step graphon.
The following corollary easily follows from Theorem \ref{thm:convergence-unbounded-graphon-deterministic}.
\begin{corollary}
\label{Cor:C-regular}
    Assume that $\cG_N = (V_N,E_N)$ is a sequence of graphs such that for every $x,y\in I$ it holds $\mathrm{deg_{\cG_N}}(x) = \mathrm{deg_{\cG_N}}(y)\rightarrow \infty$ as $N \rightarrow\infty$.
    Let $g_{\cG_N}$ be the graphon associated with $\cG_N$. 
    If the functions $g_N:= g_{\cG_N}/\|\cG_N\|_1$ converge in $\|\cdot\|_4$-norm to a graphon $g$ satisfying Condition \ref{cond:almost-continuity} and such that $\|g\|_4< \infty$ and the function $g\circ (f_N,\mathrm{id})$ is dominated by a square integrable function, then it holds
    \begin{equation*}
         \frac{1}{N}\sum_{i=1}^N\E\Big[ \sup_{t\in [0,T]}\big|X^{x_i,V_N}_{t}-X^{x_i,g}_t\big|^2\Big] \xrightarrow{N\rightarrow \infty} 0. 
    \end{equation*} 
    where $(X^{x,g})_{x\in I}$ solves the graphon SDE \eqref{eq:graphon-SDE}.
\end{corollary}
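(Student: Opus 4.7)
The plan is to recognize the graph-driven particle system \eqref{eq:part.syst.graph} as an instance of the system \eqref{eq:part.system.B} for a suitable choice of the sparsity parameter $\beta_N$ and of the deterministic weights $\zeta^N_{ij}$, and then simply invoke Theorem \ref{thm:convergence-unbounded-graphon-deterministic}. Let $d_N$ denote the common degree of every vertex in $\cG_N$. Because $\cG_N$ is undirected with $Nd_N/2$ edges and $g_{\cG_N}$ equals $1$ on the squares $I_i\times I_j$ corresponding to edges, we have $\|g_{\cG_N}\|_1 = d_N/N$, so I would set
\begin{equation*}
    \beta_N := \|g_{\cG_N}\|_1 = \tfrac{d_N}{N}, \qquad \zeta^N_{ij} := \beta_N g_N(x_i,x_j),
\end{equation*}
where $g_N = g_{\cG_N}/\|g_{\cG_N}\|_1$ is the rescaled step graphon.

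The first key step is to check that this choice reproduces the graph empirical measure. A direct computation gives $g_N(x_i,x_j) = (N/d_N)\mathbf{1}_{\{j\sim i\}}$, hence $\zeta^N_{ij} = \mathbf{1}_{\{j\sim i\}}$ is precisely the adjacency indicator, and
\begin{equation*}
    \frac{1}{N}\sum_{j=1}^N \frac{\zeta^N_{ij}}{\beta_N}\delta_{X^{x_j}} \;=\; \frac{1}{d_N}\sum_{j\sim i}\delta_{X^{x_j}} \;=\; m^{x_i,\cG_N}_{\X^{V_N}},
\end{equation*}
so the systems \eqref{eq:part.syst.graph} and \eqref{eq:part.system.B} coincide for these $\beta_N$ and $\zeta^N_{ij}$.

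The second step is to verify Condition \ref{cond:graphon-cond-deterministic}. Item (1) of Condition \ref{cond:graphon-cond-1} is trivial since $\zeta^N_{ij}$ is deterministic and therefore independent of $\tilde B^x$; item (2) holds because $g_N$ is a step graphon constant on each $[x_i,x_{i+1})\times[x_j,x_{j+1})$, the product $\beta_N g_N(x_i,x_j)\in\{0,1\}\le 1$, and $N\beta_N = d_N\to\infty$ by the hypothesis $\deg_{\cG_N}(x)\to\infty$. Item (2) of Condition \ref{cond:graphon-cond-deterministic} is the definition of $\zeta^N_{ij}$. Items (3) and (4) are exactly the standing hypotheses of the corollary ($g\circ(f_N,\mathrm{id})$ dominated by a square-integrable function, $\|g\|_4<\infty$, and $\|g_N-g\|_4\to 0$). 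Conditions \ref{cond:initial-conditions}, \ref{cond:coefficients}, \ref{cond:almost-continuity} and \ref{cond:bound-initial-cond} are either in force globally or directly assumed. With all hypotheses in place, Theorem \ref{thm:convergence-unbounded-graphon-deterministic} applies and yields the claimed convergence.

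There is no genuine obstacle here: the content of the corollary is really the translation between the graph-theoretic and the graphon-theoretic language, and once the normalization $\beta_N = d_N/N$ is identified, the proof is purely bookkeeping. The only place where one needs to be a little careful is in verifying that the regularity assumption $\deg_{\cG_N}(x)=\deg_{\cG_N}(y)$ is what makes the normalization $1/\deg_{\cG_N}(x_i)$ coincide with $1/(N\beta_N)$ uniformly in $i$, which is precisely what allows the weights in the empirical measure to match those of the framework of Theorem \ref{thm:convergence-unbounded-graphon-deterministic}.
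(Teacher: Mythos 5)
Your proposal is correct and follows essentially the same route as the paper: identify $\beta_N=\|\cG_N\|_1=d_N/N$ using the regularity of the degrees, observe that the deterministic weights $\zeta^N_{ij}=\beta_N g_N(x_i,x_j)=g_{\cG_N}(x_i,x_j)$ make \eqref{eq:part.syst.graph} an instance of \eqref{eq:part.system.B}, check Condition \ref{cond:graphon-cond-deterministic}, and invoke Theorem \ref{thm:convergence-unbounded-graphon-deterministic}. The bookkeeping (step-graphon structure, $\beta_N g_N(x_i,x_j)\le 1$, $N\beta_N=d_N\to\infty$, and the $\|\cdot\|_4$ hypotheses) matches the paper's verification.
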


\begin{proof}
    Recall that the $L^p$-norm of the (simple) graph $\cG$ is defined as
    \begin{equation*}
        \|\cG\|_p^p = \sum_{i,j \in V} \frac{|g_\cG(x_i,x_j)|^p}{N^2}.
    \end{equation*}
    For any vector $\X^V$, it holds that
    \begin{align*}
        m^{x,\cG_N}_{\X^{V_N}}
        &= \frac{\|\cG_N\|_1}{\mathrm{deg}_{\cG_N}(x)}\sum_{y\in V_N}\frac{1}{\|\cG_N\|_1}g_{\cG_N}(x,y)\delta_{X^{x,V_N}}.
    \end{align*}
    Because of the constant degree assumption we have $\mathrm{deg}_{\cG_N}(x)
    = |V_N|\|\cG_N\|_1$, so that
    \begin{align*}
        m^{x,\cG_N}_{\X^{V_N}} &= \frac{1}{|V_N|}\sum_{y\in V_N}\frac{1}{\|\cG_N\|_1} g_{\cG_N}(x,y)\delta_{X^{x,V_N}}\\
                      &= \frac{1}{|V_N|\beta_N}\sum_{y\in V_N} g_{\cG_N}(x,y)\delta_{X^{x,V_N}},
    \end{align*}
    with $\beta_{N} := \|\cG_N\|_1$.
    Now, observe that putting $g_N(x,y):= \frac{g_{\cG_N}(x,y)}{\beta_N}$ we obtain a step graphon $g_N:I\times I\to [0,\infty)$ and we have
    \begin{align*}
        \zeta_{ij}^N := \mathrm{Bernoulli}(\beta_N g_N(x_i,x_j)) = \mathrm{Bernoulli}( g_{\cG_N}(x_i,x_j)) = g_{\cG_N}(x_i,x_j).
    \end{align*}
    In other words, recalling that $|V_N| = N$, we have in the notation of the previous sections
   
    $$
        m^{x, \cG_N}_{\X^V_N} = \frac{1}{N}\sum_{j=1}^N\frac{\zeta^N_{ij}}{\beta_N}\delta_{X^{x_j,N}}.
    $$
    That is, with this choice of $\zeta^{ij}_N$ the particle systems \eqref{eq:part.syst.graph} and \eqref{eq:intro.part.system} are the same.
    We already know (by construction) that $N\beta_N = \mathrm{deg}_{\cG_N}(x)\rightarrow \infty$ as $N\rightarrow \infty$ and $\beta_Ng_N(x_i,x_j)= g_{\cG_N}(x_i,x_j)\le 1$ for every $N\ge 1$.
    Because $\|g_{N}-g\|_4\to 0$ and $\|g\|_4< \infty$ with $g$ satisfying Condition \ref{cond:almost-continuity} and with $g\circ(f_N,\mathrm{id})$ dominated by a square integrable function, the result follows from Theorem \ref{thm:convergence-unbounded-graphon-deterministic}.
\end{proof}

\bibliographystyle{plainnat}
\bibliography{dispmonotone}

\end{document}